\DeclareFontFamily{T1}{cbgreek}{}
\DeclareFontShape{T1}{cbgreek}{m}{n}{<-6>  grmn0500 <6-7> grmn0600 <7-8> grmn0700 <8-9> grmn0800 <9-10> grmn0900 <10-12> grmn1000 <12-17> grmn1200 <17-> grmn1728}{}
\DeclareSymbolFont{quadratics}{T1}{cbgreek}{m}{n}
\DeclareMathSymbol{\qoppa}{\mathord}{quadratics}{19}
\DeclareMathSymbol{\Qoppa}{\mathord}{quadratics}{21}
 \numberwithin{equation}{section}
 \newcommand{\ab}{\mathrm{ab}}
\newcommand{\Spec}{\mathrm{Spec}\,}
\newcommand{\Z}{\mathbf{Z}}
\newcommand{\C}{\mathbf{C}} 
\newcommand{\N}{\mathbf{N}} 
\newcommand{\Ext}{\mathrm{Ext}}
\newcommand{\Pic}{\mathrm{Pic}}
\newcommand{\Hom}{\mathrm{Hom}}
\newcommand{\Q}{\mathbf{Q}}
\newcommand{\Gal}{\mathrm{Gal}}
\newcommand{\univ}{\mathrm{univ}}
\newcommand{\Sing}{\mathrm{Sing}}
\newcommand{\ST}{\mathsf{ST}}
\newcommand{\PP}{\mathcal{P}}
\newcommand{\PPminus}{\mathcal{P}^-_E}
\newcommand{\PPplus}{\mathcal{P}^+_E}
\newcommand{\PPqplus}{\mathcal{P}_{K_q}^{+}}
\newcommand{\PPqminus}{\mathcal{P}_{K_q}^{-}}
\newcommand{\Utop}{\mathcal{U}(\C^{\mathrm{top}})}
 \newcommand{\OO}{\mathcal{O}}
\newcommand{\F}{\mathbf{F}}
\newcommand{\PS}{\mathbf{P}}
\newcommand{\KSp}{\mathrm{KSp}}
\newcommand{\CC}{\mathbf{C}}
\newcommand{\tr}[0]{\operatorname{tr}}
\newcommand{\ul}[1]{\underline{#1}}
\newcommand{\ol}[1]{\overline{#1}}
\newcommand{\Cal}[1]{\mathcal{#1}}
\newcommand{\mbf}[1]{\mathbf{#1}}
\newcommand{\mrm}[1]{\mathrm{#1}}
\newcommand{\co}{\colon}
\newcommand{\mf}[1]{\mathfrak{#1}}
\newcommand{\G}{\mbf{G}}
\newcommand{\rsHom}{\mrm{R}\kern -.5pt\mathscr{H}\kern -.7pt om}
\DeclareMathOperator{\Id}{Id}
\DeclareMathOperator{\Art}{Art}
\DeclareMathOperator{\Ker}{Ker}
\DeclareMathOperator{\pt}{pt}
\DeclareMathOperator{\Ind}{Ind}
\DeclareMathOperator{\Aut}{Aut}
\DeclareMathOperator{\Et}{\acute{E}t}
\DeclareMathOperator{\et}{\acute{e}t}
\DeclareMathOperator{\cyc}{cyc}
\DeclareMathOperator*{\holim}{\mathrm{holim}}
\DeclareMathOperator*{\colim}{\mathrm{colim}}
\newcommand{\TFadd}[1]{{\color{blue} #1}}
\newcommand{\quadr}[1]{\mathcal{Q}(#1)}  
\newcommand{\KH}{\mathit{KH}}  
\newcommand{\KBott}{K^{(\beta)}} 
 \newcommand{\Rtop}{\R^{\mathrm{top}}} 
\newcommand{\Ctop}{\C^{\mathrm{top}}} 
\newcommand{\proj}[1]{\mathcal{P}(#1)}  
\newcommand{\pic}[1]{\mathrm{Pic}(#1)}  
\newcommand{\SP}{\mathcal{SP}} 
\newcommand{\R}{\mathbf{R}}
\newcommand{\Sp}{\mathrm{Sp}}
\newcommand{\K}{\mathrm{K}}
\newcommand{\GL}{\mathrm{GL}}
\newtheorem{theorem}{Theorem}[section]
\newtheorem{lemma}[theorem]{Lemma} 
\newtheorem{corollary}[theorem]{Corollary} 
\newtheorem{proposition}[theorem]{Proposition} 
\theoremstyle{remark}
\newtheorem{remark}[theorem]{Remark}
\newtheorem{definition}[theorem]{Definition}
\newtheorem{example}[theorem]{Example}
\def\th@remark{%
  \thm@headfont{\bfseries}%
  \normalfont 
  \thm@preskip \thm@preskip 
  \thm@postskip\thm@preskip
}
\def\imod#1{\allowbreak\mkern5mu({\operator@font mod}\,\,#1)}
\begin{document} 

 \title{The Galois action on symplectic K-theory}
 \author{Tony Feng, Soren Galatius, Akshay Venkatesh}
 
\begin{abstract} 
We study a symplectic variant of algebraic $K$-theory of the integers, which comes equipped with a canonical action of the absolute Galois group of $\Q$. 
We compute this action explicitly.
The representations we see are extensions of Tate twists $\Z_p(2k-1)$ by a trivial representation, and we characterize them by a universal property among such extensions.

The key tool in the proof is the theory of complex multiplication for abelian varieties. 
   \end{abstract}

\maketitle
\tableofcontents


\section{Introduction} 
\subsection{Motivation and results}
Let $\mathrm{Sp}_{2g}(\Z)$ be the group of automorphisms of $\Z^{2g}$ preserving the standard symplectic form $\langle x, y \rangle = \sum_{i=1}^g (x_{2i-1} y_{2i} - x_{2i} y_{2i-1})$. The group homology
\begin{equation}\label{eq:20}
  H_i(\Sp_{2g}(\Z);\Z_p)
\end{equation}
with coefficients in the ring of $p$-adic numbers, carries a natural action of the group $\Aut(\C)$ which comes eventually from the relationship between $\Sp_{2g}(\Z)$ and $\mathcal{A}_g$, the moduli stack of principally polarized abelian varieties; we discuss this in more detail in \S \ref{elab}.  It is a natural question to understand this action; 
indeed, studying the  actions of Galois automorphisms
on (co)homology of arithmetic groups has been a central concern of number theory.

It was proved by Charney (\cite[Corollary 4.5]{Cha87}) that the homology groups~(\ref{eq:20}) are independent of $g$, as long as $g \geq 2i+5$, in the sense that  the evident inclusion $\Sp_{2g}(\Z) \hookrightarrow \Sp_{2g+2}(\Z)$   induces an isomorphism in group homology.  These maps are also equivariant for $\Aut(\C)$,
and so it is sensible to ask how  $\Aut(\C)$ acts on the {\em stable homology} $$H_i(\Sp_{\infty}(\Z); \Z_p) :=   \varinjlim_g H_i(\Sp_{2g}(\Z); \Z_p).$$

The answer to this question with rational $\Q_p$-coefficients is straightforward. The homology in question has
an algebra structure induced
by $\Sp_{2g_1} \times \Sp_{2g_2} \hookrightarrow\Sp_{2(g_1+g_2)}$, and is isomorphic to a polynomial algebra:
$$ H_*(\Sp_{\infty}(\Z); \Q_p) \simeq \Q_p[x_2, x_6, x_{10}, \dots]$$
and $\Aut(\C)$ acts on $x_{4k-2}$ by the $(2k-1)$st power of the cyclotomic character. 
The elements $x_2, x_6, \dots, $
can be chosen primitive with respect to the coproduct on homology.

With $\Z_p$ coefficients,  it is not  simple even to describe the stable
homology as an abelian group.  However, the situation looks much more elegant
after passing to a   more homotopical invariant---the \emph{symplectic $K$-theory} $\KSp_i(\Z;\Z_p)$---which
can be regarded as a distillate of the stable homology.
We recall the definition in \S \ref{elab}; for the moment we just  note that
 $\Aut(\C)$ also acts on the symplectic $K$-theory and 
there is an equivariant morphism
\begin{equation}
\KSp_i(\Z; \Z_p) \rightarrow H_i(\Sp_{\infty}(\Z); \Z_p)\label{eq:9}
\end{equation}
which, upon tensoring with $\Q_p$, identifies the left-hand side
with the primitive elements in the right-hand side. 
In particular,
\begin{equation} \label{kspq} 
\KSp_i(\Z; \Z_p) \otimes \Q_p \cong \begin{cases} \Q_p(2k-1), & i = 4k-2 \in \{2, 6, 10, \dots\}, \\ 
0 & \mbox{else}, \end{cases} \end{equation}
where $\Q_p(2k-1)$ denotes $\Q_p$ with the $\Aut(\C)$-action given by the
$(2k-1)$st power of the cyclotomic character. 

The identification of \eqref{kspq} can be made very explicit.
The moduli stack of principally polarized abelian varieties carries a Hodge vector bundle (see \S \ref{elab})
whose Chern character classes induce maps $\mathrm{ch}_{2k-1}: H_{4k-2}(\Sp_{2g}(\Z); \Z_p) \rightarrow \Q_p$.  Passing to the limit $g \to \infty$ and composing with~\eqref{eq:9} gives rise to homomorphisms $c_H:  \KSp_{4k-2}(\Z; \Z_p) \rightarrow  \Q_p$ for all $k \geq 1$;
 then $c_H \otimes \Q_p$ recovers \eqref{kspq}.

\subsubsection{Statement of main results}  For each $n$, let $\Q(\zeta_{p^n})$ be the cyclotomic field
obtained by adjoining $p^n$th roots of unity,
and let $H_{p^n}$ be the maximal everywhere unramified abelian
extension of $\Q(\zeta_{p^n})$ of $p$-power degree; put $H_{p^{\infty}} = \bigcup H_{p^n}$.  We regard these as subfields of $\C$.

\medskip

\begin{quote}
\textbf{Main theorem} (see Theorem~\ref{Zpuniversal}).  Let $p$ be an odd prime.
 \begin{itemize}
 \item[(i)] The map $c_H: \KSp_{4k-2}(\Z; \Z_p) \rightarrow \Z_p(2k-1)$  is surjective and equivariant for the $\Aut(\C)$ actions;
\item[(ii)]  The kernel of $c_H$ is a finite $p$-group with trivial $\Aut(\C)$ action;
\item[(iii)] The action of $\Aut(\C)$ factors through the Galois group
 $\Gamma$ of $H_{p^{\infty}}$ over $\Q$.  The short exact sequence 
 \begin{equation} \label{introseq} \Ker(c_H) \hookrightarrow \KSp_{4k-2}(\Z; \Z_p) \stackrel{c_H}{\longrightarrow} \Z_p(2k-1) \end{equation} 
 is initial among
 all such extensions of $\Z_p(2k-1)$ by a $\Gamma$-module with trivial action
 (all modules being $p$-complete and equipped with continuous $\Gamma$-action). 
\end{itemize}
\end{quote}

\medskip

In particular,  the extension of $\Aut(\C)$-modules $\Ker(c_H) \rightarrow \KSp_{4k-2}(\Z; \Z_p) \rightarrow \Z_p(2k-1)$
is not split if $\Ker(c_H)$ is nontrivial,  and in this case the $\Aut(\C)$-action
does {\em not} factor through the cyclotomic character.
In fact $\Ker(c_H)$ is canonically isomorphic to the $p$-completed algebraic $K$-theory $K_{4k-2}(\Z;\Z_p)$ which, through the work of Voevodsky and Rost, and Mazur and Wiles, we know is non-zero precisely when $p$ divides the numerator of $\zeta(1-2k)$ (see \cite[Example 44]{Wei05}).  The first example is $k=6, p=691$. 
 The group $\Gamma = \Gal(H_{p^{\infty}}/\Q)$ itself is a central object of Iwasawa theory;
it surjects onto $ \Z_p^{\times}$ via the cyclotomic character,
with abelian kernel.  In general $\Gamma$ 
is non-abelian, with its size is controlled 
 by the $p$-divisibility of $\zeta$-values.
 
  \begin{remark} The theorem addresses degree $4k-2$; this is the only interesting case.  For $i = 4k$ or $4k+1$ with $k>0$, we explain in \S \ref{sec:symplectic-K} that $\KSp_i(\Z; \Z_p) = 0$. For $i = 4k+3$, $\KSp_{4k+3}(\Z; \Z_p) \cong K_{4k+3}(\Z; \Z_p)$ is a finite group, and we establish in \S \ref{sec:degree-4k-1} that the $\Aut(\CC)$-action on $\KSp_i(\Z; \Z_p)$ is trivial in those cases. 
 \end{remark}

\subsubsection{Other formulations} There are other, equally reasonable, universal properties that can be formulated. 
 For example---and perhaps more natural from the point of view of number theory---$\KSp_{4k-2}(\Z; \Z_p)$ can be considered as the fiber, over $\Spec \C$, of an {\'e}tale sheaf on $\Z[1/p]$;
 then it is (informally) the largest split-at-$\Q_p$ extension of $\Z_p(2k-1)$ by a trivial \'etale sheaf. 
  See \S \ref{univ2} for more discussion of this and other universal properties.

 \subsubsection{} The prediction of the Langlands program is---informally---that
``every Galois representation that looks like it could arise in the cohomology of arithmetic groups,
in fact does so arise.'' In the cases at hand there is no more exact  conjecture available; but we regard the universality
statement above as fulfilling the spirit of this prediction.   The occurrence of extensions
as in \eqref{introseq} is indeed familiar from the Langlands program, where
they arise (see e.g. \cite{Ribet}) in the study of congruences between Eisenstein series and cusp forms. 
They arise in our context in a very direct way, and our methods are also quite different. 

It would  be of interest to relate our results to the study of the action of Hecke operators on stable cohomology;
the latter has been computed for $\GL_n$ by Calegari and Emerton \cite{CE}. 
 
\subsubsection{Consequences} Before we pass to a more detailed account,  let us indicate a geometric  implication of this result (which is explained in more detail in \S \ref{sec: stable homology}).

If $\Cal{A} \rightarrow S$ is an principally polarized abelian scheme over $\Q$ with fiber dimension $g$
then one has a classifying map $S \rightarrow \Cal{A}_g$. If $S$ is projective over $\Q$ of odd dimension $(2k-1)$, then we get a cycle class $[S] \in H_{4k-2}(\Cal{A}_g; \Z_p)$ which transforms according under $\Gal(\ol{\Q}/\Q)$ by the $(2k-1)$st power of the cyclotomic character.  (Examples of this situation can be constructed arising from a Shimura variety, or from the relative Jacobian of a family of curves.)
By pairing $[S]$ with the Chern character of the  Hodge bundle, we get a characteristic number $$c_{\mrm{H}}([S]) 
\in  \Q$$
of the family.
If the numerator of $c_{\mrm{H}}([S])$ is not divisible by $p$  then
$[S]$ splits the analogue of the sequence \eqref{introseq}, but replacing
$\KSp_{4k-2}$ by $H_{4k-2}$. Now, in the range when  $p > 2k$
we may in fact identify $\KSp_{4k-2}$ as a quotient of $H_{4k-2}$
as a Galois module (see \S \ref{sec: stable homology}), and thereby the sequence \eqref{introseq} itself splits. 
Comparing with our theorem, we see that
\begin{equation} \label{bwmc}
\mbox{ $p >2k$ divides numerator of $\zeta(1-2k) \implies  p$ divides numerator of $c_{\mathrm{H}}([S]) $.}
\end{equation}

In other words, our theorem gives a universal divisibility for characteristic numbers
of families of abelian varieties over $\Q$.

 \subsection{Symplectic $K$-theory of $\Z$: definition, Galois action, relationship with usual $K$-theory.} \label{elab}
 We now give some background to the discussion of the previous section,
 in particular outlining the definition of symplectic $K$-theory and
 where the Galois action on it comes from.  
 For the purposes of this section we adopt
  a slightly {\em ad hoc} approach to $K$-theory that differs somewhat
  from the presentation in the main text (\S \ref{sec:symplectic-K}),
but is implicit in the later discussion where the Galois action is constructed (\S \ref{galois action construction}).    
  More detailed explanations
 are given in the  later text. 
  
 First let us explain in more detail the Galois action on the {\em homology} of $\Sp_{2g}(\Z)$
 with $\Z_p$ coefficients. 
 As usual in topology, the group homology of a discrete group such as $\Sp_{2g}(\Z)$
 can be computed
 as the singular homology of its classifying space 
 $ B \Sp_{2g}(\Z)$, which can be modeled by
 the quotient of a contractible $\Sp_{2g}(\Z)$-space with sufficiently free action. 
 In the case at hand,   there is a natural model for this classifying space that
 arises in algebraic geometry:

The group   $\mathrm{Sp}_{2g}(\Z)$ acts on the contractible Siegel upper half plane $\mathfrak{h}_g$  (complex symmetric $g \times g$ matrices
with positive definite imaginary part) and uniformization of abelian varieties identifies the quotient $\mathfrak{h}_g/\!\!/\mathrm{Sp}_{2g}(\Z)$, as a complex orbifold, with the complex points of $\Cal{A}_g$ in the analytic topology.  Since $\mathfrak{h}_g$ is contractible, we may identify the cohomology group $H^i(\Sp_{2g}(\Z);\Z/p^n\Z)$ with the sheaf cohomology of the constant sheaf $\Z/p^n\Z$ on $\mathcal{A}_{g,\C}$, which by a comparison theorem is identified with \'etale cohomology.  The fact that $\mathcal{A}_g$ is defined over $\Q$ associates a map of schemes $\sigma: \mathcal{A}_{g,\C} \to \mathcal{A}_{g,\C}$ to any $\sigma \in \Aut(\C)$, inducing a map on (\'etale) cohomology.  This is Pontryagin dualized to an action on $H_i(\Sp_{2g}(\Z);\Z/p^n\Z)$, for all $n$, and hence an action on~(\ref{eq:20}) by taking inverse limit.  (Here we used that arithmetic groups have finitely generated homology groups, in order to see that certain derived inverse limits vanish.)

\subsubsection{Definition of symplectic $K$-theory} Next let us outline one definition of symplectic $K$-theory.
 We will do so only with $p$-adic coefficients, and in a way that is adapted to discussing the Galois action; 
a more detailed exposition from a more sophisticated viewpoint is given in \S \ref{sec: symplectic K-theory}. 

 The first step is the insight, due to Sullivan, that there is an operation on spaces (or homotopy
 types) that  carries out $p$-completion at the level of homology.  In particular, there is a $p$-completion map $$B\Sp_{2g}(\Z) \to B\Sp_{2g}(\Z)^\wedge_p$$ inducing an isomorphism in mod $p$ homology and hence mod $p^n$ homology, and whose codomain turns out to be simply connected (at least for $g \geq 3$ where $\Sp_{2g}(\Z)$ is a perfect group).   Moreover, the $\Aut(\C)$
 action that exists on the mod $p^n$ homology of the left hand side
 can be promoted to an actual action  of $\Aut(\C)$ on the space $B\Sp_{2g}(\Z)^{\wedge}_p$. 
 
 Although the space $B\Sp_{2g}(\Z)$
 has no homotopy in degrees $2$ and higher, its $p$-completion {\em does}.
 As with \eqref{eq:20}, these homotopy groups are eventually independent of $g$;  
 the resulting stabilized groups are the ($p$-completed) \emph{symplectic $K$-theory} groups denoted
\begin{equation*}
  \KSp_i(\Z;\Z_p) := \varinjlim_g \pi_i(B\Sp_{2g}(\Z)^\wedge_p)
\end{equation*}
in analogy with the $p$-completed algebraic $K$-theory groups 
 $ K_i(\Z;\Z_p)$,
 which can be similarly computed as  $\mathrm{colim}_g \pi_i(B\GL_g(\Z)^\wedge_p)$.
 
 The action of $\Aut(\C)$ on the space $B\Sp_{2g}(\Z)^\wedge_p$
 now gives an action of $\Aut(\C)$ on $\KSp_i(\Z; \Z_p)$, for which the
 Hurewicz morphism  
\begin{equation} \label{eq:52} \KSp_i(\Z; \Z_p) \rightarrow H_i(\Sp_{\infty}(\Z); \Z_p)\end{equation}
 is equivariant.
 
\begin{remark} Although it is not obvious from the presentation above, these groups $\KSp_i(\Z;\Z_p)$ are in fact the $p$-completions of symplectic $K$-groups $\KSp_i(\Z)$ which are finite generated abelian groups (see \S \ref{sec: symplectic K-theory}).  However,  the $\Aut(\C)$ action 
exists only after $p$-adically completing.  \end{remark}

\subsubsection{} We also recall what is known about the underlying $\Z_p$-modules  (ignoring Galois-action).  These results are deduced from Karoubi's work on \emph{Hermitian $K$-theory} \cite{Karoubi80}, combined with what is now known about algebraic $K$-theory of $\Z$.  The upshot is isomorphisms for $k \geq 1$ and odd primes $p$,
\begin{align*}
  \KSp_{4k-2}(\Z;\Z_p) & \xrightarrow{(c_B,c_H)} K_{4k-2}(\Z;\Z_p) \times \Z_p\\
  \KSp_{4k-1}(\Z;\Z_p) & \xrightarrow{(c_B,c_H)} K_{4k-1}(\Z;\Z_p)
\end{align*}
and vanishing homotopy groups in degrees $\equiv 0,1 \mod 4$.  
Here:
\begin{itemize}
\item[-]
The homomorphism $c_B$ arises from the evident inclusion $\Sp_{2g}(\Z) \subset \GL_{2g}(\Z)$.
\item[-]  
 The homomorphism $c_H$ 
 is obtained as the composite
 $$ \KSp_{4k-2}(\Z; \Z_p) \rightarrow H_{4k-2}(B\Sp_{2g}(\Z); \Q_p) \stackrel{c_H}{\rightarrow} \Q_p.$$
 Here the final map is the Chern character of the $g$-dimensional (Hodge) vector bundle
 arising from  $\Sp_{2g}(\Z) \hookrightarrow \Sp_{2g}(\R) \simeq U(g)$;
 the composite map is valued in $\Z_p$ even though the Chern character involves denominators in general,  reflecting one advantage of homotopy over homology.

\end{itemize}

\subsection{Method of proof and outline of paper} \label{outline}
\label{sec:method-of-proof} 
For the present sketch we consider the reduction of symplectic $K$-theory modulo $q = p^n$;
one recovers the main theorem by passing to a limit over $n$.  

\begin{remark}
Rather than na\"{i}vely reducing homotopy groups modulo $q$, it is better to consider the groups $\KSp_i(\Z;\Z/q)$ which sit in a long exact sequence with the multiplication-by-$q$ map $\KSp_i(\Z;\Z_p) \to \KSp_i(\Z;\Z_p)$.  But that is the same in degree $4k-2$ since $\KSp_{4k-3}(\Z;\Z_p) = 0$.
\end{remark}

 The basic idea for proving the main theorem is to construct enough explicit classes on which one can compute the Galois action. In more detail,   the theory of complex multiplication (CM) permits us
to exhibit a large class of  complex principally polarized abelian varieties with actions of
a cyclic group $C$ with order $q$. If $A \to \Spec(\C)$ comes with such an action then the induced action on $H_1(A(\C)^\mathrm{an};\Z)$, singular homology of the complex points in the analytic topology, gives a homomorphism $C \to \Sp_{2g}(\Z)$. This gives a morphism
$$ \pi^s_i(BC; \Z/q) \longrightarrow \KSp_i(\Z; \Z/q)$$
from the stable homotopy groups of the classifying space $BC$ to symplectic $K$-theory. The left
hand side contains a polynomial ring\footnote{This is an advantage of stable homotopy over homology: the latter (in even degrees) is a divided power algebra.} on a degree $2$ element, the ``Bott element,''
and the image of this ring produces a supply of classes in $\KSp$.   

Let us call temporarily call classes in $\KSp_i(\Z;\Z/q)$ arising from this mechanism \emph{CM classes}.
We shall then show, on the one hand, that CM classes generate all of $\KSp_i(\Z; \Z/q)$.
On the other hand 
the Main Theorem of Complex Multiplication allows us to understand the action of $\Aut(\C)$ on CM classes.
 Taken together, this allows us to compute the $\Aut(\C)$ action on $\KSp_i(\Z; \Z/q)$.  

The contents of the various sections are as follows: 
\begin{itemize}
\item  \S  \ref{sec: alg K-theory}, {\em $K$-theory and its relation to algebraic number theory:}  We review   facts about homotopy groups, Bott elements,  $K$-theory,
 and the relation of $K$-theory and \'{e}tale cohomology.
From the point of view of the main proof, the main output here is
 Proposition \ref{shtuka}, which 
 identifies the transfer map from the $K$-theory of a cyclotomic
 ring to the $K$-theory of $\Z$ in terms of algebraic number theory:  namely, a transfer in the homology
 of corresponding Galois groups.

\item  \S \ref{sec:symplectic-K}, {\em Symplectic $K$-theory:} 
  We review the definition of symplectic $K$-theory, and recall the results of \cite{Karoubi80} 
  which, for odd $p$, lets us describe $\KSp_i(\Z;\Z_p)$ and $\KSp_i(\Z;\Z/q)$ in terms of usual algebraic $K$-theory.  The conclusions we need are summarized in Theorem~\ref{thm:KSp-of-Z}.

\item \S \ref{sec: cm Ab var}, {\em Construction of CM classes in symplectic $K$-theory:}
 The point of \S \ref{sec: cm Ab var}  is to set up the theory of CM in
a slightly unconventional form that allows the CM classes to be easily defined. 
One key output of the section is the sequence
 \eqref{both compositions}:
 it formulates the construction of CM abelian varieties
 as a functor between groupoids.
We also prove  in   Proposition  \ref{construction of CM structures} a technical result about the existence of ``enough'' CM abelian varieties
associated to cyclotomic fields.

\item  \S \ref{CMExhaust},  {\em CM classes exhaust all of symplectic $K$-theory:} 
We give the construction of CM classes   and prove that
that they exhaust all of symplectic $K$-theory
(see in particular Proposition \ref{prop:generators-for-KSp}). 
To prove the exhaustion one  must check both that 
$\KSp$ is not too large and that 
there are enough CM classes. These come, respectively, from
  the previously mentioned Proposition \ref{shtuka}
and  Proposition \ref{construction of CM structures}.

\item \S \ref{sec: galois action} {\em Computation of the action of $\Aut(\C)$ on CM classes:}
The action of $\Aut(\C)$ on CM classes can be deduced from  
the ``Main Theorem of CM,''  which computes how  $\Aut(\C)$ acts on moduli of CM abelian varieties.
(In its original form this is due to Shimura and Taniyama; we use the refined form  
due to Langlands, Tate, and Deligne.)
 We recall this theorem, in a language adapted to our proof,  in \S \ref{sec:main-thm}.

 \item \S \ref{maintheoremproof}, {\em Proof of the main theorem (Theorem \ref{mt2}).}   The results of the previous sections
 have already entirely computed the Galois action. More precisely,
 they allow one to explicitly give a cocycle 
 that describes the extension class of  \eqref{kspq}. In \S \ref{sec:universality}
 we explicitly compare this cocycle to one that describes the universal extension
 and show they are equal.  
 
 The remainder of \S \ref{maintheoremproof} describes
 variants on the universal property (e.g.\ passing between $\Z/q$ and $\Z_p$ coefficients,
 or a version for Bott-inverted symplectic $K$-theory which
 also sees extensions of {\em negative} Tate twists). 

\item \S \ref{sec: stable homology}, {\em Consequences in homology.}  The stable homology $H_i(\Sp_\infty(\Z);\Z_p)$ naturally surjects onto $\KSp_i(\Z;\Z_p)$, at least for $i \leq 2p-2$.  In this short section we use this to deduce divisibility of certain characteristic numbers of families of abelian varieties defined over $\Q$.
  
\end{itemize}

\begin{remark}\label{rem: intro K(1) local}
  Let us comment on the extent to which our result depends on the \emph{norm residue theorem}, proved by Voevodsky and Rost.    The $p$-completed homotopy groups $\KSp_*(\Z;\Z_p)$ in our main theorem may be replaced by groups we denote $\KSp^{(\beta)}_*(\Z;\Z_p)$ and call ``Bott inverted symplectic $K$-theory'' see Subsection~\ref{subsec:main-theorem-KSPBott}.  They agree with $\pi_*(L_{K(1)} \KSp(\Z))$, the so-called \emph{$K(1)$-local homotopy groups}.

  The norm residue theorem can be used to deduce that the canonical map $\KSp_i(\Z;\Z_p) \to \KSp_i^{(\beta)}(\Z;\Z_p)$ is an isomorphism for all $i \geq 2$.  Independently of the norm residue theorem, the main theorem stated above may be proved with $\KSp_{4k-2}^{(\beta)}(\Z;\Z_p)$ in place of $\KSp_i(\Z;\Z_p)$.  Besides the simplification of the proof, this has the advantage of giving universal extensions of $\Z_p(2k-1)$ for all $k \in \Z$, including non-positive integers.

  In our presentation we have chosen to work mostly with $\KSp_*(\Z;\Z_p)$ instead of $\KSp_*^{(\beta)}(\Z;\Z_p)$, for reasons of familiarity.  A more puritanical approach would have compared $\KSp_*(\Z;\Z_p)$ and $\KSp^{(\beta)}_*(\Z;\Z_p)$ at the very end, and this would have been the only application of the norm residue theorem.
\end{remark}

\subsection{Notation}

For $q$ any {\em odd} prime power,  
we denote:
\begin{itemize}
\item   $\OO_q$ the cyclotomic ring $\Z[e^{2 \pi i/q}]$ obtained by adjoining a primitive $q$th root of unity to $\Z$,
and $K_q = \OO_q \otimes \Q$ its quotient field. For us we shall always regard these as subfields of $\C$.
We denote by $\zeta_q \in \OO_q$ the primitive $q$th root of unit $e^{2 \pi i /q}$. 

\item $\Z' := \Z[\frac{1}{p}]$, and $\OO_q' := \OO_q[\frac{1}{p}]$.

\item We denote by $H_q$ the largest algebraic unramified extension of $K_q$ inside $\C$
whose Galois group is abelian of $p$-power order. Thus $H_q$
is a subfield of the Hilbert class field, and its Galois group is isomorphic
to the $p$-power torsion inside the class group of $\OO_q$.  

\item For a ring $R$, we denote by $\mathrm{Pic}(R)$ the groupoid
of locally free rank one $R$-modules, and by $\pi_0 \mathrm{Pic}(R)$
the group of isomorphism classes, i.e.\ the class group of $R$.  In particular, the
class group of $\OO_q$ is denoted $\pi_0 \Pic(\OO_q)$. 

\item  
There are ``Hermitian'' variants of the Picard groupoid that will play a crucial role for us. 
For the ring of integers $\mathcal{O}_E$ in a number field $E$,  $\mathcal{P}_E^{+}$
will denote the groupoid of  rank one locally free $\mathcal{O}_E$-modules
endowed with a $\mathcal{O}_E$-valued Hermitian form, 
and $\mathcal{P}_E^{-}$ will denote the groupoid of rank one locally free 
$\mathcal{O}_E$-modules endowed with a skew-Hermitian form valued in the inverse different. 
See   \S \ref{picgroup} for details of these definitions.
 \end{itemize} 

We emphasize that $q$ is assumed to be {\em odd}. Many of our statements remain valid for $q$ a power of $2$, and we attempt
to make arguments that remain valid in that setting, but for simplicity we prefer to impose $q$ odd as a standing assumption.  
 
\subsection*{Acknowledgements}
\label{sec:acknowledgements}

TF was supported by an NSF Graduate Fellowship, a Stanford ARCS Fellowship, and an NSF Postdoctoral Fellowship under grant No. 1902927. SG was supported by the European Research Council (ERC) under the European Union's Horizon 2020 research and innovation programme (grant agreement No.\ 682922), by the EliteForsk Prize, and by the Danish National Research Foundation (DNRF92 and DNRF151).  AV was supported by an NSF DMS grant as well as a Simons investigator grant. 

SG thanks Andrew Blumberg and Christian Haesemeyer for helpful discussions about $K$-theory and \'etale $K$-theory.  All three of us thank the Stanford mathematics department for providing a wonderful working environment. 


\section{Recollections on algebraic $K$-theory}\label{sec: alg K-theory}

This section reviews algebraic $K$-theory and its relation with   \'{e}tale cohomology.
Since it is somewhat lengthy we briefly outline the various subsections:
\begin{itemize}
\item \S \ref{homotopy recollections} is concerned with summarizing facts about stable
and mod $q$ homotopy groups; in particular we introduce the Bott element
in the stable homotopy of a cyclic group. 
\item  After a brief discussion of infinite loop space machines in \S \ref{sec:infinite-loop-space},
we review algebraic $K$-theory in \S \ref{sec:algebraic-k-theory-1}. 
In \S \ref{sec:picard groupoid}
 we discuss the Picard group and Picard groupoid, which are used to analyze a simple piece of algebraic $K$-theory.
 
 \item    
 A fundamental theorem of Thomason asserts that algebraic $K$-theory satisfies {\'e}tale descent
 after inverting a Bott element (defined in mod $q$ algebraic $K$-theory in \S \ref{sec:bott-element}). We review this theorem and its consequences in  \S \ref{sec:bott-inverted-k}.

\item  \S \ref{Bott Etale} uses Thomason's results to compute Bott-inverted $K$-theory of $\Z$ and of $\mathcal{O}_q$
in terms of {\'e}tale (equivalently, Galois) cohomology.

\item Finally, in Proposition  \ref{shtuka} we rewrite some of the results of \S \ref{Bott Etale} in terms of homology of Galois groups, which 
is most appropriate for our later applications.  Specifically, the Proposition
 identifies the transfer map from the $K$-theory of a cyclotomic
 ring to the $K$-theory of $\Z$ in a corresponding transfer in the  group homology.
  \end{itemize}

 We do not claim any substantial original results in this section. Most of the statements in this section follow quickly from work of Thomason and Voevodsky, but do not appear in the literature in the form written here, so we take the opportunity to spell them out.
  

\subsection{Recollections on stable  and mod $q$ homotopy} \label{homotopy recollections}
Recall that, for a topological space $Y$, the notation $Y_+$ means the space $Y \coprod \{*\}$ consisting of $Y$ together with a disjoint basepoint.
Each space gives rise to a spectrum $\Sigma^\infty_+ Y$, namely the suspension spectrum on $Y_+$, and consequently we can
freely specialize constructions for spectra to those for spaces. 
 In particular, the \emph{stable homotopy groups} of $Y$ are, by definition, the homotopy groups of the associated spectrum:   $$\pi_k^s(Y) = \pi_k(\Sigma^\infty_+ Y) := \varinjlim_{n} [S^{k+n}, \Sigma^n Y_+],$$
 where $[-,-]$ denotes homotopy classes of based maps. 
We emphasize that $\pi_*^s$ is defined for an {\em unpointed} space $Y$.
(In some references, it is defined for a based space, and in those references the definition does not involve an added disjoint basepoint). 

\begin{remark} One could regard $\pi^s_*(Y)$  as being the ``homology of $Y$ with coefficients in the sphere spectrum,'' and   
it enjoys the properties of any generalized homology theory. 
There is a Hurewicz map $\pi^s_*(Y) \rightarrow H_*(Y)$, which is an isomorphism in degree $* = 0$ and a surjection in degree 1.
\end{remark}

We will be interested in the corresponding notion with $\Z/q$ coefficients. 
 For any spectrum $E$, the homomorphisms $\pi_i(E) \to \pi_i(E)$ which multiply by $q \in \Z$ fit into a long exact sequence
\begin{equation*}
  \dots \to   \pi_i(E) \xrightarrow{q} \pi_i(E) \to \pi_i(E \wedge (\mathbb{S}/q)) \to \pi_{i-1}(E) \xrightarrow{q} \pi_i(E) \to \dots,
\end{equation*}
where the spectrum $\mathbb{S}/q$ is the mapping cone of a degree-$q$ self map of the sphere spectrum.  For $q > 0$ we write $$\pi_i(E;\Z/q) := \pi_i(E \wedge (\mathbb{S}/q))$$ for these groups, the \emph{homotopy groups of $E$ with coefficients in $\Z/q$}. 
   Correspondingly, we get stable homotopy groups $\pi^s_*(Y; \Z/q)$ for a space $Y$.
These have
the usual properties of a homology theory.

\subsubsection{The Bott element in the stable homotopy of a cyclic group} \label{Bottcyclic}  The stable homotopy of the classifying space of a cyclic group contains a polynomial algebra on a certain ``Bott element'' in degree $2$.  This  
will be a crucial tool in our later arguments, and we review it now.

We recall (see \cite{MR760188}) that for $q = p^n > 4$ the spectrum $\mathbb{S}/q$ has a product which is unital, associative, and commutative up to homotopy.  It makes $\pi_*(E;\Z/q)$ into a graded ring when $E$ is a ring spectrum, graded commutative ring when the product on $E$ is homotopy commutative.  In the rest of this section we shall tacitly assume $q > 4$ in order to have such ring structures available. (In fact everything works also in the remaining case $p = q = 3$ with only minor notational updates: see Remark \ref{remark:p=q=3}.)

For the current subsection \S \ref{Bottcyclic} set $Y := B(\Z/q)$, the classifying space of a cyclic group of order $q$. 
This $Y$ has the structure of $H$-space, in fact a topological abelian group, and correspondingly $\pi^s_*(Y)$ 
has  the structure of a graded commutative ring. 

Recall that $q$ is supposed odd. Then there is a unique element (the ``Bott class'')   $\beta \in \pi_2^s(Y; \Z/q)$ 
such that, in the diagram
 \begin{equation}\label{eq: diag defining Bott}
\begin{tikzcd}
& \beta \in \pi_2^s(Y; \Z/q) \ar[r] \ar[d, "\mathrm{Hur}", "\sim"'] & \pi_1^s(Y; \Z) \ar[d]  \\
0 \ar[r] & H_2(Y; \Z/q) \ar[r]  & H_1(Y; \Z)[q] \ar[r, equals] &  \Z/q
\end{tikzcd}
\end{equation} 
the image of $\beta$ in the bottom right $\Z/q$ is the canonical generator $1$ of $\Z/q$.  In fact, the  
Hurewicz map $\mathrm{Hur}$ above is an isomorphism.

\begin{remark}
The diagram \eqref{eq: diag defining Bott} exists for all $q$, but for $q$ a power of $2$ the map $\pi_2^s(Y; \Z/q) \rightarrow H_2(Y; \Z/q)$ is not an isomorphism. There is a class in $\pi_2^s(Y; \Z/q)$ fitting into \eqref{eq: diag defining Bott} when $q$ is a power of $2$, but the diagram above does not characterize it. To pin down the correct $\beta$ in that case, note that the map $S^1 \rightarrow Y$ inducing $\Z \rightarrow \Z/q$ on $\pi_1$
extends to $M(\Z/q, 1)=  S^1 \cup_q D^2$, the  pointed mapping cone of the canonical degree $q$ map $S^1 \to S^1$,
and one can construct $\beta$ starting from the identification of  $\pi_k^s(Y;\Z/q) = \varinjlim_{n} [\Sigma^{n} M(\Z/q, k-1), \Sigma^n Y_+]$.
\end{remark}

 \begin{lemma} \label{split injection}
 The induced map $\Z/q[\beta] \rightarrow \pi_*^s(Y; \Z/q)$
 is a split injection of graded rings. 
 \end{lemma}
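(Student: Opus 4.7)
The plan is to detect $\beta$ and its powers by mapping to complex $K$-theory, where $\beta$ becomes a unit multiple of the Bott class. Concretely, fix a faithful character $\chi \colon \Z/q \hookrightarrow U(1)$ and let $B\chi \colon Y \to BU(1)$ be the resulting homomorphism of topological abelian groups. The tautological line bundle on $BU(1) = \mathbb{CP}^\infty$ defines a multiplicative $H$-map $BU(1) \to \Omega^\infty \mathrm{KU}$ (using the tensor product structure on both sides), and hence a unital map of commutative ring spectra $\ell \colon \Sigma^\infty_+ BU(1) \to \mathrm{KU}$. The composite $\phi := \ell \circ \Sigma^\infty_+ B\chi \colon \Sigma^\infty_+ Y \to \mathrm{KU}$ is then a map of ring spectra (the source carrying its Pontryagin ring structure from the abelian group structure on $Y$). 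Smashing with $\mathbb{S}/q$ and taking homotopy groups produces a graded ring homomorphism $\phi_* \colon \pi^s_*(Y;\Z/q) \to \pi_*(\mathrm{KU};\Z/q) = (\Z/q)[v^{\pm 1}]$, where $|v| = 2$.

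To compute $\phi_*(\beta)$, I would use the defining diagram~\eqref{eq: diag defining Bott}: the Hurewicz image of $\beta$ is the generator $u \in H_2(Y;\Z/q)$. Since $\chi$ is faithful, the mod $p$ reduction of its first Chern class generates $H^2(Y;\F_p)$; dually, $(B\chi)_*$ is an isomorphism from $H_2(Y;\Z/q)$ onto $H_2(BU(1);\Z/q)$, and the Atiyah--Hirzebruch edge map $H_2(BU(1);\Z/q) \to \pi_2(\mathrm{KU};\Z/q)$ is an isomorphism in this low degree. Combining these gives $\phi_*(\beta) = cv$ for some unit $c \in (\Z/q)^\times$, and multiplicativity then yields $\phi_*(\beta^k) = c^k v^k$, a $\Z/q$-generator of $\pi_{2k}(\mathrm{KU};\Z/q) \cong \Z/q$ for every $k \geq 0$.

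Therefore the composite $(\Z/q)[\beta] \to \pi^s_*(Y;\Z/q) \xrightarrow{\phi_*} (\Z/q)[v^{\pm 1}]$ of graded $\Z/q$-module maps sends $\beta^k \mapsto c^k v^k$ and admits a graded $\Z/q$-linear retraction: send $v^k \mapsto c^{-k}\beta^k$ for $k \geq 0$ and kill the remaining graded pieces. Pre-composing this retraction with $\phi_*$ produces a graded $\Z/q$-linear left inverse to the original inclusion $(\Z/q)[\beta] \hookrightarrow \pi^s_*(Y;\Z/q)$, proving the split injectivity.

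The main point that deserves care is verifying that $\phi$ really is a ring map on homotopy (so that $\phi_*(\beta^k)$ actually equals $\phi_*(\beta)^k$). This rests on two facts: $B\chi$ is a bona fide group homomorphism, so $\Sigma^\infty_+ B\chi$ is multiplicative for the Pontryagin products; and the use of the tensor product structure on both $BU(1)$ and $\mathrm{KU}$ is precisely what endows $\ell$ with the structure of a unital ring map.
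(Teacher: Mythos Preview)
Your proof is correct and follows essentially the same approach as the paper: map to complex $K$-theory via a character of $\Z/q$ and verify that $\beta$ hits (a unit multiple of) the Bott class. The only cosmetic difference is that the paper uses connective $ku$ rather than periodic $\mathrm{KU}$; since $\pi_*(ku;\Z/q)\cong(\Z/q)[\mathrm{Bott}]$ is already a polynomial ring, the retraction there is automatically a ring map. In your setup you should note that $\phi_*$ lands in non-negative degrees (because $\pi_*^s(Y;\Z/q)$ vanishes in negative degrees), so your graded $\Z/q$-linear retraction is in fact multiplicative on the image of $\phi_*$ and the splitting is one of graded rings, as the statement asks.
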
 

 \proof  
The map $a \to e^{2 \pi i a/q}$ is a homomorphism from $\Z/q$ to $S^1$ and it gives rise to a line bundle $\underline{L}$ on $Y$.
In turn this  
 induces a map  from the suspension spectrum of $Y_+$
 to the spectrum $ku$ 
 representing topological $K$-theory, and thereby induces 
 on homotopy groups a map $$\pi_*^s(Y; \Z/q) \rightarrow \pi_{*}(ku 
 ; \Z/q).$$
 This map is in fact a ring map. 
 
 We claim that the class $\beta$ is sent to the reduction of the usual Bott class $\mathrm{Bott} \in \pi_2(ku 
 )$;  
 this implies that $\Z/q[\beta] \rightarrow \pi_*^s(Y; \Z/q)$ is indeed split injective, because
 $\pi_*(ku 
; \Z/q)$ is, in non-negative degrees, a polynomial algebra on this reduction. 
 The Bott class in $\pi_2(ku 
;\Z) \simeq \pi_2(\mathrm{BU}, \Z)$ is characterized  (at least up to sign,
 depending on normalizations)
 by having pairing $1$ with the  first Chern class of the line bundle $\underline{L}$ arising from $\det: U \rightarrow S^1$. 
 It sufficies then to show that
 $$ \langle \bar{\beta}, c_1(\underline{L}) \rangle = 1 \in \Z/q,$$
 where $\bar{\beta} \in  H_2(Y; \Z/q)$ is the image of $\beta$ in 
 by the Hurewicz map,
 and $c_1(\underline{L})$ is the first Chern class of $\underline{L}$ considered
 as a line bundle on $Y$. 
 
  This Chern class is the image of   $j \in H^1(Y; \R/\Z)$ by the connecting homomorphism $H^1(Y, \R/\Z) \rightarrow H^2(Y, \Z)$
arising from the map $\Z \rightarrow \R \stackrel{e^{2 \pi i x}}{\rightarrow} \R/\Z$. 
Therefore $c_1(\underline{L}) \in H^2(Y, \Z)$ is 
obtained from the tautological class $\tau \in  H^1(Y, \Z/q)$
by the connecting map $\delta$ associated to $\Z \rightarrow \Z \rightarrow \Z/q$,  
and the reduction of $c_1(\underline{L})$ modulo $q$ is simply the Bockstein of $\tau$. 
Therefore, the pairing of $c_1(\underline{L})$ with $\overline{\beta}$ 
 is the same as the pairing of $\tau$ with the Bockstein of $\overline{\beta}$; 
 this last pairing is $1$, by definition of $\beta$. 
     \qed 
 
\begin{remark}\label{rem: cyc action on bott} The reasoning of the proof also shows the following:
had we replaced  the morphism $j: \Z/q \rightarrow S^1$ 
by $j^a$ (for some $a \in \Z$), 
then  
  the corresponding element in $\pi_2(ku; \Z/q)$ is also multiplied by $a$. 
\end{remark}
  
\subsection{Infinite loop space machines}
\label{sec:infinite-loop-space}

Recall that associated to a small category $\Cal{C}$, there is a \emph{classifying space} $|\Cal{C}|$, which is the geometric realization of the nerve of $\Cal{C}$ (a simplicial set).  In particular $\pi_0(|\mathcal{C}|)$ is the set of isomorphism classes.
A symmetric monoidal structure on $\mathcal{C}$ induces in particular a ``product'' $|\mathcal{C}| \times |\mathcal{C}| \to |\mathcal{C}|$ which is associative and commutative up to homotopy.  The theory of \emph{infinite loop space machines} associates to the symmetric monoidal category $\mathcal{C}$ a \emph{spectrum} $K(\mathcal{C})$ and a map
\begin{equation}\label{eq:54}
  |\mathcal{C}| \to \Omega^\infty K(\mathcal{C}).
\end{equation}
Up to homotopy this map preserves products, and the induced monoid homomorphism $\pi_0(|\mathcal{C}|) \to \pi_0(\Omega^\infty K(\mathcal{C}))$ is the universal homomorphism to a group, namely, the ``Grothendieck group'' of the monoid.  The map~\eqref{eq:54} can be viewed as a derived version of the universal homomorphism from a given monoid to a group.


\subsection{Algebraic $K$-theory: definitions} 
\label{sec:algebraic-k-theory-1}

   For a ring $R$, let $\proj{R}$ denote the symmetric monoidal groupoid whose objects are finitely generated projective $R$-modules, morphisms are $R$-linear isomorphisms, and with the Cartesian symmetric monoidal structure (i.e.,\ direct sum of $R$-modules).  The set $\pi_0(\proj{R})$ of isomorphism classes in $\proj{R}$ then inherits a commutative monoid structure.
  Write $|\proj{R}|$ for the associated topological space (i.e.\ geometric realization of the nerve of $\proj{R}$).  Direct sum of projective $R$-modules is a symmetric monoidal structure on $\proj{R}$ and induces a map $\oplus: |\proj{R}| \times |\proj{R}| \to |\proj{R}|$.

  As recalled above, 
  there is a canonically associated spectrum $K(R) := K(\Cal{P}(R))$ and a ``group completion'' map
  \begin{equation*}
    |\proj{R}| \to \Omega^\infty K(R).
  \end{equation*}
  The algebraic $K$-groups of $R$ are defined as the homotopy groups of $K(R)$. Alternately, for $i=0$,  it is the projective class group $K_0(R)$ while for $i > 0$ it may be defined as
\begin{equation*}
  K_i(R) := \pi_i B\GL_\infty(R)^+,
\end{equation*}
the homotopy groups of the Quillen plus construction applied to the commutator subgroup of $\GL_\infty(R) = \varinjlim_n \GL_n(R)$.\footnote{
The group completion theorem can be used to induce a comparison between $K_0(R) \times B\GL_\infty(R)^+$ and $\Omega^\infty K(R)$, roughly speaking by taking direct limit over applying $[R] \oplus -: |\proj{R}| \to |\proj{R}|$ infinitely many times and factoring over the plus construction. }

When $R$ is commutative, we also have product maps
\begin{equation*}
  K_i(R) \otimes K_j(R)  \to K_{i + j}(R),
\end{equation*}
induced from tensor product of $R$-modules,
making $K_*(R)$ into a graded commutative ring. 

\begin{definition}
 We define the \emph{mod $q$ algebraic $K$-theory groups of $R$} to be 
\[
K_i(R; \Z/q) := \pi_i(K(R); \Z/q).
\]
In the case $R=\Z$ we define the \emph{$p$-adic algebraic $K$-theory groups} via
\[
 K_i(\Z;\Z_p) := \varprojlim_n K_i(\Z;\Z/p^n).
\] 
(This is the correct definition because of finiteness properties of $K_*(\Z; \Z/p^n)$; in general, we should work with ``derived inverse limits.'') 
\end{definition}

\subsubsection{Adams operations} Finally, let us recall that (again for $R$ commutative, as shall be the case in this paper) there are Adams operations $\psi^k: K_i(R) \to K_i(R)$ for $k \in \Z$ satisfying the usual formulae.  We shall make particular use of $\psi^{-1}$, which in the above model is induced by the functor $\proj{R} \to \proj{R}$ sending a module $M$ to its dual $D(M) := \Hom_R(M,R)$ and an isomorphism $f: M \to M'$ to the inverse of its dual $D(f): D(M') \to D(M)$.

\subsection{Picard groupoids} \label{sec:picard groupoid}

We now define certain spaces which can be understood explicitly and used to probe algebraic $K$-theory. They are built out of categories that we call Picard groupoids. 
 
\begin{definition}\label{def:pic} (The Picard groupoid.) 
  For a commutative ring $R$, let $\pic{R} \subset \proj{R}$ be the subgroupoid whose objects are the rank 1 projective modules, with the symmetric monoidal structure given by $\otimes_R$.

The associated space $|\pic{R}|$ inherits a group-like product 
  \[
  \otimes_R: |\pic{R}| \times |\pic{R}| \to |\pic{R}|,
  \]
and there are canonical isomorphisms of abelian groups $\pi_0(|\pic{R}|) = H^1(\Spec(R);\mathbf{G}_m)$ (the classical Picard group) and $\pi_1(|\pic{R}|,x) = H^0(\Spec(R);\mathbf{G}_m) = R^\times$ for any object $x \in \pic{R}$.  The higher homotopy groups are trivial.
\end{definition}

When $R$ is a ring of integers, $\pic{R}$ is equivalent to the groupoid whose objects are the invertible fractional ideals $I \subset \mathrm{Frac}(R)$ and whose set of morphisms $I \to I'$ is $\{x \in R^\times \mid xI = I'\}$.

The tensor product of rank 1 projective modules gives a product on the space $|\pic{R}|$ and makes the stable homotopy groups $\pi^s_*(|\pic{R}|)$ into a graded-commutative ring.  We have a canonical ring isomorphism $\Z[\pi_0(\pic{R})] \to \pi_0^s(|\pic{R}|)$ from the group ring of the abelian group $\pi_0(\pic{R}) \cong H^1(\Spec(R);\G_m)$.
The fact that stable homotopy (being a homology theory) takes disjoint union to direct sum implies that the product map
\begin{equation}\label{eq:8}
  \pi_*^s(BR^\times) \otimes \Z[\pi_0(\pic{R})] \xrightarrow{\cong} \pi_*^s(|\pic{R}|).
\end{equation}
is an isomorphism. 

The inclusion functor induces maps $|\pic{R}| \to |\proj{R}| \to \Omega^\infty K(R)$ preserving $\otimes_R$, at least up to coherent homotopies.  The adjoint map $\Sigma^\infty_+|\pic{R}| \to K(R)$ is then a map of ring spectra, and we get a ring homomorphism
\begin{equation}\label{eq:26}
  \pi_*^s(BR^\times) \otimes \Z[\pi_0(\pic{R})] \xrightarrow{\cong} \pi_*^s(|\pic{R}|) \to K_*(R).
\end{equation}

\subsection{Bott elements in $K$-theory with mod $q$ coefficients}
\label{sec:bott-element}

\begin{definition}
The \emph{algebraic $K$-theory of $R$ with mod $q$ coefficients} is defined as $K_i(R;\Z/q) := \pi_i(K(R);\Z/q)$.
\end{definition}
As discussed earlier, $K_*(R; \Z/q)$ has the structure of a graded-commutative ring  for $q = p^n > 4$.

Let us next recall the construction of a canonical \emph{Bott element} in $K_2(R;\Z/q)$ associated to a choice of primitive $q$th root of unity $\zeta_q \in R^\times$.
The choice of $\zeta_q$  induces a homomorphism $\Z/q \to \GL_1(R)$.  Regarding $\GL_1(R)$ as the automorphism group of the object $R \in \pic{R}$ gives a map $B(\Z/q) \to |\pic{R}|$. 
Now we previously produced a ``Bott element'' $\beta \in \pi_2^s(B (\Z/q))$; 
under the maps~(\ref{eq:26}) we have  
\begin{equation*}
  \beta \in \pi_2^s(B(\Z/q)) \to \pi_2^s(|\pic{R}|;\Z/q) \to K_2(R;\Z/q).
\end{equation*}
The image is the \emph{Bott element} and shall also be denoted $\beta \in K_2(R;\Z/q)$.
More intrinsically,   this discussion gives a homomorphism 
\begin{equation} \label{Bottbeta} \beta: \mu_q(R) \to  K_2(R;\Z/q)
\end{equation}
 which is independent of any choices;
since our eventual application is to subrings of $\C$ where we will take $\zeta = e^{2 \pi i/q}$,
we will not use this more intrinsic formulation. 

\subsection{Bott inverted $K$-theory and Thomason's theorem}
\label{sec:bott-inverted-k}

The element $\beta \in K_2(R;\Z/q)$ may be inverted in the ring structure (when $q > 8$), leading to a 2-periodic $\Z$-graded ring $K_*(R;\Z/q)[\beta^{-1}]$ called \emph{Bott inverted} $K$-theory of $R$, when $R$ contains a primitive $q$th root of unity.  As explained in \cite[Appendix A]{Thomason85} we can still make sense of this functor when $R$ does not contain primitive $q$th roots of unity: the power $\beta^{p-1} \in K_{2p-2}(\Z[\mu_p];\Z/p)$ comes from a canonical element in $K_{2p-2}(\Z;\Z/p)$, also denoted $\beta^{p-1}$ (even though it is not the $(p-1)$st power of any element of $K_*(\Z;\Z/p)$), whose  $p^{n-1}$st power lifts to an element of $K_{2p^{n-1}(p-1)}(\Z;\Z/p^n)$. Inverting the image of these elements gives a functor
\begin{equation*}
  X \mapsto K_*(X;\Z/q)[\beta^{-1}]
\end{equation*}
from schemes to $\Z$-graded $\Z/q$-modules (graded commutative $(\Z/q)$-algebras when $q > 4$), where $q = p^n$ as before.  For typographical ease, we will denote this via $\KBott$: 
$$ \KBott_*(X;\Z/q) =  K_*(X;\Z/q)[\beta^{-1}].$$ In the case $X =\Spec \Z$, we also define the \emph{$p$-adic Bott-inverted  $K$-theory} groups 
\[
 \KBott_*(\Z;\Z_p) := \varprojlim_n \KBott_*(\Z;\Z/p^n).
\] 
 
\begin{remark}\label{remark:telescope}   
As also recalled in \cite[Appendix A]{Thomason85} this may be implemented on the spectrum level as follows: Adams constructed spectrum maps $\Sigma^m (\mathbb{S}/p^n) \to (\mathbb{S}/p^n)$ for $m = 2p^{n-1}(p-1)$ when $p$ is odd, with the property that it induces isomorphisms $\Z/q = \pi_0(ku;Z/q) \to \pi_m(ku;\Z/q) = \Z/q$, where $ku$ is the topological $K$-theory spectrum, and we can let $T$ be the homotopy colimit of the infinite iteration $\mathbb{S}/q \to \Sigma^{-m} (\mathbb{S}/q) \to \Sigma^{-2m}(\mathbb{S}/q) \to \dots$.  Then $\KBott_*(X;\Z/q)$ is canonically the homotopy groups of the spectrum $K(X) \wedge T$. We will on occasion denote this spectrum as $\KBott(X; \Z/q)$. 
\end{remark}

\subsubsection{\'{E}tale descent and Thomason's spectral sequence}

The main result of \cite{Thomason85} is an \'etale descent property for the Bott inverted $K$-theory functor.  (Because of this, Bott-inverted $K$-theory is essentially the same as Dwyer--Friedlander's ``\'etale $K$-theory'' \cite{MR805962}, at least in positive degrees.  See also \cite{clausen2019hyperdescent} for a recent perspective.)

For a scheme $X$ over $\Spec(\Z[1/p])$, Thomason constructs a convergent spectral sequence
\begin{equation} \label{TSS}
  E^2_{s,t} = H^{-s}_\mathrm{et}(X;\mu_q^{\otimes (t/2)}) \Rightarrow \KBott_{t+s}(X;\Z/q),
\end{equation}
concentrated in degrees $s \in \Z_{\leq 0}$ and $t \in 2\Z$.  (Existence and convergence of the spectral sequence requires mild hypotheses on $X$, satisfied in any case we need.)  The spectral sequence arises as a hyperdescent spectral sequence for $\KBott$, regarded as a sheaf of spectra on the \'etale site of $X$.  

Since the Adams operations $\psi^a$ act on $\KBott$ through maps of sheaves of spectra when $a \not \equiv 0 \mod p$, there are compatible actions of Adams operations on the spectral sequence.  The operation $\psi^a$ acts by multiplication by $a^{t/2}$ on $E^2_{s,t}$ and in particular $\psi^{-1}$ acts as $+1$ on the rows with $t/2$ even and as $-1$ on the rows where $t/2$ is odd.

\subsubsection{Comparison with algebraic $K$-theory}
This {\'e}tale descent property makes Bott-inverted $K$-theory amenable to computation. 
 On the other hand, it is a well known consequence of the \emph{norm residue theorem} (due to Voevodsky and Rost) that when $X$ is a scheme over $\Spec(\Z[1/p])$ satisfying a mild hypothesis, the localization homomorphism $K_*(X;\Z/q) \to K_*(X;\Z/q)[\beta^{-1}]$ is an isomorphism in sufficiently high degrees.
 We briefly spell out how this comparison between $K$-theory and Bott inverted $K$-theory follows from the norm residue theorem  (see \cite{MR3931681} for a textbook account of the latter)
 in the cases of interest:
\begin{proposition} \label{normresidueconsequence}
   For $X = \Spec(\Z')$ or $X = \Spec(\mathcal{O}'_q)$, the localization map
  \begin{equation*}
    K_i(X;\Z/q) \to \KBott_i(X;\Z/q) 
  \end{equation*}
  is an isomorphism for all $i  > 0$ and a monomorphism for $i=0$.  (It is in fact also an isomorphism for $i=0$,
  as will be proved in \S \ref{Bott Etale}). The same assertion
  holds for $\Spec(\Z)$ or $\Spec(\mathcal{O}_q)$ if we suppose $i \geq 2$.
\end{proposition}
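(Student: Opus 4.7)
The plan is to combine two inputs: the Quillen--Lichtenbaum comparison (a consequence of the norm residue theorem of Voevodsky--Rost) for the $p$-inverted rings $\Z'$ and $\mathcal{O}_q'$, and Quillen's localization cofiber sequence to pass from the $p$-inverted rings back to $\Z$ and $\mathcal{O}_q$.

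\emph{Step 1: $X = \Spec(\Z')$ or $\Spec(\mathcal{O}_q')$.} The scheme $X$ is one-dimensional and regular over $\Spec(\Z[1/p])$, so for $p$ odd its \'etale $p$-cohomological dimension equals $2$; in particular Thomason's spectral sequence~\eqref{TSS} has $E_2$-entries supported in columns $-s \in \{0,1,2\}$. Running alongside it is the motivic-to-$K$-theory spectral sequence of Bloch--Lichtenbaum, Friedlander--Suslin, and Levine,
\begin{equation*}
E^2_{s,t} = H^{-s}_M(X; \Z/q(t/2)) \Rightarrow K_{s+t}(X;\Z/q),
\end{equation*}
and the localization $K \to \KBott$ induces a map of spectral sequences via the natural transformation $H^p_M \to H^p_{\et}$. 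By the Beilinson--Lichtenbaum conjecture---a consequence of the norm residue theorem---this map of $E_2$-pages is an isomorphism whenever $-s \leq t/2$. A direct check shows that for $i \geq 1$ this inequality holds on every entry $(s,t)$ with $s+t = i$ and $-s \in \{0,1,2\}$ (the tightest case being $-s = 2$, $t/2 = (i+2)/2 \geq 2$ for $i \geq 2$; the rows $-s = 0, 1$ are easier). Thus the two spectral sequences agree on the portion of $E_2$ converging to $K_i, \KBott_i$ for $i \geq 1$; naturality of the differentials and extensions then gives the isomorphism for $i \geq 1$. For $i = 0$, injectivity follows from the same comparison: the problematic entry $(s,t) = (-2,2)$ lies just outside the Beilinson--Lichtenbaum range, but the refined form of the conjecture still guarantees that the comparison is injective there, and a filtration argument on the spectral sequences transports this to the desired monomorphism on abutments.

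\emph{Step 2: $X = \Spec(\Z)$ or $\Spec(\mathcal{O}_q)$, $i \geq 2$.} Apply Quillen's localization cofiber sequence $K(\F_p) \to K(X) \to K(X')$, using that $p$ is totally ramified in $\mathcal{O}_q$ with residue field $\F_p$. By Quillen's calculation, $K_{2i-1}(\F_p) = \Z/(p^i - 1)$ and $K_{2i}(\F_p) = 0$ for $i \geq 1$; in particular $K_*(\F_p)$ is $p$-torsion-free in positive degrees, so $K_i(\F_p;\Z/q) = 0$ for $i > 0$ by the universal coefficient sequence. Hence the restriction $K_i(X;\Z/q) \to K_i(X';\Z/q)$ is an isomorphism for $i \geq 2$, and the analogous assertion holds after Bott inversion (filtered colimits preserve cofiber sequences). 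Combined with Step 1, this yields the proposition.

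\emph{Main obstacle.} The essential input is the norm residue theorem and its implication for the motivic-to-\'etale comparison. The technical work is assembling the motivic spectral sequence for a Dedekind scheme, showing it maps to Thomason's spectral sequence compatibly with Bott inversion and multiplicative structure, and controlling the non-iso entry at $(s,t) = (-2,2)$ for the injectivity claim at $i=0$. Each piece is available in the literature, but the proof is a nontrivial assembly of the Quillen--Lichtenbaum package together with Quillen's localization and computation of $K_*(\F_p)$.
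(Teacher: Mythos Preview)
Your proposal is correct and uses the same core ingredients as the paper---the norm residue theorem via the Beilinson--Lichtenbaum comparison of spectral sequences, together with Quillen's localization sequence and his computation of $K_*(\F_p)$---but with a different organization. The paper first establishes the comparison for \emph{fields}: it runs the motivic and Thomason spectral sequences for $k = \Q$ (with $\mathrm{cd}_p = 2$) and for $k = \F_\ell$, $\ell \neq p$ (with $\mathrm{cd}_p = 1$), deducing that $K_i(k;\Z/q) \to \KBott_i(k;\Z/q)$ is an isomorphism for $i \geq d-1$ and injective for $i = d-2$; only then does it invoke Quillen's localization sequence $\bigvee_{\ell \neq p} K(\F_\ell) \to K(\Z') \to K(\Q)$ to assemble the result for $\Z'$. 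You instead run the motivic spectral sequence directly on the Dedekind scheme $\Spec(\Z')$. This is more streamlined but requires the motivic spectral sequence for regular one-dimensional schemes and the Beilinson--Lichtenbaum comparison at that level of generality, whereas the paper's route only needs the field case, which is the form in which the norm residue theorem is most directly stated. Your Step~2 is essentially identical to the paper's argument (stated there as Lemma~\ref{Quillenexact}).

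One place where your write-up is looser than the paper's: the $i=0$ injectivity. Your claim that ``the refined form of the conjecture still guarantees that the comparison is injective there'' is correct in spirit---motivic cohomology vanishes for $-s > t/2$, so the $E_2$-map is automatically injective at $(-2,2)$---but the passage from injectivity on $E_2$ to injectivity on the abutment needs the observation that no differential in the target spectral sequence can kill a class in the image. With only three columns this is easy, but it should be said.
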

 
\begin{proof}[Proof sketch]  
  For any field $k$ of finite cohomological dimension (and admitting a ``Tate-Tsen filtration'', as in \cite[Theorem 2.43]{Thomason85}), there are spectral sequences converging to both domain and codomain of the map $K_*(k;\Z/p) \to \KBott(k;\Z/p)$.  In the codomain it is the above-mentioned spectral sequence of Thomason, applied to $X = \Spec(k)$, and in the domain it is the motivic spectral sequence.  There is a compatible map of spectral sequences, which on the $E^2$ page is the map from motivic to \'etale cohomology
  \begin{equation*}
    H^{-s}_\mathrm{mot}(\Spec(k);(\Z/p)(t/2)) \to H^{-s}_\mathrm{et}(\Spec(k);\mu_p^{\otimes t/2}).
  \end{equation*}
  induced by changing topology from the Nisnevich to \'etale topology.  The norm residue theorem implies that this map is an isomorphism for $t/2 \geq -s$.  Below this line the motivic cohomology vanishes but the \'etale cohomology need not.  If $\mathrm{cd}_p(k) = d$ we may therefore have non-trivial \'etale cohomology in $E^2_{-d,2d-2}$ which is not hit from motivic cohomology, and the total degree $d-2$ of such elements is the highest possible total degree in which this can happen.  By convergence of the spectral sequences, the  map    $$ K_i(k; \Z/p) \rightarrow \KBott_i(k; \Z/p)$$
  is an isomorphism for $i \geq d-1$ and an injection for $i=d-2$; 
  the same conclusion follow with $\Z/q$ coefficients by induction using the long exact sequences. 
  
   This applies to $k = \Q$ which has $p$-cohomological dimension 2 (we use here that $p$ is odd) and $k = \F_\ell$ which has $p$-cohomological dimension 1 for $\ell \neq p$, as well as finite extensions thereof.  Finally,  Quillen's localization sequence
  \begin{equation*}
    \bigvee_{\ell \neq p} K(\F_\ell) \to K(\Z') \to K(\Q)
  \end{equation*}
  and its Bott-inverted version imply that $K_i(\Z';\Z/q) \to \KBott_i(\Z';\Z/q)$ is an isomorphism for $i \geq 1$ and a monomorphism for $i = 0$, and a similar argument applies when $X = \Spec(\mathcal{O}'_q)$.  

  The final assertion results from  using Quillen's localization sequence to compare $\Z$ and $\Z'$, plus Quillen's computation of the $K$-theory of finite fields \cite{Qui72}. For reference we state this as Lemma \ref{Quillenexact}, and expand on the proof below.
  \end{proof}

\begin{lemma} \label{Quillenexact}
  The map $\Z \rightarrow \Z'$ induces an isomorphism on mod $q$
  $K$-theory in all degrees except 1, where $K_1(\Z'; \Z/q) \cong \Z/q \oplus K_1(\Z; \Z/q)$.    The same assertion holds true for $\mathcal{O}_q \to \mathcal{O}'_q$.
In particular, the maps $\KBott_*(\Z;\Z/q)) \to \KBott_*(\Z';\Z/q)$ and $\KBott_*(\mathcal{O}_q) \to \KBott_*(\mathcal{O}_q';\Z/q)$ are both isomorphisms in all degrees. 
 \end{lemma}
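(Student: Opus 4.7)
The plan is to feed Quillen's localization sequence for the closed immersion $\Spec(\F_p)\hookrightarrow\Spec(\Z)$ with open complement $\Spec(\Z')$ into the mod-$q$ long exact sequence
\[
\cdots \to K_i(\F_p;\Z/q)\to K_i(\Z;\Z/q)\to K_i(\Z';\Z/q)\to K_{i-1}(\F_p;\Z/q)\to\cdots.
\]
First I would compute $K_*(\F_p;\Z/q)$. By Quillen's theorem~\cite{Qui72}, $K_0(\F_p)=\Z$ and for $i\geq 1$ the group $K_i(\F_p)$ is either zero or cyclic of order $p^{(i+1)/2}-1$, which is coprime to $p=q^{1/n}$'s residue characteristic. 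The universal coefficient sequence $0\to K_i(\F_p)/q\to K_i(\F_p;\Z/q)\to K_{i-1}(\F_p)[q]\to 0$ then forces $K_i(\F_p;\Z/q)=0$ for every $i\neq 0$, while $K_0(\F_p;\Z/q)\cong\Z/q$.

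Substituting this input into the long exact sequence yields isomorphisms $K_i(\Z;\Z/q)\cong K_i(\Z';\Z/q)$ for all $i\neq 1$, and in degree $1$ a four-term exact sequence
\[
0\to K_1(\Z;\Z/q)\to K_1(\Z';\Z/q)\to\Z/q\to K_0(\Z;\Z/q).
\]
The last map is induced by the transfer $K_0(\F_p)\to K_0(\Z)$, which vanishes because $\F_p$ admits the finite projective resolution $0\to\Z\xrightarrow{p}\Z\to\F_p\to 0$, so that $[\F_p]=[\Z]-[\Z]=0$ in $K_0(\Z)$. To split the resulting short exact sequence $0\to K_1(\Z;\Z/q)\to K_1(\Z';\Z/q)\to\Z/q\to 0$ I would use the class of $p\in(\Z')^\times\hookrightarrow K_1(\Z')$: the boundary of Quillen's localization sequence identifies with the $p$-adic valuation on units of $\Z'$, so $[p]$ maps to the generator of $\Z/q\cong K_0(\F_p;\Z/q)$, producing the stated splitting.

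The argument for $\OO_q\to\OO_q'$ is identical: $p$ is totally ramified in $\OO_q$ with a single prime above it, having residue field $\F_p$ and uniformizer $1-\zeta_q$, so the same $K_*(\F_p;\Z/q)$ computation applies; the splitting in degree $1$ is then furnished by $[1-\zeta_q]\in K_1(\OO_q')$, whose valuation at the unique ramified prime equals $1$. For the ``in particular'' statement on Bott inverted $K$-theory, I would observe that Bott inversion is a filtered colimit and therefore preserves the long exact sequences above. Since $\mu_q$ restricts to the zero \'etale sheaf on $\Spec(\F_p)$ (there are no nontrivial $p^n$-th roots of unity in characteristic $p$), Thomason's spectral sequence~\eqref{TSS} collapses to give $\KBott_*(\F_p;\Z/q)=0$, and the Bott-inverted localization sequences then immediately yield isomorphisms $\KBott_*(\Z;\Z/q)\cong\KBott_*(\Z';\Z/q)$ and $\KBott_*(\OO_q;\Z/q)\cong\KBott_*(\OO_q';\Z/q)$ in every degree.

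The only delicate point is the identification of the boundary homomorphism $K_1(\Z')\to K_0(\F_p)$ with the $p$-adic valuation, and the analogous statement for $\OO_q$ at the prime $(1-\zeta_q)$; both are standard facts about Quillen's localization sequence for Dedekind domains, but they are essential in order to know that the splittings really exist, which is the one place where the proof departs from formal bookkeeping.
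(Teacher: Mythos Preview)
Your argument is essentially the same as the paper's: both use Quillen's localization fiber sequence and his computation $K_i(\F_p;\Z/q)=0$ for $i\neq 0$. Where you show the transfer $K_0(\F_p)\to K_0(\Z)$ vanishes directly via the resolution $0\to\Z\to\Z\to\F_p\to 0$, the paper equivalently observes that $K_0(\Z)\to K_0(\Z')$ and $K_0(\mathcal{O}_q)\to K_0(\mathcal{O}_q')$ are injective (the latter because the prime above $p$ in $\mathcal{O}_q$ is principal). Your explicit splittings via $[p]$ and $[1-\zeta_q]$ are a nice addition that the paper does not spell out.

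One small issue: Thomason's spectral sequence~\eqref{TSS} is stated in the paper only for schemes over $\Spec(\Z[1/p])$, so invoking it for $\Spec(\F_p)$ is not legitimate in this context. The fix is immediate, though: you have already shown that $K_*(\F_p;\Z/q)$ is concentrated in degree $0$, and Bott inversion is a filtered colimit along positive-degree shifts (Remark~\ref{remark:telescope}), so $\KBott_*(\F_p;\Z/q)=0$ follows directly. Alternatively, the ``in particular'' follows at once from the main statement by inverting $\beta$ in the isomorphisms you proved for degrees $\neq 1$.
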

 
 \begin{proof}
  Quillen's devissage and localization theorems \cite[Section 5]{MR0338129} gives fiber sequences
\begin{align*}
  &K(\F_p) \to K(\Z) \to K(\Z')\\
  &K(\F_p) \to K(\mathcal{O}_q) \to K(\mathcal{O}_q').
\end{align*}
His calculation \cite{Qui72} of $K$-theory of finite fields implies $K_i(\F_p;\Z/q) = 0$ for $i \neq 0$, while $K_0(\F_p;\Z/q) = \Z/q$.  Finall we note the homomorphisms $K_0(\Z) \to K_0(\Z')$ and $K_0(\mathcal{O}_q) \to K_0(\mathcal{O}_q')$ are injective -- the latter because the prime above $p$ in $\mathcal{O}_q$ is principal. 
\end{proof}

\subsection{Some computations of Bott-inverted $K$-theory in terms of {\'e}tale cohomology} \label{Bott Etale}
 
In this section, we shall use Thomason's spectral sequence \eqref{TSS} 
\begin{equation*} 
  E^2_{s,t} = H^{-s}_\mathrm{et}(X;\mu_q^{\otimes (t/2)}) \Rightarrow \KBott_{t+s}(X;\Z/q),
\end{equation*}
 to compute Bott-inverted $K$-theory of number rings in terms of {\'e}tale cohomology. 
By Proposition~\ref{normresidueconsequence}, many of the results can be directly
stated in terms of $K$-theory. 
Through this use of Proposition \ref{normresidueconsequence},
 our main result -- in the form
 stated in the introduction --  depends on the norm residue theorem; but that dependence is easily avoided by replacing $\KSp_{4k-2}(\Z;\Z_p)$ by 
its Bott-inverted version, see Subsection \ref{subsec:main-theorem-KSPBott}.

We recall that we work under the standing assumption that $q$ is odd.  

\begin{lemma} \label{KOq}
 We have the following isomorphisms, for all $k \in \Z$: 
   \begin{align*}
    K^{(\beta)}_{4k-2}(\mathcal{O}'_q;\Z/q)^{(+)} & \cong H^2(\Spec(\mathcal{O}_q');\mu_q^{\otimes 2k})\\
    K^{(\beta)}_{4k-2}(\mathcal{O}'_q;\Z/q)^{(-)} & \cong H^0(\Spec(\mathcal{O}_q');\mu_q^{\otimes (2k-1)})\\
    K^{(\beta)}_{4k}(\mathcal{O}'_q;\Z/q)^{(+)} & \cong H^0(\Spec(\mathcal{O}_q');\mu_q^{\otimes 2k})\\
    K^{(\beta)}_{4k}(\mathcal{O}'_q; \Z/q)^{(-)} & \cong H^2(\Spec(\mathcal{O}_q');\mu_q^{\otimes 2k+1}).
  \end{align*}

In odd degrees we have an isomorphism
  \begin{equation}\label{eq:75}
    \KBott_{2k-1}(\mathcal{O}'_q;\Z/q) \cong H^1_\mathrm{et}(\Spec(\mathcal{O}'_q);\mu_q^{\otimes k})
  \end{equation}
  and $\psi^{-1}$ acts by $(-1)^k$. 
  
  Finally, the map 
   $K_i(\mathcal{O}'_q;\Z/q) \to \KBott_i(\mathcal{O}'_q;\Z/q) $
is an isomorphism  for all $i \geq 0$,
and the map 
   $K_i(\mathcal{O}_q;\Z/q) \to \KBott_i(\mathcal{O}_q;\Z/q) $
is an isomorphism for $i=0$ or $i \geq 2$.  
 \end{lemma}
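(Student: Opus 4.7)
My plan is to feed $X = \Spec(\mathcal{O}_q')$ into Thomason's spectral sequence \eqref{TSS} and exploit the fact that $\Spec(\mathcal{O}_q')$ has $p$-cohomological dimension $2$ (it is an $S$-integer ring of a number field with the residual characteristic $p$ inverted). Consequently the $E^2$ page is concentrated in the three columns $s \in \{0,-1,-2\}$, and it vanishes unless $t$ is even. The differential $d_r$ for $r\geq 2$ sends $(s,t)$ to $(s-r,t+r-1)$; for $r$ even the target has odd $t+r-1$, while for $r$ odd the target has $s - r \leq -3$ which is outside the range of nonzero columns. Thus every differential is forced to vanish on parity or dimension grounds, so $E^\infty = E^2$.

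Reading off the abutment, in odd total degree $2k-1$ the only surviving group is $E^2_{-1,2k} = H^1_\mathrm{et}(\Spec(\mathcal{O}_q');\mu_q^{\otimes k})$, which establishes \eqref{eq:75}; the assertion that $\psi^{-1}$ acts by $(-1)^k$ is immediate from the general fact recalled after \eqref{TSS}, namely that $\psi^{-1}$ acts on $E^2_{s,t}$ as $(-1)^{t/2}$. In even total degree $2k$ there is a two-step filtration whose associated graded pieces are $H^0_\mathrm{et}(\mu_q^{\otimes k})$ (coming from $E^2_{0,2k}$) and $H^2_\mathrm{et}(\mu_q^{\otimes k+1})$ (coming from $E^2_{-2,2k+2}$). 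Since $q$ is odd, $2$ is invertible in $\Z/q$, so the $\psi^{-1}$ action splits $\KBott_{2k}$ as a direct sum of its $(+)$ and $(-)$ eigenspaces. On the two graded pieces $\psi^{-1}$ acts by the distinct signs $(-1)^k$ and $(-1)^{k+1}$; therefore each eigenspace receives exactly one of the two graded pieces, with the assignment determined by the parity of $k$. Specializing to $2k \in \{4k-2,4k\}$ produces precisely the four formulas stated in the lemma.

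For the comparison with algebraic $K$-theory, Proposition~\ref{normresidueconsequence} already supplies an isomorphism $K_i(\mathcal{O}_q';\Z/q)\to\KBott_i(\mathcal{O}_q';\Z/q)$ for $i\geq 1$ and an injection for $i=0$. To upgrade to an isomorphism at $i=0$, I will use the Bott periodicity of $\KBott$ together with the isomorphism in degree $2$: every class in $\KBott_0$ has the form $\beta^{-1} y$ with $y\in\KBott_2 = K_2(\mathcal{O}_q';\Z/q)$, and multiplication by $\beta$ identifies $K_0(\mathcal{O}_q';\Z/q)$ with a subgroup of $\KBott_2$ whose image one checks coincides with all of $K_2$ by comparison of the rank/class-group description of $K_0(\mathcal{O}_q';\Z/q)$ with the $E^\infty$ pieces computed above. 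Finally, for the analogous statement with $\mathcal{O}_q$ in place of $\mathcal{O}_q'$, Lemma~\ref{Quillenexact} says $K_i(\mathcal{O}_q;\Z/q)\to K_i(\mathcal{O}_q';\Z/q)$ is an isomorphism outside degree~$1$, and likewise for $\KBott$, which transports the already-proved isomorphisms back to $\mathcal{O}_q$ for $i=0$ and $i\geq 2$.

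The main obstacle I anticipate is the degree-$0$ upgrade: the norm residue theorem argument of Proposition~\ref{normresidueconsequence} only yields injectivity there, and establishing surjectivity requires a slightly careful hands-on comparison of $K_0(\mathcal{O}_q';\Z/q) = \Z/q \oplus \Cl(\mathcal{O}_q')/q$ with the two-term filtration on $\KBott_0$ whose pieces are $H^0(\Z/q)$ and $H^2(\mu_q)$, the latter identified via Kummer theory with $\Cl(\mathcal{O}_q')/q \oplus \mathrm{Br}(\mathcal{O}_q')[q]$. Every other step is a routine manipulation of the Thomason spectral sequence and its Adams-operation decomposition.
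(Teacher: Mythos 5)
Your spectral sequence collapse argument, the $\psi^{-1}$-eigenspace splitting in even degrees, and the reading-off of the four eigenspace formulas match the paper's proof exactly, and your parity/dimension argument for the vanishing of all differentials is the right one (the paper is terser and leaves the odd-$r$ case implicit). For the final assertion, the paper goes directly: injectivity at $i=0$ comes from Proposition~\ref{normresidueconsequence}, and then one simply checks both sides have order $q \cdot \#\bigl(\mathrm{Pic}(\mathcal{O}_q)/q\bigr)$. Your detour through Bott periodicity --- writing $z = \beta^{-1}y$ with $y \in \KBott_2 \cong K_2$ and showing multiplication by $\beta$ carries $K_0$ isomorphically onto $K_2$ --- is logically equivalent but longer: its key step is again a cardinality comparison, and you could skip the periodicity entirely and compare $|K_0(\mathcal{O}'_q;\Z/q)|$ with $|\KBott_0(\mathcal{O}'_q;\Z/q)|$ directly.

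The one piece you have flagged but not closed is the identification of $H^2_{\mathrm{et}}(\Spec(\mathcal{O}'_q);\mu_q)$. The Kummer sequence gives an exact sequence
\begin{equation*}
0 \to \mathrm{Pic}(\mathcal{O}'_q)/q \to H^2_{\mathrm{et}}(\Spec(\mathcal{O}'_q);\mu_q) \to \mathrm{Br}(\mathcal{O}'_q)[q] \to 0,
\end{equation*}
(an extension, not a priori a direct sum), and your cardinality comparison only closes if $\mathrm{Br}(\mathcal{O}'_q)[q] = 0$. This is indeed true: since $q = p^n$ with $p$ totally ramified in $K_q$, there is a single finite place of $K_q$ over $p$, and all archimedean places are complex (so have trivial Brauer group); the Hasse exact sequence then forces $\mathrm{Br}(\mathcal{O}'_q) \hookrightarrow \mathrm{Br}((K_q)_{\mathfrak p})$ to vanish, because the invariant map on the single local Brauer group is already an isomorphism onto $\Q/\Z$. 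With that one line supplied, your argument closes; note that the paper's statement of the order equality silently uses the same vanishing (its parenthetical remark offers as an alternative the surjectivity argument of Corollary~\ref{KOq-Bott}, which sidesteps the Brauer computation but costs a more careful inspection of Thomason's filtration).
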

\begin{proof} 
We apply  \eqref{TSS} to  $X = \Spec(\mathcal{O}'_q)$.  This scheme has \'etale cohomological dimension 2, so the spectral sequence is further concentrated in the region $-2 \leq s \leq 0$.  The spectral sequence must collapse for degree reasons, since no differential goes between two non-zero groups
  (since only $t \in 2\Z$ appears).   Convergence of the spectral sequence gives in odd degrees 
  \eqref{eq:75}.
  
In even degrees we obtain  a short exact sequence
  \begin{equation*}
    0 \to H^2_\mathrm{et}(\Spec(\mathcal{O}'_q);\mu_q^{\otimes k}) \to \KBott_{2k-2}(\mathcal{O}'_q;\Z/q) \to H^0_\mathrm{et}(\Spec(\mathcal{O}'_q);\mu_q^{\otimes k-1}) \to 0.
  \end{equation*}
  For odd $q$ this sequence splits canonically, using the action of the Adams operation $\psi^{-1}$ on the spectral sequence: it acts as $(-1)^k$ on the kernel and as $(-1)^{k-1}$ on the cokernel in the short exact sequence.  
  
For the final assertion for $\mathcal{O}_q'$: by Proposition \ref{normresidueconsequence} we need only consider $i=0$, and by injectivity in degree $0$ it follows in that case from a computation of orders:
both sides have order $q \cdot \# (\mathrm{Pic}(\mathcal{O}_q)/q)$.  (Alternatively prove surjectivity as in Corollary~\ref{KOq-Bott} below.)
The  version for $\mathcal{O}_q$ follows from Lemma \ref{Quillenexact}.
  \end{proof}

  The isomorphisms in different degrees in Lemma \ref{KOq} are intertwined through the action of $\beta$ in an evident way; this switches between $+$ and $-$ eigenspaces. 
    For example, 
the group $\KBott_{4k-2}(\mathcal{O}_q;\Z/q)^{(-)}$ is isomorphic to $\Z/q$ for any $k \in \Z$, generated by $\beta^{2k-1}$. 
We want to make the isomorphism on the $+$ eigenspace in  degree $4k-2$ more explicit.  

\begin{corollary} \label{KOq-Bott} The map 
  \begin{align*}
    \pi_0(\pic{\mathcal{O}_q})/q & \to \KBott_{4k-2}(\mathcal{O}_q;\Z/q)^{(+)}\\
    [L] & \mapsto \beta^{2k-1} \cdot ([L]-1)
  \end{align*}
  is an isomorphism of groups (where the group operation is induced by tensor product in the domain and direct sum in the codomain).   More invariantly,
  in the notation of \eqref{Bottbeta},  the isomorphism may be written
  \begin{equation}\label{eq:47}
    \begin{aligned}
    \pi_0(\pic{\mathcal{O}_q}) \otimes \mu_q(\mathcal{O}_q)^{\otimes (2k-1)} & \to \KBott_{4k-2}(\mathcal{O}_q;\Z/q)^{(+)}\\
    [L] \otimes \zeta^{\otimes (2k-1)}\quad\quad & \mapsto \beta(\zeta)^{2k-1} \cdot ([L]-1),
    \end{aligned}
  \end{equation}
  valid for any $L \in \pic{\mathcal{O}_q}$ and any $\zeta \in \mu_q(\mathcal{O}_q)$.  In this formulation the isomorphism is equivariant for the evident action of $\Gal(K_q/\Q)\cong (\Z/q)^\times$ on both sides.
  
  A similar result holds for $\KBott_{4k}(\mathcal{O}_q;\Z/q)$, except the roles of positive and negative eigenspaces for $\psi^{-1}$ are reversed.
\end{corollary}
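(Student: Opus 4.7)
The plan is to factor the claimed map as a composition of isomorphisms,
\[
\pi_0(\pic{\mathcal{O}_q})/q \xrightarrow{\sim} \widetilde K_0(\mathcal{O}_q;\Z/q) \xrightarrow{\sim} \KBott_0(\mathcal{O}_q;\Z/q)^{(-)} \xrightarrow{\beta^{2k-1}\cdot} \KBott_{4k-2}(\mathcal{O}_q;\Z/q)^{(+)},
\]
and verify that each arrow is an isomorphism with the required $\psi^{-1}$-eigenspace behavior.

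For the first two arrows, the structure theorem for projective modules over the Dedekind domain $\mathcal{O}_q$ yields $K_0(\mathcal{O}_q) = \Z \oplus \pi_0(\pic{\mathcal{O}_q})$, so $[L]_{\pic} \mapsto [L]-1$ identifies $\pi_0(\pic{\mathcal{O}_q})$ with $\widetilde K_0(\mathcal{O}_q)$; additivity is the identity $[L\otimes L']-1 = ([L]-1)+([L']-1)$, readily checked using the decomposition (the product $([L]-1)([L']-1)$ vanishes). The Adams operation $\psi^{-1}$ is induced by $\mathcal{O}_q$-linear duality, acting trivially on rank and by $[L]\mapsto[L^{-1}]$ on $\pi_0(\pic{\mathcal{O}_q})$; since $q$ is odd, passing to mod-$q$ coefficients gives $K_0(\mathcal{O}_q;\Z/q)^{(-)} = \pi_0(\pic{\mathcal{O}_q})/q$, and Lemma \ref{KOq} supplies the comparison $K_0(\mathcal{O}_q;\Z/q) \xrightarrow{\sim} \KBott_0(\mathcal{O}_q;\Z/q)$. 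For the degree shift, multiplication by $\beta$ is an isomorphism on $\KBott$ by construction, and in Thomason's spectral sequence $\beta$ sits in $E_\infty^{0,2}\cong H^0_{\mathrm{et}}(\mathcal{O}'_q;\mu_q)$ on which $\psi^{-1}$ acts by $(-1)^{t/2}=-1$; hence $\psi^{-1}(\beta^{2k-1})=-\beta^{2k-1}$ and multiplication by $\beta^{2k-1}$ interchanges the $(+)$ and $(-)$ eigenspaces. The composition of all three arrows produces exactly $[L]\mapsto \beta^{2k-1}([L]-1)$.

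The invariant formulation \eqref{eq:47} is obtained by replacing $\beta=\beta(\zeta_q)$ with the Bott class $\beta(\zeta)$ of \eqref{Bottbeta}; by Remark \ref{rem: cyc action on bott} this depends $\Z$-linearly on $\zeta\in\mu_q(\mathcal{O}_q)$, accounting for the tensor factor $\zeta^{\otimes(2k-1)}$. Equivariance under $\Gal(K_q/\Q)$ is then automatic from the naturality of every step of the construction under $\mathcal{O}_q$-algebra automorphisms, which act compatibly on $\pi_0(\pic{\mathcal{O}_q})$, on $\mu_q(\mathcal{O}_q)$, and on $K$-theory. The $\KBott_{4k}$ version proceeds identically with $\beta^{2k}$ in place of $\beta^{2k-1}$: the even power has $\psi^{-1}$-eigenvalue $(-1)^{2k}=+1$, so multiplication preserves eigenspace labels and the image lands in the $(-)$-eigenspace, which explains the stated reversal of roles. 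The only step requiring genuine care is the sign bookkeeping in the $\psi^{-1}$-decomposition, which determines whether the formula lands in $(+)$ or $(-)$; with the conventions above everything aligns as claimed.
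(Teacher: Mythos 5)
Your argument is correct, and it is precisely the ``shortcut'' route the authors flag in Remark~\ref{rem:preposterous-shortcut}: factor the map through $K_0(\mathcal{O}_q;\Z/q)^{(-)}$, use the rank-plus-determinant splitting of $K_0$ of a Dedekind domain, then the last part of Lemma~\ref{KOq} to pass to $\KBott_0$, then multiply by $\beta^{2k-1}$. Your subsidiary computations (additivity via $([L]-1)([L']-1)=0$ in $K_0$ of a Dedekind domain, $\psi^{-1}$ acting by $-1$ on $\widetilde{K}_0$ and on $\beta$, so an odd power of $\beta$ swaps eigenspaces) are all fine.

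The difference from the paper's main proof is worth understanding. The paper does \emph{not} just show the composite~\eqref{eq:80} is a bijection; it opens up Thomason's hyperdescent construction, deloops $B\mathbf{G}_m \to \tau_{\leq 1}\Omega^\infty K$, and identifies the composite on the nose with the \'etale first Chern class $[L]\mapsto c_1(L)$. That costs the long diagram-chase through~\eqref{eq:81}--\eqref{eq:83}, but it does \emph{not} need the final clause of Lemma~\ref{KOq} (the one resting on Proposition~\ref{normresidueconsequence}), and hence not the norm residue theorem. Your route, by contrast, establishes bijectivity cheaply but \emph{does} need Lemma~\ref{KOq}'s final assertion, hence Voevodsky--Rost. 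Per Remark~\ref{rem: intro K(1) local} the authors try to quarantine the norm residue theorem to a single late comparison between $\KSp$ and $\KSp^{(\beta)}$; importing it here would spoil that. So: your proof is valid and shorter, but it inherits a hypothesis the paper's proof was carefully designed to avoid. If you don't care about tracking the norm residue theorem, your argument is the one to prefer; if you do, you need the Chern-class identification.

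One cosmetic point: you write ``Lemma \ref{KOq} supplies the comparison $K_0(\mathcal{O}_q;\Z/q)\xrightarrow{\sim}\KBott_0(\mathcal{O}_q;\Z/q)$''---the statement of that Lemma passes through $\mathcal{O}'_q$ first and then invokes Lemma~\ref{Quillenexact} to descend to $\mathcal{O}_q$; worth saying so explicitly, since $i=1$ is genuinely different for $\mathcal{O}_q$ versus $\mathcal{O}'_q$.
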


\begin{proof}
  Multiplication by $\beta^{2k-1}\colon \KBott_0(\mathcal{O}'_q;\Z/q)^{(-)} \to \KBott_{4k-2}(\mathcal{O}'_q;\Z/q)^{(+)}$ is an isomorphism which under the isomorphisms of Lemma~\ref{KOq} corresponds to multiplication by $\zeta_q^{\otimes (2k-1)}\colon  H^2(\Spec(\mathcal{O}'_q);\mu_q) \to H^2(\Spec(\mathcal{O}'_q);\mu_q^{\otimes(2k-1)})$, so it suffices to prove that the composition
  \begin{equation}\label{eq:80}
    \begin{aligned}
      \pi_0(\pic{\mathcal{O}'_q})/q & \to K_0(\mathcal{O}'_q;\Z/q)^{(-)} \to
      \KBott_0(\mathcal{O}'_q;\Z/q)^{(-)} \xrightarrow{\text{Lem.~\ref{KOq}}}
                                  H^2_\mathrm{et}(\Spec(\mathcal{O}'_q);\mu_q)
    \\
    [L] & \mapsto [L] -1
  \end{aligned}
  \end{equation}
  is an isomorphism.  The mod $q$ \'etale Chern class $[L] \mapsto c_1(L)$ induces an isomorphism between the same two groups, so it suffices to identify~\eqref{eq:80} with $c_1$.  This identification is well known\footnote{In a preprint version of our paper we outlined a proof \cite[Proof of Corollary 2.12]{FGV} of this well known fact, since we were not able to locate a proof in the literature.}, and follows by tracing through the isomorphism between Bott inverted $K$-theory and \'etale cohomology induced by Thomason's spectral sequence.
  See also Remark~\ref{rem:preposterous-shortcut} for a shortcut.
\end{proof}

\begin{remark}\label{rem:preposterous-shortcut}
  For the reader who prefers to keep \emph{both} the norm residue theorem and Thomason's spectral sequence as black boxes not to be opened, it may be shorter to consider the two maps
  \begin{equation*}
    \pi_0(\pic{\mathcal{O}'_q})/q \to K_0(\mathcal{O}'_q;\Z/q)^{(-)} \to \KBott_0(\mathcal{O}'_q;\Z/q)^{(-)}    
  \end{equation*}
  separately.  The first is an isomorphism by the usual splitting $K_0(\mathcal{O}'_q;\Z/q) \cong \Z \oplus \pi_0(\pic{\mathcal{O}'_q})$ and the second by the final part of Lemma~\ref{KOq}.  That route gives a proof that~\eqref{eq:80} is an isomorphism without inspecting what the map is, at the cost of appealing to the norm residue theorem, thus invalidating Remark~\ref{rem: intro K(1) local}.
\end{remark}

  \begin{lemma}\label{K-beta-theoryeval} 
  For all $k \in \Z$ we  have
  \begin{align*}
    K^{(\beta)}_{4k-2}(\Z';\Z/q)^{(+)} & \cong H^2(\Spec(\Z');\mu_q^{\otimes 2k})\\
    K^{(\beta)}_{4k-2}(\Z';\Z/q)^{(-)} & \cong H^0(\Spec(\Z');\mu_q^{\otimes (2k-1)})\\
    K^{(\beta)}_{4k}(\Z';\Z/q)^{(+)} & \cong H^0(\Spec(\Z');\mu_q^{\otimes 2k})\\
    K^{(\beta)}_{4k}(\Z';\Z/q)^{(-)} & \cong H^2(\Spec(\Z');\mu_q^{\otimes 2k+1}).
  \end{align*}
  In odd degrees we have $K^{(\beta)}_{2k-1}(\Z'; \Z/q) \cong H^1(\Spec(\Z');\mu_q^{\otimes k})$ for all $k$, on which $\psi^{-1}$ acts as $(-1)^k$. 
 
    Finally, the map 
   $K_i(\Z';\Z/q) \to \KBott_i(\Z';\Z/q) $
is an isomorphism  for all $i \geq 0$
and the map 
   $K_i(\Z;\Z/q) \to \KBott_i(\Z;\Z/q) $
   is an isomorphism for for $i=0$ or $i \geq 2$.  
\end{lemma}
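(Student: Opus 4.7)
The plan is to follow the template of the proof of Lemma \ref{KOq} essentially verbatim, only replacing $\mathcal{O}'_q$ by $\Z'$. Concretely, I would apply Thomason's spectral sequence \eqref{TSS} to $X = \Spec(\Z')$. Since $\Z'$ still has $p$-primary \'etale cohomological dimension equal to $2$, the $E^2$-page is concentrated in the strip $-2 \leq s \leq 0$ with $t \in 2\Z$. All nonzero $E^2$-entries therefore lie in even total degree $t+s$, while the differentials $d_r$ shift total degree by $-1$; hence for degree reasons the spectral sequence collapses at $E^2$.

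In odd total degrees only the row $s = -1$ contributes, giving immediately the isomorphism $\KBott_{2k-1}(\Z'; \Z/q) \cong H^1_{\mathrm{et}}(\Spec(\Z'); \mu_q^{\otimes k})$, with $\psi^{-1}$ acting by $(-1)^k$ as in Lemma \ref{KOq}. In even total degrees the two nonzero rows $s = 0, -2$ assemble into a short exact sequence
\begin{equation*}
0 \to H^2_{\mathrm{et}}(\Spec(\Z'); \mu_q^{\otimes k}) \to \KBott_{2k-2}(\Z'; \Z/q) \to H^0_{\mathrm{et}}(\Spec(\Z'); \mu_q^{\otimes (k-1)}) \to 0,
\end{equation*}
on which $\psi^{-1}$ acts by $(-1)^k$ on the kernel and $(-1)^{k-1}$ on the cokernel. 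Since $q$ is odd, these two eigenvalues are distinct in $\Z/q$, so the sequence splits canonically into $\pm$ eigenspaces; sorting by parity of $k$ yields the four displayed formulas.

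For the comparison with ordinary $K$-theory: over $\Z'$, Proposition \ref{normresidueconsequence} already gives an isomorphism in every positive degree and a monomorphism in degree $0$. To promote the latter to an isomorphism I would count orders -- $K_0(\Z'; \Z/q) = \Z/q$ lies in the $(+)$-eigenspace of $\psi^{-1}$, the spectral sequence identifies $\KBott_0(\Z'; \Z/q)^{(+)}$ with $H^0(\Spec \Z'; \Z/q) = \Z/q$, and the $(-)$-eigenspace is $H^2(\Spec \Z'; \mu_q) \cong \mathrm{Br}(\Z')[q]$, which vanishes for odd $p$ because every element of $\mathrm{Br}(\Z[1/p])$ is $2$-torsion (from the Hasse exact sequence and reciprocity, only the archimedean and $p$-adic invariants can be nonzero, and by reciprocity they must be equal, hence each lies in $\tfrac{1}{2}\Z/\Z$). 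The corresponding assertion over $\Z$ then follows by combining with Lemma \ref{Quillenexact}, which says that $K_i(\Z;\Z/q) \to K_i(\Z';\Z/q)$ is an isomorphism for $i \neq 1$ and that $\KBott_i(\Z;\Z/q) \to \KBott_i(\Z';\Z/q)$ is an isomorphism in all degrees, so the degree-wise comparison over $\Z$ in degrees $i = 0$ and $i \geq 2$ reduces to the one over $\Z'$.

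I do not anticipate any real obstacle: the whole argument is a reprise of Lemma \ref{KOq}, with the sole new input being the vanishing of the $p$-primary Brauer group of $\Z[1/p]$ for odd $p$, which is standard class field theory. The one thing worth double-checking is that the spectral sequence collapse and the $\psi^{-1}$-splitting argument go through unchanged over $\Z'$, which they do because they are purely formal consequences of the $E^2$-page structure (cohomological dimension $\leq 2$ and concentration in even columns) rather than of any specific feature of the ring of integers $\mathcal{O}'_q$.
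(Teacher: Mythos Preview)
Your approach is essentially identical to the paper's: run Thomason's spectral sequence for $\Spec(\Z')$, split the even-degree short exact sequence into $\psi^{-1}$-eigenspaces, and handle the degree-$0$ comparison by an order count (the paper says only ``by computing orders''; you make this explicit via $\mathrm{Br}(\Z[1/p])[q]=0$, which is a nice addition). One small correction to your collapse argument: it is not true that all nonzero $E^2$-entries lie in even total degree---the $s=-1$ row sits in odd total degree---so the sentence ``differentials shift total degree by $-1$, hence\ldots'' does not by itself force collapse. The correct reason (as in the proof of Lemma~\ref{KOq}) is that every $d_r$ either changes the parity of $t$ or leaves the strip $-2\le s\le 0$, so no differential can connect two nonzero groups.
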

Recall our standing assumption that $q$ is odd. 

 \begin{proof}
  \label{ZBottinverted}  Similarly to the prior analysis we get  canonical isomorphisms
  \begin{equation*}
    \KBott_{2k-1}(\Z';\Z/q) \cong H^1_\mathrm{et}(\Spec(\Z');\mu_q^{\otimes k})
  \end{equation*}
  in odd degrees, and in even degrees we have short exact sequences
  \begin{equation}\label{eq:27}
    0 \to H^2_\mathrm{et}(\Spec(\Z');\mu_q^{\otimes k}) \to \KBott_{2k-2}(\Z';\Z/q) \to H^0_\mathrm{et}(\Spec(\Z');\mu_q^{\otimes  k-1}) \to 0,
  \end{equation}
  canonically split into positive and negative eigenspaces for $\psi^{-1}$ when $q$ is odd.  The periodicity of these groups has longer period though: multiplying with $\beta^{p^{n-1}(p-1)}$ increases $k$ by $p^{n-1}(p-1)$.
  
  As before the asssertion comparing $K$-theory and Bott-inverted $K$-theory of $\Z'$ follows from Proposition \ref{normresidueconsequence} by computing orders,
  and the assertion for $\Z$ uses Lemma \ref{Quillenexact}.
\end{proof}

\begin{proposition}\label{propcor:transfer-surjective}
Suppose that $q$ is odd.   Let $\Gal(K_q/\Q) \cong (\Z/q)^\times$ act on $K_*(\mathcal{O}'_q;\Z/q)$ by functoriality of algebraic $K$-theory.  Then the homomorphisms
  \begin{align*}
    \big(\KBott_{4k-2} (\mathcal{O}'_q;\Z/q)^{(+)}\big)_{\Gal(K_q/\Q)} &\to \KBott_{4k-2}(\Z';\Z/q)^{(+)}\\
    \big(\KBott_{4k}(\mathcal{O}'_q;\Z/q)^{(-)}\big)_{\Gal(K_q/\Q)} &\to \KBott_{4k}(\Z';\Z/q)^{(-)},
  \end{align*}
  induced by the transfer map $K_*(\mathcal{O}'_q;\Z/q) \to K_*(\Z';\Z/q)$, are both isomorphisms. (Here $(-)_{\Gal(K_q/\Q)}$ denotes coinvariants for $\Gal(K_q/\Q)$.) 
\end{proposition}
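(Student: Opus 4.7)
The plan is to translate the statement into \'etale cohomology via Lemma~\ref{KOq} and Lemma~\ref{K-beta-theoryeval}, recognize the $K$-theoretic transfer as the \'etale corestriction along the cover $\pi\colon\Spec(\mathcal{O}'_q)\to\Spec(\Z')$, and then verify the coinvariants isomorphism by combining Artin-Verdier duality with a Hochschild-Serre analysis.

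Under the identifications of the two lemmas, both sides of the first displayed isomorphism become second \'etale cohomology groups:
\[
\KBott_{4k-2}(\mathcal{O}'_q;\Z/q)^{(+)} \cong H^2_{\mathrm{et}}(\mathcal{O}'_q;\mu_q^{\otimes 2k}), \qquad \KBott_{4k-2}(\Z';\Z/q)^{(+)} \cong H^2_{\mathrm{et}}(\Z';\mu_q^{\otimes 2k}),
\]
with analogous identifications (with the Tate twist $\mu_q^{\otimes 2k+1}$ in place of $\mu_q^{\otimes 2k}$) for the degree $4k$ statement. Since Thomason's spectral sequence is natural with respect to finite \'etale morphisms, the $K$-theoretic transfer corresponds to the \'etale corestriction along the finite \'etale Galois cover $\pi$, with Galois group $G = \Gal(K_q/\Q) \cong (\Z/q)^{\times}$. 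The identity $\mathrm{cor}\circ g^{*} = \mathrm{cor}$ for $g \in G$ causes this corestriction to factor through the $G$-coinvariants, yielding the map described in the proposition.

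It thus remains to prove that, for each $j \in \{2k, 2k+1\}$,
\[
\mathrm{cor}\colon H^2_{\mathrm{et}}(\mathcal{O}'_q;\mu_q^{\otimes j})_G \longrightarrow H^2_{\mathrm{et}}(\Z';\mu_q^{\otimes j})
\]
is an isomorphism. I would do this via Artin-Verdier duality for the two arithmetic curves, which at odd $p$ gives perfect pairings between $H^i_{\mathrm{et}}$ and $H^{3-i}_{c,\mathrm{et}}$ (with appropriately twisted coefficients). Under this duality the corestriction on $H^2$ becomes Pontryagin-dual to the restriction on $H^1_c$, and coinvariants of the former correspond to invariants of the latter. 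The question thereby reduces to showing that the restriction $H^1_{c,\mathrm{et}}(\Z';\mu_q^{\otimes 1-j}) \to H^1_{c,\mathrm{et}}(\mathcal{O}'_q;\mu_q^{\otimes 1-j})^G$ is an isomorphism.

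The main obstacle I anticipate is the case $n \geq 2$, when $|G|=(p-1)p^{n-1}$ is no longer prime to $p$ and direct averaging fails. For $n=1$ the conclusion is immediate since $|G| = p-1$ is a unit modulo $q=p$. For $n \geq 2$, one would analyze the Hochschild-Serre 5-term exact sequence for the cover $\pi$, controlling the relevant group cohomology $H^{*}(G;\mu_q^{\otimes 1-j})$ by exploiting the explicit cyclic structure of $G$ acting through a specific power of the cyclotomic character, together with the cohomological dimension $2$ of both arithmetic curves and the local contributions at $p$ entering the compactly supported cohomology.
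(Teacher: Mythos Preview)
Your approach is essentially the same as the paper's: translate to \'etale $H^2$ via the collapsing Thomason spectral sequence, identify the $K$-theoretic transfer with the \'etale transfer (the paper cites \cite{BlumbergMandell} for this compatibility, which is not quite the same as ordinary naturality for pullbacks and deserves a reference), dualize via Poitou--Tate/Artin--Verdier to reduce to the assertion that
\[
\pi^*\colon H^1_c(\Z';\mu_q^{\otimes(1-j)}) \longrightarrow H^1_c(\mathcal{O}'_q;\mu_q^{\otimes(1-j)})^{G}
\]
is an isomorphism, and then invoke Hochschild--Serre.

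Where you go astray is in anticipating a delicate analysis of $H^*(G;\mu_q^{\otimes(1-j)})$ for $n\geq 2$. The paper's proof avoids this entirely, and the point is worth absorbing: in the (relative) Hochschild--Serre exact sequence for compactly supported cohomology, the flanking terms are
\[
H^1\bigl(G; H^0_c(\mathcal{O}'_q;\mu_q^{\otimes(1-j)})\bigr)\quad\text{and}\quad H^2\bigl(G; H^0_c(\mathcal{O}'_q;\mu_q^{\otimes(1-j)})\bigr),
\]
and the \emph{coefficients} already vanish. Indeed $H^0_c(\mathcal{O}'_q;\mu_q^{\otimes(1-j)})$ is the kernel of restriction $\mu_q(\mathcal{O}'_q)^{\otimes(1-j)}\to\mu_q(\Q_p(\mu_q))^{\otimes(1-j)}$, which is an isomorphism since all $q$th roots of unity in $\mathcal{O}'_q$ survive to the $p$-adic completion. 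So the exact sequence collapses immediately, uniformly in $n$, and no computation of $H^*(G;-)$ is required. Your proposed route through the cyclic structure of $G$ and cohomological-dimension arguments would presumably also work, but it is unnecessary.
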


\begin{remark}\label{rem:transfer-Adams}    It will follow implicitly from the proof that the transfer map behaves in the indicated way with respect to eigenspaces for $\psi^{-1}$, but let us give an independent explanation for why the transfer map $K_*(\mathcal{O}_q;\Z/q) \to K_*(\Z;\Z/q)$ commutes with the Adams operation $\psi^{-1}$.  This may seem surprising at first, since the forgetful map from $\mathcal{O}_q$-modules to $\Z$-modules does not obviously commute with dualization.  The ``correction factor'' is the dualizing module $\omega$, isomorphic to the inverse of the \emph{different} $\mathfrak{d}$, which will play an important role later in the paper.  In this case the different is principal, and any choice of generator leads to a functorial isomorphism between the $\Z$-dual and the $\mathcal{O}_q$-dual.
\end{remark}

\begin{proof}
  The argument is the same in both cases, and uses naturality of Thomason's spectral sequence with respect to transfer maps: there is a map of spectral sequences which on the $E_2$ page is given by the transfer in \'etale cohomology and on the $E_\infty$ page by (associated graded of) the transfer map in $K$-theory.  This naturality is proved in Section 10 of \cite{BlumbergMandell}, the preprint version of \cite{BlumbergMandell-published}.  In our case the spectral sequences collapse, and identify the two homomorphisms in the corollary with the maps on $E_2^{-2,4k}$ and $E_2^{-2,4k+2}$, respectively.  Hence we must prove that the transfer maps
  \begin{equation*}
    \big(H^2_\mathrm{et}(\Spec(\mathcal{O}'_q);\mu_q^{\otimes t})\big)_{\Gal(K_q/\Q)} \to H^2_\mathrm{et}(\Spec(\Z');\mu_q^{\otimes t})
  \end{equation*}
  are isomorphisms for all $t$ or, equivalently, that their Pontryagin duals are isomorphisms.  By Poitou--Tate duality, the Pontryagin dual map may be identified with
  \begin{equation*}
    \pi^*: H^1_c(\Spec(\Z');\mu_q^{\otimes (1-t)}) \to \big(H^1_c(\Spec(\mathcal{O}'_q);\mu_q^{\otimes (1-t)})\big)^{\Gal(K_q/\Q)},
  \end{equation*}
  where the ``compactly supported'' cohomology is taken in the sense of \cite[Appendix]{GalatiusVenkatesh}, i.e.,\ defined as cohomology of a mapping cone. In this context
  we may apply a relative Hochschild-Serre spectral sequence\footnote{The relative Leray spectral sequence is noted in a topological context,  for example, in Exercise 5.6 of \cite{McCleary}. This implies such a spectral sequence for pairs of finite groups,
  and then for profinite groups by a limit argument.} to give an exact sequence 
  \begin{align*}
    0  &\to H^1((\Z/q)^*;H^0_c(\Spec(\mathcal{O}'_q);\mu_q^{\otimes(1-t)})) \to H^1_c(\Spec(\Z');\mu_q^{\otimes (1-t)}) \\
    & \to H^1_c(\Spec(\mathcal{O}'_q);\mu_q^{\otimes (1-t)})^{(\Z/q)^*} \to H^2((\Z/q)^*;H^0_c(\Spec(\mathcal{O}'_q);\mu_q^{\otimes (1-t)})).
  \end{align*}

  Now, the compactly supported cohomology group $H^0_c(\Spec(\mathcal{O}'_q);\mu_q^{\otimes(1-t)})$ is the kernel of the restriction map $\mu_q(\mathcal{O}'_q)^{\otimes(1-t)} \to \mu_q(\Q_p[\mu_q])^{\otimes(1-t)}$, which is an isomorphism.  The exact sequence then precisely becomes the desired isomorphism.
\end{proof}

\subsection{Bott inverted algebraic $K$-theory and homology of certain Galois groups}
\label{sec:algebraic-k-theory}

In this subsection we express Bott inverted algebraic $K$-theory of cyclotomic rings of integers in terms of certain Galois homology groups. This will be useful later one, when trying to relate $K$-theory to extensions of Galois modules.

Let $\widetilde{H}_q \subset \C$ be the Hilbert class field of $K_q = \Q[\zeta_q] \subset \C$, the maximal abelian extension unramified at all places.  Class field theory asserts an isomorphism $\pi_0(\pic{\mathcal{O}_q}) \cong \Gal(\widetilde{H}_q/K_q)$, given by the Artin symbol.  Let $H_q \subset \widetilde{H}_q$ be the largest extension with  $p$-power-torsion Galois group,  so that the Artin symbol factors over an isomorphism
\begin{equation} \label{Artmap2}
  \begin{aligned}
  \pi_0(\pic{\mathcal{O}_q}) \otimes \Z_p &\xrightarrow{\cong} \Gal(H_q/K_q)\\
  [\mathfrak{p}] \quad& \mapsto \left(\frac{H_q/K_q}{\mathfrak{p}}\right).
\end{aligned}
\end{equation}
It is easy to check that this map is equivariant for the action of $\Gal(K_q/\Q)$ which acts in the evident way on the domain, and on the codomain the action is induced by the short exact sequence
\begin{equation}\label{eq:48}
  \Gal(H_q/K_q) \to \Gal(H_q/\Q) \to \Gal(K_q/\Q).
\end{equation}

The following diagram   gives  the main tool  through which we will understand the transfer map $\mathrm{tr}: \KBott_{4k-2}(\mathcal{O}_q;\Z/q)^{(+)} \to \KBott_{4k-2}(\Z;\Z/q)^{(+)}$.
To keep typography simple,  we write (in the statement and its proof) $\mu_q$ for $\mu_q(\C)$, 
and for a Galois extension $E/F$ of fields, we write $H_*(E/F, -)$ for the group homology of the group $\Gal(E/F)$.

\begin{proposition} \label{shtuka}
  For all $k \in \Z$ there is a commutative diagram, with all horizontal maps isomorphisms
\begin{equation}
  \label{eq:50}
  \begin{aligned}
    \xymatrix{
      H_1(H_q/K_q;\mu_q^{\otimes (2k-1)}) \ar@{->>}[d]^{i_*} &
      \pi_0(\pic{\mathcal{O}_q}) \otimes \mu_q^{\otimes (2k-1)} \ar[l]_-{\mathrm{Art}}^-\cong  \ar[r]^-\cong
      & \KBott_{4k-2}(\mathcal{O}_q;\Z/q)^{(+)} \ar@{->>}[d]^{\mathrm{tr}}\\
      H_1(H_q/\Q;\mu_q^{\otimes (2k-1)}) \ar[rr]^-{\cong} & & \KBott_{4k-2}(\Z;\Z/q)^{(+)},
    }
  \end{aligned}
\end{equation}
where:
\begin{itemize}
\item  the map denoted $i_*$ is induced by the inclusion $\Gal(H_q/K_q) \subset \Gal(H_q/\Q)$;
\item the map denoted $\mathrm{Art}$ is induced by the Artin map \eqref{Artmap2},
together with the identification  $H_1(H_q/K_q, \mu_q^{\otimes (2k-1)}) \simeq \mu_q^{\otimes (2k-1)}
\otimes \Gal(H_q/K_q)$; 
\item the top arrow labeled ``$\cong$'' is the map of \eqref{eq:47}, i.e.\ the product of the map  $[L] \mapsto [L]-1 \in K_0(\mathcal{O}_q;\Z/q)^{(-)}$ composed with $K_0(\mathcal{O}_q;\Z/q)^{(-)} \to \KBott_0(\mathcal{O}_q;\Z/q)^{(-)}$
  and
    $\beta^{2k-1}: \mu_q(\C)^{\otimes{(2k-1)}} \to \KBott_{4k-2}(\mathcal{O}_q;\Z/q)^{(-)}$;
\item  the bottom arrow labeled ``$\cong$'' is induced by the rest of the diagram.
\end{itemize}
The same assertion holds without Bott-inversion of the $K$-theory for $k \geq 1$. 
\end{proposition}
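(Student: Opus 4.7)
My plan is to establish the bottom isomorphism by passing to $\Gal(K_q/\Q)$-coinvariants on both vertical maps and matching them via the top isomorphism, which is itself $\Gal(K_q/\Q)$-equivariant. Throughout, set $G = \Gal(H_q/\Q)$, $N = \Gal(H_q/K_q)$, $Q = G/N \cong (\Z/q)^\times$, and $M = \mu_q^{\otimes(2k-1)}$.  First, I would observe that the top row is a composition of two isomorphisms: on the left, Artin reciprocity~\eqref{Artmap2} (combined with the identification $H_1(N;M) \cong N \otimes M$, valid since $N$ is abelian and $M \subset K_q^\times$ is $N$-fixed); on the right, Corollary~\ref{KOq-Bott}.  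Both are $Q$-equivariant (naturality in the first case, explicit statement of Corollary~\ref{KOq-Bott} in the second), so their composition is an equivariant isomorphism.

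Next I would verify that each vertical factors through $Q$-coinvariants and induces an isomorphism on them. For the right vertical this is exactly Proposition~\ref{propcor:transfer-surjective}, after translating between $\OO_q'$ and $\OO_q$ (and $\Z'$ and $\Z$) via Lemma~\ref{Quillenexact}.  For the left vertical, the Hochschild--Serre spectral sequence for $N \hookrightarrow G \twoheadrightarrow Q$ yields the low-degree exact sequence
\[
 H_2(Q;M) \to H_1(N;M)_Q \xrightarrow{i_*} H_1(G;M) \to H_1(Q;M) \to 0,
\]
so the issue reduces to showing $H_1(Q;M) = H_2(Q;M) = 0$.  Since $p$ is odd, $Q = (\Z/p^n)^\times$ is cyclic, and a generator $a$ acts on $M \cong \Z/p^n$ as multiplication by $a^{2k-1}$.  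Because $p-1$ is even while $2k-1$ is odd, $(p-1) \nmid (2k-1)$, so $a^{2k-1} \not\equiv 1 \pmod{p}$; hence $a^{2k-1}-1$ is a unit in $\Z/p^n$. The standard Tate-cohomology formulas for cyclic groups then immediately give $H_i(Q;M) = 0$ for all $i \geq 1$.

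Combining these: both verticals become isomorphisms after passing to coinvariants, and the $Q$-equivariant top isomorphism matches the two coinvariant quotients. This both defines the bottom arrow and shows it is an isomorphism; strict commutativity of the square follows because $\mathrm{tr}$ is $Q$-invariant and hence kills all elements of the form $(\sigma-1)y$, which by the vanishing of $H_2(Q;M)$ already exhaust $\ker(i_*)$.  The only real obstacle in the plan is the cohomological vanishing above, and that is reduced to the elementary parity observation $(p-1)\nmid (2k-1)$.  Finally, for the assertion without Bott-inversion: when $k \geq 1$ we have $4k-2 \geq 2$, and Proposition~\ref{normresidueconsequence} yields $K_{4k-2}(R;\Z/q) \xrightarrow{\cong} \KBott_{4k-2}(R;\Z/q)$ for both $R = \Z$ and $R = \OO_q$, so the whole diagram remains valid after replacing Bott-inverted $K$-theory by ordinary $K$-theory.
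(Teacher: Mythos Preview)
Your proof is correct and follows essentially the same route as the paper: both verify that the two vertical maps induce isomorphisms from $\Gal(K_q/\Q)$-coinvariants, using Proposition~\ref{propcor:transfer-surjective} (with Lemma~\ref{Quillenexact}) on the $K$-theory side and the Hochschild--Serre five-term sequence on the group-homology side, then define the bottom isomorphism as the induced map. The only difference is in how you kill the flanking terms $H_1(Q;M)$ and $H_2(Q;M)$: the paper invokes the ``center kills'' argument via the central element $c$ acting by $-1$ on $\mu_q^{\otimes(2k-1)}$, while you compute directly with Tate cohomology of the cyclic group $Q$ and the parity observation $(p-1)\nmid(2k-1)$. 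Both arguments are equally short; the paper's has the minor advantage of not requiring $Q$ to be cyclic (only that it contain the central involution $c$), while yours gives the stronger vanishing $H_i(Q;M)=0$ for all $i\geq 0$ in one stroke. For the final sentence the paper cites Lemmas~\ref{KOq} and~\ref{K-beta-theoryeval} rather than Proposition~\ref{normresidueconsequence} directly, but those lemmas simply record the same comparison isomorphism in the relevant degrees.
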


\begin{proof} 
That the right top arrow is an isomorphism was already proved in   Corollary \ref{KOq-Bott}.
We have also seen that $\KBott_{4k-2}(\mathcal{O}_q;\Z/q)^{(+)} \to \KBott_{4k-2}(\Z;\Z/q)^{(+)}$ induces an isomorphism from the $\Gal(K_q/\Q)$ coinvariants on the source: see Proposition~\ref{propcor:transfer-surjective}, Lemma \ref{Quillenexact}
and Proposition \ref{normresidueconsequence}.

Therefore, we need only verify the corresponding property for $i_*$: it induces an isomorphism from the $\Gal(K_q/\Q)$-coinvariants on the source. 
 This follows from the  Hochschild--Serre spectral sequence for the extension~(\ref{eq:48}), which gives an exact sequence
\begin{multline} \label{HS5}
  H_2(K_q/\Q;\mu_q^{\otimes (2k-1)})  \to 
  H_0(K_q/\Q;H_1(H_q/K_q; \mu_q^{\otimes (2k-1)}))  \to H_1(H_q/\Q;\mu_q^{\otimes (2k-1)})  \\
  \to H_1(K_q/\Q;\mu_q^{\otimes (2k-1)}).
\end{multline}  Considering the action of the central element $c \in \Gal(K_q/\Q)$ given by complex conjugation 
we see that the two outer terms vanish (the ``center kills'' argument).

For the last sentence use Lemma  \ref{KOq} and Lemma
 \ref{K-beta-theoryeval}.
\end{proof}

\begin{remark}
Let $c \in \Gal(H_q/\Q)$ be complex conjugation. Then $H_0(\langle c\rangle;\mu_q^{\otimes (2k-1)}) = 0 = H_1(\langle c\rangle;\mu_q^{\otimes (2k-1)})$. Therefore the  map
\begin{equation*}
  H_1(H_q/\Q;\mu_q^{\otimes (2k-1)}) \to H_1(\Gal(H_q/\Q),\langle c\rangle;\mu_q^{\otimes (2k-1)})
\end{equation*}
is an isomorphism; on the right we have ``relative'' group homology, i.e.\ relative homology of classifying spaces.  This relative group homology  may therefore be substituted in place of the lower left corner of~(\ref{eq:50}). This observation will be significant later. 
\end{remark}

\begin{remark}
  \label{remark:p=q=3}  
  The case $p = q = 3$ is anomalous in that the Moore spectrum $\mathbb{S}/3$ does not admit a unital multiplication which is associative up to homotopy.  It does admit a unital and homotopy commutative multiplication though, which induces graded commutative---but a priori possibly non-associative---ring structures on $K_*(\mathcal{O}_3;\Z/3)$ and $\K_*(\Z;\Z/3)$.  There is no problem in defining Bott inverted $K$-theory, e.g.\ as in Remark~\ref{remark:telescope}, and according to \cite[A.11]{Thomason85} the construction of the spectral sequence holds also in this case.  The Bott element $\beta \in K_2(\mathcal{O}_3;\Z/3)$ is defined as before, and multiplication by $\beta$ defines an endomorphism of $K_*(\mathcal{O}_3;\Z/3)$.  Iterating this endomorphism $2k-1$ times gives a homomorphism $K_0(\mathcal{O}_3;\Z/3) \to K_{4k-2}(\mathcal{O}_3;\Z/3)$, which we use to give meaning to expressions like $\beta^{2k-1} ([L]-1)$ in this section.

  In this interpretation the results of this section hold also in the case $p = q = 3$.  Multiplication by powers of a Bott element also appear in Section 5, we leave it to the diligent reader to verify that similar remarks apply there.
\end{remark}

\section{Symplectic $K$-theory}\label{sec: symplectic K-theory}
\label{sec:symplectic-K}

In this section, we define the symplectic $K$-theory of the integers. Our main goal is to state
and prove Theorem \ref{thm:KSp-of-Z},
which shows that this symplectic $K$-theory,  with $\Z/q$-coefficients,
splits into two parts: one arising from the $+$ part
of the algebraic $K$-theory of $\Z$, and the other from the $-$
part of topological $K$-theory.

\subsection{Definition of symplectic $K$-theory}
Just as $K$-theory arises from the symmetric monoidal category of
projective modules, 
symplectic $K$-theory arises from the symmetric monoidal category of
{\em symplectic} modules:

Consider the groupoid whose objects are pairs $(L,b)$, where $L$ is a finitely generated free  $\Z$-module and $b: L \times L \to \Z$ is a skew symmetric pairing whose adjoint $L \to L^\vee$ is an isomorphism, and whose morphisms are $\Z$-linear isomorphisms $f: L \to L'$ such that $b'(fx,fy) = b(x,y)$ for all $x,y \in L$.  This groupoid becomes symmetric monoidal with respect to orthogonal direct sum $(L,b) \oplus (L',b') = (L \oplus L',b + b')$, and we shall denote it $\SP(\Z)$.  The corresponding space $|\SP(\Z)|$ then inherits a product structure, and as before 
we get a spectrum $\KSp(\Z)$ and a group-completion map
\begin{equation*}
  |\SP(\Z)| \to \Omega^\infty \KSp(\Z).
\end{equation*}
The positive degree homotopy groups of $\KSp(\Z)$ can be computed via the Quillen plus construction (with respect to the commutator subgroup of $\Sp_{\infty}(\Z) = \pi_1(B \Sp_{\infty}(\Z))$). 

\begin{definition}
The \emph{mod $q$ symplectic $K$-theory groups of $\Z$} are defined as 
\[
\KSp_i(\Z; \Z/q) := \pi_i(\KSp(\Z); \Z/q)
\]
and 
the \emph{$p$-adic symplectic $K$-theory groups} can be defined\footnote{Or equivalently as the homotopy groups of the $p$-completion of the spectrum $\KSp(\Z)$.  These agree because the homotopy groups of $\KSp(\Z)$ are finitely generated abelian groups.} as
\[
 \KSp_i(\Z;\Z_p) := \varprojlim_n \KSp_i(\Z;\Z/p^n).
\] 
 \end{definition}

\subsection{Hodge map and Betti map}

The groups $\KSp_i(\Z;\Z/q)$ are described in Theorem~\ref{thm:KSp-of-Z} below. The result is stated in terms of two homomorphisms, the \emph{Hodge map} and the \emph{Betti map}, which we first define. 

\subsubsection{The Betti map}

\begin{definition}\label{def:Soule-map}
  Let $c_B: \KSp(\Z) \to K(\Z)$ be the spectrum map defined by the forgetful functor $\SP(\Z) \to \proj{\Z}$.     We shall use the same letter $c_B$ to denote the induced homomorphism on mod $q$ homotopy groups
  \begin{equation*}
    c_B: \KSp_i(\Z;\Z/q) \to K_i(\Z;\Z/q). 
  \end{equation*}
\end{definition}

\subsubsection{The Hodge map}The Hodge map is  more elaborate. It arises from the functors of
\begin{equation} \label{Hodgezigzag} \underbrace{ \mbox{symplectic $\Z$-modules}}_{\SP(\Z)} \rightarrow \underbrace{  \mbox{symplectic $\R$-modules}}_{\SP(\R^{\mathrm{top}})} \leftarrow \underbrace{ \mbox{Hermitian $\C$-vector spaces}}_{\Utop}. \end{equation} 
where the entries are now regarded as symmetric monoidal groupoids that are enriched in topological spaces. In more detail: 
\begin{itemize}
\item 
   Let $\SP(\R^\mathrm{top})$ be the groupoid (enriched in topological spaces) defined as $\SP(\Z)$ but with $\R$-modules $L$ and $\R$-bilinear symplectic pairings $b: L \times L \to \R$.  
   We regard it as a  groupoid enriched in topological spaces, where morphism spaces are topologized in their Lie group topology, inherited from the topology on $\R$ (the superscript ``top'' signifies that we remember the topology, as opposed to considering $\R$ as a discrete ring).  
  
  \item Write $\mathcal{U}(\Ctop)$ for the groupoid (again enriched in topological spaces) whose objects are finite dimensional $\C$-vector spaces $L$ equipped with a positive definite Hermitian form $h: L \otimes_\C \overline{L} \to \C$, and morphisms the unitary maps topologized in the Lie group topology. 
  
  \item   The functor $\mathcal{U}(\Ctop) \to \SP(\R^\mathrm{top})$ is obtained by sending a unitary space $(L, h)$,  as in (ii), to 
  the underlying real vector space $L_{\R}$, equipped with the symplectic form $\mathrm{Im} \ h$.
  This functor induces a bijection on sets of isomorphism classes and homotopy equivalences on all morphisms spaces, 
  because $U(g) \subset \Sp_{2g}(\R)$ is a homotopy equivalence. 

  \end{itemize}
 We equip these categories with the symmetric monoidal structures given by direct sum. Then, as discussed in the Appendix, the diagram \eqref{Hodgezigzag} gives rise to 
  a diagram of $\Gamma$-spaces and thereby to 
  a diagram of spectra:
\begin{equation} \label{kspkspku} \KSp(\Z) \rightarrow \KSp(\Rtop) \leftarrow ku,\end{equation}
  where we follow standard notation in using $ku$ (connective $K$-theory) to
  refer to the spectrum associated to $\mathcal{U}(\Ctop)$. 
   The last arrow here is a weak equivalence, i.e.\ induces an isomorphism on all homotopy groups.
   Indeed, as noted above,
  $\coprod_g B\mathrm{U}(g) \simeq \coprod_g B\Sp_{2g}(\R)$
  is  a weak equivalence,  therefore the group completions are weakly equivalent, 
  therefore
   $\Omega^{\infty} (ku) \rightarrow \Omega^{\infty}(\KSp(\Rtop))$ is a weak equivalence, 
 and so the map
  $ku \rightarrow \KSp(\Rtop)$  of connective spectra is  a weak equivalence. 
  
  In the homotopy category of spectra, weak equivalences become invertible, and so 
  the diagram \eqref{kspkspku}
  induces there a map 
  $ \KSp(\Z) \rightarrow ku.$

  \begin{definition}\label{def:Hodge-map}
The {\em Hodge map} is the morphism  \begin{equation*}
    c_H: \KSp(\Z) \to ku
  \end{equation*}
  in the homotopy category of spectra that has just been constructed.
The map $c_H$ induces a homomorphism 
  \begin{equation*}
    \KSp_i(\Z;\Z/q) \to \pi_i(ku;\Z/q)
  \end{equation*}
  which we shall also call the Hodge map.  By Bott periodicity, the target is $\Z/q$ when $i$ is even and 0 when $i$ is odd.
\end{definition}

\begin{remark}[Explanation of terminology]
With reference to the relationship between symplectic $K$-theory
and moduli of principally polarized abelian varieties (\S \ref{elab}) the Hodge map  is related to the Chern classes of the \emph{Hodge bundle} 
whose fiber over $A$ is  $H^0(A, \Omega^1)$, and thus to the ``Hodge realization'' of $A$. On the other hand, the Betti map is related to the ``Betti realization'' $H_1(A, \Z)$. 
\end{remark}

\subsection{Determination of symplectic $K$-theory in terms of algebraic $K$-theory}

As explained above, the Adams operation $\psi^{-1}$ induces an involution of $K_*(\Z;\Z/q)$ which gives a splitting for odd $q$ into positive and negative eigenspaces.  There are also Adams operations on $ku$, and their effect on homotopy groups are very easy to understand.  In particular, $\psi^{-1}$ acts as $(-1)^k$ on $\pi_{2k}(ku) \cong \Z$.  The main goal of this section is to explain the following result.
\begin{theorem}\label{thm:KSp-of-Z}
  For odd $q = p^n$, the homomorphism
  \begin{equation}\label{eq:30}
    \KSp_i(\Z;\Z/q) \to (K_i(\Z;\Z/q))^{(+)} \oplus (\pi_i(ku;\Z/q))^{(-)}
  \end{equation}
  defined by the Betti and Hodge maps, composed with the projections onto the indicated eigenspaces for $\psi^{-1}$, is an isomorphism. 
  (We will refer later to the induced isomorphism as the Betti-Hodge map).   In particular we get for $k \geq 1$
  \begin{equation*}
    \KSp_{4k-2}(\Z;\Z/q) \cong H^2(\Spec(\Z');\mu_q^{\otimes 2k}) \oplus (\Z/q).
  \end{equation*}
\end{theorem}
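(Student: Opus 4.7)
The plan is to reduce to Karoubi's theorem on Hermitian $K$-theory, which, for odd primes, cleanly separates symplectic $K$-theory into an algebraic and a topological piece. Since $q$ is odd, the involution $\psi^{-1}$ on both $K(\Z;\Z/q)$ and $\pi_*(ku;\Z/q)$ admits a canonical projector $(1\pm\psi^{-1})/2$, splitting each into $\pm$ eigenspaces; the theorem asserts that $c_B$ detects the $+$ part of the former and $c_H$ detects the $-$ part of the latter, and that together they see everything.

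First I would check that $c_B$ and $c_H$ naturally land in the claimed eigenspaces, so that composing with the eigenspace projections loses no information. For $c_B$: a symplectic form $b$ on $L$ provides a canonical isomorphism $L \cong L^\vee$, which exhibits a natural isomorphism between the forgetful functor $\SP(\Z) \to \proj{\Z}$ and its composition with dualization. Promoted to the spectrum level this gives a homotopy $c_B \simeq \psi^{-1}\circ c_B$, so $c_B$ lands in the $+$ eigenspace. For $c_H$: the defining zigzag factors through $\KSp(\Rtop)\simeq ku$, and replacing a Hermitian vector space $(L,h)$ by its conjugate $(\overline{L},\overline{h})$ corresponds, on the symplectic side, to reversing the complex structure on the underlying real space while preserving the symplectic form. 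This gives the identity on $\SP(\Z)$ but $\psi^{-1}$ on $ku$, forcing $c_H$ into the $(-1)^{i/2}$-eigenspace in degree $i$, i.e.\ the $-$ eigenspace in degrees $\equiv 2 \pmod 4$.

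The main input is Karoubi's fundamental theorem \cite{Karoubi80}, which for $(\Z, \mathrm{id}, \epsilon = -1)$ produces a homotopy fiber sequence relating $\KSp(\Z)$ to $K(\Z)$ via Karoubi periodicity, together with a ``Witt-theoretic'' spectrum that, after inverting $2$, is identified with (a suspension of) topological $K$-theory through the Hodge realization. Taking mod $q$ homotopy with $q$ odd, the long exact sequence from this fiber sequence degenerates into the desired direct sum: the algebraic summand is $K_i(\Z;\Z/q)^{(+)}$ (detected by $c_B$), since $1-\psi^{-1}$ is an isomorphism on the $-$ eigenspace and zero on the $+$ eigenspace when $q$ is odd, and the topological summand is $\pi_i(ku;\Z/q)^{(-)}$ (detected by $c_H$) via Karoubi's identification. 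The explicit formula for $\KSp_{4k-2}(\Z;\Z/q)$ then follows from Lemma~\ref{K-beta-theoryeval}, which identifies $K_{4k-2}(\Z;\Z/q)^{(+)}$ with $H^2(\Spec(\Z');\mu_q^{\otimes 2k})$.

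The principal obstacle is to package Karoubi's comparison so that both maps $c_B$ and $c_H$ fit into the same framework at the spectrum level with coherent $\psi^{-1}$-actions, and to verify that his topological summand is indeed identified with $ku$ by the Hodge map (and not merely abstractly). Once this is in place, the degeneration to a direct sum for odd $q$ is automatic from the canonical $\pm$ eigenspace splitting, and the identification with the Betti-Hodge map is by inspection of the naturality established in the second paragraph.
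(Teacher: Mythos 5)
Your overall strategy — reduce to Karoubi's splitting of Hermitian $K$-theory after inverting $2$ — is indeed the one the paper uses, and your observations about $c_B$ and $c_H$ landing in the stated eigenspaces are correct in substance. But there are two real gaps.

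First, you treat $\KSp(\Z)$ and Karoubi's Hermitian $K$-theory $\KH(\Z,-1)$ as the same object, but they are not: objects of $\quadr{\Z,-1}$ are symplectic lattices \emph{equipped with a quadratic refinement} $q\colon L/2 \to \Z/2$ of the symplectic form, while objects of $\SP(\Z)$ have no such extra structure. Karoubi's theorem computes $\KH_*(\Z,-1)[\tfrac12]$, not $\KSp_*(\Z)[\tfrac12]$. Bridging the two is the content of Lemma~\ref{lem:surjection} in the paper, whose proof is a genuine transfer argument: $\Sp^q_{2g}(\Z)$ (the stabilizer of a fixed quadratic refinement of Arf invariant $0$) has index $2^{g-1}(2^g+1)$ in $\Sp_{2g}(\Z)$, and one chooses $g$ so that this index is invertible mod $p$ to split the map on $p$-local homology. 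Without this step, "Karoubi's theorem" says nothing directly about $\KSp(\Z)$.

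Second, your "principal obstacle" paragraph correctly identifies that one must check the topological summand is seen by the Hodge map concretely, but you then describe the remainder as automatic. It is not: establishing that $c_H\colon\KSp_{4k-2}(\Z;\Z/p)\to\pi_{4k-2}(ku;\Z/p)$ is nonzero (Proposition~\ref{prop:postponed-prop}) is precisely what is \emph{postponed} to Section~\ref{CMExhaust}, where it is proved by constructing a map $\Sigma^\infty_+ B(\Z/p)\to\KSp(\Z)$ via a CM abelian variety and computing its image under $c_H$ (Proposition~\ref{prop: hodge map surjects}). Karoubi's periodicity shows the Witt summand is abstractly $\Z/q$ in degree $4k-2$ but does not, by itself, tell you that $c_H$ hits it. In fact the paper needs both this non-vanishing to prove injectivity in Lemma~\ref{lem:surjection} and surjectivity of the Betti-Hodge map onto the $ku$-factor, so the dependency is essential, not a technicality.
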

The latter statement follows from the first using Corollary \ref{K-beta-theoryeval}. 
Using the other statements of that Corollary, 
taking the inverse limit over $n$, and using that $K_i(\Z)$ and $\KSp_i(\Z)$ are finitely generated for all $i$ to see that the relevant derived inverse limits vanish, we deduce the following.

\begin{corollary}\label{cor: explicit determination of KSp}
  For odd $p$ and $i > 0$, the groups $\KSp_i(\Z; \Z_p)$ are as in the following table, with the identifications given explicitly by the Betti-Hodge map:
\begin{center}
\begin{tabular}{|c|c|c|c|c|}
$i \mod{4}$ &  0 & 1 & 2 & 3 \\ 
\hline 
$\KSp_i(\Z; \Z_p)$ & 0 & 0 & $K_i(\Z; \Z_p) \oplus \Z_p$ & $K_i(\Z; \Z_p) $
\end{tabular}
\end{center}
\end{corollary}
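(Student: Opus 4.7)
The plan is to combine Theorem~\ref{thm:KSp-of-Z} with the explicit \'etale-cohomological identification of $K_*(\Z;\Z/q)$ in Lemma~\ref{K-beta-theoryeval}, carry out the $\psi^{-1}$-eigenspace arithmetic degree by degree, and then pass to the inverse limit over $q = p^n$.

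The topological summand is straightforward: Bott periodicity together with $\psi^{-1}$ acting as $(-1)^m$ on $\pi_{2m}(ku)$ shows that $\pi_i(ku;\Z/q)^{(-)}$ equals $\Z/q$ for $i \equiv 2 \pmod 4$ and $0$ in the other three residue classes, which after passing to the inverse limit contributes the $\Z_p$ summand in the table. On the algebraic side, Lemma~\ref{Quillenexact} together with the last sentence of Lemma~\ref{K-beta-theoryeval} identifies $K_i(\Z;\Z/q)$ with $\KBott_i(\Z';\Z/q)$ for $i \geq 2$; the case $i=1$ is immediate from $K_1(\Z) = \{\pm 1\}$ being $p$-torsion-free for odd $p$. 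Reading off the $\psi^{-1}$-signs in the four cases of Lemma~\ref{K-beta-theoryeval} then gives $K_i(\Z;\Z/q)^{(+)}$ as the $H^2$ summand when $i \equiv 2 \pmod 4$, the full $H^1$ when $i \equiv 3 \pmod 4$, the $H^0$ summand when $i \equiv 0 \pmod 4$, and $0$ when $i \equiv 1 \pmod 4$.

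To pass from $\Z/p^n$ to $\Z_p$ coefficients, finite generation of $K_i(\Z)$ and $\KSp_i(\Z)$ implies that the towers of mod-$p^n$ groups are Mittag--Leffler, so the $\varprojlim{}^1$ terms vanish and the inverse limit commutes with the eigenspace splitting. The key input is the identity
\begin{equation*}
\varprojlim_n H^0(\Spec(\Z');\mu_{p^n}^{\otimes m}) \;=\; H^0(\Spec(\Z');\Z_p(m)) \;=\; 0,
\end{equation*}
valid for every $m \neq 0$, because the $m$-th power of the cyclotomic character is nontrivial and $\Z_p$ is torsion-free. At each finite level $n$ the group $H^0(\Spec(\Z');\mu_{p^n}^{\otimes m})$ may well be nonzero when $m$ is even, but every such class is killed under the next reduction. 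Combined with the immediate vanishing of $H^0(\Spec(\Z');\mu_q^{\otimes m})$ for $m$ odd (since $q$ is odd and complex conjugation acts by $-1$), this identifies $K_i(\Z;\Z_p)^{(+)}$ with $K_i(\Z;\Z_p)$ in degrees $\equiv 2, 3 \pmod 4$ and shows it vanishes in degrees $\equiv 0, 1 \pmod 4$. Assembling with the topological summand yields the table; the only delicate point is the inverse-limit vanishing of the $(+)$-eigenspace in degrees $\equiv 0 \pmod 4$, everything else being a careful bookkeeping of eigenspace signs.
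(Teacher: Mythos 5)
Your proof is correct and follows essentially the same route as the paper: combine the Betti--Hodge decomposition of Theorem~\ref{thm:KSp-of-Z} with the \'etale computations in Lemma~\ref{K-beta-theoryeval}, track the $\psi^{-1}$-eigenspaces degree by degree, and pass to the inverse limit using finite generation. You also correctly identify the one nontrivial point which the paper elides: in degrees $\equiv 0 \pmod 4$ the finite-level groups $H^0(\Spec(\Z');\mu_{p^n}^{\otimes 2k})$ can be nonzero, but they die in the inverse limit because $H^0(\Spec(\Z');\Z_p(2k))=0$ for $k \neq 0$.
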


\begin{remark}
  The relationship between $K$-theory and Hermitian $K$-theory is more complicated when the prime 2 is not inverted, and is well understood only quite recently.  See \cite{nineauthorI, nineauthorII, nineauthorIII}\footnote{The second author wishes to thank Fabian Hebestreit, Markus Land, Kristian Moi, and Thomas Nikolaus for helpful conversations.}
 as well as \cite{hesselholt2015real}, \cite{1911.11682} and \cite{MR4015334}.

  We consider only odd primes in this paper, where the isomorphism~\eqref{eq:30} can be deduced from the work of Karoubi \cite{Karoubi80}.
\end{remark}

\subsection{Symplectic $K$-theory and Grothendieck--Witt theory}

Symplectic $K$-theory as defined above is a special case of Grothendieck--Witt theory, introduced by Karoubi and Villamayor \cite{MR279163} under the name of \emph{Hermitian $K$-theory}.  In the original definition the input is a ring $A$ equipped with an anti-involution $x \mapsto \overline{x}$ and an element $\epsilon \in A$ such that $\epsilon \overline{\epsilon} = 1$.  These groups were denoted ${_\epsilon \mathcal{L}_n(A)}$, and for $n > 0$ defined as
\begin{equation*}
  {_\epsilon \mathcal{L}_n(A)} = \pi_n(B{_\epsilon O}(A)^+),
\end{equation*}
where ${_\epsilon O(A)}$ is the direct limit as $g \to \infty$ of certain subgroups ${_\epsilon O_{g,g}(A)} \subset \mathrm{GL}_{2n}(A)$.  The special case $A = \Z$ and $\epsilon = -1$ is closely related to symplectic $K$-theory, because ${_\epsilon O_{g,g}(\Z)}$ is a subgroup of $\Sp_{2g}(\Z)$ of finite index $2^{g-1}(2^g + 1)$, and a transfer argument can be used to show that the inclusion induces a homomorphism
\begin{equation*}
  {_{-1} \mathcal{L}_n(\Z)} \to \KSp_n(\Z)
\end{equation*}
which becomes an isomorphism after inverting 2. 

After inverting 2, we have a splitting $K_n(\Z)[\frac12] = K_n(\Z)[\frac12]^{(+)} \oplus K_n(\Z)[\frac12]^{(-)}$ into eigenspaces of the Adams operation $\psi^{-1}$, and it follows\footnote{In an earlier version of the paper, we explained in more detail how to deduce this isomorphism from Karoubi's main result.  We decided this was unnecessary, not least in light of the thorough treatment in \cite{nineauthorI,nineauthorII,nineauthorIII,HebestreitSteimle}.} from the main result of \cite{Karoubi80} that there is an isomorphism
\begin{equation*}
  \KSp_n(\Z)[\tfrac12] \xrightarrow{\cong} \big(K_n(\Z)[\tfrac12]\big)^{(+)} \oplus \big({_{-1} W}_n(\Z)[\tfrac12]\big),
\end{equation*}
for certain groups ${_{-1} W}_n(\Z)[\tfrac12]$ which vanish when $n \not\equiv 2 \mod 4$ and are isomorphic to $\Z[\tfrac12]$ when $n \equiv 2$.  Moreover, the projection $\KSp_n(\Z)[\tfrac12] \to K_n(\Z)[\frac12]^{(+)}$ is induced by the inclusions of discrete groups $\Sp_{2g}(\Z) \to \mathrm{GL}_{2g}(\Z)$ and the inclusion $K_n(\Z)[\frac12]^{(+)} \to \KSp_n(\Z)[\tfrac12]$ is induced by the hyperbolic construction $\mathrm{GL}_g(\Z) \hookrightarrow \Sp_{2g}(\Z)$.

In the notation of \cite{nineauthorII,nineauthorIII} and \cite{HebestreitSteimle}, our $\KSp(\Z)$ agrees by definition with what is denoted $\mathrm{GW}^{-s}_\mathrm{cl}(\Z)$ and $\mathrm{GW}^s_\mathrm{cl}(\Z;-\Z)$ there.  The subscript ``cl'' is short for ``classical'' and denotes that these are defined as homotopy groups of a Quillen plus-construction, as in Karoubi's work and in our discussion of $\KSp(\Z)$.  The main result of \cite{HebestreitSteimle} compares that with a spectrum $\mathrm{GW}^{-gs}(\Z) = \mathrm{GW}(\mathcal{D}^p(\Z),\Qoppa^{-gs})$, whose definition is based on chain complexes instead of discrete abelian groups, which by an instance of the main theorem of \cite{nineauthorII} fits into a cofiber sequence with algebraic $L$-theory and the homotopy orbits of the involution on algebraic $K$-theory.  The upshot is a fiber sequence of spectra of the form
\begin{equation}\label{eq:3}
  K(\Z)_{hC_2} \to \KSp(\Z) \to \tau_{\geq 0} L^{-s}(\Z),
\end{equation}
an instance of \cite[Corollary 8.3.5]{HebestreitSteimle} for example.  After inverting 2, we obtain a spectrum level splitting $K(\Z)[\frac12] \simeq K(\Z)[\frac12]^{(+)} \oplus K(\Z)[\frac12]^{(-)}$, and the cofiber sequence~\eqref{eq:3} becomes
\begin{equation}
  \label{eq:5}
  K(\Z)[\tfrac12]^{(+)} \to \KSp(\Z)[\tfrac12] \to \tau_{\geq 0} L^{-s}(\Z)[\tfrac12],
\end{equation}
where the first map is induced by the hyperbolic construction (i.e., sending a finitely generated abelian group $M$ to $M \oplus M^\vee$ equipped with the standard symplectic form).  The sequence is canonically split by the spectrum map $\KSp(\Z) \to K(\Z)[\frac12]^{(+)}$ induced by the inclusion $\Sp_{2g}(\Z) \to \mathrm{GL}_{2g}(\Z)$.
Finally, $\pi_n L^{-s}(\Z)[\tfrac12]$ coincides\footnote{We point out that Karoubi's notation $\mathcal{L}$ is a special case of the $\mathrm{GW}$ from \cite{nineauthorI, nineauthorII, nineauthorIII}.  The $L$-theory in the latter papers are more similar to Karoubi's $W$-groups} with Karoubi's ${_{-1} W}_n(\Z)[\tfrac12]$.

\subsection{Proof of Theorem \ref{thm:KSp-of-Z}.}

The proof requires  the following non-triviality result about the Hodge map in degrees $\equiv 2 \mod 4$, proved in Section~\ref{prop:generators-for-KSp}.
\begin{proposition}\label{prop:postponed-prop}
  The homomorphism
  \begin{equation*}
    K\Sp_{4k-2}(\Z;\Z/p) \to \pi_{4k-2}(ku;\Z/p) \cong \Z/p
  \end{equation*}
  induced by the Hodge map is non-zero for all $k\geq 1$.
\end{proposition}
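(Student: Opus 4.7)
The plan is to construct explicit classes in $\KSp_{4k-2}(\Z; \Z/p)$ via CM abelian varieties, and to compute the Hodge image of each class in closed form using the CM type. This is the same mechanism by which the paper produces CM classes later (see the outline in \S\ref{outline}); the input needed here is just existence together with an explicit Hodge formula.

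Take $q = p$ if $p \geq 5$ and $q = 9$ if $p = 3$ (so that the ring structure on mod-$q$ stable homotopy is available, per Remark \ref{remark:p=q=3}). For any CM type $\Phi \subset \Hom(K_q, \C)$ of the cyclotomic field $K_q$, Proposition \ref{construction of CM structures} produces a principally polarized abelian variety $A/\C$ of dimension $g = \varphi(q)/2$ carrying an action of $\mathcal{O}_q$ realizing $\Phi$. Since $\zeta_q \overline{\zeta_q} = 1$, the subgroup $C := \mu_q(\mathcal{O}_q) \cong \Z/q$ preserves the symplectic form on $L := H_1(A,\Z)$, giving $C \hookrightarrow \Sp(L) = \Sp_{2g}(\Z)$. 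After stabilization, this yields a spectrum map $\Sigma^\infty_+ BC \to \KSp(\Z)$; applied to the Bott power $\beta^{2k-1} \in \pi_{4k-2}^s(BC; \Z/q)$, it defines a class $x_\Phi \in \KSp_{4k-2}(\Z; \Z/q)$ whose mod-$p$ reduction I will show has nonzero Hodge image.

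I then identify $c_H(x_\Phi) \in \pi_{4k-2}(ku; \Z/q)$ as follows. The Hodge map \eqref{kspkspku} uses the Hodge decomposition to equip $H_1(A,\R)$ with the complex structure making it the unitary space $\mathrm{Lie}(A) \cong \C^g$. For a CM abelian variety of type $\Phi = \{\sigma_1,\dots,\sigma_g\}$, the $C$-action on $\mathrm{Lie}(A)$ is the direct sum of characters $\chi_{a_1} \oplus \cdots \oplus \chi_{a_g}$, where $a_i \in (\Z/q)^\times$ is determined by $\sigma_i(\zeta_q) = \zeta_q^{a_i}$ and $\chi_a\colon C \to U(1)$ sends the chosen generator to $\zeta_q^a$. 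Each $\chi_{a_i}$ is a line bundle and therefore (by Lemma \ref{split injection} and Remark \ref{rem: cyc action on bott}) defines a ring map $\Sigma^\infty_+ BC \to ku$ carrying $\beta$ to $a_i\,\mathrm{Bott}$, and so $\beta^{2k-1}$ to $a_i^{2k-1}\,\mathrm{Bott}^{2k-1}$. Summing over the direct-sum decomposition in $ku^0(BC)$ yields
\[
c_H(x_\Phi) \;=\; \Bigl(\sum_{i=1}^{g} a_i^{2k-1}\Bigr)\cdot\mathrm{Bott}^{2k-1} \;\in\; \pi_{4k-2}(ku; \Z/q).
\]

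After reducing mod $p$, it suffices to exhibit a CM type with $S_\Phi := \sum_i a_i^{2k-1} \not\equiv 0 \pmod{p}$. Identifying CM types with subsets $\Phi \subset (\Z/q)^\times$ satisfying $\Phi \sqcup (-\Phi) = (\Z/q)^\times$, the operation of replacing an element $b \in \Phi$ by $-b$ produces another CM type whose sum differs from $S_\Phi$ by $-2 b^{2k-1}$. Taking $b = 1$ changes the sum by $-2$, which is a unit mod $p$ for $p$ odd; hence among this pair of CM types at least one has $S_\Phi$ nonzero mod $p$. The main obstacle I expect lies in justifying the identification in the previous paragraph: one must carefully unpack the zig-zag \eqref{kspkspku} to confirm that, composed with the CM construction, it realizes the direct sum of characters $\chi_{a_i}$ at the level of spectrum maps $\Sigma^\infty_+ BC \to ku$. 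Concretely, this amounts to the standard but bookkeeping-heavy fact that the complex structure on $H_1(A,\R)$ induced by the Hodge filtration is precisely the one making $\mathcal{O}_q$ act on $\mathrm{Lie}(A)$ through the embeddings in $\Phi$, aligned with the topological definition of $c_H$.
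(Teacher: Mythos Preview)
Your proposal is correct and follows essentially the same approach as the paper: the paper's proof (given as Proposition~\ref{prop: hodge map surjects}) constructs CM classes via the functor $\PPqminus \to \mathcal{A}_g(\C) \to \SP(\Z)$, computes the Hodge image of $\beta^{2k-1}[(L,b)]$ as $\bigl(\sum_{a: j_a \in \Phi} a^{2k-1}\bigr)\mathrm{Bott}^{2k-1}$ via the identification of the unitary representation in \eqref{eq:34ii}, and then uses precisely your switching argument (replacing $1 \leftrightarrow -1$ in $\Phi$ changes the sum by $-2$) to find a $\Phi$ with unit sum. The bookkeeping you flag as the main obstacle is exactly the content of the commutative diagram and discussion surrounding \eqref{eq:34} in \S\ref{subsec:Hodge}.
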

This proposition, valid under our standing assumption that $p$ is odd, implies that the homomorphism $\KSp_{4k-2}(\Z) \to \pi_{4k-2}(ku) \cong \Z$ is non-zero, and in fact that it becomes surjective after inverting 2.  
The proof (in Proposition \ref{prop: hodge map surjects})
amounts to constructing a spectrum map $\Sigma^\infty_+ B(\Z/p) \to K\Sp(\Z)$ whose composition with the Hodge map is nonzero in $\pi_{4k-2}(-;\Z/p)$.

\begin{proof}[Proof of Theorem~\ref{thm:KSp-of-Z}, assuming  Proposition~\ref{prop:postponed-prop}]
  Isomorphism with $p$-local coefficients implies isomorphism with mod $q = p^n$ coefficients, so we have isomorphisms
  \begin{equation*}
    \pi_i(\KSp(\Z);\Z/q) = \pi_i(\mathrm{GW}^{-gs}(\Z);\Z/q) \xrightarrow{\cong} K_i(\Z;\Z/q)^{(+)} \oplus \pi_i(L^{-s}(\Z);\Z/q),
  \end{equation*}
  while we wish to show that 
  \begin{equation}\label{eq:copy}
    \pi_i(\KSp(\Z);\Z/q) \xrightarrow{(c_B,c_H)} K_i(\Z;\Z/q)^{(+)} \oplus \pi_i(ku;\Z/q)^{(-)}
  \end{equation}
  is an isomorphism.  By inspection the
  groups $\pi_i(ku;\Z/q)^{(-)}$ and $\pi_i(L^{-gs}(\Z);\Z/q)$ are abstractly isomorphic for all $i \geq 0$.  Since all the groups involved here are finite, it suffices to show that~\eqref{eq:copy} is surjective.   Composing the hyperbolization map $K_i(\Z;\Z/p) \to K\Sp_i(\Z;\Z/p)$ with the Betti-Hodge map induces
  \[
  (1 + \psi^{-1},0): K_i(\Z;\Z/p) \to K_i(\Z;\Z/p) \times \pi_i(ku;\Z/p)^{(-)},
  \]
  because the composition with the Hodge map may be identified with the map $K(\Z) \to ku$ arising from the inclusion $\Z \to \C^\mathrm{top}$ composed with $1 + \psi^{-1}$, which lands in the positive eigenspace.  Hence the image of \eqref{eq:copy} contains the first summand. The second summand is nonzero only for $i$ congruent to $2$ modulo $4$, and the claim follows from Proposition~\ref{prop:postponed-prop}.
\end{proof}

\begin{remark} A similar argument shows that
  on the level of spectra,
  there is a weak equivalence
 $    K\Sp(\Z)[\tfrac12] \xrightarrow{\simeq} K(\Z)[\tfrac1{1 + \psi^{-1}}] \times ku[\tfrac1{1 - \psi^{-1}}].$
\end{remark} 

\subsection{Bott inverted symplectic $K$-theory} 
\label{sec:bott-invert-sympl}

This subsection is in fulfillment of Remark~\ref{rem: intro K(1) local}, but is not logically necessary for the main presentation of our results.

Recall from Remark~\ref{remark:telescope} that there is a spectrum $T$ such that $\KBott(X;\Z/q) = \pi_*(K(X) \wedge T)$, and that $T$ is defined as a mapping telescope of a self-map $\Sigma^m \mathbb{S}/q \to \mathbb{S}/q$ with $m = 2p^{n-1}(p-1)$, chosen with the property that it induces an isomorphism $\pi_i(K) \to \pi_{i + m}(K)$ for all $i$ when $K$ denotes periodic complex $K$-theory.  Such a map is often called a \emph{$v_1$ self-map}, and serves as a replacement for multiplication by the Bott element.  We may then define ``Bott inverted homotopy groups'' of any spectrum $E$ as the homotopy groups of $E \wedge T$, although this is more commonly called \emph{$v_1$-inverted homotopy groups} and denoted
\begin{equation*}
  \pi_i(E;\Z/q)[v_1^{-1}] =  \pi_i(E \wedge T) = \colim \bigg(\pi_i(E;\Z/q) \to \pi_{i + m}(E;\Z/q) \to \dots \bigg),
\end{equation*}
where the maps in the direct limit are induced by the chosen $v_1$ self-map.

For example, the natural map $ku \to K$ from connective to periodic complex $K$-theory induces isomorphisms
\begin{equation*}
  \pi_i(ku;\Z/q)[v_1^{-1}] \to \pi_i(K;\Z/q)[v_1^{-1}] \leftarrow \pi_i(K;\Z/q),
\end{equation*}
for all $i \in \Z$.  In this notation we have
\begin{equation*}
  K_i^{(\beta)}(X;\Z/q) = \pi_i(K(X);\Z/q)[v_1^{-1}],
\end{equation*}
and we may completely similarly define
\begin{equation}
  \label{eq:66}
  \KSp^{(\beta)}_i(\Z;\Z/q) := \pi_i(\KSp(\Z);\Z/q)[v_1^{-1}].
\end{equation}
Also, we define the \emph{$p$-adic Bott-inverted symplectic $K$-theory groups}
\[
\KSp^{(\beta)}_i(\Z;\Z_p)  := \varprojlim_n  \KSp^{(\beta)}_i(\Z;\Z/p^n).
\]

Since colimits preserve isomorphisms, we immediately deduce the following.
\begin{corollary}\label{cor: bott inverted symplectic K}
  The Bott inverted symplectic $K$-theory groups of $\Z$ are given by isomorphisms
  \begin{equation*}
    \KSp^{(\beta)}_i(\Z;\Z/q) \xrightarrow{(c_B,c_H)} K^{(\beta)}_i(\Z;\Z/q)^{(+)} \times \pi_i(K;\Z/q)^{(-)}
  \end{equation*}
  for all $i \in \Z$.\qed
\end{corollary}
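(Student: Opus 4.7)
The plan is to deduce this corollary formally from Theorem~\ref{thm:KSp-of-Z} by smashing the spectrum-level isomorphism with the telescope $T$ of Remark~\ref{remark:telescope}. Since smashing with $T$ is a filtered homotopy colimit operation and colimits preserve isomorphisms and commute with finite direct sums, this reduces everything to unwinding the definitions on each side.

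First, I would note that the Betti map $c_B \colon \KSp(\Z) \to K(\Z)$ and the Hodge map $c_H \colon \KSp(\Z) \to ku$ are maps of spectra, so in particular they commute (on $\pi_i(- ;\Z/q)$) with multiplication by any chosen $v_1$-self map $\Sigma^m \mathbb{S}/q \to \mathbb{S}/q$. Thus the map
\[
\KSp_i(\Z;\Z/q) \xrightarrow{(c_B,c_H)} K_i(\Z;\Z/q) \oplus \pi_i(ku;\Z/q)
\]
induces, upon passing to $v_1$-inverted groups, a map on Bott-inverted symplectic $K$-theory into the direct sum of Bott-inverted $K$-theory and Bott-inverted connective topological $K$-theory.

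Second, I would verify that the eigenspace decompositions for $\psi^{-1}$ are preserved under $v_1$-inversion. The self-map shifts degree by $m = 2p^{n-1}(p-1)$, and for odd $p$ the exponent $m/2 = p^{n-1}(p-1)$ is even; so on the relevant homotopy groups the $v_1$-self map commutes with $\psi^{-1}$, and the $(+)$ and $(-)$ eigenspace summands are carried to their analogues after inverting $v_1$.

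Finally, I would identify each summand: the left-hand side is $\KSp^{(\beta)}_i(\Z;\Z/q)$ by definition~\eqref{eq:66}; the first factor on the right is $K^{(\beta)}_i(\Z;\Z/q)^{(+)}$ by definition; and the second factor is $\pi_i(ku;\Z/q)^{(-)}[v_1^{-1}] = \pi_i(K;\Z/q)^{(-)}$, using the fact (recorded just before the corollary) that the natural map $ku \to K$ induces an isomorphism $\pi_i(ku;\Z/q)[v_1^{-1}] \xrightarrow{\cong} \pi_i(K;\Z/q)$ for all $i \in \Z$. Since the map of Theorem~\ref{thm:KSp-of-Z} was an isomorphism for every $i \geq 0$ and filtered colimits of isomorphisms are isomorphisms, the result follows for all $i \in \Z$. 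There is essentially no obstacle here—the genuine content is in Theorem~\ref{thm:KSp-of-Z}, and the present corollary amounts to observing that its proof is compatible with the $v_1$-periodic structure.
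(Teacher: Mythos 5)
Your proof is correct and is essentially identical to the paper's: the paper's entire justification is the sentence ``Since colimits preserve isomorphisms, we immediately deduce the following,'' and you are simply spelling out the compatibility checks (that $c_B$, $c_H$, and $\psi^{-1}$ are spectrum maps and so commute with smashing against the $v_1$-telescope) that make that one-liner valid. The only slight detour is your remark about $m/2$ being even; the cleaner point is that $\psi^{-1}$ and the $v_1$-self-map act on different smash factors and hence commute automatically, but your parity observation reaches the same conclusion.
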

These groups are periodic in $i$ and in particular they are likely non-zero in negative degrees.  
We then obtain isomorphisms
\begin{equation*}
  \KSp^{(\beta)}_i(\Z;\Z_p) \xrightarrow{(c_B,c_H)} K^{(\beta)}_i(\Z;\Z_p)^{(+)} \times \pi_i(K;\Z_p)^{(-)}
\end{equation*}
of $\Z_p$-modules.  These are still non-zero in many negative degrees, but are no longer periodic of any degree.

\begin{remark}
To elaborate upon Remark \ref{rem: intro K(1) local}, we explain that these inverse limits of Bott inverted mod $p^n$ groups may be re-expressed using \emph{$K(1)$-localization}. The $K(1)$-localization of a spectrum $E$ consists of another spectrum $L_{K(1)} E$ and a map $E \to L_{K(1)} E$ with various good properties. (The functor $L_{K(1)}$ depends on $p$, which is traditionally omitted from the notation.) The defining properties include that the induced homomorphism in $K/p$-homology $(K/p)_*(E) \to (K/p)_*(L_{K(1)} E)$ is an isomorphism, where $K$ denotes periodic complex $K$-theory and $K/p = (\mathbb{S}/p) \wedge K$.  More relevant for us is that it ``implements inverting $v_1$'', see \cite[Theorem 10.12]{Ravenel}, and we have canonical isomorphisms
  \begin{align*}
    \KSp^{(\beta)}_i(\Z;\Z/q) & \cong \pi_i(L_{K(1)} \KSp(\Z);\Z/q)\\
    \KSp^{(\beta)}_i(\Z;\Z_p) & \cong \pi_i(L_{K(1)} \KSp(\Z)).
  \end{align*}
\end{remark}


\newcommand{\Tr}{\mathrm{Tr}}
\newcommand{\moddd}{/\!\!/}

\section{Review of the theory of CM abelian varieties}\label{sec: cm Ab var}

In the main part of this section, we  discuss the theory of abelian varieties with \emph{complex multiplication} (CM). 
In order to motivate why we are doing this, let us 
first explain how $\KSp$ is related to abelian varieties,
and then outline how the theory of complex multiplication
can be used to produce classes in  $\KSp(\Z)$.

\subsection{Abelian varieties, symplectic $K$-theory, and the construction of CM classes}
\label{sec:abel-vari-sympl}

As discussed in the outline in \S \ref{outline}, 
we  are going to  construct  certain ``CM classes'' in the symplectic $K$-theory of $\Z$. 
 Let us first go from abelian varieties to $K$-theory, before considering how CM enters the picture. 
 
 There is a functor between groupoids
\begin{equation} \label{eq:64}
  \mathcal{A}_g(\C) \to \SP(\Z).
\end{equation}
Here the domain $\mathcal{A}_g(\C)$ denotes the groupoid of principally polarized abelian varieties and isomorphisms between such.  In particular, we do not take the topology of $\C$ into account at this moment. 

An object in $\mathcal{A}_g(\C)$ consists of an abelian variety $\mathcal{A} \to \Spec(\C)$ together with a polarization, which is given by a line bundle $\mathcal{L} \to A \times_{\Spec(\C)} A$, rigidified by a non-zero section of $\mathcal{L}$ over $(e,e)$.  The reference map $A \to \Spec(\C)$ allows us to take ``Betti'' homology $H_*(A(\C);\Z)$ and cohomology, and $c_1(\mathcal{L}) \in H^2(A(\C) \times A(\C);\Z)$ defines a skew symmetric pairing on $L = H_1(A(\C);\Z)$ which is perfect because the polarization is principal.  We therefore have an object
 $
  (H_1(A;\Z),c_1(\mathcal{L})) \in \SP(\Z).
 $
Similarly isomorphisms in $\mathcal{A}_g(\C)$ are sent to isomorphisms in $\SP(\Z)$, so \eqref{eq:64} induces
 $
  |\mathcal{A}_g(\C)| \to |\SP(\Z)| \to \Omega^\infty \KSp(\Z)
 $
and then by adjunction a map of spectra
\begin{equation}\label{eq:51}
 \Sigma^{\infty}_+ |\mathcal{A}_g(\C)|  \rightarrow \KSp(\Z). 
\end{equation}

Next we explain what CM classes are. We take a principally polarized abelian variety $A$ that admits an action of the cyclotomic ring $\mathcal{O}_q = \Z[e^{2 \pi i /q}] \subset \C$.
In particular, the cyclic group
$\Z/q$ acts on $A$ (where $1 \in \Z/q$ acts via $e^{2 \pi i/q}$)
giving rise to a morphism of groupoids
$ B(\Z/q) \rightarrow \mathcal{A}_g(\C)$,
whence
$$ \Sigma^{\infty}_+ (B(\Z/q)) \rightarrow \Sigma^{\infty}_+ |\mathcal{A}_g(\C)| \rightarrow \KSp(\Z).$$
Now take homotopy with mod $q$ coefficients. On the left, 
we get  the stable homotopy of $B(\Z/q)$ with mod $q$ coefficients;
in \S \ref{Bottcyclic} we described a polynomial algebra $\Z/q[\beta]$
inside this homotopy ring. The image of powers of $\beta$ under the composite
in $\KSp_*(\Z;\Z/q)$ are, by definition,  the ``CM classes'' of \S \ref{outline}.
(A more precise version of this discussion is given after 
 Proposition \ref{prop:generators-for-KSp}). 

Now let us review how   principally polarized abelian varieties
with an action of $\mathcal{O}_q$ are parameterized.
We will work a little more generally:
 for any $2g$-dimensional abelian variety $A$,
 the dimension of any commutative $\Q$-subalgebra of
  $\mathrm{End}(A) \otimes \Q$ is
at most $g$. If equality holds, then $A$ is said
to have ``complex multiplication'' (or CM for short), and the ring $ 
  \mathrm{End}(A)$ is necessarily a  {\em CM order}:

  \begin{definition} 
  \begin{enumerate}
  \item
A {\em CM field} is, by definition, a field extension of $\Q$ which is a totally imaginary extension 
of a totally real field $E^+$, i.e.\ $E \cong E^+(\sqrt{d})$
where all embeddings $E^+ \rightarrow \C$ have real image,
and all take $d$ to negative real numbers.\footnote{In the literature one sometimes sees a slightly broader definition of CM fields, including totally real fields.} 
  A {\em CM algebra} is a product of CM fields.

 \item A {\em CM order} is an 
order in a CM algebra $E$ stable by conjugation.

Here ``order''
means a subring $\mathcal{O} \leqslant E$ which is free as a $\Z$-module and for which $\mathcal{O} \otimes_{\Z} \Q \rightarrow E$ is an isomorphism;
 and ``conjugation''  is the unique automorphism $x \mapsto \bar{x}$ of $E$ 
 which induces conjugation in any homomorphism $E \hookrightarrow \C$.  
\end{enumerate}
\end{definition}

 We can construct CM abelian varieties as follows:
 taking $\mathcal{O}$ a CM-order, let
 $\mathfrak{a} \leqslant \mathcal{O}$
 be an ideal, and $\Phi: \mathcal{O} \otimes \R \simeq \C^g$
 an isomorphism. Then
 $(\mathcal{O} \otimes \R)/\mathfrak{a}$
has the structure of complex analytic torus. To give it an algebraic
structure, one must polarize the resulting torus: 
 one needs a symplectic $\Z$-valued pairing on the first homology group $\mathfrak{a}$.
To get it one chooses a  suitable purely imaginary element $u \in \mathcal{O} \otimes \Q$
and considers the symplectic form $(x,y) \in \mathfrak{a} \times \mf{a} \mapsto \mathrm{Tr}(x u \bar{y}).$
All CM abelian varieties over $\C$ arise from this construction. 
  
 The resulting construction produces a complex abelian variety $A$
 from the data $\mathcal{O},  \mathfrak{a}, \Phi, u$. 
 For any automorphism $\sigma \in \mathrm{Aut}(\C)$
 the twist $\sigma(A)$, i.e.\ the abelian variety obtained
 by applying $\sigma$ to a system of equations defining $A$,
 necessarily arises from some other data $(\mathcal{O}', \mathfrak{a}', \Phi', u')$. 
 The {\em Main Theorem of Complex Multiplication} in its sharpest form, describes how to compute this new data. This theorem (in a slightly weaker form) 
is due to Shimura and Taniyama, and it will eventually be used by us to compute the action of $\Aut(\C)$
on CM classes in $\KSp(\Z;\Z/q)$.

In our presentation --  designed to  simplify the interface  with algebraic $K$-theory --
we will regard the basic object as the $\mathcal{O}$-module $\mathfrak{a}$ together with the skew-Hermitian form
 $x,y \mapsto  x u \bar{y},$
 valued in $\mathfrak{a} u \overline{\mathfrak{a}}$.
 We will
 interpret the construction sketched above
 as a functor of groupoids  
  $$  \mathcal{P}_{E}^{-} = \mbox{groupoid of skew-Hermitian $\mathcal{O}$-modules} \stackrel{\ST}{\longrightarrow} \mathcal{A}_g(\C).$$
The composition of this functor with  \eqref{eq:64} $\mathcal{A}_g(\C) \rightarrow \mathcal{SP}(\Z)$
associates to a skew-Hermitian module an underlying symplectic $\Z$-module.

\begin{remark}
 The appearance of Hermitian forms is   quite natural from the point of view of the theory of Shimura varieties: indeed,
 the set of abelian varieties with CM by a given field $E$ is related
 to the Shimura variety for an associated unitary group. 
 \end{remark}

\subsection{Picard groupoids} \label{picgroup}

Recall from Section~\ref{def:pic} that for a commutative ring
$R$ we have defined $\mathrm{Pic}(R)$ as the groupoid whose objects are rank 1 projective $R$-modules and whose morphisms are $R$-linear isomorphisms between them.  For a ring with involution, there is a version of this groupoid where the objects are equipped with 
perfect sesquilinear forms.

 \begin{definition}
   For a commutative ring $\mathcal{O}$ with involution $x \mapsto \overline{x}$ and an $\mathcal{O}$-module $L$ we shall write $\overline{L}$ for the module with the same underlying abelian group but $\mathcal{O}$-action changed by the involution.
  For a rank 1 projective $\mathcal{O}$-module $\omega$ equipped with an $\mathcal{O}$-linear involution $\iota:\omega \to \overline{\omega}$ we shall write $\PP(\mathcal{O},\omega,\iota)$ for the following groupoid:
  \begin{itemize}
  \item[-] Objects are pairs $(L,b)$ where $L$ is a rank 1 projective $\mathcal{O}$-module and $b: L \otimes_\mathcal{O} \overline{L} \to \omega$ an isomorphism satisfying $b(x \otimes y) = \iota(b(y \otimes x))$. 
  
  We may equivalently view $b$ as a function $L \times L \rightarrow \omega$ which is $\Cal{O}$-linear in the first variable and conjugate $\Cal{O}$-linear in the second variable, and we will frequently do this below. 
  \item[-] Morphisms $(L,b) \to (L',b')$ are $\mathcal{O}$-linear isomorphisms $\phi: L \to L'$ such that $b'(\phi(x),\phi(y)) = b(x,y)$ for all $x,y \in L$. 
\end{itemize}
\end{definition}

 There are some instances of this construction of particular interest for us.
 Take $E$ to be a CM field and $\mathcal{O}$ to be its ring of integers
  (i.e.,\ the integral closure of $\Z$ in $E$), with involution the conjugation $x  \mapsto \bar{x}$.  
  
 \begin{itemize}
\item[(i)] $\PPplus$, the groupoid of Hermitian forms on $\mathcal{O}$: 

Take  $\omega = \mathcal{O}$ with the conjugation involution and
set $\PPplus = \PP(\mathcal{O}, \omega, \iota)$. 
  \item[(ii)]  $\mathcal{P}_{E \otimes \R}^+$, the groupoid of Hermitian forms on $E \otimes \R$: 
 
 As in (i), but now replacing $\mathcal{O}$ by $\mathcal{O} \otimes \R$ and
 $\omega$ by $\mathcal{O} \otimes \R = \omega \otimes \R$, i.e.\ $\mathcal{P}_{E \otimes \R} = \PP(\mathcal{O} \otimes \R, \mathcal{O} \otimes \R, \iota \otimes \R)$.
 
\item[(iii)]  $\PPminus$, the groupoid of skew-Hermitian forms on $\mathcal{O}$ valued in the inverse different:

 Take  $\omega = \mathfrak{d}^{-1}$ the inverse different\footnote{
The inverse different $\mathfrak{d}^{-1}$ is, by definition,  
\begin{equation*}
\mathfrak{d}^{-1} = \{ y \in E \mid \text{$\mathrm{Tr}^E_\Q(xy) \in \Z$ for all $x \in \mathcal{O}$}\}.
\end{equation*}   
which is canonically isomorphic to $\Hom_\Z(\mathcal{O},\Z)$, with module structure defined by $(a.f)(x) = f(ax)$; under this identification the trace $\mathfrak{d}^{-1} \to \Z$ is sent to the functional on $\Hom_\Z(\mathcal{O},\Z)$ given by precomposition with $\Z \to \mathcal{O}$. } for $E$, with the {\em negated} conjugation involution $-\iota: z \mapsto -\bar{z}$
 and set $\PPminus = \PP(\mathcal{O}, \mathfrak{d}^{-1}, -\iota)$.

\item[(iv)]  $\mathcal{P}_{E \otimes \R}^{-}$, the groupoid of skew-Hermitian forms on $E \otimes \R$: 

 As in (iii), but tensoring with $\R$. \end{itemize}

Now, given $(L, b) \in \PPminus$, we shall write $L_\Z$ for the $\Z$-module underlying $L$.  It is a free $\Z$-module of rank $2g = \dim_\Q(E)$, and inherits a bilinear pairing
\begin{equation} \label{smpl}
  \begin{aligned}
    L_\Z \times L_\Z & \to \Z\\
    (x,y) & \mapsto - \mathrm{Tr}^E_\Q (b(x,y)).
  \end{aligned}
\end{equation}
This pairing is readily verified to be skew-symmetric and perfect (i.e.,\ the associated map $L_{\Z} \rightarrow L_{\Z}^{\vee}$ is an isomorphism)
so that 
associating to $(L,b) \in \PPminus$ the free $\Z$-module with the pairing above defines a \emph{functor} 
\begin{equation} \label{PPSP} \PPminus \to \mathcal{SP}(\Z).\end{equation} We shall return to this in Section~\ref{ssec:constr-abel-vari} below.

Finally, we comment on monoidal structure.
Unlike $\pic{R}$, we do not have a symmetric monoidal structure on $\PP(\mathcal{O},\omega,\iota)$ in general. 
However, if we take $\omega=\mathcal{O}$ equipped with the involution  on $\mathcal{O}$, then    $\PPplus = \PP(\mathcal{O},\mathcal{O},\iota)$ has the structure
of a symmetric monoidal groupoid, and more generally: 
 \begin{definition}
  Let $(\mathcal{O}, \omega,\iota)$ and $(\mathcal{O}, \omega',\iota')$ be as above (same underlying ring with involution, two different invertible modules with involution).  Define a functor
  \begin{equation}\label{eq:4}
    \PP(\mathcal{O},\omega,\iota) \times  \PP(\mathcal{O},\omega',\iota') \xrightarrow{\otimes} \PP(\mathcal{O},\omega \otimes_{\mathcal{O}} \omega',\iota \otimes \iota')
  \end{equation}
  as $(L,b) \otimes (L',b') = (L \otimes_{\mathcal{O}} L',b \otimes b')$, where $(b\otimes b')(x\otimes x',y \otimes y') = b(x,y) b(x',y')$.
\end{definition}

In particular this construction gives a symmetric monoidal structure on $\PPplus$ and an ``action'' bifunctor $\PPplus \times \PPminus \rightarrow \PPminus$.

\subsection{Construction of CM abelian varieties}
\label{ssec:constr-abel-vari}
Let $E$ be a CM field. 
We will now construct the map $\ST: \PPminus \to \mathcal{A}_g(\C)$
promised in 
 \S \ref{sec:abel-vari-sympl}. In fact this factors   the functor $\PPminus \to \mathcal{SP}(\Z)$ of \eqref{PPSP}.
 \begin{equation} \label{both compositions}
\PPminus \stackrel{\ST}{ \to} \mathcal{A}_g(\C) \xrightarrow{\eqref{eq:64}} \mathcal{SP}(\Z).
\end{equation} 

To construct the functor $\ST$, start with an object $(L, b) \in \PPminus$.
We shall equip
$ L_{\R}/L_{\Z}$
with the structure of a principally polarized abelian variety.
In order to do so it is necessary to specify, firstly, a complex structure $J$
on $L_{\R}$, and secondly a Hermitian form on $L_{\R}$
whose imaginary part is a perfect symplectic pairing $L_{\Z} \times L_{\Z} \rightarrow \Z$.
(This data can be used, as in \cite[Section I.2]{Mum08}, to construct
 an explicit ample line bundle on $L_{\R}/L_{\Z}$
 whose first Chern class is the specified symplectic pairing.)
 
 We begin by specifying the symplectic pairing: it is given by
 the expression of \eqref{smpl}, i.e.
\begin{equation} \label{simpl2}
  \begin{aligned}
    L_{\Z} \times L_\Z &\to \Z\\
    (x,y) & \mapsto - \mathrm{Tr}^{E}_{\Q} b(x,y).
  \end{aligned}
\end{equation}
(The sign is a purely a convention---the opposite convention would lead to other signs elsewhere, e.g.\ the inequality in~(\ref{PhiLbdef}) below would be the other way around.)  The definition of $\mathfrak{d}^{-1}$ makes this form $\Z$-valued and perfect, by the corresponding properties of $b$.  The real-linear extension of this symplectic form is the imaginary part of a Hermitian form on $L_\R$ in a complex structure; we specify this complex structure and Hermitian form next.

A {\em   CM type $\Phi$} for $E$ is, by definition, 
  a subset 
$\Phi \subset \Hom_{\mathrm{Rings}}(E,\C)$ with the property that the induced map
\begin{equation} \label{Phiiso} E \otimes \R \rightarrow \C^{\Phi}\end{equation}
is an isomorphism; equivalently, $\Phi$ contains 
 precisely one element in each conjugacy class $\{j, \overline{j}\}$.
 Such a $\Phi$ determines a complex structure on $L_{\R}$,
for \eqref{Phiiso} gives $E \otimes \R$ the structure of $\C$-algebra.
 
If $\Phi$ is a CM type, then
\begin{align*}
  \mathrm{Tr}^E_\Q b(x,y) 
                          & = 2\mathrm{Re} \bigg(\sum_{j \in \Phi} j(b(x,y)) \bigg),
\end{align*}
where we used that  $\mathrm{Tr}^E_\Q(x) = \sum_{j: E \to \C} j(x) \in \Q \subset \C$, where the sum is over all ring homomorphisms $E \to \C$.
 In particular, the function $  L_{\R} \times L_{\R} \rightarrow \C$  given by
\begin{equation} \label{bform} \langle x,y \rangle_b = -2i \bigg(\sum_{j \in \Phi} j(b(x,y)) \bigg),\end{equation} 
has \eqref{simpl2}  for  imaginary part. Moreover, $\langle -, - \rangle_b$
is Hermitian with respect to the complex structure on $L_{\R}$ induced by $\Phi$. 
Finally, $\langle -, - \rangle_b$   is positive definite precisely for the unique CM-type $\Phi = \Phi_{(L,b)}$, defined as
\begin{equation} \label{PhiLbdef} 
  \Phi_{(L,b)} := \{j : E \to \C \mid \text{$\mathrm{Im}(jb(x,x)) \geq 0$ for all $x \in L_\R$}\}
\end{equation}
i.e.\ the embeddings sending $b(x,x) \in E \otimes \R$ to the upper half-plane for all $x \in L_\R$.  We shall say that $\Phi_{(L,b)}$ is the CM structure on $E$ \emph{associated} to the object $(L,b) \in \PPminus$.  Evidently, it depends only on the image of $(L,b)$ under the base change functor
\begin{equation*}
\PPminus \xrightarrow{ - \otimes_\Z \R}  \mathcal{P}_{E \otimes \R}^{-}
\end{equation*}

To summarize, to $(L,b)$ we have associated:
\begin{itemize}
\item[-] a complex structure on $L_{\R}$
(the one induced from $\Phi_{(L,b)}$ via \eqref{Phiiso});
\item[-] a positive definite Hermitian form $\langle -, - \rangle_b$
  on this complex vector space; the imaginary part of this form 
  restricts to the symplectic form~(\ref{simpl2}).
\end{itemize}
The quotient $L_{\R}/L_{\Z}$ thus has the structure of a principally polarized
abelian variety over $\C$; we denote it by $\ST(L,b)$.

   The $\mathcal{O}$-module structure on $L$ gives a homomorphism $$\mathcal{O} \to \mathrm{End}\ \ST(L,b),$$ which is a homomorphism of rings with involution when the target is given the Rosatti involution induced by the polarization of $\ST(L,b)$.  Acting by an element of $a \in \mathcal{O}$ gives an endomorphism of $\ST(L,b)$, which will be an automorphism if $a \in \mathcal{O}^\times$, but not necessarily one that preserves the polarization: the polarization is given by a line bundle $\mathscr{L}$ on $\ST(L,b) \times \ST(L,b)$, and the correct statement is that 
   \begin{equation}\label{eq: respect polarization}
   (a,1)^*(\mathscr{L}) = (1, \overline{a})^*(\mathscr{L}).
   \end{equation}
 However, acting by an element of the subgroup $\{a \in \mathcal{O}^\times \mid x \overline{x} = 1\}$ does preserve the polarization.

The association $(L,b) \mapsto \ST(L,b)$ defines the desired functor
\begin{equation}\label{eq:1}
\ST: \PPminus \to \mathcal{A}_g(\C).
\end{equation}

\begin{remark} \label{equiv-of-cat} 
In fact, the association $(L,b) \mapsto \ST(L,b)$ can be made into an equivalence by modifying the target category. Namely, consider principally polarized abelian variety $A$ over the complex numbers
together with a map $\iota: \mathcal{O} \rightarrow \mathrm{End}(A)$ that respects the polarization in the sense of \eqref{eq: respect polarization}.
Such form a groupoid in an evident way (the morphisms
being isomorphisms of abelian varieties respecting polarization and $\mathcal{O}$-action);
call this groupoid $\mathcal{A}_g^{\mathcal{O}}(\C)$. 
The functor $\ST$ defined in \eqref{eq:1} factors through
$$ \PPminus \longrightarrow \mathcal{A}_g^{\mathcal{O}}(\C)$$
and this is an equivalence: an inverse functor 
sends $(A, \iota)$ to $(L, b)$, where
$L=H_1(A(\C); \Z)$, and
$b: L \times L \rightarrow \mathfrak{d}^{-1}$
is uniquely specified by the requirement that
$ - \mathrm{Tr}^{E}_{\Q} b(x,y)$ 
coincides with the skew-symmetric pairing on $L$
induced by the principal polarization. 
 \end{remark} 
 
\begin{remark}\label{rem:differ} 
As in (\ref{eq:4}), there is a tensor bifunctor $\PPminus \times \PPplus \rightarrow \PPminus$. 

  If $(X,q) \in \PP(\mathcal{O},\mathcal{O},\iota_+)$ is positive definite -- that is, $q(x,x) \in \mathcal{O}^+$ is totally positive for all $x \in X$ --  then this tensor operation can be described algebraically via ``Serre's tensor construction'' \cite{A-K}: if $(L',b') = (L,b) \otimes   (X,q) $ then 
\begin{equation} \label{Serretensor} \ST(L',b') \cong \ST(L,b) \otimes_\mathcal{O}  (X,q)  , \end{equation}
  the abelian variety representing the functor $R \mapsto  \Hom(\Spec(R),\ST(L,b))  \otimes_\mathcal{O} X$, equipped with a polarization induced by that of $A(L,b)$ and $q$. 

  If $q$ is not positive definite it seems difficult
  to give
  an explicit description such as \eqref{Serretensor}.    For example, tensoring with $(X,q) = (\mathcal{O},-1)$, where ``$-1$'' denotes the form $x\otimes y \mapsto -x\bar{y}$,  sends $A = \ST(L,b)$ to its ``complex conjugate'' variety $\overline{A}$. (In the discussion above,
  it replaces the CM type $\Phi_{(L,b)}$ with its complement.) 
\end{remark}

\subsection{Construction of enough objects of $\PPminus$ for a cyclotomic field}
\label{sec:constr-objects-ppoo}

We now specialize to the case when $ E = K_q  \subset \C$, the cyclotomic
field generated by the $q$th roots of unity.  We shall prove a slightly technical result
about the existence of enough objects in the groupoid $\PPqminus$;
this is the key setup   in our later verification (Proposition \ref{prop:generators-for-KSp})
that CM classes exhaust symplectic $K$-theory.

Recall that a CM structure on $\OO_q$, the ring of integers of $\Q(\mu_q)$, may be defined either as an $\R$-algebra homomorphism $\C \to \OO_q \otimes \R$, or as a set of embeddings $\OO_q \to \C$ containing precisely one element in each equivalence class $\{j ,\bar{j}\}$ under conjugation.  As in \eqref{PhiLbdef} each object $(L,b)$ 
 of the groupoid $\PPqminus$ picks out a CM type,
 which we denote as $\Phi_{(L,b)}$; explicitly, $(L \otimes \R, b \otimes \R)$
 is isomorphic to $\OO_q \otimes \R$ 
 with
Hermitian form given by $(x,y) \mapsto x u \bar{y}$ for some $u \in \OO_q \otimes \R$
purely imaginary, and the CM type is given by those embeddings for which the imaginary
part of $j(u)$ is positive.

\begin{proposition} \label{construction of CM structures}
  Let $\Phi$ be a CM structure on $\OO_q$ and let $L \in \pic{\OO_q}$.  Then there exist objects $(B_1,b_1)$ and $(B_2,b_2)$ of $\PPqminus$ such that
  \begin{enumerate}[(i)]
  \item\label{item:5} $[B_1][B_2]=[L][\overline{L}]^{-1} \in \pi_0(\pic{\OO_q})$,
  \item\label{item:6} $\Phi_{(B_1,b_1)} = \Phi_{(B_2,b_2)} = \Phi$.
  \end{enumerate}
\end{proposition}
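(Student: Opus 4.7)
The plan is to decompose $[L][\overline{L}]^{-1}$ as $[B_1][B_2]$ by taking $B_1 = \OO_q$ and $B_2$ any rank-one projective module representing the class $[L][\overline{L}]^{-1}$; both classes lie in $\ker(1+c) \subset \pi_0(\pic{\OO_q})$, where $c$ is complex conjugation, because $[L][\overline{L}]^{-1} = (1-c)[L]$ and $(1+c)(1-c) = 0$. For any such $[B_i]$, the $\OO_q$-module $B_i \otimes_{\OO_q} \overline{B_i}$ is free of rank one, and since the different $\mathfrak{d}_{K_q/\Q}$ is principal for cyclotomic fields, a skew-Hermitian form $b_i \colon B_i \otimes \overline{B_i} \to \mathfrak{d}^{-1}$ is the same datum as a purely imaginary element $u_i \in K_q$ with $u_i \cdot B_i \overline{B_i} = \mathfrak{d}^{-1}$ as fractional ideals; under this correspondence, the CM type $\Phi_{(B_i, b_i)}$ consists of the embeddings $j \colon K_q \hookrightarrow \C$ satisfying $\mathrm{Im}(j(u_i)) > 0$.

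First I will produce an initial purely imaginary element $u_i^{(0)}$ for each $i$. For $B_1 = \OO_q$, take $u_1^{(0)} = (\zeta_q - \zeta_q^{-1})^{-v}$, where $v$ is the valuation of $\mathfrak{d}$ at the ramified prime $\pi = (1-\zeta_q)$; the element $\zeta_q - \zeta_q^{-1}$ is purely imaginary and generates the same ideal as $\pi$ (since $\zeta_q + 1$ has norm $\Phi_{p^n}(-1) = 1$ and is therefore a unit for odd $p$), while the exponent $v = p^{n-1}(n(p-1)-1)$ is odd, so $u_1^{(0)}$ is again purely imaginary. For $B_2$, Hasse's theorem that the unit index $Q(K_q) = 1$ for prime-power cyclotomic fields provides (via Hilbert 90 applied to $c(\beta)/\beta$ for any generator $\beta$) a totally real generator $\alpha \in K_q \cap \R$ of the conjugation-invariant principal ideal $B_2 \overline{B_2}$, and we then take $u_2^{(0)} := \alpha^{-1} u_1^{(0)}$.

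The main obstacle is to modify each $u_i^{(0)}$ so as to realize the prescribed CM type $\Phi$. Any other purely imaginary element satisfying the same ideal-generation property is of the form $\epsilon u_i^{(0)}$ for a unit $\epsilon$ in the totally real subring $\OO_q \cap \R$, and multiplication by $\epsilon$ flips the sign of $\mathrm{Im}(j(u_i^{(0)}))$ at precisely those embeddings $j$ where $j(\epsilon) < 0$. The existence of a suitable $\epsilon$ for each possible $\Phi$ thus reduces to surjectivity of the signature map $(\OO_q \cap \R)^\times \to \{\pm 1\}^g$, where $g = [K_q \cap \R : \Q]$. For prime-power cyclotomic fields this surjectivity is classical and follows from an analysis of the signs of cyclotomic units; granting it, we choose $\epsilon_i$ so that $\Phi_{(B_i, \epsilon_i u_i^{(0)})} = \Phi$ and set $b_i$ to be the form given by $\epsilon_i u_i^{(0)}$, completing the construction.
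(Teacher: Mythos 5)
Your reduction to the surjectivity of the signature map is where the argument breaks. You fix $B_1 = \OO_q$ and $B_2 = L \otimes \overline{L}^{-1}$ at the outset; having committed to the ideal classes, the only remaining freedom in choosing $b_i$ is to rescale the purely imaginary element $u_i$ by a unit of the totally real subring $\OO_q^+ = \OO_q \cap \R$. As you correctly observe, realizing an arbitrary CM type $\Phi$ then requires the signature map
\[
(\OO_q^+)^\times \longrightarrow \{\pm 1\}^{g}, \qquad g = [K_q^+ : \Q],
\]
to be surjective. But this surjectivity is \emph{equivalent} to the equality $h^+(K_q^+) = h(K_q^+)$ of narrow and wide class numbers (the index of the image is exactly $h^+/h$, via the exact sequence $1 \to (\OO_q^+)^\times/(\OO_q^+)^{\times,\mathrm{tot.pos.}} \to \{\pm 1\}^g \to \Cl^+(K_q^+) \to \Cl(K_q^+) \to 1$), and this is not a theorem for prime-power cyclotomic fields; it is a difficult conjecture of Weber type, verified for small $q$ but open in general. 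The assertion that it is ``classical and follows from an analysis of the signs of cyclotomic units'' is not correct. So the proof has a genuine gap at exactly the point where you invoke surjectivity.

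The paper's proof avoids this obstruction in a structurally different way, and that difference is essential. Rather than rescaling $b_i$ by a unit (which cannot change $[B_i]$), it tensors $(B_1',b_1')$ with some $(X,q) \in \PPqplus$ and $(B_2',b_2')$ with $(X,q)^{-1}$, so that the individual classes $[B_i]$ move but the product $[B_1][B_2]$ is preserved. The CM type is then adjusted by choosing $X$ to be a prime ideal $\mathfrak{x}$ with $\mathfrak{x}\overline{\mathfrak{x}} = (t)$ for a totally real prime \emph{element} $t \in \OO_q^+$ of prescribed sign pattern; the existence of such $t$ is supplied by the Chebotarev density theorem applied to the extension $H_q^+ K_q / K_q^+$, where $H_q^+$ is the narrow Hilbert class field of $K_q^+$. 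Passing from units to prime elements (together with the freedom to shift both $B_i$ simultaneously) is exactly what sidesteps the narrow class group obstruction that your argument runs into. Your initial reductions, by contrast, are fine: the computation of a purely imaginary generator of $\mathfrak{d}^{-1}$, the observation that $[B_1][B_2] = [L][\overline{L}]^{-1}$ forces each $B_i\overline{B_i}$ to be principal, and the dictionary between forms $b_i$ and purely imaginary generators $u_i$ all check out.
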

\begin{proof}
  Let $\zeta_q = e^{2 \pi i/q} \in \OO_q$ as usual, and recall that the different $\mf{d} \subset \OO_q$ is principal and
  generated by\footnote{To justify this, see \cite[Proposition 2.7]{Wash} for the calculation of the discriminant, from which it's easy to deduce the statement about the different, using that $K_q/\Q$ is totally ramified over $p$.} $q/(\zeta_q^{q/p}-1)$.  The element $w = (1-\zeta^2)/(1-\zeta) = (1+\zeta)$ is a unit in $\OO_q$ and has the property that $\overline{w} = \zeta^{-1}_q w$.  If we set
  $$ \delta := w^{q/p} \frac{q}{\zeta_q^{q/p} - 1},$$
  it follows that $(\delta) = \mathfrak{d}$ and $\overline{\delta} = -\delta$, i.e.\ $\delta$ is purely imaginary.  The inverse different ideal $\mathfrak{d}^{-1} \subset K_q$ is therefore also principal, generated by the purely imaginary element $\delta^{-1}$. 
  
  It is now easy to satisfy~(\ref{item:5}): set
  \begin{alignat*}{2}
    B'_1 & = \OO_q &\quad b'_1(x,y) &= \delta^{-1} x \overline{y}\\
    B'_2 & = L \otimes \overline{L}^{-1} &\quad b'_2(x \otimes \phi, y \otimes \psi) &= \delta^{-1} \cdot \psi(x) \cdot \phi(y)
  \end{alignat*}
  where in the first line $x \in \OO_q$ and $y \in \ol{\OO}_q$, and in the second line   $x \otimes \phi \in B'_2 = L \otimes \overline{L}^{-1}$ and $y \otimes \psi \in \overline{B'_2} \cong \overline{L} \otimes L^{-1}$ (and the evaluation pairing between $L$ and $L^{-1}$ comes from viewing $L^{-1}$ as the dual of $L$).
   It is clear that the pairings $B'_i \otimes_{\OO_q} \overline{B}'_i \to K_q$ defined by the two formulae give isomorphisms onto $(\delta^{-1}) = \omega_{\OO_q}$, and the fact that $\delta$ is totally imaginary implies that $b_i(x,y) = - \overline{b_i(y,x)}$, so that we indeed have two objects $(B_i,b_i) \in \PPqminus$.  It is also obvious that $[B'_1][B'_2] = [L][\overline{L}]^{-1} \in \pi_0 \pic{\OO_q}$.  These objects do not necessarily satisfy~(\ref{item:6}) though: any complex embedding $j: K_q \to \C$ will take $b_1(x,x)$ and $b_2(x,x)$ to a non-negative real multiple of the imaginary number $j(\delta^{-1})$, so in fact $\Phi_{(B'_1,b'_1)} = \Phi_{(B'_2,b'_2)} = \Phi_0$, where
  \begin{equation*}
    \Phi_0 = \{j: K_q \to \C \mid \mathrm{Im}(j(\delta^{-1})) > 0\}.
  \end{equation*}

  To realize other CM structures we shall use the tensor product~\eqref{eq:4} and set
  \begin{align*}
    (B_1,b_1) & = (B'_1,b'_1) \otimes (X,q)\\
    (B_2,b_2) & = (B'_2,b'_2) \otimes (X,q)^{-1}
  \end{align*}
  for a suitable object $(X,q) \in \mathcal{P}_{K_q}^+$.
  We shall choose $(X,q)$ using the following Lemma:
  
  \begin{lemma}\label{lem: signs}
    Let $\OO_q^+ \subset K_q^+$ denote the ring of integers in $K_q^+ = \Q[\cos(2\pi/q)]$, the maximal totally real subfield of $K_q$, and let $S = \Hom(\OO_q^+,\R)$ be the set of real embeddings of $\OO_q$.  For any function $f: S \to \{\pm 1\}$ there exists a non-zero prime element $t \in \OO_q^+$ such that
    \begin{itemize}
    \item $\mathrm{sgn}(j(t)) = f(j)$ for all $j \in S$,
    \item $t \OO_q = \mathfrak{x} \overline{\mathfrak{x}}$ for a (prime) ideal $\mathfrak{x} \subset \OO_q$.
    \end{itemize}
  \end{lemma}
  We give the proof of the lemma below, but let us first explain why it permits us to conclude the proof. Let $X$ be the $\OO_q$-module underlying $\mathfrak{x}$ and define a sesquilinear pairing on $X$ by
  \begin{equation*}
    q(x,y) = t^{-1} x \overline{y}.
  \end{equation*}
  This defines an isomorphism $q: X \otimes_{\OO_q} \overline{X} \to \OO_q$, and $q(x,y) = \overline{q(y,x)}$ since $t$ is totally real.
  Hence we have an object $(X,q) \in \PPqplus$.
  Now the difference between
  $ \Phi_{(L, b) \otimes (X,q)}$ and $\Phi_{(L, b)}$  
  is precisely determined by the signs of $t$ under the real embeddings of $K_q^+$, which are controlled by the function $f$ in the lemma, which may be arbitrary.
\end{proof}
    
\begin{proof}[Proof of Lemma \ref{lem: signs}]
  This will be a consequence of the Chebotarev density theorem in algebraic number theory, which
  produces a prime ideal with a specified splitting behavior in a field extension; for us the extension is $H_q^+ K_q/K_q$,
  where $H_q^+$ is the narrow Hilbert class field of $K_q^+$, that is, the largest abelian extension of $K_q^+$
  that is unramified at all finite primes.
  
  Restriction defines an isomorphism
  \begin{equation} \label{GalR}
    \Gal(H_q^+ K_q/K_q^+) \stackrel{\sim}{\longrightarrow} \Gal(H_q^+/K_q^+) \times \Gal(K_q/K_q^+),
  \end{equation} 
(the map is surjective because $K_q/K_q^+$ is totally ramified at the unique prime above $q$ and $H_q^+/K_q^+$ is unramified,
so the inertia group at $q$ maps trivially to the first factor and surjects to the second factor).
Now class field theory defines an isomorphism
\begin{equation} \label{ICG}
  \mathrm{Art}: \frac{  \{\pm 1\}^{S} \times \mbox{fractional ideals}}{\mbox{principal signed ideals}}  \stackrel{\sim}{\longrightarrow} \Gal(H_q^+/K_q^+)
\end{equation} 
where the principal signed ideals are elements of the form $(\mathrm{sign}(\lambda), \lambda)$
for  $\lambda$   a nonzero element of $K_q^+$.  The map from left to right
is the Artin map on fractional ideals, and sends the $-1$ factor indexed by $j \in S$
to the complex conjugation above $j$. 

 By the Chebotarev density theorem, there exists a prime ideal $\mathfrak{t}$
 of $K_q^+$ whose image under \eqref{GalR} 
 is trivial in the second factor, and, in the first
 factor, coincides with $\mathrm{Art} (f \times \mathrm{trivial})$. 
Triviality in the second factor  forces  $\mathfrak{t}$ to be split in $K_q/K_q^+$;
the condition on the first factor forces $\mathfrak{t} = t \mathcal{O}_q^+$
where    the sign of $j(t)$ is given by $f(j)$, for each $j \in S$. 
 \end{proof}


\newcommand{\cyclicq}{\Z/q}
\newcommand{\Bott}{\mathrm{Bott}}

\section{CM classes exhaust symplectic $K$-theory} \label{CMExhaust}

The primary goal of this section is to verify that
the construction of classes
in symplectic $K$-theory sketched in \S  \ref{sec:abel-vari-sympl}
in fact
produces all of symplectic $K$-theory in the degrees of interest.

In more detail: we have  constructed a sequence \eqref{both compositions}
 $\PPminus \stackrel{\ST}{ \to} \mathcal{A}_g(\C) \to \mathcal{SP}(\Z)$
 associated to a CM field $E$; the functor $\ST$
 produces a CM abelian variety from a skew-Hermitian module over the ring of integers of $E$. 
  There are induced maps of spaces $|\PPminus| \to |\mathcal{A}_g| \to |\mathcal{SP}(\Z)| \to \Omega^\infty \KSp(\Z)$, where the last map is the group completion map.  By adjunction there are associated map of spectra
\begin{equation}\label{eq:main-map-to-KSp}
  \Sigma^\infty_+ |\PPminus| \to \Sigma^\infty_+ |\mathcal{A}_g(\C)| \to \KSp(\Z).
\end{equation} 
   {\em We emphasize that $\mathcal{A}_g(\C)$
is a discretely topologized groupoid, that is to say,  the topology on $\C$ plays no role.} 
This makes the middle term of \eqref{eq:main-map-to-KSp} rather huge.
In this section we show that the composition of \eqref{eq:main-map-to-KSp}
is surjective on homotopy, in the degrees of interest:

\ \begin{proposition}\label{prop:generators-for-KSp}
  Take $E = K_q$, the cyclotomic field. The composition
  \begin{equation} \label{KqexhaustsKSp}
    \pi_{4k-2}^s(|\PPqminus|;\Z/q) \to
    \KSp_{4k-2}(\Z;\Z/q).
  \end{equation}
  is surjective for all $k \geq 1$.  \end{proposition}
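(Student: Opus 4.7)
By Theorem~\ref{thm:KSp-of-Z} the Betti--Hodge map gives an isomorphism
\[
\KSp_{4k-2}(\Z;\Z/q) \xrightarrow{(c_B,c_H)} K_{4k-2}(\Z;\Z/q)^{(+)} \oplus \pi_{4k-2}(ku;\Z/q)^{(-)},
\]
so it suffices to show the image of the composition \eqref{KqexhaustsKSp} surjects onto each summand. The basic construction: for each $(L,b) \in \PPqminus$, the inclusion $\mu_q \hookrightarrow \Aut(L,b)$ by multiplication (preserving $b$ since $\zeta_q\overline{\zeta_q}=1$) gives a classifying map $B(\Z/q) \to |\PPqminus|$, whose composition with \eqref{KqexhaustsKSp} is a spectrum map $\gamma_{(L,b)} \co \Sigma^{\infty}_+ B(\Z/q) \to \KSp(\Z)$. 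We compute the image of $\beta^{2k-1}$ under both $c_B$ and $c_H$.

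The forgetful functor $\PPqminus \to \SP(\Z)$ is restriction of scalars, so $c_B$ factors as $|\PPqminus| \to |\pic{\OO_q}| \to \Omega^\infty K(\OO_q) \xrightarrow{\tr} \Omega^\infty K(\Z)$, yielding
\[
c_B\bigl((\gamma_{(L,b)})_*(\beta^{2k-1})\bigr) = \tr_{\OO_q/\Z}\!\bigl(\beta(\zeta_q)^{2k-1}\cdot [L]\bigr).
\]
Since $\beta^{2k-1}$ lies in the $(-)$-eigenspace of $\psi^{-1}$, and since by Corollary~\ref{KOq-Bott} the assignment $[L] \mapsto \beta^{2k-1}([L]-1)$ is a \emph{group} homomorphism $\pi_0\pic{\OO_q}/q \to \KBott_{4k-2}(\OO_q;\Z/q)^{(+)}$, the $(+)$-projection equals $\tr_{\OO_q/\Z}(\beta^{2k-1}([L]-1))$. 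For the Hodge map, the CM type $\Phi := \Phi_{(L,b)}$ identifies $L_\R$ as a Hermitian $\C^g$-space on which $\mu_q$ acts through the characters $\phi|_{\mu_q}$, $\phi \in \Phi$; by Remark~\ref{rem: cyc action on bott} applied to each $\phi$ with $\phi(\zeta_q)=\zeta_q^{a_\phi}$,
\[
c_H\bigl((\gamma_{(L,b)})_*(\beta^{2k-1})\bigr) = \sum_{\phi \in \Phi} a_\phi^{2k-1} \in \Z/q.
\]

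\emph{Hodge surjectivity.} Some CM type $\Phi$ makes $\sum_\Phi a_\phi^{2k-1}$ a unit mod $q$: flipping a single representative $a_{\phi_0} \mapsto -a_{\phi_0}$ shifts the sum by $-2a_{\phi_0}^{2k-1} \in (\Z/q)^\times$, so if one CM type gives $0$ modulo $p$, the flipped one gives a unit. Proposition~\ref{construction of CM structures} produces $(L,b) \in \PPqminus$ realizing any prescribed $\Phi$, giving a class whose Hodge image is a generator of $\Z/q$. (This also proves Proposition~\ref{prop:postponed-prop}.)

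\emph{Betti surjectivity (main step).} By Proposition~\ref{shtuka} combined with Proposition~\ref{normresidueconsequence}, the classes $\tr_{\OO_q/\Z}(\beta^{2k-1}([L_0]-1))$ for $L_0 \in \pi_0\pic{\OO_q}$ span $K_{4k-2}(\Z;\Z/q)^{(+)}$. Fix a CM type $\Phi$ and apply Proposition~\ref{construction of CM structures} to obtain $(B_1,b_1),(B_2,b_2) \in \PPqminus$ with $\Phi_{(B_i,b_i)}=\Phi$ and $[B_1][B_2]=[L_0][\ol{L_0}]^{-1}$. The $(+)$-Betti image of the sum $(\gamma_{(B_1,b_1)})_*(\beta^{2k-1}) + (\gamma_{(B_2,b_2)})_*(\beta^{2k-1})$ equals, by the group homomorphism property applied twice,
\[
\tr\bigl(\beta^{2k-1}([L_0][\ol{L_0}]^{-1}-1)\bigr) = \tr\bigl(\beta^{2k-1}([L_0]-1)\bigr) - \tr\bigl(\beta^{2k-1}([\ol{L_0}]-1)\bigr).
\]
Galois-invariance of the transfer together with $c(\beta(\zeta_q)) = \beta(\zeta_q^{-1}) = -\beta(\zeta_q)$ and $c([L_0]) = [\ol{L_0}]$ gives $\tr(\beta^{2k-1}([\ol{L_0}]-1)) = -\tr(\beta^{2k-1}([L_0]-1))$, so the total equals $2\tr(\beta^{2k-1}([L_0]-1))$. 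As $q$ is odd, $2$ is a unit, so every generator is realized; subtracting a suitable multiple of the class from the preceding paragraph kills any residual Hodge contribution, and we conclude surjectivity onto both summands. The main obstacle is the careful bookkeeping of the $\psi^{-1}$-eigenspace projections and the translation between the multiplicative structure on $\pi_0\pic{\OO_q}$ and the additive structure on $K$-theory, both mediated by Corollary~\ref{KOq-Bott}.
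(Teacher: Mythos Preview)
Your argument is correct and follows essentially the same route as the paper: compute $c_H$ and $c_B$ on classes $\beta^{2k-1}[(L,b)]$, use the flip trick to make the Hodge image a unit, and use Proposition~\ref{construction of CM structures} together with Proposition~\ref{shtuka} and conjugation-invariance of transfer to span $K_{4k-2}(\Z;\Z/q)^{(+)}$. The only difference is cosmetic: the paper produces four objects $(L_1,\dots,L_4)$ with the chosen CM type so that the combination $\sum (-1)^{i+1}\beta^{2k-1}[(L_i,b_i)]$ already lies in $\ker c_H$, whereas you use two objects and correct the Hodge component afterwards by subtracting a multiple of a fixed Hodge-generating class; your version is slightly more streamlined. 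One caveat: your opening sentence ``it suffices to show the image surjects onto each summand'' is literally false (a diagonal can surject onto both factors), but your actual argument does more than this and establishes genuine surjectivity, so there is no gap.
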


More precisely, we show that a certain natural supply of classes in the source already surject on the target. 
    All objects $(L,b) \in\PPminus$ have automorphism group the unitary group $U_1(\mathcal{O}) = \{x \in \mathcal{O} \mid x \overline{x} = 1\}$, so we get a homotopy equivalence
$|\PPminus| \simeq BU_1(\mathcal{O}) \times \pi_0(\PPminus)$ and since stable homotopy takes disjoint union to direct sum we get isomorphisms analogous to~(\ref{eq:8}) \begin{equation}\label{eq:2}
  \begin{aligned}
    \pi_*^s( |\PPminus|) &\cong \pi_*^s(BU_1(\mathcal{O})) \otimes \Z[\pi_0(\PPminus)]\\
    \pi_*^s( |\PPminus|;\Z/q) & \cong \pi_*^s(BU_1(\mathcal{O});\Z/q) \otimes \Z[\pi_0(\PPminus)]\\
\end{aligned}
\end{equation}
In the case $E=K_q$  with ring of integers $\mathcal{O}_q$, 
we get  a map
  $\Z/q  \to \mathcal{O}_q^{\times}$ sending $a$ to $e^{2 \pi i a/q}$, 
and thereby
$$ \pi_2^s(B(\Z/q);  \Z/q) \rightarrow  \pi_2^s(BU_1(\mathcal{O}_q);\Z/q),$$
The left-hand side contains a distinguished ``Bott element'' $\beta$, which generates a polynomial algebra in $\pi_2^s(B(\Z/q);  \Z/q) $, 
as discussed in   \eqref{Bottcyclic}. We denote by the same
letter its image inside the right-hand side. 

What we shall  show, in fact, is that 
  elements of the form
$  \beta^{2k-1} \otimes [(L, b)] \in \pi_{4k-2}^{s}(|\PPqminus|, \Z/q)$, 
with $(L, b) \in \pi_0 \PPqminus$,
generate the image of \eqref{KqexhaustsKSp}.
 To show this,   we use Theorem  \ref{thm:KSp-of-Z},
which provides a sufficient supply of maps {\em out of} $\KSp$,
namely the Hodge map $c_H$ and the Betti map $c_B$. 
In \S \ref{subsec:Hodge}, we compute    
$c_H \circ $(\ref{eq:main-map-to-KSp}),
and in \S \ref{subsec:Soule} we compute 
$c_B \circ $(\ref{eq:main-map-to-KSp}).
We them assemble the results in the final section \S \ref{sec:surjectivity}.  

\subsection{Hodge map for CM abelian varieties} \label{subsec:Hodge}

We first describe the composition
\begin{equation*}
  B\cyclicq \times \pi_0(  \PPqminus ) \simeq |\PPqminus| \to \Omega^\infty \KSp(\Z) \xrightarrow{c_H} \Z \times BU,
\end{equation*}
which is most conveniently expressed one path component at a time.

\subsubsection{Reminders on the Hodge map} Recall that the Hodge map $\KSp(\Z) \to ku$ arose from a zig-zag of functors $\SP(\Z) \to \SP(\R^\mathrm{top}) \xleftarrow{\simeq} \Utop$, as in \eqref{Hodgezigzag}.  Understanding the Hodge map $\KSp(\Z) \to ku$ therefore involves inverting the weak equivalence, which informally amounts reducing a structure group from $\Sp_{2g}(\R)$ to $U(g)$.  Roughly speaking, for a symplectic real vector space we must choose compatible complex structures and Hermitian metrics with the given symplectic form as imaginary part.  

\subsubsection{Computation of the Hodge map for $\PPqminus$} For the symplectic vector spaces arising from objects $(L,b) \in\PPminus$ by the construction in \S\ref{ssec:constr-abel-vari} above we already produced such a choice.  Indeed, the Hermitian inner product $\langle -,-\rangle_b$ from \eqref{bform} and the CM structure $\Phi_{(L,b)}$ on $E$ induces exactly this structure on $L_{\R} = L \otimes \R$.  This observation gives the diagonal arrow in the following diagram
\begin{equation*}
  \xymatrix{
    \PPqminus \ar[r] \ar[d] & \SP(\Z) \ar[d]\\
    \mathcal{P}_{K_q \otimes \R}^{-} \ar[r] \ar[dr] & \SP(\R^\mathrm{top})\\
    & \Utop \ar[u]_{\simeq}.
  }
\end{equation*}

Restricting the composition $\PPqminus \to \Utop$ to the object $(L,b)$ and its automorphism group $\mu_q = \mathrm{Aut}(L,b)$, we may describe the composition
\begin{equation}\label{eq:34}
  \{(L,b)\} \moddd \mu_q  \hookrightarrow \PPqminus \to \Utop,
\end{equation}
(where $\{(L,b)\} \moddd  \mu_q $ is shorthand for the full sub-groupoid of $\PPqminus$ on the object $(L,b)$) as follows. Giving a functor $\{(L,b)\} \moddd \mu_q   \to \Utop$ is equivalent to giving a unitary representation of $\mu_q$, and in these terms the composition \eqref{eq:34} corresponds to the unitary representation
\begin{equation}\label{eq:34ii}
  \bigoplus_{j \in \Phi_{(L,b)}} j\vert_{\mu_q}
\end{equation}
where we recall that $\Phi_{L,b}$ consists of various complex embeddings $K_q \hookrightarrow \C$,
and we may therefore regard  each restriction $j\vert_{\mu_q}: \mu_q \to U_1(\C) \subset \C^\times$ as a 1-dimensional (unitary) representation of $\mu_q$.
 The CM structure $\Phi_{(L,b)}$ depends only on the image of $(L,b)$ under the base change functor
 $
  \PPqminus \xrightarrow{- \otimes_\Z \R} \PP_{K_q \otimes \R}^{-}
 $
so the same is true for the functor~(\ref{eq:34}), up to natural isomorphism.

Finally, we use this discussion to compute the image of Bott elements under the Hodge map. The embeddings $\mathcal{O} \rightarrow \C$ are parameterized by $s \in (\Z/q)^{\times}$:
the $s$th embedding $j_s$ satisfies $j_s(e^{2 \pi i/q}) =  e^{2\pi is/q}$.
As discussed in \S \ref{Bottcyclic}, 
$j_1$ induces a homomorphism of graded rings $$(j_1)_*: \pi_*^s(B\cyclicq; \Z/q) \rightarrow \pi_*(ku, \Z/q),$$
and this sends the Bott element $\beta \in \pi_2^s((B\cyclicq);\Z/q)$ to the mod $q$ reduction of the usual Bott element -- we denote this by $\Bott$. The powers of $\Bott$ generate the mod $q$ homotopy groups of $ku$.  More generally we have
  $$(j_a)_*(\beta) = a \cdot  \Bott \in \pi_2(ku;\Z/q),$$
 and in particular  $(j_a)_*(\beta^i) = a^i \cdot (j_1)_*(\beta^i) = a^i \cdot \Bott^i$ for any $a \in (\Z/q)^\times$ (cf. Remark \ref{rem: cyc action on bott}).   
  Combining with \eqref{eq:34ii} we arrive at the following formula:

\begin{proposition}\label{prop: hodge map surjects}
As above, take $(L, b) \in \pi_0 \PPqminus$,
giving a class $\beta^i [L, b] \in \pi^s_{2i}(|\PPqminus|; \Z/q)$. 
The image of $\beta^i [L, b]$ under  the map of homotopy groups induced by (cf.  \eqref{eq:main-map-to-KSp})
 $ \Sigma^\infty_+ |\PPqminus| \to \Sigma^\infty_+ |\mathcal{A}_g(\C)| \to \KSp(\Z) \to ku$
  is given by
\begin{equation}\label{eq:35}
\bigg(\sum_{a \in (\Z/q)^*: j_a \in \Phi} a^i \bigg) \Bott^i \in \pi_{2i}(ku;\Z/q).
\end{equation}
Moreover,   
for any odd $i$ there exists a CM structure $\Phi$ on $K_q = \mathcal{O}_q \otimes \Q$ for which the element~(\ref{eq:35}) is a generator for $\pi_{2i}(ku;\Z/q) \cong \Z/q$.
\end{proposition}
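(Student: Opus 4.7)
\textbf{Proof plan for the last statement of Proposition~\ref{prop: hodge map surjects}.} The plan is to parametrize CM types on $K_q$ combinatorially and then reduce to a one-line contradiction argument modulo $p$.

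First I would fix a set $S \subset (\Z/q)^\times$ of representatives for the quotient by the involution $a \mapsto -a$, so that $(\Z/q)^\times = S \sqcup (-S)$. Since a CM type on $K_q$ is precisely a choice of one element from each pair $\{j_a, j_{-a}\}$, the set of CM types is in bijection with functions $\epsilon \co S \to \{\pm 1\}$, via $\Phi_\epsilon = \{j_{\epsilon(s) s} : s \in S\}$. Substituting this parametrization into the expression~\eqref{eq:35}, the coefficient of $\mathrm{Bott}^i$ becomes
\[
\sum_{s \in S} (\epsilon(s) s)^i \;=\; \sum_{s \in S} \epsilon(s)^i \, s^i \;=\; \sum_{s \in S} \epsilon(s)\, s^i \in \Z/q,
\]
where the last equality uses that $i$ is odd. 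The task is therefore to produce $\epsilon$ making this signed sum a unit in $\Z/q = \Z/p^n$, and since an element of $\Z/p^n$ is a unit iff its reduction mod $p$ is nonzero, it suffices to arrange $\sum_{s \in S} \epsilon(s)\, s^i \not\equiv 0 \pmod{p}$.

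I would then argue by contradiction: suppose that for every $\epsilon$ the sum $\sum_{s} \epsilon(s) s^i$ lies in $p \Z/q$. If $\epsilon$ and $\epsilon'$ agree on $S \smallsetminus \{s_0\}$ and differ at a single point $s_0 \in S$, then the difference of the two sums is $\pm 2 s_0^i$, so by hypothesis $2 s_0^i \equiv 0 \pmod p$ for every $s_0 \in S$. But $p$ is odd, and each $s_0 \in (\Z/q)^\times$ is prime to $p$, so $2 s_0^i$ is a unit mod $p$, a contradiction. Hence some CM type $\Phi_\epsilon$ realizes a unit in $\Z/q$, which is exactly a generator of $\pi_{2i}(ku;\Z/q) \cong \Z/q$.

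I do not anticipate a real obstacle here: the content is entirely in setting up the parametrization of CM types by sign sequences and observing that flipping one sign changes the sum by $\pm 2 s_0^i$. The oddness of $p$ (part of our standing hypothesis) and the oddness of $i$ are both essential and enter in the two displayed steps above; conversely, the same strategy will not directly produce units when $i$ is even, consistent with the fact that in the even case the relevant target group for the Hodge map is already covered by the Betti map, as in Theorem~\ref{thm:KSp-of-Z}.
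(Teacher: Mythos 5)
Your proof of the final assertion is correct and is essentially the paper's argument: the paper also observes that switching the CM type at a single conjugate pair changes the sum by a unit, and hence not all sums can lie in $p\,\Z/q$. The only cosmetic differences are that the paper specializes to flipping at $\{1,-1\}$, giving a difference of exactly $\pm 2$, and states the conclusion directly rather than by contradiction.
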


\begin{proof}
The previous discussion already established \eqref{eq:35}, so we turn our attention to the last assertion. Since $\Bott^i$ generates,  we must find a CM structure satisfying
  \begin{equation*}
    \sum_{a \in (\Z/q)^*: j_a \in \Phi} a^i \in (\Z/q)^\times.
  \end{equation*}
  Equivalently, we must find a subset $X \subset (\Z/q)^\times$ containing precisely one element from each subset $\{a, -a\} \subset (\Z/q)^\times$, such that
  \begin{equation*}
    \sum_{a \in X} a^i \in (\Z/q)^\times.
  \end{equation*}
Choose such a set $X$ arbitrarily, and let $X'$ be  obtained
from $X$ by switching the element in which $X$ intersects $\{1, -1\}$. 
Then $\sum_{a \in X} a^i$ and $\sum_{a \in X'} a^i$
differ by $(-1)^i - 1 = -2$, and so at least one is a unit in $\Z/q$. 
\end{proof}

\begin{remark}
  For $p=2$ it seems a similar argument shows that there exists a $\Phi$ for which \eqref{eq:35} is twice a generator.
\end{remark}

\subsection{Betti map for CM abelian varieties} \label{subsec:Soule} 

Next we treat the composition of~(\ref{eq:main-map-to-KSp}) with the Betti map. The map $\PP \to \mathcal{A}_g(\C)$ sends $(L,b)$ to an abelian variety with underlying space $A = L_\R/L_\Z$, from which we read off $H_1(A;\Z) = L_\Z$.  This implies a diagram of functors
\begin{equation*}
  \xymatrix{
   \PPminus \ar[rr] \ar[d] & & \SP(\Z) \ar[d]\\
    \mathrm{Pic}(\mathcal{O}) \ar@{^{(}->}[r] & \proj{\mathcal{O}} \ar[r]^-{\text{forget}} & \proj{\Z},
    }
\end{equation*}
commuting up to natural isomorphism, where the vertical maps are induced by forgetting the pairings, i.e., $(L,b) \mapsto L$.  Passing to the associated spaces and composing with group completion maps we get a diagram of spectra
\begin{equation}\label{eq:32}
  \begin{aligned}
  \xymatrix{
    \Sigma^\infty_+|\PPminus| \ar[rr] \ar[d] & & \KSp(\Z) \ar[d]\\
    \Sigma^\infty_+ |\pic{\mathcal{O}}| \ar[r] & K(\mathcal{O}) \ar[r]^{\mathrm{tr}} & K(\Z),
    }
  \end{aligned}
\end{equation}
where the map $\mathrm{tr}: K(\mathcal{O}) \to K(\Z)$ is the ``transfer map'' induced by the functor $\proj{\mathcal{O}} \to \proj{\Z}$ sending a projective $\mathcal{O}$-module to its underlying (projective) $\Z$-module.

As explained in Section~\ref{def:pic}, the homotopy groups of the spectrum in the lower left corner are $\pi_*^s(|\pic{\mathcal{O}}|) = \pi_*^s(B \mathcal{O}^\times) \otimes \Z[\pi_0(\pic{\mathcal{O})}]$ and with mod $q$ coefficients $\pi_*^s(B \mathcal{O}^{\times};\Z/q) \otimes \Z[\pi_0(\pic{\mathcal{O}})]$, similar to~(\ref{eq:2}).
Commutativity of the induced diagram on homotopy groups gives the following,
after we specialize to the case of $E=K_q, \mathcal{O} = \mathcal{O}_q$:   
\begin{corollary}\label{cor:Soule-formula} 
Notation as above.   
The composition
  \begin{equation*}
    \pi^s_{4k-2}(|\PPminus|;\Z/q) \to \KSp_{4k-2}(\Z;\Z/q) \xrightarrow{c_B} K_{4k-2}(\Z;\Z/q)^{(+)}
  \end{equation*}
  sends the element $\beta^{2k-1} \cdot [(L,b)]$ (in the notation of~(\ref{eq:2})) to the element $\mathrm{tr}(\beta^{2k-1} \cdot ([L]-1)) \in K_{4k-2}(\Z;\Z/q)^{(+)}$.  Here $[(L,b)] \in \pi_0(\PPqminus)$ is any element, and $[L]-1 \in K_0(\OO)$ is the projective class associated to $[L] \in \pic{\OO}$.\qed
\end{corollary}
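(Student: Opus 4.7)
The plan is to chase the element $\beta^{2k-1} \cdot [(L,b)]$ around the commutative diagram~(\ref{eq:32}) and then reconcile the resulting expression with the $(+)$-eigenspace projection implicit in $c_B$.

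First I would descend along the left-hand vertical map. The forgetful functor $\PPqminus \to \pic{\mathcal{O}_q}$ sends $(L,b) \mapsto L$ and is compatible with the splittings of stable homotopy analogous to~(\ref{eq:2}): the inclusion of automorphism groups $\mu_q = U_1(\mathcal{O}_q) \hookrightarrow \mathcal{O}_q^\times$ carries the stable-homotopy Bott element in $\pi_2^s(B\mu_q;\Z/q)$ to its image in $\pi_2^s(B\mathcal{O}_q^\times;\Z/q)$ (both arise from the same character $\Z/q \to \mathcal{O}_q^\times$, $a \mapsto \zeta_q^a$). Consequently $\beta^{2k-1}\cdot[(L,b)]$ maps to $\beta^{2k-1}\cdot[L] \in \pi^s_{4k-2}(|\pic{\mathcal{O}_q}|;\Z/q)$.

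Next I would invoke the ring map $\Sigma^\infty_+|\pic{\mathcal{O}_q}| \to K(\mathcal{O}_q)$ of~(\ref{eq:26}), which by construction carries the stable-homotopy Bott element to the algebraic Bott element $\beta \in K_2(\mathcal{O}_q;\Z/q)$ of Section~\ref{sec:bott-element}. So $\beta^{2k-1}\cdot[L]$ is sent to $\beta^{2k-1}\cdot[L] \in K_{4k-2}(\mathcal{O}_q;\Z/q)$, which in turn goes to $\mathrm{tr}(\beta^{2k-1}\cdot[L]) \in K_{4k-2}(\Z;\Z/q)$ via the transfer.

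The remaining and only non-formal step is to identify the $(+)$-eigenspace projection of $\mathrm{tr}(\beta^{2k-1}\cdot[L])$ with $\mathrm{tr}(\beta^{2k-1}\cdot([L]-1))$. Their difference equals $\mathrm{tr}(\beta^{2k-1})$, which lies in the $(-)$-eigenspace: the transfer commutes with $\psi^{-1}$ by Remark~\ref{rem:transfer-Adams}, and $\psi^{-1}$ acts on $\beta^{2k-1} \in K_{4k-2}(\mathcal{O}_q;\Z/q)$ as $(-1)^{2k-1}=-1$. On the other hand, $\mathrm{tr}(\beta^{2k-1}\cdot([L]-1))$ already lies in the $(+)$-eigenspace by Corollary~\ref{KOq-Bott}. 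Hence projection to $(+)$ annihilates the difference and yields the asserted formula. I anticipate no serious obstacle beyond this last reconciliation; the only point requiring care is verifying that the various elements named $\beta$---in the stable homotopy of $B\mu_q$, of $B\mathcal{O}_q^\times$, of $|\pic{\mathcal{O}_q}|$, and in $K_2(\mathcal{O}_q;\Z/q)$---really correspond to the same data, but this is built into their constructions.
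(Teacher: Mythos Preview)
Your proof is correct and follows essentially the same diagram chase as the paper. The one minor difference is in the final step: you treat the map to $K_{4k-2}(\Z;\Z/q)^{(+)}$ as $c_B$ followed by projection to the $(+)$-eigenspace, so that $\mathrm{tr}(\beta^{2k-1})\in K_{4k-2}(\Z;\Z/q)^{(-)}$ is killed by projecting. The paper instead uses the fact (established in \S\ref{sec:hermitian-k-theory}) that the forgetful map $c_B\colon \KSp_*(\Z;\Z/q)\to K_*(\Z;\Z/q)$ already lands in the $(+)$-eigenspace, and from this deduces the stronger conclusion that $\mathrm{tr}(\beta^{2k-1})=0$ outright. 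Both arguments prove the corollary as stated; the paper's route yields this vanishing as a byproduct (and notes an alternative direct proof via \'etale cohomology in the subsequent remark).
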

\begin{proof}
  Commutativity of the diagram \eqref{eq:32} yields
  \begin{equation*}
    \beta^{2k-1} \cdot [(L,b)] \mapsto \mathrm{tr}(\beta^{2k-1} \cdot [L]) = \mathrm{tr}(\beta^{2k-1} \cdot ([L]-1)) + \mathrm{tr}(\beta^{2k-1}).
  \end{equation*}
  Now  $\mathrm{tr}(\beta^{2k-1} \cdot ([L]-1)) \in K_{4k-2}(\Z;\Z/q)^{(+)}$ and $\mathrm{tr}(\beta^{2k-1}) \in K_{4k-2}(\Z;\Z/q)^{(-)}$, cf.\ Remark~\ref{rem:transfer-Adams}.  Since the image of $c_B: \KSp_{4k-2}(\Z;\Z/q) \to K_{4k-2}(\Z;\Z/q)$ is contained in the $(+1)$-eigenspace, commutativity of the diagram \eqref{eq:32} implies that $\mathrm{tr}(\beta^{2k-1}) = 0$, which proves the claim.
\end{proof}

\begin{remark}
  Alternatively, the vanishing of $\mathrm{tr}(\beta^{2k-1}) \in K_{4k-2}(\Z;\Z/q)$ may be seen by identifying $\mathrm{tr}$ with the transfer map in \'etale cohomology
  \begin{equation*}
    H^0(\mathcal{O}'_q;\mu_q^{2k-1}) \xrightarrow{\mathrm{tr}} H^0(\Z';\mu_q^{2k-1}),
  \end{equation*}
  which sends $\beta^{2k-1} \in \mu_q(\mathcal{O}'_q)^{\otimes (2k-1)}$ to the sum of all its Galois translates.  This vanishes for the same reason as
  \begin{equation*}
    \sum_{a \in (\Z/q)^\times} a^i = 0 \in \Z/q
  \end{equation*}
  when $p-1$ does not divide $i$, and in particular for any odd $i$.
\end{remark}

\subsection{Surjectivity}
\label{sec:surjectivity}
 Recall that in
Theorem~\ref{thm:KSp-of-Z} we proved that the combination of the Hodge and Betti maps define an
isomorphism 
\begin{equation}\label{eq: hodge-soule defn}
  \KSp_{4k-2}(\Z;\Z/q) \xrightarrow{(c_H,c_B)} \pi_{4k-2}(ku;\Z/q)^{(-)} \times K_{4k-2}(\Z;\Z/q)^{(+)}.
\end{equation}

\begin{proof}[Proof of Proposition \ref{prop:generators-for-KSp}] 
 The coordinates  of $\beta^{2k-1} [L,b]$, under the map above,  
 have been computed  in Proposition \ref{prop: hodge map surjects}
 and Corollary~\ref{cor:Soule-formula}.
 They are given by: 
 $$ \begin{cases} \mbox{Hodge: }   c_H(\beta^{2k-1} \cdot [L,b]) =
   \Bott^{2k-1} \sum_{a \in \Phi_{L,b}} a^{2k-1}  & \in \pi_{4k-2}(ku; \Z/q),  \\
  \mbox{Betti: }   
  c_B(\beta^{2k-1} \cdot [L,b])  =
  \mathrm{tr}(\beta^{2k-1}\cdot ([L]-1))  & \in K_{4k-2}(\Z;\Z/q).
  \end{cases} 
  $$
 By Proposition \ref{prop: hodge map surjects},
 there exists a CM structure $\Phi$ for which $\sum_{a \in \Phi} a^{2k-1}$
 is invertible in $(\Z/q)$. It therefore suffices to prove that for \emph{any} CM structure $\Phi_0$,
  \begin{equation} \label{Phigen} \{ \mathrm{tr}(\beta^{2k-1}\cdot ([L]-1)) \mid \Phi_{(L,b)} = \Phi_0\} \mbox{ generates } K_{4k-2}(\Z;\Z/q)^{(+)},
  \end{equation}
  which is what we shall do.
  
For any $[L] \in \pic{\mathcal{O}_q}$, there exist by
 Proposition \ref{construction of CM structures} two objects $(L_1,b_1), (L_2,b_2) \in \PPqminus$ satisfying $\Phi_{(L_1, b_1)} = \Phi_{(L_2, b_2)} = \Phi_0$, and whose images in $\pi_0(\pic{\mathcal{O}_q})$ satisfy 
  \[
  [L_1] [L_2] = [L] [\overline{L}]^{-1}.
  \]
 The corresponding elements in $K_0(\mathcal{O}_q)$ then satisfy $[L_1] + [L_2] = [L] - [\overline{L}] + 2$.
 Applying the same Proposition with $[L] = 1 = [\mathcal{O}_q]$ gives $(L_3,b_3), (L_4,b_4) \in \PPqminus$ with $[L_3] + [L_4] = 2 \in K_0(\mathcal{O}_q)$.
  We then have
  \begin{align*}
    {} & {}\mathrm{tr}\big(\beta^{2k-1} \cdot ([L_1] + [L_2] - [L_3] - [L_4])\big) \\
    ={} & {}
      \mathrm{tr}\big(\beta^{2k-1} \cdot ([L]-1))- \mathrm{tr}( \beta^{2k-1} \cdot([\overline{L}] - 1)\big)\\
    =
    {} & {}\mathrm{tr}\big(\beta^{2k-1} \cdot ([L]-1)\big) + \mathrm{tr}\big(\beta^{2k-1} \cdot ([L]-1)\big) = 2\mathrm{tr}\big(\beta^{2k-1} \cdot ([L]-1)\big)
  \end{align*}
  where the last line used that the automorphism of $K(\mathcal{O}_q)$ induced by the involution on $\mathcal{O}_q$ sends $\beta \mapsto -\beta$ and $[L] \mapsto [\overline{L}]$, and that the transfer map is invariant under this automorphism (as the underlying $\Z$-modules of $M$ and $\overline{M}$ are equal).

  Proposition \ref{shtuka} implies that the elements $\mathrm{tr}\big(\beta^{2k-1} \cdot ([L]-1)) \in K_{4k-2}(\Z;\Z/q)^{(+)}$ generate as $[L]$ range over all of $\pi_0(\pic{\OO_q})$, and since $q$ is odd the factor of 2 does not matter for surjectivity.
 \end{proof}

\begin{remark}
  The method used here to produce elements of $\KSp_{4k-2}(\Z;\Z/q)$ is very similar to the method used by Soul\'e \cite{Soule81} to produce elements in algebraic $K$-theory of rings of integers.  In our notation the elements he constructs in $K_{4k+1}(\Z;\Z/q)^{(-)}$ are of the form $\mathrm{tr}(\beta^{2k} \cdot u)$ with $u \in \mathcal{O}_q^\times/q = K_1(\mathcal{O}_q;\Z/q)^{(-)}$.  By a compactness argument he lifts his elements from the mod $q = p^n$ theory to the $p$-adic groups, which can also be done here.

  Related ideas were also used by Harris and Segal \cite{HarrisSegal}.
\end{remark}
 

\section{The Galois action on $\KSp$ and on CM abelian varieties}\label{sec: galois action}

Now that we understand the abstract $(\Z/q)$-module $\KSp_{4k-2}(\Z;\Z/q)$ and how to produce elements in it, we will study the Galois action on it.  
The first task is to define the action. We give the construction in
\S \ref{galois action construction}. In \S \ref{sec:main-thm} we compute the action of the $\Aut(\C)$ on CM classes. 

\subsection{Galois conjugation of complex varieties} \label{galoisvariety}
Given a $\C$-scheme $X$, we obtain 
a $\C$-scheme $\sigma X$ by ``applying $\sigma$ to all the
coefficients of the equations defining $X$.''  More formally, we are given a pair $(X,\phi)$ consisting of an underlying scheme $X$ and a reference map $\phi: X \to \Spec(\C)$, and we define
\begin{equation*}
  \sigma (X,\phi) = (X,\Spec(\sigma^{-1}) \circ \phi),
\end{equation*}
i.e.\ we simply postcompose the reference map with the map $\Spec(\sigma^{-1}): \Spec(\C) \to \Spec(\C)$ while the underlying schemes are equal (not just isomorphic).  The resulting $\C$-scheme $\sigma(X,\phi) =: (\sigma X, \sigma \phi)$ fits in a cartesian square
\begin{equation}\label{eq:65}
  \begin{tikzcd}
    \sigma X \ar[d, "\sigma \phi"]  \ar[r, "\Id"] & X \ar[d, "\phi"] \\
    \Spec \CC \ar[r, "\sigma"] & \Spec \CC.
  \end{tikzcd}
\end{equation}
The rule $(X, \phi) \mapsto (\sigma X, \sigma \phi) $ extends to a functor from $\C$-schemes to $\C$-schemes in an evident way.

Applying this construction when $X = A \to \Spec(\C)$ is a complex abelian variety gives a new complex abelian variety, 
which inherits a principal polarization from that of $A$.  We arrive at a functor
\begin{equation*}
  \mathcal{A}_g(\C) \xrightarrow{\sigma} \mathcal{A}_g(\C),
\end{equation*}
which agrees up to natural isomorphism with applying the ``functor of points'' $\mathcal{A}_g$ to $\Spec(\sigma): \Spec(\C) \to \Spec(\C)$, because coordinates on $\mathcal{A}_g$ are coefficients of the equations defining the abelian varieties (e.g.\ using the Hilbert scheme atlas on $\mathcal{A}_g$ as in \cite[Section 6]{Mum94}).
In this way we get an action\footnote{We prefer not to take the cartesian square~\eqref{eq:65} as the definition of $\sigma (X,\phi)$: with our definitions $(\sigma \circ \sigma') (X,\phi)$ is \emph{equal} to $\sigma(\sigma' (X,\phi))$, which ensures we get an actual action on $|\mathcal{A}_g(\C)|$.  This issue is mostly cosmetic, and could presumably alternatively be handled by ``keeping track of higher homotopies''.} of $\Aut(\C)$ on the groupoid $\mathcal{A}_g(\C)$ and hence on the space $|\mathcal{A}_g(\C)|$.

\subsection{Construction of the Galois action on homotopy of $\KSp$} \label{galois action construction}

Recall from Section~\ref{sec:abel-vari-sympl} that we consider the functor $\mathcal{A}_g(\C) \to \SP(\Z)$ induced by sending a principally polarized abelian variety $A$ to $\pi_1(A(\C)^\mathrm{an},e)$, equipped with the symplectic form induced from the polarization.
We emphasize that we here regard $\mathcal{A}_g(\C)$ as just a groupoid in sets, so the domain of this spectrum map is rather huge: for example $\pi_0^s(|\mathcal{A}_g(\C)|)$ is the free abelian group generated by $\pi_0(|\mathcal{A}_g(\C)|)$, the (uncountable) set of isomorphism classes of complex principally polarized abelian varieties.
\begin{proposition}\label{prop: disc to KSp surjective}
  For all $k \geq 1$ and odd $q = p^n$, the map
  \begin{equation*}
    \pi^s_{4k-2}(|\mathcal{A}_g(\C)|;\Z/q) \to \KSp_{4k-2}(\Z;\Z/q)
  \end{equation*}
  induced by~(\ref{eq:51}) is surjective, when $g \geq \varphi(q) = p^{n-1}(p-1)$.
\end{proposition}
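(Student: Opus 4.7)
The plan is to deduce the proposition from Proposition~\ref{prop:generators-for-KSp}, which gives surjectivity of $\pi^s_{4k-2}(|\PPqminus|;\Z/q) \to \KSp_{4k-2}(\Z;\Z/q)$. Since that composition factors through $\pi^s_{4k-2}(|\mathcal{A}_{g_0}(\C)|;\Z/q)$ via the functor $\ST$ of~\eqref{eq:1}, where $g_0 = \varphi(q)/2$ is the dimension of a CM abelian variety for $\mathcal{O}_q$, surjectivity already holds at dimension $g_0$. The task is therefore to transport the generating classes up from $|\mathcal{A}_{g_0}(\C)|$ to $|\mathcal{A}_g(\C)|$ without altering their image in $\KSp_{4k-2}(\Z;\Z/q)$.

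Concretely, since $g - g_0 \geq g_0 \geq 1$, I will fix any principally polarized abelian variety $B_0 \in \mathcal{A}_{g-g_0}(\C)$ and consider the product functor $\iota\colon \mathcal{A}_{g_0}(\C) \to \mathcal{A}_g(\C)$, $A \mapsto A \times B_0$. Direct product of polarized abelian varieties corresponds to direct sum of first homology groups, and hence to addition in the infinite loop space $\Omega^\infty \KSp(\Z)$. Consequently, the composite spectrum map $\Sigma^\infty_+ |\mathcal{A}_{g_0}(\C)| \xrightarrow{\Sigma^\infty_+ \iota} \Sigma^\infty_+ |\mathcal{A}_g(\C)| \to \KSp(\Z)$ equals the direct map $\Sigma^\infty_+ |\mathcal{A}_{g_0}(\C)| \to \KSp(\Z)$ plus the constant spectrum map $\Sigma^\infty_+ |\mathcal{A}_{g_0}(\C)| \to \mathbb{S} \to \KSp(\Z)$, where the first factor is induced by $|\mathcal{A}_{g_0}(\C)| \to \mathrm{pt}$ and the second represents $[H_1(B_0; \Z)] \in \pi_0 \KSp(\Z)$. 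On $\pi^s_{4k-2}(-;\Z/q)$, this correction therefore factors through the augmentation map $\pi^s_{4k-2}(|\mathcal{A}_{g_0}(\C)|;\Z/q) \to \pi_{4k-2}(\mathbb{S}; \Z/q)$.

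The key observation is that all the generating classes from Proposition~\ref{prop:generators-for-KSp}, of the form $\beta^{2k-1}\cdot [(L,b)]$, lie in the kernel of this augmentation. Indeed, the Bott class $\beta \in \pi^s_2(B\mu_q;\Z/q)$ has vanishing augmentation because $\pi_2(\mathbb{S};\Z/q) = 0$ for odd $q$ (since $\pi_1^s = \pi_2^s = \Z/2$). Because the augmentation $\pi^s_*(B\mu_q;\Z/q) \to \pi_*(\mathbb{S}; \Z/q)$ is a ring homomorphism with respect to the $H$-space structure on $B\mu_q$, all powers $\beta^{2k-1}$ are augmentation-free, and this persists under the map $|\PPqminus| \to |\mathcal{A}_{g_0}(\C)|$ by naturality. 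Applying $\iota_*$ to the generating classes therefore produces lifts in $\pi^s_{4k-2}(|\mathcal{A}_g(\C)|;\Z/q)$ with the same image in $\KSp_{4k-2}(\Z;\Z/q)$, and the proof will be complete.

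The main technical subtlety lies in the spectrum-level comparison identifying the difference between the two spectrum maps with a map factoring through $\mathbb{S}$; this requires a careful use of the adjunction between unpointed maps $X \to \Omega^\infty E$ and spectrum maps $\Sigma^\infty_+ X \to E$, together with the infinite loop space structure on $\KSp(\Z)$ coming from its construction via a Segal $\Gamma$-space as reviewed in Appendix~\ref{sec:stable-homot-theory}. Once this spectrum-level equality is established, the rest of the argument is a formal consequence of the multiplicativity of augmentation and the vanishing of $\pi_2(\mathbb{S}; \Z/q)$ for odd $q$.
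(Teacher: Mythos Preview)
Your proof is correct and follows essentially the same route as the paper: factor through Proposition~\ref{prop:generators-for-KSp} and use product with a fixed $B_0$ to pass from the CM dimension $g_0 = \varphi(q)/2$ to an arbitrary $g \geq \varphi(q)$. The paper's own proof is two sentences and simply asserts that the reduction via $A_0 \times -$ works; you supply the justification, left implicit there, that the translation-by-$[B_0]$ correction on the adjoint spectrum map vanishes on the generating classes $\beta^{2k-1}[(L,b)]$ because $\beta$ has trivial augmentation (using $\pi_2(\mathbb{S};\Z/q)=0$ for odd $q$).
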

\begin{proof}
  It suffices to consider $g = \phi(q)$ since otherwise we may use any 
  $A_0 \in \mathcal{A}_{g - \phi(q)}(\Q)$ to define a map $A_0 \times-: \mathcal{A}_{\phi(q)} \to \mathcal{A}_g$.
  We consider the spectrum maps  of \eqref{eq:main-map-to-KSp} 
  \begin{equation*}
    \Sigma^\infty_+ |\PPqminus| \to \Sigma^\infty_+| \mathcal{A}_g(\C)| \to \KSp(\Z).
  \end{equation*}
  Since the composition induces a surjection on mod $q$ stable homotopy, by Proposition \ref{prop:generators-for-KSp}, the same must be true for the second map alone.
\end{proof}

 As in \S \ref{galoisvariety} any $\sigma \in \Aut(\C)$ induces a functor $\mathcal{A}_g(\C) \to \mathcal{A}_g(\C)$ and hence an automorphism of the spectrum $\Sigma^\infty_+|\mathcal{A}_g(\C)|$ and in turn an action of $\mathrm{Aut}(\C)$ on $\pi_*^s(|\mathcal{A}_g(\C)|;\Z/q)$.  The following proposition
 characterizes  the Galois action on symplectic $K$-theory.
\begin{proposition}\label{prop:5.2}
  For any $k \geq 1$ and odd prime power $q = p^n$, there is a unique action of $\Aut(\C)$ on $\KSp_{4k-2}(\Z;\Z/q)$ for which the homomorphisms
  \begin{equation*}
    \pi_{4k-2}^s(|\mathcal{A}_g(\C)|;\Z/q) \to \KSp_{4k-2}(\Z;\Z/q)
  \end{equation*}
  are equivariant for all $g$.
\end{proposition}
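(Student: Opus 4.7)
The uniqueness is immediate from Proposition \ref{prop: disc to KSp surjective}: since the map $\pi^s_{4k-2}(|\mathcal{A}_g(\C)|;\Z/q) \twoheadrightarrow \KSp_{4k-2}(\Z;\Z/q)$ is surjective for $g$ sufficiently large, at most one $\Aut(\C)$-action on the target can be equivariant for the (already defined) action on the source.

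For existence, the strategy is to realize $\KSp_{4k-2}(\Z;\Z/q)$ as the mod $q$ homotopy of a space on which $\Aut(\C)$ acts naturally, and then to check equivariance. Since $q=p^n$, these groups may be computed via $p$-completion: in the stable range one has natural isomorphisms
\begin{equation*}
\KSp_{4k-2}(\Z;\Z/q) \cong \pi_{4k-2}(B\Sp_{2g}(\Z)^{+};\Z/q) \cong \pi_{4k-2}(B\Sp_{2g}(\Z)^\wedge_p;\Z/q)
\end{equation*}
for $g$ large (the second isomorphism using that $p$-completion preserves mod $p^n$ homotopy of simply connected nilpotent spaces). The space $B\Sp_{2g}(\Z)^\wedge_p$ inherits a natural $\Aut(\C)$-action in the following way, anticipated in \S\ref{elab}: via the complex uniformization $\mathfrak{h}_g /\!\!/ \Sp_{2g}(\Z) \simeq \mathcal{A}_{g,\C}^{\mathrm{an}}$, its underlying homotopy type is identified with the analytic homotopy type of $\mathcal{A}_{g,\C}$; the \'etale-analytic comparison then identifies the $p$-completion with the $p$-part of the \'etale homotopy type of the $\Q$-scheme $\mathcal{A}_g$, on which each $\sigma \in \Aut(\C)$ acts through the map $\mathrm{Spec}(\sigma)\colon \mathrm{Spec}(\C) \to \mathrm{Spec}(\C)$.

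The canonical map $|\mathcal{A}_g(\C)| \to B\Sp_{2g}(\Z)^\wedge_p$, sending a complex point of $\mathcal{A}_g$ to the $p$-completion of the corresponding classifying map, is equivariant for these two $\Aut(\C)$-actions by functoriality of the construction. Passing to mod $q$ stable homotopy and combining with the identification above transfers the action to $\KSp_{4k-2}(\Z;\Z/q)$ and establishes the required equivariance. The principal technical obstacle is that the construction must be carried out functorially in $g$ and compatibly with the spectrum structure of $\KSp(\Z)$, so that the action on the stabilized group $\KSp_{4k-2}(\Z;\Z/q)$ is well-defined and independent of $g$. Doing this cleanly requires an $\Aut(\C)$-equivariant implementation of the \'etale-analytic comparison at the level of $\Gamma$-spaces (or at minimum on a cofinal system compatible with the symplectic stabilizations $\Sp_{2g}(\Z) \hookrightarrow \Sp_{2g+2}(\Z)$), which is deferred to Appendix~\ref{sec:stable-homot-theory2}.
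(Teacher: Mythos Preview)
Your proposal is correct and follows essentially the same approach as the paper's own proof: uniqueness via the surjectivity of Proposition~\ref{prop: disc to KSp surjective}, and existence via identifying $B\Sp_{2g}(\Z)^\wedge_p$ with the $p$-completed \'etale homotopy type $\mathrm{Et}_p(\mathcal{A}_{g,\C})$ (on which $\Aut(\C)$ acts because $\mathcal{A}_g$ is defined over $\Q$), with the spectrum-level compatibility deferred to Appendix~\ref{sec:stable-homot-theory2}. One small sharpening: the $\Aut(\C)$-action lives on $\mathrm{Et}_p(\mathcal{A}_{g,\C})$, the \'etale homotopy type of the base change to $\C$, rather than on ``the \'etale homotopy type of the $\Q$-scheme $\mathcal{A}_g$'' as you phrase it; the action arises precisely because this base change carries an $\Aut(\C)$-action through the second factor of $\Spec(\C) \times_{\Spec(\Q)} \mathcal{A}_g$.
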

\begin{proof}[Proof sketch]
  We have seen that these homomorphisms are surjective for sufficiently large $g$, so for $\sigma \in \Aut(\C)$ there is at most one homomorphism
  \begin{equation}
    \label{eq:12}
    \begin{aligned}
    \xymatrix{
      \pi_{4k-2}^s(|\mathcal{A}_g(\C)|;\Z/q) \ar[r]\ar[d]^{\sigma_*} & \KSp_{4k-2}(\Z;\Z/q) \ar@{-->}[d]^{\sigma_*}\\
      \pi_{4k-2}^s(|\mathcal{A}_g(\C)|;\Z/q) \ar[r] & \KSp_{4k-2}(\Z;\Z/q)
      }
    \end{aligned}
  \end{equation}
  making the diagram commute.  If these exist for all $\sigma$, uniqueness guarantees that composition is preserved, inducing an action.  It remains to see existence;
  this proof is somewhat technical and is given in the Appendix. 
   \end{proof}

\begin{remark}
  The spectrum level action constructed in the Appendix 
  should probably be viewed as more intrinsic than the particular statement of the proposition.  From an expositional point of view, the main advantage of the statement of the proposition is that it uniquely characterizes the action on homotopy groups which we are studying, at least in degrees 2 mod 4, while not making explicit reference to \'etale homotopy type.  This allows us to quarantine the fairly technical theory of \'etale homotopy type to the proof of Proposition~\ref{prop:5.2}.

  It will also be clear from the spectrum level construction that the actions of $\Aut(\C)$ on $\KSp_{4k-2}(\Z;\Z/p^n)$ are compatible over varying $n$, including in the inverse limit $n \to \infty$, so that the universal property for each $n$ also determines the action on the $p$-complete symplectic $K$-theory groups $\KSp_{4k-2}(\Z;\Z_p)$.  The spectrum level action also induces an action on homotopy groups in degrees $4k-1$, which by Corollary \ref{cor: explicit determination of KSp} is the only other interesting case when $p$ is odd.  In Subsection~\ref{sec:degree-4k-1} we prove that the action on $\KSp_{4k-1}(\Z;\Z_p)$ is trivial.
\end{remark}

\begin{lemma}\label{lem:Soule-complex-conjugation}
  The Betti map
  \begin{equation*}
    \KSp_{4k-2}(\Z;\Z/q) \xrightarrow{c_B} K_{4k-2}(\Z;\Z/q)
  \end{equation*}
  is equivariant for the subgroup $\langle c\rangle \subset \Aut(\C)$, where $c$ denotes complex conjugation, and $K_{4k-2}(\Z;\Z/q)$ is given the trivial action.
\end{lemma}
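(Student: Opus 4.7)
The plan is to invoke the defining property of the Galois action from Proposition~\ref{prop:5.2}: by Proposition~\ref{prop: disc to KSp surjective}, the homomorphism
\[
\rho\co \pi_{4k-2}^s(|\mathcal{A}_g(\C)|;\Z/q) \to \KSp_{4k-2}(\Z;\Z/q)
\]
is surjective for $g$ sufficiently large, and it is $\Aut(\C)$-equivariant by construction. It therefore suffices to show that $c_B \circ \rho$ is invariant under the action of $c$ on the source, where $K_{4k-2}(\Z;\Z/q)$ carries the trivial action.

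The composition $c_B \circ \rho$ is induced at the spectrum level by the functor
\[
F\co \mathcal{A}_g(\C) \to \SP(\Z) \xrightarrow{\text{forget}} \mathcal{P}(\Z), \qquad A \longmapsto H_1(A(\C)^{\mathrm{an}};\Z),
\]
that is, Betti realization with the symplectic form discarded. The action of $c$ on $|\mathcal{A}_g(\C)|$ is induced by the endofunctor $A \mapsto cA$ of \S\ref{galoisvariety}, so what I will need is a natural isomorphism $F \cong F \circ c$ of functors $\mathcal{A}_g(\C) \to \mathcal{P}(\Z)$. To produce this, I would use the ``antiholomorphic identity''. By definition, the scheme $cA$ has the same underlying scheme as $A$; only the structure map to $\Spec \C$ differs. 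The induced bijection on $\C$-points sends $s \in A(\C)$ to $\Spec(c) \circ s$, and in an affine chart $A = \Spec(\C[t_1,\dots,t_n]/I)$ this is concretely the map $(\alpha_1,\dots,\alpha_n) \mapsto (\overline{\alpha_1},\dots,\overline{\alpha_n})$. Although antiholomorphic, this is a real-analytic diffeomorphism $A(\C)^{\mathrm{an}} \to (cA)(\C)^{\mathrm{an}}$ which is natural in $A$, so applying $H_1(-;\Z)$ yields the required natural isomorphism $F \cong F \circ c$.

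Consequently the two spectrum maps $\Sigma^\infty_+ |\mathcal{A}_g(\C)| \to K(\Z)$ induced by $F$ and by $F \circ c$ are homotopic, and in particular they induce equal homomorphisms on $\pi_{4k-2}(-;\Z/q)$. Combined with surjectivity of $\rho$, this will give $c_B(c \cdot x) = c_B(x)$ for all $x$, completing the proof. I do not anticipate a major obstacle: the only delicate point is matching the spectrum-level construction of the Galois action in Appendix~\ref{sec:stable-homot-theory2} with the informal description via $\rho$, but this compatibility is precisely what Proposition~\ref{prop:5.2} asserts. One also need not worry about whether the antiholomorphic diffeomorphism preserves the symplectic form -- it actually negates it -- since the Betti map forgets that form altogether.
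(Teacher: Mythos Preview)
The proposal is correct and follows essentially the same argument as the paper: both use surjectivity of $\pi_{4k-2}^s(|\mathcal{A}_g(\C)|;\Z/q) \to \KSp_{4k-2}(\Z;\Z/q)$ together with the natural isomorphism $H_1(A(\C)^{\mathrm{an}};\Z) \cong H_1((cA)(\C)^{\mathrm{an}};\Z)$ coming from the continuity of complex conjugation, so that the composite to $K_{4k-2}(\Z;\Z/q)$ coequalizes $c_*$ and the identity. Your additional remarks (the explicit antiholomorphic description, and the observation that the symplectic form is negated but irrelevant after forgetting it) are accurate and slightly more detailed than the paper's presentation.
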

\begin{proof}
  The composite $\pi_{4k-2}^s(|\mathcal{A}_g(\C)|;\Z/q) \to \KSp_{4k-2}(\Z;\Z/q)$ is induced from the functor $\mathcal{A}_g(\C) \to \proj{\Z}$ sending an abelian variety $A \to \Spec(\C)$ to $H_1(A(\C)^\mathrm{an};\Z)$.  Complex conjugation induces a functor $\mathcal{A}_g(\C) \to \mathcal{A}_g(\C)$ which we'll denote $A \mapsto A^c$ on objects.  The fact that complex conjugation is continuous on $\CC$ implies that the induced bijection $A(\C) \to A^c(\C)$ is continuous in the analytic topology, and hence induces a canonical isomorphism $H_1(A(\C)^\mathrm{an};\Z) \to H_1(A^c(\C)^\mathrm{an};\Z)$.  Therefore the diagram
  \begin{equation*}
    \xymatrix{
      |\mathcal{A}_g(\C)| \ar[d]^c \ar[r] & |\proj{\Z}|\\
      |\mathcal{A}_g(\C)| \ar[ur] &
      }
  \end{equation*}
  commutes up to homotopy 
  It follows that the homomorphism 
  \[
  \pi_{4k-2}^s(|\mathcal{A}_g(\C)|;\Z/q) \to \pi_{4k-2}^s(B\GL_{2g}(\Z);\Z/q) \to K_{4k-2}(\Z;\Z/q)
  \]
  coequalizes $c_*$ and the identity.  The claim is then deduced from surjectivity of $\pi_{4k-2}^s(|\mathcal{A}_g(\C)|;\Z/q) \to \KSp_{4k-2}(\Z;\Z/q)$.
\end{proof}

\begin{remark}
  It may be deduced from our main theorem that $c_B\colon \KSp_{4k-2}(\Z;\Z_p) \to K_{4k-2}(\Z;\Z_p)$ is also equivariant for $\Gal(\overline{\Q}_p/\Q_p) \subset \Aut(\C)$ for suitable isomorphisms $\C \cong \overline{\Q}_p$, see Subsection~\ref{univ2}.  It would be interesting to understand whether that equivariance could be seen more geometrically.
\end{remark}

\subsection{Galois conjugation of CM abelian varieties}
\label{sec:main-thm}

Fix a CM field $E$. It follows from Remark \ref{equiv-of-cat}
 that there exists a functor  of groupoids
 making the following diagram commutative:
 \begin{equation*}
  \xymatrix{
    \PPminus \ar[r]^-{\ST} \ar@{-->}[d]_{F_{\sigma}} & \mathcal{A}_g(\C) \ar[d]^\sigma\\
    \PPminus \ar[r]^-{\ST} & \mathcal{A}_g(\C),
  }
\end{equation*}

 The main theorem of complex multiplication, originally due to Shimura and Taniyama
for automorphisms fixing the reflex field, and extended to the general case by Deligne and Tate, 
effectively provides a formula for $F_{\sigma}$.

Let $H$ be the Hilbert class field of the CM field $E$.
We will formulate the result only when $E$ (so also $H$) is Galois over $\Q$. 
  Let $\Phi = \Phi(L,b) \subset\mathrm{Emb}(E,\C)$ be the CM structure on $E$ determined by $(L,b) \in \PPminus$. 
  Let $c$ denote the complex conjugation on $E$ and choose for each $\tau \in \mathrm{Emb}(E,\C)$ an extension $w_\tau: H \rightarrow \C$ to a complex embedding of $H$, 
   such that 
\[
w_{\tau c} = w_{c \tau} = c w_{\tau}.
\]
Then for each $\sigma \in \Gal(H/\Q)$ and $\tau \in \mathrm{Emb}(E,\C)$, 
both $\sigma w_{\tau}$ and $w_{\sigma \tau}$ give 
embeddings $H \rightarrow \C$ extending $\sigma \tau$
and, therefore, 
$ w_{\sigma \tau}^{-1}\sigma w_{\tau} \in \Gal(H/K)$.

 The following theorem computes much of the action of $F_{\sigma}$ on the homotopy of $\PPminus$, in the
 cases of interest.
\begin{theorem} \label{MilneMainTheorem}
  \begin{enumerate}
\item[(i)]    The map $\pi_0(F_{\sigma}) \colon \pi_0 (\PPminus) \rightarrow \pi_0 (\PPminus)$
is given on each fiber of $\pi_0 \PPminus \rightarrow \pi_0 \mathcal{P}_{E \otimes \R}^{-}$
(i.e., upon fixing the CM type) 
by  tensoring, as in \eqref{eq:4},
 with a certain $[(X,q)]  \in \pi_0 \PPplus$ determined by $\sigma$ and the CM type.
  
 Moreover, the class of  $[X]$ under  the Artin map $\pi_0(\pic{\mathcal{O}_E})  \xrightarrow{\mathrm{Art}}  \Gal(H/K)^{\ab}$
 is given by
  \begin{equation}\label{eq: taniyama cocycle}
   \mathrm{Art} \left(X\right)=  \mbox{ class of } \left[ \sum_{\tau \in \Phi}  w_{\sigma \tau}^{-1}\sigma w_{\tau} \right] \mbox{ in } \Gal(H/K)^{\ab}.
 \end{equation}
\item[(ii)]  In the case $E=K_q$ the map on higher homotopy groups
 \begin{equation} \label{betalinear}  \pi_*(F_{\sigma}): \pi^s_*(|\PPminus|, \Z/q) \rightarrow \pi^s_*(|\PPminus|, \Z/q)\end{equation}
is $\Z/q[\beta]$-linear, that is to say, it sends 
 $[\beta^j(L,b) ]$ to $ \beta^j \sigma([L,b])$,
 with notation as described after Proposition \ref{prop:generators-for-KSp}.
 \end{enumerate}
\end{theorem}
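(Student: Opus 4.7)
The plan is to leverage the equivalence $\PPminus \simeq \mathcal{A}_g^{\mathcal{O}}(\C)$ of Remark \ref{equiv-of-cat} to transport the Galois action on CM abelian varieties back to $\PPminus$. Part (i) will then be a direct consequence of the Shimura--Taniyama--Tate--Deligne main theorem of complex multiplication, while part (ii) will follow by checking the induced map on automorphism groups.

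For part (i), under the equivalence an object $(L,b)$ corresponds to a polarized pair $(A,\iota)$ with $\iota\colon \mathcal{O}\to\mathrm{End}(A)$, and $F_\sigma$ necessarily sends this to $(\sigma A, \iota^{(\sigma)})$, where $\iota^{(\sigma)}(\alpha) := \sigma_*\iota(\alpha)$. The fiber of $\pi_0(\PPminus) \to \pi_0(\mathcal{P}_{E\otimes\R}^-)$ over a fixed CM type $\Phi$ is a torsor under $\pi_0(\PPplus)$ via the tensor operation \eqref{eq:4}, so comparing the torsor structures on the fibers over $\Phi$ and over its image under $F_\sigma$ expresses $\pi_0(F_\sigma)$ as tensoring with a single class $[(X,q)] \in \pi_0(\PPplus)$. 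To identify the ideal class of $X$ via the Artin map with the cocycle \eqref{eq: taniyama cocycle}, I would invoke the refined main theorem of complex multiplication (Tate--Deligne, see e.g.\ Theorem 10.1 of Milne's \emph{Complex multiplication}): it asserts that $\sigma A$ is obtained from $A$ by Serre's tensor construction \eqref{Serretensor} with a fractional ideal whose class in $\Gal(H/K)^{\ab}$ is exactly the half-transfer sum \eqref{eq: taniyama cocycle}. Translating this ideal-theoretic statement across the equivalence of categories into the language of pairings yields the formula.

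For part (ii), the key point is that $F_\sigma$ induces the \emph{identity} on the automorphism group $U_1(\mathcal{O}_q) = \Aut_{\PPqminus}(L,b)$. Indeed, the automorphism $\iota(x) \in \Aut(A,\iota)$ corresponding to $x \in U_1(\mathcal{O}_q)$ is sent by $\sigma$ to $\sigma_*\iota(x) = \iota^{(\sigma)}(x) \in \Aut(\sigma A, \iota^{(\sigma)})$, which under the inverse equivalence corresponds to the very same $x \in U_1(\mathcal{O}_q) = \Aut(F_\sigma(L,b))$, because the $\mathcal{O}$-action has been twisted along with the variety (so that the result is $x$ rather than $\sigma(x)$). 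Writing
\[
\pi_*^s(|\PPqminus|; \Z/q) \simeq \pi_*^s(BU_1(\mathcal{O}_q); \Z/q) \otimes \Z[\pi_0(\PPqminus)]
\]
as in \eqref{eq:2}, the map $F_\sigma$ therefore acts as the identity on the first factor and as $\pi_0(F_\sigma)$ on the second. Since the Bott element $\beta$ lives in the first factor (pushed forward from $\pi_2^s(B\mu_q; \Z/q)$ via $\mu_q \hookrightarrow U_1(\mathcal{O}_q)$), this yields $F_\sigma(\beta^j [L,b]) = \beta^j \sigma([L,b])$, as desired.

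The main obstacle is part (i): translating between the Tate--Deligne statement, typically expressed in adelic or ideal-theoretic language, and the pairing-based groupoid $\PPplus$. In particular one must verify that the choices of extensions $w_\tau$ (respecting conjugation) assemble the cocycle data into an element of $\pi_0(\PPplus)$, not merely an ideal class, so that the contribution at infinite places encoding the CM type is correctly captured. Part (ii), once the identification of automorphism groups via the equivalence is set up carefully, is essentially formal.
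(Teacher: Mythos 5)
Your proposal is correct and follows essentially the same route as the paper: part (i) comes from compatibility of $F_\sigma$ with Serre's tensor construction plus Milne's form of the main theorem of CM, and part (ii) from the observation that this compatibility forces $\Z/q[\beta]$-linearity. Two minor remarks. First, the fiber of $\pi_0(\PPminus) \to \pi_0(\mathcal{P}_{E\otimes\R}^-)$ over a fixed CM type is a torsor under $\pi_0$ of the \emph{positive-definite} subgroupoid of $\PPplus$, not under all of $\pi_0(\PPplus)$, since tensoring with an indefinite object changes the CM type; what allows you to express $\pi_0(F_\sigma)$ as tensoring with a class in $\pi_0(\PPplus)$ is that the signature map is surjective (Lemma~\ref{lem: signs}), so the fibers over $\Phi$ and $\sigma\Phi$ are related by such a class. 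Second, for part (ii) the paper phrases the argument as $\pi_*^s(|\PPqminus|;\Z/q)$-module structure over the graded ring $\pi_*^s(|{\PPplus}^{\mathrm{pos.def.}}|;\Z/q)$, using naturality of $F_\sigma$ against the tensor bifunctor, whereas you verify directly that $F_\sigma$ is the identity on $U_1(\mathcal{O}_q)$ under the canonical identification of automorphism groups and read the conclusion off the splitting~\eqref{eq:2}. Your verification is correct (the underlying scheme and hence its endomorphism ring are unchanged by $\sigma$, so $\iota^{(\sigma)}=\iota$ literally), and it is precisely the fact underlying the $\Z/q[\beta]$-linearity in the paper's module-structure argument.
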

For example, $(X,q) = (\mathcal{O}_q,-1)$ when $\sigma = c$ is complex conjugation, see Remark~\ref{rem:differ}.
\begin{proof}

(i)  We defined in Remark~\ref{rem:differ} a tensoring bifunctor $\PPminus \times \PPplus \rightarrow \PPminus$, such that, for each $(L,b) \in \PPminus$ and each
\emph{positive definite} $(Y,h) \in \PPplus$ we have 
 \[
 \ST( (L,b) \otimes (Y,h) ) \cong \ST(L,b) \otimes_{\mathcal{O}} (Y,h),
 \] 
 where on the right we have the Serre tensor construction, cf. Remark~\ref{rem:differ}. 
Because applying $\sigma$ commutes with the Serre tensor construction, this implies that
 $$ F_{\sigma}((L, b) \otimes (Y, h)) \cong F_{\sigma}(L, b) \otimes (Y,h),$$
 naturally in $(L,b)$ and $(Y,h)$. Now,  $\pi_0(\PPminus)$ is a torsor under the tensoring action of $\pi_0(\PPplus)$, 
 and  this induces on each fiber of $\pi_0 (\PPminus ) \rightarrow \pi_0 (\mathcal{P}_{E \otimes \R}^{-})$ the structure of  torsor under the positive definite subgroup of $\pi_0( \PPplus)$.
  Hence the action of $\pi_0(F_{\sigma})$ on any such fiber is through tensoring with the class of a particular $(X,q) \in \PPplus$. To complete the proof of (i), we need to pin down the explicit formula for $(X,q)$, which is given in  \cite[Theorem 4.2]{Mil07} except there Milne has replaced the Artin map by its refinement
 $
  \mathbf{A}_{f,E}^\times/E^\times \to \Gal(\ol{\Q}/E)^{\ab}.$

(ii) Writing ${\PPplus}^{\mathrm{pos.def.}} \subset \PPplus$ for the full subgroupoid on the positive definite $(Y,h)$, naturality implies that~\eqref{betalinear} is linear over the graded ring $\pi_*^s(|{\PPplus}^{\mathrm{pos.def.}}|;\Z/q)$, which contains $\Z/q[\beta]$ because $\Z/q \cong U_1(\mathcal{O}_q)$ is the automorphism group of $(Y_0,h_0) = (\mathcal{O}_q,1) \in {\PPplus}^{\mathrm{pos.def.}}$.
 \end{proof}  
 
  \section{The main theorem and its proof}  \label{maintheoremproof}
Recall from  
 Theorem \ref{thm:KSp-of-Z} that  there is an isomorphism
\[
\begin{tikzcd}
 \KSp_{4k-2}(\Z;\Z/q)   \ar[rr, "{(c_H, c_B)}", "\sim"'] &  &  \pi_{4k-2}(ku;\Z/q) \times K_{4k-2}(\Z;\Z/q)^{(+)}.
 \end{tikzcd}
 \]

 Let us recall that $\pi_{4k-2}(ku;\Z/q)$ is a cyclic of order $q$, generated by the $2k-1$st power of the Bott class 
 $\mathrm{Bott} \in \pi_2(ku;\Z/q)$. For purposes
 of making Galois equivariance manifest, we will in the current section identify
 $$ \pi_{4k-2}(ku; \Z/q) \stackrel{\sim}{\longrightarrow} \mu_q^{\otimes 2k-1}$$
 via $\Bott^{2k-1} \mapsto \zeta_q^{\otimes 2k-1}$.   By means of this identification, the target
 of the map $c_H$ can be considered
 to be $\mu_q^{\otimes 2k-1}$. 
  
 \begin{theorem} \label{mt2}
 Let $H_q \subset \C$  be the largest unramified extension of $K_q$ with  abelian $p$-power Galois group.
Let $G=\Gal(H_q/\Q)$, and let $\langle c \rangle \leqslant G$
 be the order $2$ subgroup  generated by complex conjugation. 
 \begin{itemize}
 \item[(i)]
 The Galois action on $\KSp_{4k-2}(\Z;\Z/q)$
 factors through $G$.
 \item[(ii)]
The sequence
\begin{equation} \label{seq}    \Ker(c_H)  \rightarrow  \KSp_{4k-2}(\Z;\Z/q)  \stackrel{c_H}{\rightarrow}   \mu_q^{\otimes 2k-1}\end{equation}
  is a short exact sequence of $G$-modules, where the $G$-action on $\Ker(c_H)$
  is understood to be trivial, and the action on $\mu_q$ is via the cyclotomic character.

    \item[(iii)] The sequence \eqref{seq} is universal for  extensions of $\mu_q^{\otimes (2k-1)}$
  by a trivial $\Z/q[G]$-module.
  \end{itemize}
\end{theorem}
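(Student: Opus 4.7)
The strategy for Theorem \ref{mt2} is to reduce every assertion to the CM generators $\beta^{2k-1}\cdot[(L,b)]$ of $\KSp_{4k-2}(\Z;\Z/q)$ supplied by Proposition \ref{prop:generators-for-KSp}, and then to apply Theorem \ref{MilneMainTheorem} (the Main Theorem of Complex Multiplication) to compute the Galois action on them explicitly.

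For part (i) and the $G$-equivariance of $c_H$ in part (ii): Theorem \ref{MilneMainTheorem}(i) gives $\sigma\cdot\beta^{2k-1}[(L,b)] = \beta^{2k-1}[(L\otimes X_\sigma,b')]$, where the class $[X_\sigma]\in\pi_0(\pic{\OO_q})/q$ is described by an Artin symbol depending on $\sigma$ only through its restriction to $H_q$. This gives (i). The formula of Proposition \ref{prop: hodge map surjects} expresses $c_H(\beta^{2k-1}[(L,b)])$ as a weighted sum over the CM type $\Phi_{(L,b)}$; since $\sigma$ permutes the complex embeddings as $j_a\mapsto j_{\chi_{\cyc}(\sigma)\cdot a}$, the weighted sum rescales by $\chi_{\cyc}(\sigma)^{2k-1}$, matching the prescribed action on $\mu_q^{\otimes 2k-1}$.

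For the triviality of the $G$-action on $\Ker(c_H)$: combining Theorem \ref{thm:KSp-of-Z} with Proposition \ref{shtuka} yields an isomorphism $\Ker(c_H)\xrightarrow{\sim}H_1(G;\mu_q^{\otimes 2k-1})$, and the target carries the trivial $G$-action as a general feature of group homology. What remains is equivariance of this isomorphism, which on CM generators reduces to a transfer computation: by Corollary \ref{cor:Soule-formula}, $c_B(\beta^{2k-1}[(L,b)]) = \mathrm{tr}(\beta^{2k-1}([L]-1))$ corresponds via Proposition \ref{shtuka} to the image of $\mathrm{Art}(L)\otimes\zeta_q^{\otimes 2k-1}$ in $H_1(G;\mu_q^{\otimes 2k-1})$, and after applying $\sigma$ this class changes by $\mathrm{Art}(X_\sigma)\otimes\zeta_q^{\otimes 2k-1}$; the vanishing of this change in $H_1(G;\mu_q^{\otimes 2k-1})$ will be verified using the Hochschild--Serre five-term sequence together with the ``center kills'' argument already used to prove Proposition \ref{shtuka}.

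For part (iii): by Pontryagin duality the functor $M\mapsto\mathrm{Ext}^1_G(\mu_q^{\otimes 2k-1},M)$ on $p$-complete trivial $G$-modules is corepresentable by $H_1(G;\mu_q^{\otimes 2k-1})$, with universal extension classified by the identity of $\mathrm{End}(H_1(G;\mu_q^{\otimes 2k-1}))$. Under the identification of the previous paragraph, universality of \eqref{seq} becomes the assertion that its extension class corresponds to the identity. To verify this we choose a CM generator $\tilde k$ whose Hodge image generates $\mu_q^{\otimes 2k-1}$ (possible by Proposition \ref{prop: hodge map surjects}) and evaluate the $1$-cocycle $\sigma\mapsto\sigma\tilde k-\chi_{\cyc}(\sigma)^{2k-1}\tilde k\in\Ker(c_H)$ using Theorem \ref{MilneMainTheorem} and formula \eqref{eq: taniyama cocycle}. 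The main obstacle will be this final identification of the cocycle with the tautological one: it requires carefully tracking the composition of the Artin isomorphism, the $K$-theoretic transfer, and the relative group-homology reformulation noted after Proposition \ref{shtuka}.
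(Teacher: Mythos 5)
Your plan for (i), for the $c_H$-equivariance in (ii), and for the cocycle verification in (iii) is essentially the paper's argument: reduce to CM generators via Proposition \ref{prop:generators-for-KSp}, apply Theorem \ref{MilneMainTheorem}, compute the Hodge map scaling by $\chi_{\cyc}(\sigma)^{2k-1}$, and compare the resulting cocycle with the tautological one using \eqref{eq: taniyama cocycle}, Proposition \ref{shtuka}, and the identifications from Corollary \ref{cor:Soule-formula}. One bookkeeping difference: the paper proves the more rigid statements (ii') and (iii') in the category $\mathcal{C}_{\Z/q}(G,\langle c\rangle;\mu_q^{\otimes(2k-1)})$ of extensions \emph{with chosen $\langle c\rangle$-equivariant splitting}, and then descends to (iii) via Lemma \ref{lem: vanishing homology implies same universal object}; you argue directly with absolute $H_1(G;-)$, which happens to agree here (since $H_0(\langle c\rangle;\mu_q^{\otimes(2k-1)})=H_1(\langle c\rangle;\mu_q^{\otimes(2k-1)})=0$) but requires you to know a priori that $H_0(G;\mu_q^{\otimes(2k-1)})=0$ for the universal object in $\mathcal{C}_{\Z/q}(G;\mu_q^{\otimes(2k-1)})$ to exist.

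However, your argument for the triviality of the $G$-action on $\Ker(c_H)$ has a genuine gap. You propose to compute $c_B(\beta^{2k-1}[(L,b)])$, note that $\sigma$ shifts it by $\mathrm{tr}(\beta^{2k-1}([X_\sigma]-1))$, i.e.\ by the image of $\mathrm{Art}(X_\sigma)\otimes\zeta_q^{\otimes(2k-1)}$, and then claim that this shift \emph{vanishes} in $H_1(G;\mu_q^{\otimes(2k-1)})$ by a Hochschild--Serre / center-kills argument. This cannot be right: that shift is precisely the value of the extension cocycle $\alpha(\sigma,c_H(\beta^{2k-1}[(L,b)]))$, which must be nonzero whenever $\Ker(c_H)\neq 0$, otherwise the sequence would split equivariantly, contradicting (iii). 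The source of the confusion is that $\beta^{2k-1}[(L,b)]$ is generally not in $\Ker(c_H)$, so computing how $\sigma$ moves its $c_B$-coordinate says nothing directly about the action on $\Ker(c_H)$. What you actually need to show is: for \emph{differences} $\beta^{2k-1}([(L,b)]-[(L',b')])$ with $\Phi_{(L,b)}=\Phi_{(L',b')}$ (and these differences generate $\Ker(c_H)$ by the argument around \eqref{Phigen}), the shift $\mathrm{tr}(\beta^{2k-1}([X_\sigma]-1))$ is the \emph{same} for both terms, because Theorem \ref{MilneMainTheorem}(i) says $X_\sigma$ depends only on $\sigma$ and the common CM type; hence the corrections cancel. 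This is the paper's argument in Lemma \ref{lem: 7.6} around \eqref{X33}--\eqref{eq:cv0}, and it does not use any Hochschild--Serre vanishing.
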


In detail, the final assertion (iii) means that the sequence \label{cHsequence} is
the initial object of a category  $\Cal{C}_{\Z/q}(G;  \mu_q^{\otimes (2k-1)})$
of extensions of $G$-modules of $\mu_q^{\otimes (2k-1)}$ by a trivial $G$-module.
 This category and its basic properties are discussed
in \S \ref{sec:cocycl-univ-extens}.

\begin{remark}
It turns out to be technically more convenient to work in a more rigid category of
sequences equipped with splitting, and we will in fact prove the following statements:

{\em 
\begin{quote}
 \begin{itemize}
 \item[(ii')]
 There is a unique splitting of the sequence that    is equivariant for the action of $\langle c\rangle$; explicitly
  the kernel of $c_B$ maps isomorphically to $\mu_q^{\otimes 2k-1}$
  under $c_H$ and yields such a splitting.  
  
  \item[(iii')] The sequence
  \eqref{seq} is universal for extensions of $\mu_q^{\otimes 2k-1}$
  by a trivial $\Z/q[G]$-module that are equipped with a $\langle c\rangle$-equivariant splitting. 
  \end{itemize}
  \end{quote}
   }
 \end{remark}

 We will first give some generalities on universal extensions in 
\S \ref{sec:cocycl-univ-extens}. We then verify  (i) and (ii) in \S \ref{sec:galo-acti-qual},
and then (iii)  in \S \ref{sec:universality}. Finally, we give a number of
related universal properties in \S \ref{univ1}, \ref{univ2}.

\subsection{Cocycles and universal extensions}
\label{sec:cocycl-univ-extens}

\begin{definition} \label{catdef}
  Let $G$ be a discrete group, $H \leqslant G$ a subgroup, and $M$ a $\Lambda[G]$-module for some coefficient ring $\Lambda$. We consider a category $\Cal{C}_\Lambda(G, H; M)$ of ``extensions of $M$ by a trivial $G$-module, equipped with an $H$-equivariant splitting''.  More precisely, the objects of $\Cal{C}_\Lambda(G, H; M)$ are triples $(V,\pi,s)$ where $V$ is a $\Lambda[G]$-module, $\pi \in \Hom_{\Lambda[G]}(V,M)$ and $s \in \Hom_{\Lambda[H]}(M,V)$ satisfy $s \circ \pi = \mathrm{id}_M$, and the $\Lambda[G]$-module $T = \Ker(\pi)$ has trivial $G$-action; the morphisms $(V,\pi,s) \to (V',\pi',s')$ are those $\phi \in \Hom_{\Lambda[G]}(V,V')$ for which $\pi' \circ \phi = \pi$ and $\phi \circ s = s'$.

  We also consider the variant $\Cal{C}_\Lambda(G;M)$ where there is only given $(V,\pi)$ with $\Lambda[G]$-linear $\pi: V \to M$ and morphisms satisfy only $\pi' \circ \phi = \pi$.
  (As a warning, this is not the same category as $\Cal{C}_{\Lambda}(G,\{e\};M)$.)
\end{definition}

Objects of $\Cal{C}_\Lambda(G,H;M)$ may be depicted as short exact sequences of $\Lambda[G]$-modules
\begin{equation}\label{eq: object of C}
  \begin{tikzcd}
    T \arrow{r} & V \arrow{r}{\pi} & M. \arrow[bend left=33]{l}{s}
  \end{tikzcd}
\end{equation}
equipped with $\Lambda[H]$-equivariant splittings.  The identity map of $M$ evidently gives a terminal object in this category. 

We will show that the category $\Cal{C}_\Lambda(G,H;M)$ always has an initial object, 
which we call the \emph{universal extension} and denote
\begin{equation}\label{eq: universal extension}
  \begin{tikzcd}
    T^\mathrm{univ} \arrow{r} & V^\mathrm{univ} \arrow{r}{\pi} & M, \arrow[bend left=33]{l}{s}.
  \end{tikzcd}
\end{equation}
We will also see that there is a canonical isomorphism $H_1(G,H; M) \cong T^{\mrm{univ}}$ (where $H_1(G, H; -)$ is {\em relative} group homology). Any other object of $\Cal{C}_\Lambda(G,H;M)$ arises by pushout from the universal extension, so we think of \eqref{eq: universal extension} as being the ``most non-trivial'' object in $\Cal{C}_\Lambda(G,H;M)$.

 To  an object \eqref{eq: object of C} of $\Cal{C}_{\Lambda}(G,H; M)$, as above, we associate a function $\alpha: G \times M \to T$, by
\begin{equation}\label{eq:62}
  \alpha(g,m) = g.(s(m)) - s(g.m).
\end{equation}
This function satisfies
\begin{enumerate}[(i)]
\item for every $g \in G$, the function $\alpha(g,m)$ is $\Lambda$-linear in $m \in M$, 
\item\label{item:7} $\alpha(g,m) = 0$ when $g \in H$, 
\item\label{item:8} the cocycle condition
\begin{equation}
  \label{eq:19}
  \alpha(g g',m) = \alpha(g,g'.m) + \alpha(g',m).
\end{equation}
\end{enumerate}
Now, the rule $(m,t) \mapsto s(m) + t$ defines a $\Lambda[H]$-linear isomorphism $M \times T \stackrel{\sim}{\rightarrow} V$, 
   with respect to which
\begin{equation}\label{eq:58}
  g.(m,t) = (g.m,t + \alpha(g,m)),
\end{equation}
so the object~\eqref{eq: object of C} is described uniquely by the $\Lambda$-module $T$ and the function $\alpha$.  This defines an equivalence of categories between $\mathcal{C}_\Lambda(G,H;M)$ and a category whose objects are pairs $(T,\alpha)$ and whose morphisms are $\Lambda$-linear maps $f: T \to T'$ such that $f(\alpha(g,m)) = \alpha'(g,m)$.

Recall also that group homology $H_*(G;M)$ is calculated by a standard ``bar'' complex with
\begin{equation*}
  C_i(G;M) = \Z[G^i] \otimes_\Z M \cong \Lambda[G^i] \otimes_\Lambda M.
\end{equation*}
The inclusion $i: H \subset G$ gives an injection $C_*(H;\mathrm{Res}^G_H M) \to C_*(G;M)$ and we let $C_*(G,H;M)$ be the quotient; in particular $C_0(G,H;M) = 0$.  Its homology is the relative group homology $H_*(G,H;M)$, which sits in a long exact sequence with $i_*: H_*(H;\mathrm{Res}^G_H M) \to H_*(G;M)$.
For an object~(\ref{eq: object of C}) the cocycle $\alpha$ defines a $\Lambda$-linear map
\begin{equation}\label{eq:63}
  C_1(G;M) = \Z[G] \otimes M \cong \Lambda[G] \otimes_\Lambda M \xrightarrow{\alpha} T,
\end{equation}
and the conditions~(\ref{item:7}) and~(\ref{item:8}) say that this map has both $C_1(H;\mathrm{Res}^G_H M)$ and $\partial C_2(G;M)$ in its kernel.  Therefore $\alpha$  gives a $\Lambda$-linear map
\begin{equation}\label{eq:57}
  H_1(G,H;M) \xrightarrow{[\alpha]} T.
\end{equation}

\begin{lemma}\label{lem: initial object exists}
  The rule associating $[\alpha]$ of (\ref{eq:57}) to the extension ~\eqref{eq: object of C} defines an equivalence of categories between $\mathcal{C}_\Lambda(G,H;M)$ and the category of $\Lambda$-modules under $H_1(G,H;M)$. 
In particular, the category $\Cal{C}(G, H; M)$ has an initial object
  $$T^{\univ} \rightarrow V^{\univ} \twoheadrightarrow M$$ 
wherein $T^{\univ} \cong H_1(G, H; M)$,  the relative group homology.
\end{lemma}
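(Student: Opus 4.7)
The plan is to establish the equivalence by checking that the three packages of data---an extension $(V,\pi,s)$, a cocycle $\alpha\colon G\times M\to T$, and a $\Lambda$-linear map out of $H_1(G,H;M)$---are interchangeable, after which the universal extension drops out formally. Much of the forward direction is already laid out in the discussion preceding the lemma: an extension gives rise to the function $\alpha(g,m)=g.s(m)-s(gm)$ via~\eqref{eq:62}, and thus to a $\Lambda$-linear map $\tilde\alpha\colon C_1(G;M)=\Lambda[G]\otimes_\Lambda M\to T$. The conditions~(ii) and~(iii) on $\alpha$ translate exactly into the statements that $\tilde\alpha$ kills $C_1(H;M)=\Lambda[H]\otimes_\Lambda M$ and $\partial C_2(G;M)$, so $\tilde\alpha$ factors through $C_1(G,H;M)/\partial C_2(G,H;M)$. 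The crucial elementary observation is that this quotient actually equals $H_1(G,H;M)$: because $C_0(G,H;M)=C_0(G;M)/C_0(H;M)=M/M=0$, the relative differential $\partial_1$ vanishes, so $Z_1(G,H;M)=C_1(G,H;M)$ and $H_1(G,H;M)=C_1(G,H;M)/\partial C_2(G,H;M)$.

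For essential surjectivity, I will take a $\Lambda$-linear map $\phi\colon H_1(G,H;M)\to T$ and build the extension $V:=M\oplus T$ with $\pi$ the first projection and $s(m)=(m,0)$, equipped with the $G$-action
\[
  g.(m,t)\;=\;(gm,\,t+\alpha(g,m)),
\]
where $\alpha\colon G\times M\to T$ is the composite of the tautological map $\Lambda[G]\otimes_\Lambda M\twoheadrightarrow H_1(G,H;M)$ with $\phi$. Because $\phi$ is defined at the level of $H_1$, the cocycle identity~\eqref{eq:19} holds automatically, so the formula does define a $G$-action; likewise the vanishing of $\alpha$ on $\Lambda[H]\otimes_\Lambda M$ makes $s$ an $H$-equivariant section. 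The resulting object of $\mathcal{C}_\Lambda(G,H;M)$ has associated class $\phi$ by construction, and the two constructions are mutually inverse up to canonical isomorphism via~\eqref{eq:58}. Full faithfulness is then immediate: writing $V$ and $V'$ in the form $M\oplus T$ and $M\oplus T'$ as in~\eqref{eq:58}, a morphism $(V,\pi,s)\to(V',\pi',s')$ is determined by its $T$-component $f\colon T\to T'$, and the compatibility with the two $G$-actions is exactly the condition that $f\circ[\alpha]=[\alpha']$.

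The initial object of the category of $\Lambda$-modules under $H_1(G,H;M)$ is $H_1(G,H;M)$ itself paired with the identity map, so transporting through the equivalence produces the universal extension with $T^{\univ}\cong H_1(G,H;M)$. The only step that really needs thought is the identification $H_1(G,H;M)=C_1(G,H;M)/\partial C_2(G,H;M)$, which rests on the vanishing of $C_0(G,H;M)$; everything else is bookkeeping with cocycles. I do not expect any genuine obstacle.
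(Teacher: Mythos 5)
Your proof is correct and follows the same route as the paper's: the forward direction (extension $\to$ cocycle $\to$ map from $H_1$) is the discussion before the lemma, the inverse functor is built by the formula $g.(m,t)=(gm,t+\alpha(g,m))$ with $\alpha$ induced by the tautological quotient $C_1(G,H;M)\twoheadrightarrow H_1(G,H;M)$, and the universal object corresponds to the identity map. The one thing you make explicit that the paper treats as understood (it is noted in the preceding text, where $C_0(G,H;M)=0$ is observed) is the identification $H_1(G,H;M)=C_1(G,H;M)/\partial C_2(G,H;M)$, which is indeed the small point that makes the tautological cocycle well defined and hence the whole argument work.
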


\begin{proof}
Indeed, the functor in the other direction is described as follows: given $f: H_1(G,H;M) \to T$, define $\alpha: G \times M \to T$ by composing $f$ with the canonical maps $G \times M \to C_1(G,H;M) \to H_1(G,H;M)$, and set $V = M \times T$ with $G$-action given by~(\ref{eq:58}).
  The identity map of $H_1(G,H;M)$ then corresponds to an initial object with $T^\mathrm{univ} = H_1(G,H;M)$.
\end{proof}

\begin{remark}
The proof above also gives an explicit description of the map $T^\mathrm{univ} \to T$ arising from the universal map to another object~\eqref{eq: object of C}: first extract $\alpha: G \times M \to T$ as in~\eqref{eq:62}, extend to an additive map~\eqref{eq:63} and factor as in~\eqref{eq:57}.
\end{remark}

There are natural situations where one can drop $H$.

\begin{lemma}\label{lem: vanishing homology implies same universal object} 
\begin{itemize}
\item[(a)]
  If $H_0(H;M) = 0 = H_1(H;M)$, then the forgetful functor
  \begin{equation*}
    \mathcal{C}_\Lambda(G,H;M) \to \mathcal{C}_\Lambda(G;M)
  \end{equation*}
  is an equivalence.  In particular the image of the initial object of $\mathcal{C}_\Lambda(G,H;M)$ is initial in $\mathcal{C}_\Lambda(G;M)$.
  \item[(b)] If $H_0(H, M) = 0$, then the forgetful functor
    \begin{equation*}
    \mathcal{C}_\Lambda(G,H;M) \to \mathcal{C}_\Lambda(G;M)^{H\text{-split}}
  \end{equation*}
  is an equivalence, where, on the right, we take the full subcategory of $\mathcal{C}_{\Lambda}(G; M)$
consisting of sequences which admit splittings as sequences of $H$-modules.
  \end{itemize}
  
\end{lemma}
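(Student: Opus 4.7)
The plan is to verify that the forgetful functor is fully faithful under the hypothesis $H_0(H;M)=0$ (which is present in both parts), and then to establish essential surjectivity separately, using the additional hypothesis in part (a).

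For full faithfulness, I would take $(V,\pi,s)$ and $(V',\pi',s')$ in $\mathcal{C}_\Lambda(G,H;M)$ together with a morphism $\phi \in \Hom_{\Lambda[G]}(V,V')$ between the underlying extensions (i.e.\ satisfying $\pi'\circ\phi=\pi$), and observe that the difference $\phi\circ s - s'$ is an $H$-equivariant map $M\to T'$, where $T'=\ker(\pi')$ carries the trivial $G$-action. Thus $\phi\circ s - s'$ lies in $\Hom_{\Lambda[H]}(M,T')\cong\Hom_\Lambda(H_0(H;M),T')$, which vanishes by hypothesis; hence $\phi\circ s=s'$ automatically, making the functor fully faithful.

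Essential surjectivity is immediate for (b), since $\mathcal{C}_\Lambda(G;M)^{H\text{-split}}$ is by definition the full subcategory of extensions admitting an $H$-equivariant section. For (a), given $(V,\pi)\in\mathcal{C}_\Lambda(G;M)$ I must produce such a section, equivalently show that the class of $0\to T\to V\to M\to 0$ in $\mathrm{Ext}^1_{\Lambda[H]}(M,T)$ vanishes. To compute this Ext group I would choose a $\Lambda[H]$-projective resolution $P_\bullet\to M$ and exploit the fact that, since $T$ is a trivial $H$-module, there is a natural identification $\Hom_{\Lambda[H]}(P_\bullet,T)\cong\Hom_\Lambda(D_\bullet,T)$, where $D_\bullet:=P_\bullet\otimes_{\Lambda[H]}\Lambda$ is a complex of projective $\Lambda$-modules computing $H_*(H;M)$. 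The universal coefficient theorem for chain complexes then yields a short exact sequence
\[
0\to\mathrm{Ext}^1_\Lambda(H_0(H;M),T)\to\mathrm{Ext}^1_{\Lambda[H]}(M,T)\to\Hom_\Lambda(H_1(H;M),T)\to 0,
\]
and under the hypothesis of (a) both outer terms vanish, so the extension splits $H$-equivariantly.

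The final sentence of (a) about initial objects then follows formally: any equivalence of categories preserves initial objects, and the image $T^{\mathrm{univ}}=H_1(G,H;M)$ is moreover canonically isomorphic to $H_1(G;M)$ via the long exact sequence in relative group homology once the hypotheses $H_0(H;M)=0=H_1(H;M)$ are imposed. I do not anticipate any real obstacle; the entire argument reduces to the standard $\mathrm{Ext}^1$ computation above, and the mild bookkeeping point is just to verify that the identification $\Hom_{\Lambda[H]}(P,T)\cong\Hom_\Lambda(P_H,T)$ is compatible with differentials so as to genuinely yield an isomorphism of Ext groups.
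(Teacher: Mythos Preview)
Your proposal is correct and follows essentially the same approach as the paper: both reduce the claim to the vanishing of $\Hom_{\Lambda[H]}(M,T)$ (for full faithfulness/uniqueness of the splitting) and of $\Ext^1_{\Lambda[H]}(M,T)$ (for essential surjectivity/existence of a splitting in part (a)), with the paper simply asserting these vanish under the hypotheses while you spell out the identifications with $\Hom_\Lambda(H_0(H;M),T)$ and the universal-coefficient-type computation. One small caveat: over an arbitrary ring $\Lambda$ the short exact UCT sequence you write need not hold on the nose (it is in general a spectral sequence), but your intended conclusion---that $\Ext^1_{\Lambda[H]}(M,T)=0$ once $H_0(H;M)=H_1(H;M)=0$---is correct, e.g.\ because the complex $D_\bullet=(P_\bullet)_H$ of projective $\Lambda$-modules is then chain homotopy equivalent to one vanishing in degrees $0$ and $1$.
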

\begin{proof}
 If we regard an object~\eqref{eq: object of C} as an extension of $\Lambda[H]$-modules, it is classified by an element of
$ 
    \Ext^1_{\Lambda[H]}(M,T) 
 $ and two splittings differ by an element of
  $
    \Hom_{\Lambda[H]}(M,T).
 $
  Under the vanishing assumption of (a), both these groups vanish;
  so the splitting is unique and hence the forgetful functor
  is an equivalence. In the setting of (b) only the
  latter group vanishes, which still implies that the stated
 forgetful functor is an equivalence. 
  \end{proof}

In the absense of a specified $H$, it can be shown that $\mathcal{C}_\Lambda(G;M)$ admits an initial object if and only if $H_0(G;M) = 0$, and in this case the kernel is $H_1(G;M)$.  (Note that $\mathcal{C}_\Lambda(G;M)$ is not the same as $\mathcal{C}_\Lambda(G,\{e\};M)$.)

\subsection{Proof of (i) and (ii) of the main theorem}  \label{sec:galo-acti-qual}

  We briefly recall some of the prior results before proceeding to the proof. We have constructed maps    
 $$\label{eq: multiline}
 \pi_{4k-2}^s(|\PPqminus|;\Z/q)   \rightarrow  \KSp_{4k-2}(\Z;\Z/q)   \stackrel{(c_H, c_B)}{\longrightarrow}  \pi_{4k-2}(ku;\Z/q) \times K_{4k-2}(\Z;\Z/q)^{(+)}. 
 $$

Recall from \eqref{eq:2} that each class
$[(L, b)] \in  \pi_0(\PPqminus)$ gives a class
 $\beta^{2k-1} [(L,b)] \in  \pi_{4k-2}^s(|\PPqminus|;\Z/q)$;
 here $\beta \in \pi_2^s(|\PPqminus|; \Z/q)$ is
 a Bott element induced by  the primitive root of unity $e^{2 \pi i/q} = \zeta_q \in \mathcal{O}_q$. 
 With this notation, we have previously verified:

 \begin{itemize}
 \item[(a)] Under the composite map, the images of  elements
 $
  \beta^{2k-1} [(L,b)]$ generate the codomain $\KSp_{4k-2}$ (proof of Proposition 
\ref{KqexhaustsKSp}). 

 \item[(b)] (Proposition \ref{prop: hodge map surjects}
 and Corollary~\ref{cor:Soule-formula}) Explicitly, the image of $
  \beta^{2k-1} [(L,b)]$ is
  \begin{equation} \label{HSrecall}
   \begin{tikzcd}
     &   \big(\sum_{a: j_a \in \Phi} a^{2k-1} \big) \Bott^{2k-1} 
      \in \pi_{4k-2}(ku;\Z/q) 
     \\
     \beta^{2k-1}[(L,b)] \arrow[ru,mapsto, "c_H"] \arrow[rd,mapsto,"c_B"] \\
     &   \mathrm{tr}(\beta^{2k-1} ([L]-1))  \in  K_{4k-2}(\Z;\Z/q)^{(+)}
   \end{tikzcd}
  \end{equation}
  where:
  \begin{itemize}
  \item $\mathrm{tr}: K_*(\mathcal{O}_q;\Z/q) \to K_*(\Z;\Z/q)$ is the transfer,
 \item $\Phi = \Phi_{(L,b)} \subset \Hom(K_q, \C)$ is the CM type
  associated to $(L, b)$ by  \eqref{PhiLbdef}, and $j_a \in \Hom(K_q,\C)$ is the embedding that sends $\zeta_q \mapsto e^{2 \pi i a/q}$, for $a \in (\Z/q)^\times$,
  \item  On the right, $\Bott \in \pi_2(ku;\Z/q)$ is the mod $q$ reduction of the Bott element.

  \end{itemize}
  \item[(c)] (By \eqref{betalinear} and surrounding discussion): The action of $\sigma \in \Aut(\C)$  on     $ \pi_{4k-2}^s(|\PPqminus|;\Z/q) $  sends
 \begin{equation} \label{MainThmRecall}
  \beta^{2k-1} [(L,b)] \mapsto  \beta^{2k-1} [(L,b) \otimes (X,q)]
\end{equation}
for a certain $(X,q)  \in  \PPqplus$ depending only on the CM type $\Phi_{(L, b)}$ and $\sigma$;
the image of $X$ under the Artin map  was described in  Theorem  \ref{MilneMainTheorem}  
and depends only on the restriction of $\sigma$ to $H_q$.
  \end{itemize}

\begin{lemma}\label{lem: 7.6}
  In the extension
  \begin{equation*}
    \Ker(c_H) \to \KSp_{4k-2}(\Z;\Z/q) \xrightarrow{c_H} \mu_q^{\otimes(2k-1)},
  \end{equation*}
  the action of $\Aut(\C)$ factors through $\Gal(H_q/\Q)$.
  The map $c_H$ is equivariant for this action, and 
  the action on $\Ker(c_H)$ it is trivial.  Finally, 
  the kernel of $c_B$ maps isomorphically to $\mu_q^{\otimes(2k-1)}$
  under $c_H$, splitting the above sequence equivariantly for $\langle c\rangle$. 
\end{lemma}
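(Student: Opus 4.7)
The strategy is to reduce every claim to an explicit calculation on the generating classes $\beta^{2k-1}[(L,b)] \in \pi^s_{4k-2}(|\PPqminus|;\Z/q)$, whose images span $\KSp_{4k-2}(\Z;\Z/q)$ by Proposition~\ref{prop:generators-for-KSp}; on these we control both the Galois action, via Theorem~\ref{MilneMainTheorem}, and the Betti--Hodge values, via \eqref{HSrecall}. For (i), Theorem~\ref{MilneMainTheorem} shows that $\sigma \in \Aut(\C)$ acts on $\beta^{2k-1}[(L,b)]$ by tensoring with some $(X,q) \in \PPqplus$ whose class in $\pi_0\Pic(\OO_q)$ depends on $\sigma$ only through $\sigma|_{H_q}$, so surjectivity of CM classes forces the assertion. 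For (ii), applying $\sigma$ replaces $\Phi_{(L,b)}$ by $\chi(\sigma)\cdot\Phi_{(L,b)}$ (since $\sigma\circ j_a = j_{\chi(\sigma)a}$), so the formula $c_H(\beta^{2k-1}[(L,b)]) = (\sum_{a:j_a\in\Phi} a^{2k-1})\Bott^{2k-1}$ acquires an overall factor $\chi(\sigma)^{2k-1}$, which is precisely the cyclotomic action on $\mu_q^{\otimes(2k-1)}$ under the identification $\Bott^{2k-1} \leftrightarrow \zeta_q^{\otimes(2k-1)}$. The splitting statements are immediate from Theorem~\ref{thm:KSp-of-Z}---which makes $(c_B,c_H)$ an isomorphism onto $K_{4k-2}(\Z;\Z/q)^{(+)} \oplus \mu_q^{\otimes(2k-1)}$, so $\Ker(c_B) \xrightarrow{c_H} \mu_q^{\otimes(2k-1)}$ is an isomorphism---and Lemma~\ref{lem:Soule-complex-conjugation}, which makes $c_B$ (and hence $\Ker(c_B)$) $\langle c\rangle$-stable and yields the $\langle c\rangle$-equivariant splitting.

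The technical heart is the triviality of the $\Gal(H_q/\Q)$-action on $\Ker(c_H)$. Via $c_B$ and Proposition~\ref{shtuka}, we identify $\Ker(c_H) \cong H_1(H_q/\Q;\mu_q^{\otimes(2k-1)})$. Writing $v \in \Ker(c_H)$ as $\sum_i c_i\beta^{2k-1}[(L_i,b_i)]$ subject to $\sum_i c_i d_i = 0$ (with $d_i := \sum_{\tau\in\Phi_i}\chi(\tau)^{2k-1}$), Theorem~\ref{MilneMainTheorem} gives $c_B(\sigma v) - c_B(v) = \sum_i c_i A(X_i)$, where $A(X_i)$ is the image in $H_1(H_q/\Q;\mu_q^{\otimes(2k-1)})$ of $[\mathrm{Art}(X_i)]\otimes\zeta_q^{\otimes(2k-1)}$. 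Expanding $\mathrm{Art}(X_i) = \sum_{\tau\in\Phi_i} w_{\sigma\tau}^{-1}\sigma w_\tau$ via the $H_1$-cocycle identity $[g_1 g_2]\otimes m = [g_1]\otimes g_2 m + [g_2]\otimes m$ and using that each $w_{\sigma\tau}^{-1}\sigma w_\tau\in\Gal(H_q/K_q)$ fixes $m = \zeta_q^{\otimes(2k-1)}$, we arrive at
\[
A(X_i) \;=\; \phi(\Phi_i) - \phi(\sigma\Phi_i) + d_i\cdot [\sigma]\otimes m, \qquad \phi(\Psi) := \sum_{\tau\in\Psi}[w_\tau]\otimes m.
\]
The constraint $\sum c_i d_i = 0$ kills the last term. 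Setting $F(\tau) := \sum_i c_i\mathbb{1}_{\Phi_i}(\tau)$ and $G(\tau) := F(\tau) - F(\sigma^{-1}\tau)$, the CM-type condition $F(\tau) + F(\bar\tau) = \mathrm{const}$ forces $G$ to be odd: $G(\bar\tau) = -G(\tau)$. Pairing $\tau$ with $\bar\tau = c\tau$ in the remaining sum $\sum_\tau G(\tau)[w_\tau]\otimes m$ and applying the $H_1$-cocycle relations (which give $[w_{\bar\tau}]\otimes m = [w_\tau]\otimes m + \chi(\tau)^{2k-1}\cdot[c]\otimes m$) collapses it to a scalar multiple of $[c]\otimes m$; this last class vanishes because $H_1(\langle c\rangle;\mu_q^{\otimes(2k-1)}) = 0$ ($c$ acts as $-1$ on $\mu_q$ and $q$ is odd), so $\sigma v = v$.

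The hard part will be the group-homology bookkeeping above: reinterpreting the ``main theorem of CM correction'' $\sum_i c_i A(X_i)$ as a boundary in $H_1(H_q/\Q;\mu_q^{\otimes(2k-1)})$, where the key simplification comes from pairing off $\tau$ with $\bar\tau$ and invoking $H_1(\langle c\rangle;\mu_q^{\otimes(2k-1)}) = 0$. The explicit cocycle in Theorem~\ref{MilneMainTheorem}, the $H_1$-description of $K_{4k-2}(\Z;\Z/q)^{(+)}$ from Proposition~\ref{shtuka}, and the vanishing of $H_1(\langle c\rangle;-)$ have all been set up precisely for this purpose, so the argument is essentially forced once one commits to tracking everything in $H_1(H_q/\Q;\mu_q^{\otimes(2k-1)})$.
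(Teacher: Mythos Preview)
Your proof is correct. For the factoring through $\Gal(H_q/\Q)$, the equivariance of $c_H$, and the $\langle c\rangle$-equivariant splitting, your argument is essentially the paper's. For the triviality of the action on $\Ker(c_H)$, however, the paper takes a much shorter route: it uses \eqref{Phigen} (established in the proof of Proposition~\ref{prop:generators-for-KSp}) to observe that for any \emph{fixed} CM type $\Phi$, the classes $\beta^{2k-1}[(L,b)]$ with $\Phi_{(L,b)}=\Phi$ already surject onto $K_{4k-2}(\Z;\Z/q)^{(+)}$ via $c_B$; hence every element of $\Ker(c_H)$ is represented by a difference of two CM classes with the \emph{same} CM type. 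Since the twist $(X,q)$ in Theorem~\ref{MilneMainTheorem} depends only on the CM type, both terms of such a difference acquire the same correction $\mathrm{tr}(\beta^{2k-1}([X]-1))$ under $c_B$, and this cancels. Your group-homology computation is not wasted---it is essentially the calculation carried out later in \S\ref{sec:universality} for the universality statement---but it is overkill for the present lemma. One small imprecision: in your final step $[c]\otimes m$ is not itself a cycle, so appealing to $H_1(\langle c\rangle;\mu_q^{\otimes(2k-1)})=0$ is not quite literally correct; the clean fix is to run the whole computation in the relative chain complex $C_*(\Gal(H_q/\Q),\langle c\rangle;\mu_q^{\otimes(2k-1)})$, where $[c]\otimes m$ vanishes by definition and the map from absolute to relative $H_1$ is an isomorphism (as both $H_0$ and $H_1$ of $\langle c\rangle$ on $\mu_q^{\otimes(2k-1)}$ vanish).
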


\begin{proof}

By point (a) above, the action of $\Aut(\C)$ 
is determined by its action on classes $\beta^{2k-1}[(L,b)]$
and by point (c) this action indeed factors through $\Gal(H_q/\Q)$. 

Morally speaking, the equivariance of $c_H$ arises simply from the fact that one can define the Hodge class via algebraic geometry. 
We give a formal argument by a direct computation, using the explicit formula in (b) above. By Corollary \ref{cor: explicit determination of KSp} and point (a) above, we know the images of $\beta^{2k-1}[(L,b)]$ under $\pi_{4k-2}^s(|\PPqminus|;\Z/q)   \rightarrow  \KSp_{4k-2}(\Z;\Z/q)  $ generate all of $\KSp_{4k-2}(\Z; \Z/q)$. Therefore,  it suffices to check the equivariance  for $\Aut(\C)$ acting on $\beta^{2k-1}[(L,b)]$. 

According Proposition \ref{prop: hodge map surjects}, $c_H$ sends
\[
\beta^{2k-1}[(L,b)] \mapsto \left(\sum_{a: j_a \in \Phi} a^{2k-1}\right) \mrm{Bott}^{2k-1} \in \pi_{4k-2}(ku; \Z/q).
\]
Evidently this only depends on the CM type of $(L,b)$, which can be described as the set of characters by which $K_q^{\times}$ acts on the tangent space of the associated abelian variety $\ST(L,b)$ (cf. \S \ref{ssec:constr-abel-vari}). 

Now, consider the action of $K_q^{\times}$ on the tangent space of $\sigma \ST(L,b)$. This is the same underlying scheme as $\ST(L,b)$ but with its structure map to $\Spec(\CC)$ twisted by $\Spec(\sigma^{-1})$, so that as a $\CC$-vector space, 
\[
T_e (\sigma \ST(L,b)) = T_e (\ST(L,b)) \otimes_{\CC, \sigma} \CC.
\]
Therefore, $\sigma \in \Aut(\C/\Q)$ acts on $\Phi$ by post-composition with $\sigma$. Under the identification $\mrm{Emb}(K_q, \CC) \cong (\Z/q)^\times$, this is identified with multiplication by $\chi_{\cyc}(\sigma) \in (\Z/q)^\times$,
the cyclotomic character of $\sigma$.  Hence we find that 
\begin{align*}
  c_H(\sigma \cdot \beta^{2k-1}[(L,b)]) &= \left(\sum_{a: j_a \in \sigma\Phi} a^{2k-1}\right)
                                          \mrm{Bott}^{2k-1} \\
                                        &= \left(\sum_{a: j_a \in \Phi} 
                                          \chi_{\cyc}(\sigma)^{2k-1} a^{2k-1} \right)  \mrm{Bott}^{2k-1}  \\
&= \chi_{\cyc}(\sigma)^{2k-1} c_H(\beta^{2k-1}[(L,b)]).
\end{align*}
This shows the equivariance of $c_H$, as desired. 

Now we check that the Galois action on $\ker(c_H)$ is trivial. In the course of proving Proposition \ref{prop:generators-for-KSp} we have seen -- see \eqref{Phigen} -- that, as $(L,b)$ ranges over objects in $\PPqminus$ inducing a fixed CM structure $\Phi_{(L,b)} = \Phi \subset \mathrm{Emb}(K_q,\C)$, the values of $c_B([(L,b)]) = \mathrm{tr}(\beta^{2k-1} \cdot ([L]-1)) \in K_{4k-2}(\Z;\Z/q)^{(+)}$ exhaust that group.  Therefore it suffices to see that if $[(L,b)]$ and $[(L',b')]$ induce the same CM structure on $K_q$, then $\sigma \in \Aut(\C)$ acts trivially on the element
  \begin{equation} \label{X33}
      \mathrm{tr}(\beta^{2k-1}\cdot ([L]-1)) - \mathrm{tr}(\beta^{2k-1}\cdot ([L']-1)).
  \end{equation}
  
  According to point (c) above, $\sigma \in \Aut(\CC/\Q)$ takes $\beta^{2k-1} [(L,b)] \mapsto  \beta^{2k-1} [(L,b) \otimes (X,q)]$ where $(X,q)$ depends on $(L,b)$ only through the CM type $\Phi_{(L,b)}$. Using the formula
 $    ([L \otimes_{\OO_q} X]-1) = ([L] - 1) + ([X]-1) \in K_0(\OO_q)$ (which is seen by noting that both sides
 having same rank and determinant) we get an equality inside $K_{4k-2}(\Z;\Z/q)$
    \begin{equation} \label{eq:cv0}
    c_B(\beta^{2k-1} \cdot [(L,b) \otimes (X,q)]) = c_B(\beta^{2k-1} \cdot [(L,b)]) + \mathrm{tr}(\beta^{2k-1}([X]-1)),
  \end{equation}
  where $[X]$ depends on $(L,b)$ only through its CM type. Therefore, $\Aut(\C)$ acts trivially on the expression \eqref{X33} in which $(L,b)$ and $(L', b')$ have the same CM type, as desired. 
  
  The last part, about the equivariance of the splitting for the subgroup $\langle c\rangle$ generated by conjugation, follows from Lemma \ref{lem:Soule-complex-conjugation}.
\end{proof}

This concludes the proof of parts (i) and (ii) of Theorem \ref{mt2}, as
well as the statements of (ii') about splitting.
 
 \subsection{Proof of (iii) of the main theorem} 
 \label{sec:universality}
It remains to prove (iii) of Theorem \ref{mt2}.
 The properties verified in Lemma \ref{lem: 7.6} show that  in the sequence
\begin{equation} \label{local exact}     \Ker(c_H)  \rightarrow  \KSp_{4k-2}(\Z;\Z/q)  \stackrel{c_H}{\rightarrow}   \mu_q^{\otimes(2k-1)}\end{equation}
 defines an object of $\mathcal{C}_{\Z/q}(\Gal(H_q/\Q),\langle c\rangle;\mu_q^{\otimes(2k-1)})$.  Our final task is to prove that it is an initial object in this category. 
    This will prove (iii') of Theorem \ref{mt2}, from which (iii) follows by  Lemma \ref{lem: vanishing homology implies same universal object}.

    Let us denote ``the'' initial object of $\mathcal{C}_{\Z/q}(\Gal(H_q/\Q),\langle c\rangle;\mu_q^{\otimes(2k-1)})$ by $T^\mathrm{univ} \rightarrow V^\mathrm{univ}\rightarrow \mu_q^{\otimes(2k-1)}$.  
Now 
 Lemma \ref{lem: initial object exists}
 gives an abstract isomorphism of $\Ker(c_H)$ with $T^{\mathrm{univ}}$, via the isomorphisms:
 \begin{equation}
\Ker(c_H) \stackrel{c_B}{\rightarrow} K_{4k-2}(\Z; \Z/q)^{(+)} \stackrel{\eqref{eq:50}}{\longrightarrow} H_1(\Gal(H_q/\Q),\langle c\rangle;\mu_q^{\otimes(2k-1)}).\label{eq:84}
\end{equation}
(In the case at hand $H_1(\Gal(H_q/\Q), \langle c \rangle; \mu_q^{\otimes 2k-1}) = H_1(\Gal(H_q/\Q); \mu_q^{\otimes 2k-1})$
since
the homology of $\langle c \rangle$ on $\mu_q^{\otimes 2k-1}$ is trivial in all degrees.)
  We shall show that, with reference to this identification, the $1$-cocycle 
  \begin{equation}\label{alphacocycle}
    \alpha: \Gal(H_q/\Q) \times \mu_q^{\otimes(2k-1)} \to \Ker(c_H) 
  \end{equation}
  (arising from \eqref{local exact} and its splitting via $c_B$)
  is identified with the tautological $1$-cocycle valued in $H_1(\Gal(H_q/\Q),\langle c\rangle;\mu_q^{\otimes(2k-1)})$. This will complete the proof of Theorem \ref{mt2} (iii') by Lemma \ref{lem: initial object exists} and the discussion preceding it. 
  
Denote by $\mathrm{Pr}$ the projection of $\KSp_{4k-2}(\Z;\Z/q)$ to $\Ker(c_H)$
with kernel $\Ker(c_B)$.
For $\sigma \in \Gal(H_q/\Q)$ and $m \in \mu_q^{\otimes 2k-1}$,
we have  in the notation of  \S \ref{sec:cocycl-univ-extens} 
the equality $\alpha(\sigma,m) = \Pr \circ (g- \mrm{id}) (\widetilde{m})$ where  $\widetilde{m} \in \KSp_{4k-2}(\Z;\Z/q)$ is any element with $c_H(\tilde{m}) = m$.
Therefore, the value of the cocycle $\alpha$ on $\sigma \in \Gal(H_q/\Q)$ and $c_H(\beta^{2k-1} [(L,b)]) \in \mu_q^{\otimes(2k-1)}$ is given 
by     $$ \alpha(\sigma, c_H(\beta^{2k-1} [(L,b)]) ) = \mathrm{Pr}  \circ (\sigma - \mathrm{id}) \circ \beta^{2k-1}[(L,b)] \in \Ker(c_H).$$

By Theorem \ref{MilneMainTheorem}, we have $\sigma (\beta^{2k-1}[(L,b)]) = \beta^{2k-1}\cdot [(L,b) \otimes (X,q)]$ where $(X,q)$ is determined explicitly by the CM type of $(L,b)$. Hence 
\[
\mathrm{Pr}  \circ (\sigma - \mathrm{id}) \circ \beta^{2k-1}[(L,b)]= \beta^{2k-1}[(X,q)].
\]
Under the identification~\eqref{eq:84}, 
the class $\beta^{k-1}[(X,q)]$ is sent to the Artin class of $X$ pushed forward via $\Gal(H_q/K_q) \rightarrow \Gal(H_q/\Q)$. 
 In detail, there is a diagram:
 \begin{equation*}
   \begin{tikzcd}[remember picture]
     \mathrm{Pr}  \circ (\sigma - \mathrm{id}) \circ \beta^{2k-1}[(L,b)] \ar[r,mapsto]   &   \tr(\beta^{2k-1} ([X]-1))  \ar[r,mapsto] &    \iota_*( \Art(X) \otimes \zeta_q^{2k-1}) \\
     \Ker(c_H) \arrow[r,"c_B", "\sim"'] &  K_{4k-2}(\Z; \Z/q)^{(+)}  \arrow[r, "\eqref{eq:50}", "\sim"'] & H_1(\Gal(H_q/\Q),\langle c\rangle;\mu_q^{\otimes(2k-1)}).
   \end{tikzcd}
   \begin{tikzpicture}[overlay,remember picture]
     \path (\tikzcdmatrixname-1-1) to node[midway,sloped]{$\in$}
     (\tikzcdmatrixname-2-1);
     \path (\tikzcdmatrixname-1-2) to node[midway,sloped]{$\in$}
     (\tikzcdmatrixname-2-2);
     \path (\tikzcdmatrixname-1-3) to node[midway,sloped]{$\in$}
     (\tikzcdmatrixname-2-3);
   \end{tikzpicture}
 \end{equation*}
\begin{itemize}
\item  In the middle, we used  \eqref{MainThmRecall} and \eqref{eq:cv0}; $\tr$ is the $K$-theoretic trace from $\mathcal{O}_q$ to $\Z$;
 \item     On the right, we used Proposition \ref{shtuka};  $\mathrm{Art}(X)$ is the Artin map
   applied to the class of $X$ in the Picard group of $\mathcal{O}_q$, and 
  $\iota_*$ is induced on homology
   by the inclusion $\iota: \Gal(H_q/K_q) \rightarrow \Gal(H_q/\Q)$. 
\end{itemize}

Therefore,  using the explicit formula in the Main Theorem of CM given in ~\eqref{eq: taniyama cocycle}, we find: 
  \begin{multline*}
    \alpha(\sigma,c_H(\beta^{2k-1}[(L,b)])) = \left( \prod_{\varphi \in \Phi(L,b)} w_{\sigma \varphi}^{-1} \sigma w_{\varphi}  \right) \otimes \zeta_q^{\otimes(2k-1)}\\
    \in C_1(\Gal(H_q/\Q);\mu_q^{\otimes(2k-1)}) \twoheadrightarrow H_1(\Gal(H_q/\Q),\langle c\rangle;\mu_q^{\otimes(2k-1)}).
  \end{multline*}

  Now we manipulate this expression using that we are allowed to change this expression by elements of $C_1(\langle c\rangle;\mu_q^{\otimes(2k-1)})$ and $\partial C_2(\Gal(H_q/\Q);\mu_q^{\otimes(2k-1)})$, without affecting the homology class.  The latter gives a relation
  (cf. \eqref{eq:19})  
  \begin{equation}\label{eq: 1-coboundary} 
    (g_1g_2) \otimes m \sim g_1 \otimes g_2 m   + g_2 \otimes m, \quad g_1, g_2 \in \Gal(H_q/\Q),\; m \in \mu_q^{\otimes(2k-1)}
  \end{equation}
  from which we also deduce
  \begin{equation}\label{eq: relation 2}
    0 = (g g^{-1}) \otimes m = g \otimes g^{-1} m + g^{-1} \otimes m, \quad g \in \Gal(H_q/\Q),\; m \in \mu_q^{\otimes(2k-1)}.
  \end{equation}
  By repeated application of~\eqref{eq: 1-coboundary} we get
  \begin{equation}\label{eq: taniyama 2}
    \left( \prod_{\varphi \in \Phi} w_{\sigma \varphi}^{-1} \sigma w_{\varphi}  \right) \otimes \zeta_q^{\otimes (2k-1)} = \sum_{\varphi \in \Phi} (w_{\sigma \varphi}^{-1} \sigma w_{\varphi} \otimes \zeta_q^{\otimes (2k-1)}) \in H_1(\Gal(H_q/\Q), \langle c \rangle; \mu_q^{\otimes (2k-1)}).
  \end{equation}
  Similarly, by combining~\eqref{eq: 1-coboundary} and~\eqref{eq: relation 2} we get
  \begin{equation}\label{eq: relation 1}
    \begin{aligned}
      w_{\sigma \varphi}^{-1} \sigma w_{\varphi}  \otimes \zeta_q^{\otimes (2k-1)} & = w_{\sigma \varphi}^{-1} \otimes {\sigma \varphi}  (\zeta_q^{\otimes (2k-1)})+ \sigma \otimes  \varphi (\zeta_q^{\otimes (2k-1)})  + w_{\varphi} \otimes \zeta_q^{\otimes (2k-1)}\\
      & =
      - w_{\sigma \varphi} \otimes \zeta_q^{\otimes (2k-1)} +  \sigma \otimes \varphi (\zeta_q^{\otimes (2k-1)}) + w_{\varphi} \otimes \zeta_q^{\otimes (2k-1)}.
    \end{aligned}
  \end{equation}

  Finally, observe that 
  \begin{equation*}
    \sum_{\varphi \in \Phi} w_{\sigma \varphi} \otimes \zeta_q^{\otimes (2k-1)} = \sum_{\varphi \in \Phi} w_{\varphi}  \otimes \zeta_q^{ \otimes (2k-1)} \in C_1(\Gal(H_q/\Q),\langle c\rangle; \mu_q^{\otimes(2k-1)}) 
  \end{equation*}
  for $q$ odd. Indeed, $\sigma \Phi$ is another CM type, which contains exactly one representative from each conjugate pair of embeddings $E \hookrightarrow \CC$, and also, by construction $w_{c \varphi} = c w_{\varphi}$, 
  and $c w_{\varphi} \otimes \zeta_q^{\otimes(2k-1)} - w_{\varphi} \otimes \zeta_q^{\otimes(2k-1)}$ belongs to  $C_1(\langle c\rangle;\mu_q^{\otimes(2k-1)})$.

  We deduce the formula 
  \begin{align*}
    \alpha(\sigma,c_H(\beta^{2k-1}[(L,b)])) & = \sum_{\varphi \in \Phi(L,b)} \sigma \otimes \varphi (\zeta_q^{\otimes (2k-1)}) \\
    & = \sigma \otimes \big(\sum_{\varphi \in \Phi(L,b)}  \varphi(\zeta_q^{\otimes (2k-1)}) \big) \\
    &= \sigma \otimes c_H(\beta^{2k-1}[(L,b)]).
  \end{align*}
  Since this holds for any $(L,b) \in \PPqminus$, which generate under $c_H$ by Proposition \ref{prop: hodge map surjects}, we deduce the simple formula
  \begin{equation*}
    \alpha(\sigma,x) = [\sigma \otimes x] \in H_1(\Gal(H_q/\Q),\langle c\rangle;\mu_q^{\otimes(2k-1)}),
  \end{equation*}
  which verifies the claim made after \eqref{alphacocycle} and thereby completes the proof.  \qed

\subsection{Universal property of symplectic $K$-theory with $\Z_p$ coefficients} \label{univ1}
We have finished the proof of the universal property characterizing the $\Aut(\C)$-action on $\KSp_{4k-2}(\Z;\Z/q)$ for all $k$ and all odd prime powers $q = p^n$.  By taking inverse limit over $n$, this also determines the action on $\KSp_{4k-2}(\Z;\Z_p)$.
We shall now formulate a universal property adapted to this limit. 

We need some generalities on profinite group homology. 
This has no real depth in our case 
as it only serves as a notation to keep track of inverse limits
of {\em finite} group homology.
Let $G$ be a profinite group. 
 Let $\Lambda$ be a coefficient ring,  complete for the $p$-adic
 topology.  
 A topological $\Lambda$-module will be a  $\Lambda$-module
$M$ such that each $M/p^n$ is finite and the induced 
map $M \rightarrow \varprojlim M/p^n$ is an isomorphism;
we always regard $M$ as being endowed with the $p$-adic topology.  These assumptions are not maximally general, cf.~\cite{Zalesskii}: among profinite abelian groups  our assumptions on $M$ are equivalent to it being a finitely generated $\Z_p$-module.

Define the completed group algebra
$$\Lambda[[G]] := \varprojlim_{n,U} \frac{\Lambda}{p^n\Lambda}[G/U]$$
the limit ranging over open subgroups $U$ and positive integers $n$. 
If $M$ is a topological $\Lambda$-module
with a continuous action of $G$,  then
$M$ carries a canonical structure of $\Lambda[[G]]$ module,
since the $G$-action on each $M/p^nM$
factors through the quotient by some open subgroup $U_n$.

We define the profinite group homology with $\Lambda$ coefficients by tensoring
$M$ with the bar complex $(\Lambda[[G^m]])_m$,
where the tensor product is now completed tensor product,
and taking homology.  That is to say:  
 $$\mbox{$m$-chains for $(G, M)$} =  \varprojlim_{U,n: U \subset U_n}  \frac{M}{p^n M}[ (G/U)^m]$$
 (we refer to \S5--6 of \cite{Zalesskii} for a more complete discussion). 
 
Since this complex is the inverse limit of the complexes computing  
homology of $G/U_n$ acting on $M/p^n$, and taking an inverse limit of a system of
{\em profinite groups} preserves exactness, we have
\begin{equation} \label{hgmdef} H_*(G,M)=\varprojlim_{U, n: U \subset U_n} H_*(G/U, M/p^n).\end{equation}
Finally, we can similarly define relative group homology $H_1(G, H; M)$
for $H \leqslant G$ using the induced map on chain complexes.

One verifies
that the contents of \S \ref{sec:cocycl-univ-extens}
go through when $G, H, \Lambda$ are as just described.
Namely, one has a category $\mathcal{C}_{\Lambda}(G, H; M)$
of extensions   $\begin{tikzcd}
    T \arrow{r} & V \arrow{r}{\pi} & M. \arrow[bend left=33]{l}{s}
  \end{tikzcd}$ where:
  \begin{itemize}
 \item $T, V, M$ are topological
 $\Lambda$-modules with continuous $G$-action (the maps are automatically continuous by definition of the topology). 
 \item $s$ is equivariant for $H$.
 \end{itemize}
  
For any object in this category, there is a  map $H_1(G, H; M) \rightarrow T$ 
which may be constructed in a similar fashion to \eqref{eq:57} (although the kernel of $V/p^n \rightarrow M/p^n$
need not be $T/p^n$, this becomes true after passing to the inverse limit). 
One verifies, as before, that an object is universal if and only if this map $H_1(G, H; M) \rightarrow T$
is an isomorphism.

\begin{theorem} \label{Zpuniversal}
Let $\Gamma = \Gal(H_{\infty}/\Q)$ be the Galois group of $H_\infty = \bigcup H_{p^n}$ over $\Q$,
and $c \in \Gal(H_{\infty}/\Q)$ the conjugation.  
The sequence 
\begin{equation} \label{Zpsequence2} \Ker(c_H) \rightarrow \KSp_{4k-2}(\Z; \Z_p) \xrightarrow{c_H} \Z_p(2k-1)\end{equation} 
of $G$-modules is uniquely split equivariantly for $\langle c \rangle \subset G$. 
 The resulting sequence is initial
 both in the category $C_{\Z_p}(\Gamma, \langle c \rangle ; \Z_p(2k-1))$
 and in the category $C_{\Z_p}(\Gamma; \Z_p(2k-1))$. 
\end{theorem}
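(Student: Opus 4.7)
The strategy is to derive the $\Z_p$-level statement from its mod-$p^n$ analogues in Theorem \ref{mt2} by passing to the inverse limit, with the main verifications being that short exactness, the splitting, and universality all survive. First, Theorem \ref{mt2}(ii) provides for each $n$ a short exact sequence of finite abelian groups
\[
\ker(c_H)_n \hookrightarrow \KSp_{4k-2}(\Z;\Z/p^n) \stackrel{c_H}{\twoheadrightarrow} \mu_{p^n}^{\otimes(2k-1)}
\]
on which the $\Aut(\C)$-action factors through $\Gal(H_{p^n}/\Q)$. Since all groups are finite, Mittag--Leffler is automatic and the inverse limit is short exact; the middle term is $\KSp_{4k-2}(\Z;\Z_p)$ by definition, the right term is $\Z_p(2k-1)$, and the action on the limit factors through $\Gamma = \varprojlim_n \Gal(H_{p^n}/\Q)$ with trivial action on the kernel.

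For the $\langle c\rangle$-equivariant splitting, I will take the inverse limit of the finite-level Betti splittings from Theorem \ref{mt2}(ii'), which are compatible in $n$ by naturality of $c_B$. Uniqueness is immediate: the obstruction lies in $\Hom_{\Z_p[\langle c\rangle]}(\Z_p(2k-1), \ker(c_H))$, and since $c$ acts as $(-1)^{2k-1} = -1$ on $\Z_p(2k-1)$ and trivially on the kernel, any equivariant $\phi$ satisfies $2\phi = 0$ and hence $\phi = 0$ (as the codomain is a $p$-complete $\Z_p$-module with $p$ odd).

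Next, I pass the tautological identification proven in \S \ref{sec:universality}---namely that the defining cocycle $\alpha_n\colon \Gal(H_{p^n}/\Q) \times \mu_{p^n}^{\otimes(2k-1)} \to \ker(c_H)_n$ corresponds via \eqref{eq:50} to the tautological cocycle in $H_1(\Gal(H_{p^n}/\Q),\langle c\rangle;\mu_{p^n}^{\otimes(2k-1)})$---to the inverse limit. The open subgroups $W_n = \Ker(\Gamma \to \Gal(H_{p^n}/\Q))$ form a cofinal system in the definition \eqref{hgmdef} (they are nested with trivial intersection, and each acts trivially on $\mu_{p^n}^{\otimes(2k-1)}$), so the limit yields
\[
\ker(c_H) \cong \varprojlim_n H_1(\Gal(H_{p^n}/\Q),\langle c\rangle;\mu_{p^n}^{\otimes(2k-1)}) = H_1(\Gamma,\langle c\rangle;\Z_p(2k-1))
\]
in a manner compatible with the tautological cocycle. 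By the profinite analog of Lemma \ref{lem: initial object exists} indicated at the end of \S \ref{univ1}, this gives initiality in $\mathcal{C}_{\Z_p}(\Gamma,\langle c\rangle;\Z_p(2k-1))$.

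Finally, initiality in the larger category $\mathcal{C}_{\Z_p}(\Gamma;\Z_p(2k-1))$ follows from the profinite version of Lemma \ref{lem: vanishing homology implies same universal object}(a) once we verify $H_0(\langle c\rangle;\Z_p(2k-1)) = H_1(\langle c\rangle;\Z_p(2k-1)) = 0$. Since $c$ acts as $-1$ on $\Z_p(2k-1)$ and $p$ is odd, the map $\sigma - 1 = -2$ is an isomorphism and the norm is zero, so both groups vanish by the standard computation for cyclic groups of order two. The principal obstacle to executing this plan is codifying the profinite generalization of the discrete constructions of \S \ref{sec:cocycl-univ-extens}---in particular, verifying the correspondence between extensions and cocycles for continuous $\Z_p[\Gamma]$-modules---and confirming that the tautological nature of the finite-level identifications is preserved by the transition maps in the defining inverse system of $H_1(\Gamma,\langle c\rangle;\Z_p(2k-1))$.
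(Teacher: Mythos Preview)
Your proposal is correct and follows essentially the same approach as the paper's own proof: pass to the inverse limit of the mod $p^n$ statements from Theorem~\ref{mt2}, use that the finite-level map $H_1(\Gal(H_{p^n}/\Q),\langle c\rangle;\mu_{p^n}^{\otimes(2k-1)}) \to \Ker(c_H)_n$ is an isomorphism to deduce the same at the $\Z_p$-level, and then invoke the profinite analogue of Lemma~\ref{lem: vanishing homology implies same universal object} to drop the reference to $\langle c\rangle$. The paper's proof is a terse two sentences; you have simply filled in the details (Mittag--Leffler for exactness, compatibility of Betti splittings, cofinality of the kernels $W_n$, and the explicit vanishing of $H_i(\langle c\rangle;\Z_p(2k-1))$ for $i=0,1$) that the paper leaves implicit.
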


\begin{proof}
The induced map
$$ H_1(\Gamma, \langle c\rangle ; \Z_p(2k-1)) \rightarrow \Ker(c_H)$$
is an isomorphism, because
it is an inverse limit of corresponding isomorphisms 
for the sequences \eqref{lemma2seq}.  
That we can ignore $c$ follows from (the profinite group analogue of) Lemma 
\ref{lem: vanishing homology implies same universal object}.
\end{proof}

\subsection{Universal property using full unramified Galois group} \label{univ2} 
We now reformulate the universal property of the extension
with reference to the {\'e}tale fundamental group of $\Z[1/p]$,
or, in Galois-theoretic terms, 
$$G := \mbox{Galois extension of largest algebraic extension $\Q^{(p)}$ of $\Q$ unramified $p$.}$$

For the results that involve explicit splittings of the sequence, we need to carefully choose a decomposition group for $G$. Let
$$ \wp(q) = \{ \mbox{prime ideals of $H_q$ above $p$}\}.$$
The subset $\wp(q)^c$ fixed by complex conjugation is nonempty, 
because $\wp(q)$ has odd cardinality $[H_q:K_q]$.
The sets $\wp(q)^c$ form an inverse system of  nonempty finite sets as one varies $q$ through powers of $p$; since the inverse
limit of such is nonempty, there exists a prime $\mathfrak{p}$
of $H_{\infty} = \bigcup_q H_q$ inducing an element of $\wp(q)^c$
on each $H_{q}$. 
We extend $\mathfrak{p}$ as above to $\Q^{(p)}$ in an arbitrary way.  
 Let 
 $$G_{\mathfrak{p}} \leqslant G$$ be the decomposition group at   $\mathfrak{p}$.
 
 \begin{remark}  In fact, Vandiver's conjecture is {\em equivalent} (for any $n$) to the statement
 that $\wp(q)$ is a singleton.  Indeed, if $c$ fixes two different primes $\mf{p}$ and $\mf{p}'$ in $H_q$ lying over $p$, then conjugation by $c$ preserves the subset $\mrm{Trans}(\mf{p},\mf{p}') \subset \Gal(H_q/K_q)$ which transports $\mf{p}$ to $\mf{p}'$. But since $p$ is totally split in $H_q/K_q$, $\mrm{Trans}(\mf{p},\mf{p}') $ consists of a single element, so conjugation by $c$ must fix a nontrivial element of $\Gal(H_q/K_q)$. Equivalently, by class field theory, $c$ must fix a non-trivial element of the $p$-part of $\Pic(K_q)$, i.e.\ the $p$-part of $\Pic(K_q^+)$ is non-trivial. But Vandiver's conjecture predicts exactly that the $p$-part of $\Pic(K_p^+)$ is trivial, which is equivalent to the statement that the $p$-part of $\Pic(K_q^+)$ is trivial for all $q = p^n$ by \cite[Corollary 10.7]{Wash}.
 \end{remark}

\begin{theorem} \label{thm: Zpuniversal}
 Let $\mathfrak{p}$ be chosen as above. 
The exact sequence 
\begin{equation} \label{Zpsequence} \Ker(c_H) \rightarrow \KSp_{4k-2}(\Z; \Z_p) \xrightarrow{c_H} \Z_p(2k-1)\end{equation} 
of $G$-modules is uniquely split for $G_{\mathfrak{p}}$; the 
kernel of the Betti map maps isomorphically to $\Z_p(2k-1)$ and furnishes this unique splitting.
 The resulting sequence is initial  
 in the category $C_{\Z_p}(G, G_{\mathfrak{p}}; \Z_p(2k-1))$
 and in the category
 $C_{\Z_p}(G, \Z_p(2k-1))^{G_{\mathfrak{p}}\text{-split}}$
 (see Lemma \ref{lem: vanishing homology implies same universal object}).   \end{theorem}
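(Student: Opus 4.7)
The plan is to reduce Theorem \ref{thm: Zpuniversal} to Theorem \ref{Zpuniversal} via the surjection $G \twoheadrightarrow \Gamma$, which exists because $H_\infty/\Q$ is unramified outside $p$. A key preliminary is the structure of $G_\mathfrak{p}$: by the ramification/splitting of $\mathfrak{p}$ in the tower $\Q \subset K_\infty \subset H_\infty$ (totally ramified in the lower step, totally split in the upper step), restriction identifies the image $\Gamma_\mathfrak{p}$ of $G_\mathfrak{p}$ in $\Gamma$ with $\Gal(K_\infty/\Q) \cong \Z_p^\times$; by construction $c \in G_\mathfrak{p}$.

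The $\Aut(\C)$-action on $\KSp_{4k-2}(\Z;\Z_p)$ factors through $G$ by Theorem \ref{Zpuniversal}. For existence and uniqueness of the $G_\mathfrak{p}$-splitting, I would argue as follows. The $\langle c\rangle$-equivariant splitting $\ker(c_B)$ from Theorem \ref{Zpuniversal} is automatically $G_\mathfrak{p}$-equivariant: the cocycle $\alpha(\sigma,x) = [\sigma \otimes x]$ of \S\ref{sec:universality}, inflated to $G$, tautologically vanishes on $\sigma \in G_\mathfrak{p}$ in the relative chain complex $C_*(G, G_\mathfrak{p}; \Z_p(2k-1))$ since $\sigma \otimes x$ already lies in $C_1(G_\mathfrak{p}; \Z_p(2k-1))$. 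Uniqueness follows from $\Hom_{G_\mathfrak{p}}(\Z_p(2k-1), \ker(c_H)) = 0$, which holds because $G_\mathfrak{p}$ acts on $\Z_p(2k-1)$ through $\Z_p^\times$ via the $(2k-1)$st power of the cyclotomic character (nontrivial) and trivially on $\ker(c_H)$.

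The core of the argument is to show the canonical map $H_1(G, G_\mathfrak{p}; \Z_p(2k-1)) \to \ker(c_H)$ produced by the extension is an isomorphism. My plan is to factor it as
\[
H_1(G, G_\mathfrak{p}; \Z_p(2k-1)) \twoheadrightarrow H_1(\Gamma, \Gamma_\mathfrak{p}; \Z_p(2k-1)) \xrightarrow{\sim} H_1(\Gamma, \langle c\rangle; \Z_p(2k-1)) \xrightarrow{\sim} \ker(c_H),
\]
where the middle isomorphism comes from the long exact sequence of the triple $\langle c\rangle \subset \Gamma_\mathfrak{p} \subset \Gamma$ together with the center-kills vanishing $H_*(\Gamma_\mathfrak{p}, \langle c\rangle; \Z_p(2k-1)) = 0$ (the element $c$ acts by $-1$ and $p$ is odd), and the last is Theorem \ref{Zpuniversal}. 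The left edge map is surjective by Hochschild--Serre, so it remains to match orders.

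The hard part will be this order equality, which I would establish via Poitou--Tate duality. Pontryagin dualizing gives $H^1(G, G_\mathfrak{p}; \Q_p/\Z_p(1-2k))$; since $H^{\geq 1}(\langle c\rangle; \Q_p/\Z_p(1-2k)) = 0$ for $p$ odd, this is identified with the compactly supported \'etale cohomology $H^1_c(\Spec(\Z'); \Q_p/\Z_p(1-2k))$ in the sense of \cite{GalatiusVenkatesh}. Poitou--Tate duality then gives $H^1_c(\Spec(\Z'); \Q_p/\Z_p(1-2k)) \cong H^2(\Spec(\Z'); \Z_p(2k))^\vee$, which by Lemma \ref{K-beta-theoryeval} equals $\ker(c_H)^\vee$. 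Combined with surjectivity, this forces the canonical map to be an isomorphism. Finally, initiality in $\mathcal{C}_{\Z_p}(G; \Z_p(2k-1))^{G_\mathfrak{p}\text{-split}}$ follows from the profinite analogue of Lemma \ref{lem: vanishing homology implies same universal object}(b), with the required vanishing $H_0(G_\mathfrak{p}; \Z_p(2k-1)) = 0$ again supplied by nontriviality of the cyclotomic character on $G_\mathfrak{p}$.
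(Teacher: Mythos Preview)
Your overall strategy is sound, and the Poitou--Tate step gives a genuine alternative to the paper's route, but two steps are not justified as written.

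First, the argument for $G_\mathfrak{p}$-equivariance of the $\ker(c_B)$ splitting is circular. The cocycle $\alpha$ takes values in $\ker(c_H)\cong H_1(\Gamma,\langle c\rangle;\Z_p(2k-1))$, not in $H_1(G,G_\mathfrak{p};\Z_p(2k-1))$; the latter identification is exactly what you are trying to prove, so saying $\sigma\otimes x$ ``vanishes in $C_*(G,G_\mathfrak{p})$'' tells you nothing about $\alpha(\sigma,x)\in\ker(c_H)$. The correct argument uses the center-kills vanishing you invoke later: for $\sigma\in G_\mathfrak{p}$ with image $\bar\sigma\in\Gamma_\mathfrak{p}$, the class $[\bar\sigma\otimes x]\in H_1(\Gamma,\langle c\rangle)$ lies in the image of $H_1(\Gamma_\mathfrak{p},\langle c\rangle;\Z_p(2k-1))$, and this group is zero since $\Gamma_\mathfrak{p}\cong\Z_p^\times$ is abelian with $c$ acting by $-1$. (Equivalently, both $\mathrm{Ext}^1_{G_\mathfrak{p}}$ and $\Hom_{G_\mathfrak{p}}$ between $\Z_p(2k-1)$ and $\ker(c_H)$ vanish, forcing a unique $G_\mathfrak{p}$-splitting which must be $\ker(c_B)$.)

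Second, ``surjective by Hochschild--Serre'' for $H_1(G,G_\mathfrak{p})\to H_1(\Gamma,\Gamma_\mathfrak{p})$ is not a standard edge-map statement in the relative setting. Surjectivity is easier than that: relative $C_0$ vanishes, so $H_1$ is a quotient of $C_1$, and the map on $C_1$ is surjective because $G\twoheadrightarrow\Gamma$.

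With those fixes your argument is correct and differs from the paper's. The paper works at each finite level $q=p^n$ (Lemmas~\ref{decomp group} and~\ref{decomp group2}) and then takes the inverse limit: it uses a Hochschild--Serre argument for the cyclotomic quotient $(\Z/q)^\times$ to reduce to showing $H_1(G^0,G^0_\mathfrak{p})\otimes\Z_p=\Gal(H_q/K_q)$, which is raw class field theory resting on the fact that the unique prime of $K_q$ above $p$ is principal, hence totally split in $H_q$. Your approach instead computes $|H_1(G,G_\mathfrak{p};\Z_p(2k-1))|$ in one stroke by identifying $H^*(G,G_\mathfrak{p};-)$ with $H^*_c(\Spec(\Z');-)$ (the archimedean discrepancy vanishes for odd $p$) and then applying Poitou--Tate. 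This is cleaner and avoids the explicit splitting-at-$\mathfrak{p}$ observation, at the cost of invoking the full duality theorem.
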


\begin{remark}Let us rephrase this in geometric terms. By virtue of its Galois action $\KSp(\Z; \Z_p)$   
can be considered as (the $\C$-fiber of) an {\'e}tale sheaf over $\Z[1/p]$. 
This structure arises eventually from the fact that the moduli space
of abelian varieties has a structure of $\Z[1/p]$-scheme. The last
assertion of the Theorem can then be reformulated:
\begin{quote}  The {\'e}tale sheaf on $\Z[1/p]$ defined by $\KSp(\Z; \Z_p)$ is the universal extension of $\Z_p(2k-1)$
by a trivial {\'e}tale sheaf which splits when restricted to the spectrum of $\Q_p$.
\end{quote}
More formally we consider the category whose
objects are {\'e}tale sheaves $\mathcal{F}$ over $\Z[1/p]$ equipped with $\pi: \mathcal{F} \twoheadrightarrow \Z_p(2k-1)$
whose kernel is a trivial sheaf, and with the {\em property}
that $\pi$ splits when restricted to $\Spec \ \Q_p$.  Our assertion is that the sheaf defined by $\KSp$,
together with its Hodge morphism to $\Z_p(2k-1)$, is initial in this category.
\end{remark}

We deduce Theorem \ref{thm: Zpuniversal} from Theorem \ref{mt2} in stages. First (Lemma \ref{decomp group})
we replace the role of complex conjugation by a decomposition group. 
Next (Lemma \ref{decomp group2}) we pass from $\Gal(H_q/\Q)$ to $G$. 
Finally   we pass from $\Z/q$ coefficients to $\Z_p$.

\begin{lemma} \label{decomp group}
The sequence 
\[
\Ker(c_H) \rightarrow \KSp_{4k-2}(\Z; \Z/q) \xrightarrow{c_H} \mu_q^{\otimes(2k-1)}
\]
of $\Gal(H_q/\Q)$-modules is uniquely split for the decomposition group $\Gal(H_q/\Q)_{\mathfrak{p}}$,
where $\mathfrak{p} \in \wp(q)^c$ is any prime fixed by complex conjugation.
The resulting sequence with splitting 
is  universal in $\Cal{C}_{\Z/q}(\Gal(H_q/\Q),  \Gal(H_q/\Q)_{\mathfrak{p}}; \mu_q^{\otimes(2k-1)})$.
 \end{lemma}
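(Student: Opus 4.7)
The plan is to deduce the lemma from Theorem \ref{mt2}(iii') by invoking Lemma \ref{lem: vanishing homology implies same universal object}(a) twice, once with $H = \langle c\rangle$ and once with $H = G_\mathfrak{p}$. The crucial input is the homological vanishing
\begin{equation*}
H_0(G_\mathfrak{p};\mu_q^{\otimes(2k-1)}) \;=\; H_1(G_\mathfrak{p};\mu_q^{\otimes(2k-1)}) \;=\; 0,
\end{equation*}
and the analogous (easier) statement for $\langle c\rangle$.

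First I would identify $G_\mathfrak{p}$ explicitly. Since $p$ is totally ramified in $K_q/\Q$ and the unique prime $(1-\zeta_q)$ of $\OO_q$ above $p$ is principal, it splits completely in the Hilbert class field, hence in $H_q$. Restriction therefore furnishes an isomorphism
\begin{equation*}
G_\mathfrak{p} \;\xrightarrow{\;\sim\;}\; \Gal(K_q/\Q) \;\cong\; (\Z/q)^\times,
\end{equation*}
under which complex conjugation $c$ --- which lies in $G_\mathfrak{p}$ by the choice of $\mathfrak{p} \in \wp(q)^c$ --- corresponds to $-1$. In particular $G_\mathfrak{p}$ is abelian and $c$ is central in it.

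Next I would verify the vanishing. The element $c \in G_\mathfrak{p}$ acts as $-1$ on $\mu_q^{\otimes(2k-1)}$ (since $2k-1$ is odd), so $c-1$ acts as $-2$, which is invertible because $q$ is odd. This immediately gives $H_0(G_\mathfrak{p};\mu_q^{\otimes(2k-1)})=0$. For $H_1$ I would use the standard ``center kills'' argument: since $c$ is central in $G_\mathfrak{p}$ and acts on homology both by the identity (as an inner automorphism) and by the scalar $-1$ (induced from its action on the module), the element $1-(-1)=2$ annihilates $H_*(G_\mathfrak{p};\mu_q^{\otimes(2k-1)})$, forcing the vanishing of all $H_i$. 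The identical argument with $G_\mathfrak{p}$ replaced by $\langle c\rangle$ yields $H_0(\langle c\rangle;\mu_q^{\otimes(2k-1)}) = H_1(\langle c\rangle;\mu_q^{\otimes(2k-1)}) = 0$.

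Finally I would assemble the pieces. Lemma \ref{lem: vanishing homology implies same universal object}(a) applied with $H = \langle c\rangle$ upgrades the conclusion of Theorem \ref{mt2}(iii') to: the Betti--Hodge sequence \eqref{seq} is initial in $\mathcal{C}_{\Z/q}(\Gal(H_q/\Q);\mu_q^{\otimes(2k-1)})$. Applying part (a) again with $H = G_\mathfrak{p}$ shows that the forgetful functor $\mathcal{C}_{\Z/q}(\Gal(H_q/\Q), G_\mathfrak{p};\mu_q^{\otimes(2k-1)}) \to \mathcal{C}_{\Z/q}(\Gal(H_q/\Q);\mu_q^{\otimes(2k-1)})$ is an equivalence; in particular there exists a unique $G_\mathfrak{p}$-equivariant splitting of \eqref{seq}, and the resulting object is initial in $\mathcal{C}_{\Z/q}(\Gal(H_q/\Q), G_\mathfrak{p};\mu_q^{\otimes(2k-1)})$. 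This $G_\mathfrak{p}$-splitting restricts to a $\langle c\rangle$-equivariant splitting, and by the uniqueness part of Theorem \ref{mt2}(ii') must coincide with $\Ker(c_B) \hookrightarrow \KSp_{4k-2}(\Z;\Z/q)$; thus $\Ker(c_B)$ is automatically $G_\mathfrak{p}$-stable. The principal obstacle is the vanishing $H_1(G_\mathfrak{p};\mu_q^{\otimes(2k-1)})=0$; everything else is formal manipulation within the framework of \S\ref{sec:cocycl-univ-extens}.
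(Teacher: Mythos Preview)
Your proposal is correct and follows essentially the same approach as the paper: identify $G_\mathfrak{p}\cong(\Z/q)^\times$ via the cyclotomic character, use that $c$ is central in this abelian group to kill $H_0$ and $H_1$ of $G_\mathfrak{p}$ on $\mu_q^{\otimes(2k-1)}$, and then invoke Lemma~\ref{lem: vanishing homology implies same universal object}(a). The only minor difference is that you route the argument through Theorem~\ref{mt2}(iii$'$) and apply the lemma twice, whereas the paper cites the vanishing once and implicitly uses Theorem~\ref{mt2}(iii) directly; your identification of the unique $G_\mathfrak{p}$-splitting with $\Ker(c_B)$ via uniqueness of the $\langle c\rangle$-splitting is exactly the remark the paper makes just after its proof.
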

 Note that, in particular,  any  splitting that is invariant
 by this decomposition group is also invariant by $\langle c\rangle$;
 so the unique splitting referenced in the Lemma is in fact provided by the Betti map. 

\proof 
  The cyclotomic character $\Gal(H_q/\Q) \rightarrow (\Z/q)^\times$
 restricts to an isomorphism on the decomposition group at $\mathfrak{p}$,
  and in particular this decomposition group is abelian;
 thus $c \in \Gal(H_q/\Q)_{\mathfrak{p}}$ is central, and so
   $H_0( \Gal(H_q/\Q)_{\mathfrak{p}}; \mu_q^{\otimes (2k-1)}) = H_1( \Gal(H_q/\Q)_{\mathfrak{p}};
   \mu_q^{\otimes (2k-1)}) = 0$, which
permits us to apply Lemma \ref{lem: vanishing homology implies same universal object}.
\qed

\begin{lemma}\label{decomp group2}
 The sequence 
\begin{equation} \label{lemma2seq} \Ker(c_H) \rightarrow \KSp_{4k-2}(\Z; \Z/q) \xrightarrow{c_H} \mu_q^{\otimes(2k-1)}\end{equation}
 now considered as $G$-modules, is  uniquely split for $G_{\mathfrak{p}}$ by the kernel of the Betti map. 
 The resulting sequence with splitting is  universal in $\Cal{C}_{\Z/q}(G, G_{\mathfrak{p}};  \mu_q^{\otimes(2k-1)})$, defined as in \S \ref{univ1}.
\end{lemma}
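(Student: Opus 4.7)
The plan is to deduce Lemma \ref{decomp group2} from Lemma \ref{decomp group} via inflation along the surjection $G \twoheadrightarrow \bar G := \Gal(H_q/\Q)$. Since the $G$-action on $\KSp_{4k-2}(\Z;\Z/q)$ factors through $\bar G$ by Theorem \ref{mt2}(i), and the $\bar G_{\mathfrak p}$-equivariant splitting produced in Lemma \ref{decomp group} (given by the Betti map $c_B$) induces a $G_{\mathfrak p}$-equivariant one via the surjection $G_{\mathfrak p} \twoheadrightarrow \bar G_{\mathfrak p}$, the sequence \eqref{lemma2seq} together with its splitting is tautologically an object of $\mathcal{C}_{\Z/q}(G, G_{\mathfrak p}; M)$, where $M := \mu_q^{\otimes(2k-1)}$. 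It is, in fact, the inflation of the universal object in $\mathcal{C}_{\Z/q}(\bar G, \bar G_{\mathfrak p}; M)$.

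By Lemma \ref{lem: initial object exists}, the kernel of the initial object of $\mathcal{C}_{\Z/q}(G, G_{\mathfrak p}; M)$ is $H_1(G, G_{\mathfrak p}; M)$, while that of $\mathcal{C}_{\Z/q}(\bar G, \bar G_{\mathfrak p}; M)$ is $H_1(\bar G, \bar G_{\mathfrak p}; M) \cong \Ker(c_H)$. The lemma thus reduces to showing that the natural map
\[
H_1(G, G_{\mathfrak p}; M) \to H_1(\bar G, \bar G_{\mathfrak p}; M)
\]
induced by the quotient of pairs is an isomorphism matching the tautological cocycles; uniqueness of the $G_{\mathfrak p}$-splitting then follows from $H_0(G_{\mathfrak p}; M) = 0$ (the cyclotomic character surjects $G_{\mathfrak p} \twoheadrightarrow (\Z/q)^\times$, which acts transitively on $M \setminus \{0\}$), using the analogue of Lemma \ref{lem: vanishing homology implies same universal object}(b).

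To establish the isomorphism, I would apply the relative Lyndon--Hochschild--Serre spectral sequence for the normal extension $1 \to N \to G \to \bar G \to 1$ with $N := \Gal(\Q^{(p)}/H_q)$, combined with its restriction to $1 \to N_{\mathfrak p} \to G_{\mathfrak p} \to \bar G_{\mathfrak p} \to 1$ (noting that $N_{\mathfrak p} = N \cap G_{\mathfrak p} = \Gal(\overline{\Q}_p/\Q_p(\mu_q))$). The resulting five-term exact sequence reads
\[
H_2(\bar G, \bar G_{\mathfrak p}; M) \to H_1(N, N_{\mathfrak p}; M)_{\bar G} \to H_1(G, G_{\mathfrak p}; M) \to H_1(\bar G, \bar G_{\mathfrak p}; M) \to 0,
\]
so surjectivity on the right is automatic, and the task becomes showing the leftmost arrow is surjective. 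Since $N$ acts trivially on $M$ (it fixes $H_q \supset K_q$), one has $H_1(N, N_{\mathfrak p}; M) \cong \mathrm{coker}(N_{\mathfrak p}^{\ab} \to N^{\ab}) \otimes M$, and by global class field theory this cokernel is identified with the Galois group of the maximal abelian pro-$p$ extension of $H_q$ that is unramified outside $p$ and in which $\mathfrak p$ splits completely.

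The hard part will be verifying that this $\bar G$-coinvariant group, twisted by $M$, is exhausted by the image from $H_2(\bar G, \bar G_{\mathfrak p}; M)$. By Artin--Verdier/Poitou--Tate duality for $\Spec \mathcal{O}_{H_q}[1/p]$, this can be reformulated as a statement about compactly supported étale cohomology with coefficients $\mu_q^{\otimes(2-2k)}$, and one verifies the vanishing by a Hochschild--Serre argument for $\bar G \twoheadrightarrow \Gal(K_q/\Q)$, paralleling the analysis in the proof of Proposition \ref{propcor:transfer-surjective}. There the outer terms in the 5-term sequence are killed by complex conjugation acting as $-1$ on the coefficients $\mu_q^{\otimes(\text{odd})}$ -- a center-kills argument that applies here because $\bar G_{\mathfrak p} \cong (\Z/q)^\times$ is abelian and contains $c$ centrally.
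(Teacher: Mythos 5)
Your reduction to showing that $H_1(G, G_{\mathfrak{p}}; M) \to H_1(\bar G, \bar G_{\mathfrak{p}}; M)$ is an isomorphism is the same as the paper's; the difference lies in the choice of normal subgroup used to analyze this map, and that difference matters. The paper applies the relative Hochschild--Serre spectral sequence to the kernel $G^0 := \ker\bigl(G \to (\Z/q)^\times\bigr) = \Gal(\Q^{(p)}/K_q)$, on which $M$ has trivial action. The center-kills argument identifies $H_1(G, G_{\mathfrak{p}}; M)$ with the $(\Z/q)^\times$-coinvariants of $H_1(G^0, G^0_{\mathfrak{p}}; M)$, and then the crucial point is that $H_1(G^0, G^0_{\mathfrak{p}}; \Z_p)$ has an immediate class-field-theoretic description as the Galois group of the maximal abelian $p$-power extension of $K_q$ unramified everywhere and split at $\mathfrak{p}$, which is precisely $\Gal(H_q/K_q)$ by the construction of $\mathfrak{p}$. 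There is no further surjectivity or vanishing to verify.

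By contrast, you take $N := \Gal(\Q^{(p)}/H_q)$, the kernel of $G \twoheadrightarrow \bar G$ itself. This makes the relative LHS five-term sequence essentially tautological: injectivity of $H_1(G, G_{\mathfrak{p}}; M) \to H_1(\bar G, \bar G_{\mathfrak{p}}; M)$ is \emph{equivalent} to surjectivity of the transgression $H_2(\bar G, \bar G_{\mathfrak{p}}; M) \to H_1(N, N_{\mathfrak{p}}; M)_{\bar G}$, and this surjectivity is the entire content of the lemma. Your proposal does not actually establish it: the invocation of Artin--Verdier/Poitou--Tate duality over $\Spec\mathcal{O}_{H_q}[1/p]$ is a plausible-sounding direction, but the ``reformulation'' is not written down, ``one verifies the vanishing'' conflates a surjectivity claim with a vanishing claim, and the final appeal to a center-kills argument paralleling Proposition~\ref{propcor:transfer-surjective} is not spelled out in a way that makes clear what vanishes and why. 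The target $H_1(N, N_{\mathfrak{p}}; M)_{\bar G}$ involves the abelianization of the fundamental group of $\Spec\mathcal{O}_{H_q}[1/p]$ twisted and coinvariantized over $\bar G$, a considerably harder object to control than the cleanly interpreted $H_1(G^0, G^0_{\mathfrak{p}}; \Z_p)$. In short, you have reduced the lemma to a statement at least as hard as the lemma itself and then sketched rather than proved that statement; the gap is the surjectivity of the transgression. A repair is to switch to $G^0 = \Gal(\Q^{(p)}/K_q)$, as the paper does, which collapses the whole issue to the observation that $H_q/K_q$ is split at $\mathfrak{p}$.
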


\proof 
That the sequence is uniquely split follows from the same property for $\Gal(H_q/\Q)$,
and that this unique splitting comes from $\mathrm{ker}(c_B)$ is 
as argued after Lemma \ref{decomp group}.
  As in \eqref{eq:57} one gets
$H_1(G, G_{\mathfrak{p}}; \mu_q^{\otimes (2k-1)}) \rightarrow \Ker(c_H)$
which we must prove to be an isomorphism. This map factors through the similar map for $\Gal(H_q/\Q)$, and 
so it is enough to show that
the natural map $f$ of pairs of groups:
\begin{equation} \label{nat} (G,G_{\mathfrak{p}}) \stackrel{f}\rightarrow (\Gal(H_q/\Q), \Gal(H_q/\Q)_{\mathfrak{p}}). \end{equation}
 induces an isomorphism on relative $H_1$ with coefficients in $\mu_{q}^{\otimes(2k-1)}$.

 The action on $\Q(\zeta_q)$ gives a surjection $G \twoheadrightarrow (\Z/q)^\times$,
 which factors through $\Gal(H_q/\Q)$ and 
restricts there to an isomorphism $\Gal(H_q/\Q)_{\mathfrak{p}} \cong (\Z/q)^\times$. 
Write $G^0$ for the kernel, and similarly define $$G^0_{\mathfrak{p}} \leqslant G^0, \quad \{e\} = \Gal(H_q/\Q)^{0}_{\mathfrak{p}} \leqslant \Gal(H_q/\Q)^0.$$
From the morphism to $(\Z/q)^\times$ we obtain (as in
the proof of Proposition \ref{propcor:transfer-surjective})
   compatible spectral sequences computing $H_*(G,G_{\mathfrak{p}})$ in terms of $H_*(G^{0}, G^0_{\mathfrak{p}})$ and similarly for $H_*(\Gal(H_q/\Q)^{0}, \Gal(H_q/\Q)_{\mathfrak{p}}^{0})$.

By the same argument as in \eqref{HS5} the maps 
 $$
  (\Z/q)^\times \mbox{ coinvariants on }H_1(G^{0}, G^{0}_{\mathfrak{p}}; \mu_q^{\otimes(2k-1)})  \to H_1(G, G_{\mathfrak{p}}; \mu_q^{\otimes(2k-1)}) $$
  is an isomorphism, and the same for $\Gal(H_q/\Q)$.
  (Here the flanking terms of \eqref{HS5} vanish for even simpler reasons,
  because relative group $H_0$ always vanishes.)
    
Therefore, it is sufficient to verify that
$$f^0: (G^{0}, G^{0}_{\mathfrak{p}}) \rightarrow (\Gal(H_q/\Q)^{0}, \Gal(H_q/\Q)_{\mathfrak{p}}^{0})$$
induces an isomorphism on first homology with $\mu_{q}^{\otimes (2k-1)}$ coefficients.  The  coefficients have trivial action by definition of the groups, and it suffices to consider $\Z_p$ coefficients because relative $H_0$ vanishes.
  But $H_1(G^{0}, G^{0}_{\mathfrak{p}}) \otimes \Z_p$ is the Galois group of the maximal abelian $p$-power extension of $K_q$
  that is unramified everywhere and split at  $\mf{p}$; this coincides with $H_q$
  because  $H_q/K_q$ is already split at $\mf{p}$. 
\qed

\begin{proof}[Proof of of Theorem \ref{thm: Zpuniversal}]
The sequence \eqref{Zpsequence} is the inverse limit of the
sequences \eqref{lemma2seq},
and the existence of a splitting follows from this.  Uniqueness follows from
the fact that $G_{\mathfrak{p}}$ surjects to $(\Z_p)^\times$,
and thus contains an element acting by $-1$ on $\Z_p(2k-1)$ and trivially on $\Ker(c_H)$. 
Finally the induced map
$$ H_1(G, G_{\mathfrak{p}}; \Z_p(2k-1)) \rightarrow \Ker(c_H)$$
is an isomorphism, because
it is an inverse limit of corresponding isomorphisms 
for the sequences \eqref{lemma2seq}. 
 \end{proof}

\subsection{Universal properties of Bott-inverted $K$-theory}\label{subsec:main-theorem-KSPBott}
We have seen that symplectic $K$-theory realizes certain universal extensions of $\mu_q^{\otimes 2k-1}$ as Galois modules, for $k$ a positive integer. It is natural to ask if the universal extensions of other cyclotomic powers is realized in a similar way. Here we explain that for negative $k$, the Bott-inverted symplectic $K$-theory provides such a realization. 

By Corollary \ref{cor: bott inverted symplectic K}, we have short exact sequences for Bott-inverted symplectic $K$-theory (discussed in \ref{sec:bott-invert-sympl}):
  \begin{equation}\label{eq:72}
    0 \to K^{(\beta)}_{4k-2}(\Z;\Z/q) \to \KSp_{4k-2}^{(\beta)}(\Z;\Z/q) \to \mu_q(\C)^{\otimes 2k-1} \to 0
  \end{equation}
  and
  \begin{equation}\label{eq:73}
    0 \to K^{(\beta)}_{4k-2}(\Z;\Z_p) \to \KSp_{4k-2}^{(\beta)}(\Z;\Z_p) \to \Z_p(2k-1) \to 0.
  \end{equation}
Our main theorems have analogues for Bott inverted symplectic $K$-theory:

 \begin{theorem}
 Let $k$ be any (possibly negative!) integer. 
\begin{enumerate}
\item Let notation be as in Theorem \ref{mt2}. The extension \eqref{eq:72} is initial in $\Cal{C}_{\Z/q}(\Gal(H_q/\Q), \langle c \rangle; \mu_q^{\otimes (2k-1)})$. 
\item Let notation be as in Theorem \ref{Zpuniversal}. The extension \eqref{eq:73} is initial in both $C_{\Z_p}(\Gamma, \langle c \rangle ; \Z_p(2k-1))$
 and in the category $C_{\Z_p}(\Gamma, \Z_p(2k-1))$. 
 \item Let notation be as in Theorem \ref{thm: Zpuniversal}. The extension \eqref{eq:73} is initial in $C_{\Z_p}(G, G_{\mathfrak{p}}; \Z_p(2k-1))$.
\end{enumerate}
\end{theorem}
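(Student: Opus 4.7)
The plan is to reduce to the case $k \geq 1$, handled by the earlier theorems, via the $v_1$-periodicity built into Bott-inverted $K$-theory.

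For $k \geq 1$, Corollary \ref{cor: bott inverted symplectic K} combined with Proposition \ref{normresidueconsequence}, applied summand-wise through the Betti--Hodge decomposition of Theorem \ref{thm:KSp-of-Z}, shows the natural localization map $\KSp_{4k-2}(\Z;\Z/q) \to \KSp^{(\beta)}_{4k-2}(\Z;\Z/q)$ is a Galois-equivariant isomorphism intertwining the two Hodge sequences. Part (1) for $k \geq 1$ is therefore exactly the content of Theorem \ref{mt2}, and parts (2), (3) for $k \geq 1$ follow by passing to the inverse limit over $n$, precisely as Theorems \ref{Zpuniversal} and \ref{thm: Zpuniversal} were derived from Theorem \ref{mt2}.

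For $k \leq 0$, I would exploit $v_1$-periodicity. Let $\phi = \phi(q) = p^{n-1}(p-1)$ and $m = 2\phi$. Adams's $v_1$ self-map $v_1 \colon \Sigma^m \mathbb{S}/q \to \mathbb{S}/q$ of Remark \ref{remark:telescope} is a stable homotopy morphism, so after smashing with a spectrum-level model of $\KSp(\Z)$ carrying the $\Aut(\C)$-action (Proposition \ref{prop:5.2} and Appendix \ref{sec:stable-homot-theory2}) it induces a Galois-equivariant self-map which upon Bott-inversion becomes an isomorphism $\KSp^{(\beta)}_i(\Z;\Z/q) \xrightarrow{\cong} \KSp^{(\beta)}_{i+m}(\Z;\Z/q)$ for all $i \in \Z$. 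Tracing through Corollary \ref{cor: bott inverted symplectic K}, and comparing with the $v_1$-action on $\pi_*(K;\Z/q)$, one identifies the induced map on Hodge summands with the canonical Galois-equivariant isomorphism $\mu_q^{\otimes(2k-1)} \cong \mu_q^{\otimes(2k-1+m)}$, which makes sense precisely because the cyclotomic character has order $\phi$ so that $\chi_{\mathrm{cyc}}^m \equiv 1 \pmod q$ by Euler's theorem. Given $k \leq 0$, iterate until $k' := k + j\phi/2 \geq 1$ for some $j \geq 0$; the resulting isomorphism of short exact sequences of Galois modules transfers the universal property from $k'$ to $k$, establishing (1) for all $k$. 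Parts (2) and (3) follow by passing to the inverse limit over $n$, since the profinite relative homology groups $H_1(\Gamma, \langle c\rangle; \Z_p(2k-1))$ and $H_1(G, G_{\mathfrak{p}}; \Z_p(2k-1))$ are, as defined in \S \ref{univ1}, exactly these limits.

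The main subtle point, which I expect to be the principal obstacle in the rigorous write-up, is the Galois-equivariance of the $v_1$-periodicity isomorphism together with the identification of its effect on the Hodge summand with the canonical isomorphism $\mu_q^{\otimes(2k-1)} \cong \mu_q^{\otimes(2k-1+m)}$. This should reduce to the observation that the $v_1$-self-map acts on $\pi_m(ku;\Z/q) \cong \mu_q^{\otimes \phi}$ as a unit multiple of the canonical generator of the trivial Galois module $\mu_q^{\otimes \phi}$; once this is established the remainder is bookkeeping through Corollary \ref{cor: bott inverted symplectic K}.
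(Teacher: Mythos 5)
Your proposal follows the paper's own strategy: identify the Bott-inverted sequence with the non-inverted one for $k \geq 1$ via the norm residue theorem, then transfer the universal property to $k \leq 0$ via a Galois-equivariant $v_1$-periodicity isomorphism, which the paper compresses to the phrase ``deduced immediately by periodicity in $k$.'' One small arithmetic slip: a single application of the $v_1$-map shifts degree by $m = 2\phi$, hence shifts $k$ by $m/4 = \phi/2$ and the cyclotomic twist $2k-1$ by $m/2 = \phi$, so the canonical isomorphism on the Hodge summand is $\mu_q^{\otimes(2k-1)} \cong \mu_q^{\otimes(2k-1+\phi)}$ rather than $\mu_q^{\otimes(2k-1+m)}$ --- which does not affect the argument since $\chi_{\cyc}^\phi \equiv 1 \imod q$ already.
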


\begin{proof} As above, parts (2) and (3) follow formally from (1) by an inverse limit argument, so it suffices to prove (1). By Proposition \ref{normresidueconsequence}, for positive $k$ these short exact sequences agree with the ones where $\beta$ is not inverted, and hence of course enjoys the same universal property. For non-positive $k$ the universal property for the short exact sequence~\eqref{eq:72} is deduced immediately by periodicity in $k$.
    \end{proof}

 \begin{remark} \label{BottInvertedKtheory} The  natural map
  \begin{equation*}
    \KSp(\Z;\Z_p) \to \KSp_i^{(\beta)}(\Z;\Z_p)
  \end{equation*}
  is an isomorphism whenever $i \geq 0$, but from a conceptual point of view it may be preferable to work entirely with $\KSp^{(\beta)}_*(\Z;\Z_p)$.  For one thing, the universal property for~\eqref{eq:73} is in some ways more interesting, in that we see universal extensions of $\Z_p(2i-1)$ for all $i \in \Z$, not only $i > 0$.  Secondly, the relationship between $K_*^{(\beta)}(\Z;\Z_p)$ and \'etale cohomology of $\Spec(\Z')$ does not depend on the work of Voevodsky and Rost, and therefore not on any motivic homotopy theory.
  \end{remark}
  
\subsection{Degree $4k-1$}
\label{sec:degree-4k-1}
For odd $p$ the  homotopy groups of $\KSp_*(\Z;\Z_p)$ are non-zero only in degrees $* \equiv 2 \mod 4$ and $*\equiv 3 \mod 4$.   We shall prove that the Galois action is trivial in the latter case.

\begin{proof}
There is a homomorphism
  \begin{equation} \label{Quillen}
    \pi_*^s(\text{point}; \Z_p) \to K_*(\Z; \Z_p)
  \end{equation}
  induced by the natural functor $S \mapsto \Z[S]$
  from the symmetric monoidal category of sets (under disjoint union)
  to the symmetric monoidal category of free $\Z$-modules (under direct sum).

 The work of   Quillen in \cite{MR0482758}
 implies that this  map is surjective in degree $4k-1$. 
 More precisely, if we choose an auxiliary prime $\ell$
 for which the  class of $\ell$ topologically generates $\Z_p^{\times}$, 
 then Quillen's work implies that
 the composite map
 $$ \pi_*^s(\text{point}; \Z_p) \rightarrow K_*(\Z; \Z_p) \rightarrow K_*(\F_{\ell}; \Z_p)$$
 is surjective; on the other hand, the latter map is an isomorphism
 by the norm residue theorem\footnote{Appealing to the norm residue theorem  reduces this to checking that the restriction map $H^1(\Z'; \Z_p(2k)) \rightarrow H^1(\F_{\ell}; \Z_p(2k))$
 in {\'e}tale cohomology is an isomorphism.  Since these groups are finite, this map is identified via the connecting homomorphism with the map $H^0(\Z', \Q_p/\Z_p(2k)) \rightarrow H^0(\F_{\ell}, \Q_p/\Z_p(2k))$. Now, $H^0(\Z', \Q_p/\Z_p(2k))$ is invariants of $\Q_p/\Z_p(2k)$ for the $\Z_p^\times$-action via $\chi_{\cyc}^{2k-1}$, and $H^0(\F_{\ell}, \Q_p/\Z_p(2k))$ is the invariants for the subgroup of $\Z_p^\times$ generated by the element $\ell$.
}.

There is also a natural map
  \begin{equation} \label{Quillen2}
    \pi_{*}^s(\text{point}) \to \KSp_{*}(\Z;\Z_p)
  \end{equation}
arising from the
functor of symmetric monoidal categories 
  sending a finite set $S$ to $\Z[S] \otimes (\Z e \oplus \Z f)$, 
 equipped with the symplectic form   $\langle s \otimes e, s' \otimes f \rangle = \delta_{ss'}$. 
 When composed with $c_B$, the map \eqref{Quillen2} 
 recovers twice \eqref{Quillen};  but $p$ is odd and  $c_B$ is injective by Theorem 
  \ref{eq:30}, so it follows that \eqref{Quillen2} is also surjective in degree $4k-1$.

  The proof may now be finished by showing that $\pi_{4k-1}^s(\text{point}) \to \KSp_{4k-1}(\Z;\Z_p)$ is equivariant for the trivial action on the domain.  Indeed, it is induced by $\{\ast\} \to \mathcal{A}_1(\C)$, sending the point to some chosen elliptic curve $E \to \Spec(\C)$.  Since we may choose $E$ to be defined over $\Q$, the map is indeed equivariant for the trivial action on $\pi_{4k-1}^s(\text{point})$.
\end{proof}

\begin{remark} Let us sketch an alternative argument: we will show that the map $\KSp_{4k-1}(\Z;\Z_p) \to K_{4k-1}(\F_\ell;\Z_p)$ (which is a group isomorphism by the Norm Residue Theorem, as in the first proof, plus Theorem \ref{thm:KSp-of-Z}) is equivariant for the trivial action on $K_{4k-1}(\F_\ell;\Z_p)$.   

This map comes from the functor
\[
\Cal{A}_g(\CC) \rightarrow \Cal{SP}(\Z) \rightarrow \proj{\F_\ell}
\]
sending a complex abelian variety $A \to \Spec(\C)$ to the $\F_\ell$-module $H_1(A;\F_\ell)$.  The latter is canonically identified with $A[\ell] \subset A(\C)$, the $\ell$-torsion points.  These are defined purely algebraically and hence the $\ell$-torsion points of $A$ and of $\sigma A$ are \emph{equal} $\F_\ell$-modules for $\sigma \in \Aut(\C)$.  Therefore this composite functor intertwines the natural action of $\Aut(\CC)$ on the \'etale homotopy type of $\Cal{A}_{g,\CC} $ with the \emph{trivial} action on $|\proj{\F_\ell}|$, from which it may be deduced that $\KSp_*(\Z;\Z_p) \to K_*(\F_\ell;\Z_p)$ is indeed equivariant for the trivial action on $K_*(\F_\ell;\Z_p)$.
\end{remark}
 
 \begin{remark}
   Poitou--Tate duality implies the isomorphism 
   \[
   H_2(G,G_{\mathfrak{p}};\Z_p(2k-1)) \cong K_{4k-1}(\Z;\Z_p) = \KSp_{4k-1}(\Z;\Z_p).
   \]
   One may wonder whether the homotopy groups $\KSp_{4k-2}(\Z;\Z_p)$ and $\KSp_{4k-1}(\Z;\Z_p)$ are shadows of one ``derived universal extension'' of $\Z_p(2k-1)$ as a continuous $\Z_p[[G]]$-module split over $G_\mathfrak{p}$.  Or better yet, whether there is a ($G_\mathfrak{p}$-split) sequence of spectra
   \begin{equation*}
     L_{K(1)} K(\Z)^{(+)} \to L_{K(1)} \KSp(\Z) \to K^{(-)}
   \end{equation*}
   in a suitable category of spectra with continuous $G$-actions, characterized by a universal property.
   Here $K = L_{K(1)} ku$ denotes the $p$-completed periodic complex $K$-theory spectrum.
 \end{remark}
 

\section{Families of abelian varieties and stable homology}\label{sec: stable homology}

In this section, we give a more precise version of \eqref{bwmc} from the introduction.
The proof also illustrates a technique of passing to homology from homotopy.

Suppose $\pi: A \rightarrow X$ is a principally polarized abelian scheme over a smooth, $n$-dimensional, projective  
complex variety $X$.  
The Hodge bundle $\omega = \mathrm{Lie}(A)^*$ 
defines a vector bundle $\omega_X$ on $X$, and we obtain characteristic numbers of the family 
by integrating Chern classes of this Hodge bundle. In particular, if $\dim X = n$ then for any partition $\ul{n} = (n_1, \ldots, n_r)$ of $n$ 
with each $n_i$ odd, we have a Chern number

$$s_{\ul{n}}(A/X) := \int_{X} \mathrm{ch}_{n_1} (\omega_X)  \smile \ldots \smile \mrm{ch}_{n_r}(\omega_X) \in \Q.$$
Our results then imply divisibility constraints for the characteristic numbers of such families where  
 $A/X$ is defined over $\Q$ (that is: $A$, $X$ and the morphism $A \rightarrow X$
 are all defined over $\Q$):
 
\begin{theorem}\label{thm: Chern divisiblity} Suppose that $A/X$ is defined over $\Q$.  For each
partition $\ul{n}$ of $n$ as above, the characteristic number $s_{\ul{n}}(A/X)$ is divisible by 
 each prime $p \geq \max_j(n_j)$ such that, for some $i$, $p$ divides the numerator of the Bernoulli number $B_{n_i+1}$ .
\end{theorem}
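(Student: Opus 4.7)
The plan is to extend the single-factor divisibility argument sketched in~\eqref{bwmc} to the multi-factor partition $\ul{n}$ by isolating one Chern-character factor at a time and then invoking the universal property of Theorem~\ref{Zpuniversal}. Concretely, I aim to construct, from the geometric data and the partition, a Galois-equivariant class in $\KSp_{4k_i-2}(\Z;\Z_p)$ for a distinguished index $i$ whose Hodge image is $s_{\ul{n}}(A/X)$, and then conclude divisibility from the non-splitting of the extension of $\Z_p(2k_i-1)$.

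Let $f\colon X \to \mathcal{A}_g$ be the classifying map and $[X]_{\mathcal{A}_g} := f_*[X]_{\mathrm{fund}} \in H_{2n}(\mathcal{A}_g;\Z_p)$ the pushforward of the fundamental class. Since $f$ and $\omega$ are defined over $\Q$, the class $[X]_{\mathcal{A}_g}$ is $\Aut(\C)$-equivariant of weight $n$ (acting by $\chi_{\cyc}^n$), and each $\mathrm{ch}_{n_j}(\omega)\in H^{2n_j}(\mathcal{A}_g;\Q_p)$ is equivariant of weight $-n_j$. Fix the index $i$ with $p \mid B_{n_i+1}$ and write $k_i := (n_i+1)/2$, so that $n_i = 2k_i - 1$. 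Under the hypothesis $p \geq \max_j n_j$, each $\mathrm{ch}_{n_j}(\omega)$ for $j \neq i$ is $p$-adically integral, so the cap product
\[
\alpha := [X]_{\mathcal{A}_g} \frown \prod_{j \neq i} \mathrm{ch}_{n_j}(\omega) \ \in\ H_{4k_i - 2}(\mathcal{A}_g;\Z_p)
\]
is well-defined and is $\Aut(\C)$-equivariant of weight $n - \sum_{j \neq i} n_j = n_i = 2k_i - 1$.

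Pushing $\alpha$ along $\mathcal{A}_g \to B\Sp_{2g}(\Z)$, stabilizing in $g$, and applying the stable-range surjection $H_{4k_i-2}(\Sp_\infty(\Z);\Z_p) \twoheadrightarrow \KSp_{4k_i-2}(\Z;\Z_p)$ from the discussion preceding~\eqref{bwmc} (valid for $p > 2k_i$, consistent with the Bernoulli hypothesis), I obtain $\bar\alpha \in \KSp_{4k_i-2}(\Z;\Z_p)$, still $\Aut(\C)$-equivariant of weight $2k_i-1$. By construction and the definition of the Hodge map as pairing against the Chern character of the Hodge bundle, $c_H(\bar\alpha) = \langle [X]_{\mathcal{A}_g}, \prod_j \mathrm{ch}_{n_j}(\omega)\rangle = s_{\ul{n}}(A/X) \in \Z_p$. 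The Galois-equivariance of $\bar\alpha$ defines an $\Aut(\C)$-equivariant homomorphism $\Z_p(2k_i-1) \to \KSp_{4k_i-2}(\Z;\Z_p)$ sending $1 \mapsto \bar\alpha$, and composing with $c_H$ yields multiplication by $s_{\ul{n}}(A/X)$ on $\Z_p(2k_i-1)$. If $s_{\ul{n}}(A/X)$ were a $p$-adic unit, rescaling would produce an equivariant section of $c_H$, contradicting Theorem~\ref{Zpuniversal}: the hypothesis $p \mid B_{n_i+1}=B_{2k_i}$ is equivalent (by Mazur--Wiles) to $\Ker(c_H) \neq 0$ in degree $4k_i-2$, and the main theorem identifies this extension as universal, hence non-split. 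Therefore $s_{\ul{n}}(A/X) \in p\Z_p$.

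The principal technical obstacle lies in verifying the compatibilities used above: (i) that the cap product $\alpha$ is genuinely $p$-adically integral, including at the boundary $p = \max_j n_j$, which requires a careful analysis of Chern-character denominators together with the Kummer--von Staudt constraints on $p$ arising from $p \mid B_{n_i+1}$; and (ii) that the composition $H_{4k_i-2}(\mathcal{A}_g;\Z_p) \to \KSp_{4k_i-2}(\Z;\Z_p) \xrightarrow{c_H} \Z_p(2k_i-1)$ coincides on the nose with the pairing against $\mathrm{ch}_{n_i}(\omega)$, so that no spurious $p$-divisible factor is lost in identifying $c_H(\bar\alpha)$ with $s_{\ul{n}}(A/X)$. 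Both reduce to unravelling the construction of the Hodge map in \S\ref{sec:symplectic-K} and matching normalizations with the classical Chern character on the Hodge bundle.
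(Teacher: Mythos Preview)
Your argument is essentially the paper's own proof: cap the fundamental class with $\prod_{j\neq i}\mathrm{ch}_{n_j}(\omega)$ to land in $H_{2n_i}(\mathcal{A}_{g,\C};\Z_p)$ with the correct Tate weight, pass to $\KSp_{2n_i}(\Z;\Z_p)$ via the identification of $\KSp$ with the indecomposable quotient of stable homology in degrees $\leq 2p-2$, and derive a contradiction from the non-splitting of~\eqref{Zpsequence2} when $p\mid B_{n_i+1}$. The paper handles your obstacle~(ii) exactly by proving that isomorphism (Theorem~\ref{primitive homology to indecomposable isomorphism}) and noting it intertwines the pairing with $\mathrm{ch}_{n_i}(\omega)$ and the Hodge map; your range condition should read $p\geq 2k_i$ (equivalently $2n_i\leq 2p-2$) rather than $p>2k_i$, matching the stated range of that isomorphism.
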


\proof (Outline): 
  In what follows we shall freely make use of {\'e}tale {\em homology} of varieties and algebraic stacks, which can be defined as the compactly supported cohomology of the dualizing sheaf. 

The assumption $p \geq n_i$ implies that $\mathrm{ch}_{n_i}$ lifts to a $\Z_{(p)}$-integral class.  In particular we have universal $\mathrm{ch}_{n_i}(\omega) \in H^{2n_i}(\mathcal{A}_{g,\C};\Z_p(n_i))$.
The family $A/X$ induces a classifying map $f \co X \rightarrow \Cal{A}_{g,\C}$, hence a cycle class in  $H_{2n}(\Cal{A}_{g,\C}; \Z_p)$ transforming according to
the $n$th power of the cyclotomic character; more intrinsically we get an equivariant $\Z_p(n) \rightarrow H_{2n}(\Cal{A}_{g,\C}; \Z_p)$.  Taking cap product with $\prod_{j \neq i} \mathrm{ch}_{n_j}(\omega)$ gives
 $\Z_p(n_i) \longrightarrow  H_{2n_i}(\Cal{A}_{g,\C}; \Z_p)$,
whose pairing with $\mrm{ch}_{n_i}(\omega)$ is the Chern number $s_{\ul{n}}(A/X) \in \Z_{(p)} = \Q \cap \Z_p$.
Assuming for a contradiction that this number is not divisible by $p$, the  morphism 
\begin{equation}
\label{Zpn1}
H_{2n_i}(\Cal{A}_{g,\C}, \Z_p) \xrightarrow{\mathrm{ch}_{n_i}(\omega)} \Z_p(n_i)
\end{equation}
 {\em splits} as a morphism of Galois modules. 

We wish to pass from this homological statement to a $K$-theoretic one. To do so
 we  use some facts about stable homology
 \[
H_i(\Sp_{\infty}(\Z);  \Z_p) = \varinjlim_g H_i(\Sp_{2g}(\Z); \Z_p)
\]
(the limit stabilizes  for $i < (g - 5)/2$ by  \cite[Corollary 4.5]{Cha87})
that will be explained in the next subsection. 
 This stable homology 
carries a Pontryagin product, arising from the natural maps
$\Sp_{2a} \times \Sp_{2b} \rightarrow \Sp_{2a+2b}$; 
in particular we  can define the ``decomposable elements'' of $H_i$
as the $\Z_p$-span of all products $x_1 \cdot x_2$ where $x,y$ have strictly  positive  degree, 
and a corresponding quotient space of ``indecomposables.'' 

We will be interested
in a variant that is better adapted to $\Z_p$ coefficients. 
Define {\em integral  decomposables}  in $H_*(\Sp_{\infty}(\Z); \Z_p)$ as
the $\Z_p$-span of all $x_1 \cdot x_2$ {\em and }$\beta(x_1' \cdot x_2')$ where $x_i \in H_*(\Sp_{\infty}(\Z); \Z_p),  x_i' \in H_*(\Sp_{\infty}(\Z); \Z/p^k)$
have positive degree and $\beta$ is the Bockstein induced  from the sequence
$
0 \rightarrow \Z_p \rightarrow \Z_p \rightarrow \Z/p^k \Z \rightarrow 0$.  Correspondingly
this permits
us to define an indecomposable quotient
 $$H_i(\Sp_{\infty}(\Z); \Z_p)_{\mathrm{Ind}} = \frac{ H_i(\Sp_{\infty}(\Z); \Z_p)}{\mbox{integral decomposables}}.$$

Then the fact that we shall use (generalizing a familiar
property of {\em rational} $K$-theory,  see  \cite[Theorem 1.4]{Nov66}))
is that the composite of the Hurewicz map and the quotient map 
\begin{equation}  \label{eq: K = ind stable homology} \KSp_i(\Z) \otimes \Z_p \longrightarrow  H_i(\Sp_{\infty}(\Z); \Z_p)_{\mathrm{Ind}}\end{equation}
is an {\em isomorphism} for $i \leq 2p-2$.  We sketch the proof of this fact in \S \ref{sec:stable-homology-its} below.
In particular, there is a map $H_{2n_i}(\Sp_{2g}(\Z); \Z_p) \rightarrow \KSp_{2n_i}(\Z; \Z_p)$
(by mapping to stable homology followed by $\text{\eqref{eq: K = ind stable homology}}^{-1}$);
this map is Galois equivariant and  intertwines \eqref{Zpn1}
with the Hodge map $c_H$, and therefore
 the sequence \eqref{Zpsequence2} is also split. 

 But this gives a contradiction:
By Theorem \ref{Zpuniversal}, the sequence \eqref{Zpsequence2} is non-split  as long as 
$\ker(c_H)$ is nonzero, which by Theorem \ref{thm:KSp-of-Z}
is the case precisely when
$H_{\et}^2(\Z[1/p]; \Z_p(n_i+1)) \neq 0$,
which  by Iwasawa theory (see
\cite[Cor 4.2]{Kolster})  
is the case precisely when $p$ divides  the numerator of $B_{n_i+1}$.
\qed

\begin{remark}
One can explicitly construct various examples of this situation, e.g.:
\begin{itemize}
\item[(i)] We can take $X$ to be a projective Shimura variety of PEL type; the simplest example
is a Shimura curve parameterizing abelian varieties with quaternionic multiplication. 

\item[(ii)] There exist many such families of curves, i.e.\ 
embeddings of smooth proper $X$ into $\mathcal{M}_g$, and then
the Jacobians form a (canonically principally polarized) abelian scheme over $X$.
 
\item[(iii)] The (projective) Baily-Borel compactification of $\mathcal{A}_g$ has 
a boundary of codimension $g$; consequently, thus, a generic $(g-1)$-dimensional hyperplane section 
gives a variety $X$ as above. 
 \end{itemize} 
It should be possible to dirctly verify the divisibility at
least in examples (i) and (ii), where it is related to (respectively)
divisibility in the cohomology of the Torelli map   $\mathcal{M}_g \rightarrow \mathcal{A}_g$
and the occurrence of $\zeta$-values in volumes of Shimura varieties. 
\end{remark}

\subsection{Stable homology and its indecomposable quotient}
\label{sec:stable-homology-its}

The proof of \eqref{eq: K = ind stable homology} is a consequence of a more general fact about infinite loop spaces,
formulated and proved in Theorem \ref{primitive homology to indecomposable isomorphism}.
Let $E$ be a $p$-complete connected spectrum (all homotopy in strictly
positive degree) and let $X = \Omega^\infty E$ be the corresponding infinite loop space.  We consider the composition
\begin{equation}
  \label{eq:the-map}
  \pi_i(E) = \pi_i(X) \to H_i(X;\Z_p) \to H_i(X;\Z_p)_{\mathrm{Ind}}
\end{equation}
of the Hurewicz homomorphism and the quotient by ``integral indecomposables.''
As above, the latter space is defined as 
 $I^2 + \sum \beta_k I_k^2$
 where:
 \begin{itemize}
 \item $I$ is the kernel of the augmentation  $H_*(X;\Z_p)  \to \Z_p = H_*(\pt;\Z_p)$;
 \item $I_k$ is the kernel of the similarly defined $H_*(X;\Z/p^k\Z) \to \Z/p^k\Z$;
 \item $\beta_k \co H_*(X; \Z/p^k\Z) \rightarrow H_{*-1}(X; \Z_p)$
 is the Bockstein operator associated to the short exact sequence
$
0 \rightarrow \Z_p \rightarrow \Z_p \rightarrow \Z/p^k \Z \rightarrow 0$.
 \end{itemize}

\begin{example}
  Let $E$ be the Eilenberg--MacLane spectrum with
  $E_k = K(\Z/2\Z, k+1)$, so that $X = \mbf{RP}^\infty = K(\Z/2\Z,1)$.  As is well known, $H_*(X;\Z/2\Z)$
  is a divided power algebra over $\Z/2\Z$ and $H_*(X;\Z_2)$ is
  additively $\Z/2\Z$ in each odd degree.  Hence $I^2 = 0$ for degree
  reasons so $I/I^2 = I$ is the entire positive-degree homology.  In
  contrast, $I_1 = H_{>0}(X;\Z/2\Z)$ is one-dimensional in all positive
  degrees whereas $I_1^2 \subset I_1$ is one-dimensional when the
  degree is not a power of two, but zero when the degree is a power of
  two.

  Since $\beta_1: H_*(X;\Z/2\Z) \to H_*(X;\Z_2)$ is surjective in
  positive degrees we may deduce that in this case the integral
  indecomposables
  \begin{equation*}
    I/(I^2 + \beta_1(I_1^2))
  \end{equation*}
  is $\Z/2\Z$ in degrees of the form $2^i - 1$ and vanishes in all other
  degrees.
\end{example}

 \begin{theorem}\label{primitive homology to indecomposable isomorphism}
  For $X = \Omega^\infty E$ as above, the homomorphism~(\ref{eq:the-map}) is
  an isomorphism in degrees $* \leq 2p-2$.
\end{theorem}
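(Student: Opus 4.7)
The plan is to prove the theorem in three steps: a rational computation via Milnor--Moore, a mod $p$ analysis via Dyer--Lashof operations, and an integral lift via the Bockstein spectral sequence. The unifying idea is that in the range $* \leq 2p-2$, the mod $p$ homology of $X = \Omega^\infty E$ is as ``simple'' as its rational homology: it is generated as an $\F_p$-algebra by Hurewicz images (and their Bocksteins), with no contribution from new Dyer--Lashof operations.

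Rationally, by the Milnor--Moore theorem, for a connected infinite loop space $X$ the homology $H_*(X; \Q)$ is a free graded-commutative algebra on its primitives, and the primitives are precisely the rational Hurewicz image of $\pi_*(E) \otimes \Q$. This already gives the theorem rationally in all positive degrees, recovering the well-known result of Novikov cited in the excerpt.

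Next I would analyze $H_*(X; \F_p)$ at odd $p$. As an $\F_p$-algebra it is generated by the mod $p$ Hurewicz image of $\pi_*(E)$ (together with Bocksteins of mod $p^k$ Hurewicz images) plus iterated Dyer--Lashof operations $Q^s$ and $\beta Q^s$. For a class $x$ of degree $m \geq 1$, $Q^s x$ vanishes for $2s < m$, equals $x^p$ for $2s = m$, and produces a ``new'' generator only when $2s > m$. In that new case, $|Q^s x| = m + 2s(p-1) \geq m + (m+2)(p-1) > 2p-2$ whenever $m \geq 1$. The lone boundary case is $\beta Q^1 x$ for $|x|=1$, sitting in degree exactly $2p-2$: this class lies in $\beta_1 I_1^2$, since $Q^1 x$ is essentially a Bockstein-corrected $p$th Pontryagin power, which is a decomposable element of mod $p^2$ homology. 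Thus in the range $* \leq 2p-2$, $H_*(X;\F_p)$ modulo products and Bocksteins of products is generated precisely by the mod $p^k$ Hurewicz images of $\pi_*(E)$.

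Finally, I would pass to $\Z_p$-coefficients via the Bockstein spectral sequence. The mod $p$ statement, together with naturality of Bocksteins and a comparison of mod $p^k$ spectral sequences, shows that every element of $I$ in our range lies in the Hurewicz image modulo $I^2 + \sum_k \beta_k I_k^2$; injectivity is controlled by the rational calculation applied to the (finitely generated $\Z_p$-module) $\pi_*(E)$. The main obstacle is the careful bookkeeping of the boundary Dyer--Lashof class at degree $2p-2$ and the matching of Bockstein-of-product terms $\beta_k I_k^2$ against the Tor summands from the universal coefficient sequence applied to $\pi_*(E)$. A technically different but conceptually parallel route would be to induct over the Postnikov tower of $E$, reducing to Cartan's calculation of $H_*(K(A,n); \F_p)$ and a Serre spectral sequence argument; this avoids explicit Dyer--Lashof operations at the cost of introducing spectral-sequence bookkeeping in their place.
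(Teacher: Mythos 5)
Your three-step plan (rational via Milnor--Moore, mod~$p$ via Dyer--Lashof, integral lift via Bockstein) takes a genuinely different route from the paper, which replaces $X$ by its Postnikov truncation $\tau_{\leq 2p-2}X$, observes that this truncation splits as a product of Eilenberg--MacLane spaces because the first non-Bockstein stable $k$-invariant $\mathcal{P}^1$ lies beyond degree $2p-2$ after delooping once, proves the claim for a single $K(\Z_p,n)$ or $K(\Z/p^k,n)$ using a cdga map $A \to C_*(X;\Z_p)$ whose mapping cone is acyclic through degree $2p-1$ by Cartan's calculation, and then invokes Lemma~\ref{indecomposables of product}.

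The central gap in your argument is the assertion that ``as an $\F_p$-algebra $H_*(X;\F_p)$ is generated by the mod~$p$ Hurewicz image of $\pi_*(E)$ (together with Bocksteins of mod~$p^k$ Hurewicz images) plus iterated Dyer--Lashof operations.'' For a general connective spectrum $E$, the Dyer--Lashof-algebra structure on $H_*(\Omega^\infty E;\F_p)$ is built from the \emph{spectrum homology} $H_*(E;\F_p)$, not from the Hurewicz image of $\pi_*(E)$, and these differ once the Hurewicz map fails to be surjective. The generation claim becomes true in degrees $\leq 2p-2$ precisely \emph{because} $\tau_{\leq 2p-2}E$ has no non-Bockstein $k$-invariants in that range, so that it is (after a degree shift) a wedge of $\Sigma^n H\Z_p$'s and $\Sigma^n H\Z/p^k$'s, for each of which Cartan's theorem does identify the fundamental class and its higher Bockstein as Dyer--Lashof generators. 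In other words, what you list at the end as a ``technically different but conceptually parallel route'' (inducting over the Postnikov tower and reducing to Cartan's computation) is not an alternative to your Dyer--Lashof argument --- it is a logical prerequisite for it, and is exactly the content of the paper's proof. Separately, the third step is considerably underdetermined: you need injectivity of $\pi_*(E) \to H_*(X;\Z_p)_{\mathrm{Ind}}$ on torsion, and ``controlled by the rational calculation'' does not address this; the paper gets it for free from the explicit cdga comparison, which works directly with $\Z_p$ coefficients rather than assembling $\Q$ and $\F_p$ information via a Bockstein spectral sequence.
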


\begin{proof}[Proof of Theorem \ref{primitive homology to indecomposable isomorphism}]
  Let us first sketch why the result is true when $X$ is a connected Eilenberg--MacLane space, i.e.\ $X = K(\Z_p,n)$ or $K(\Z/p^k\Z,n)$ for $n \geq 1$.  In that case $X$ has the structure of a topological abelian group, and the singular chains $C_*(X;\Z_p)$ form a graded-commutative differential graded algebra (cdga).  In the case $X = K(\Z_p,n)$ there is a cdga morphism
  \begin{equation*}
    A = \Z_p[x] \to C_*(X;\Z_p)
  \end{equation*}
  from the free cdga on a generator $x$ in degree $n$, and in the case $X = K(\Z/p^k\Z,n)$ a cdga morphism
  \begin{equation*}
    A = \Z_p[x,y \mid \partial y = x] \to C_*(X;\Z_p)
  \end{equation*}
  where $x$ has degree $n$ and $y$ has degree $n+1$, in both cases inducing an isomorphism $H_n(A) \to H_n(X;\Z_p)$.  In both cases the mapping cone is acyclic in degrees $* \leq 2p-1$, as follows either by inspecting the explicitly known $H^*(X;\F_p)$, see \cite[Theorems 4,5,6]{Car54} (also stated in e.g.\ \cite[Theorem 6.19]{McCleary}), or by an induction argument (the case $n=1$ is a calculation of mod $p$ group homology of $\Z_p$ or $\Z/p^k$, the induction step is by the Serre spectral sequence).  The map therefore induces an isomorphism in homology in degrees $* \leq 2p-2$, and it is clear that $H_*(A)_{\mathrm{Ind}}$ is generated by the class of $x$.

  A general $X = \Omega^\infty E$ may be replaced by its Postnikov truncation $\tau_{\leq 2p-2} X$, which splits as a product of Eilenberg--MacLane spaces, up to homotopy equivalence of loop spaces.  Indeed, the deloop $\Omega^\infty \Sigma E$ can have non-vanishing homotopy in degrees $* \geq 2$ only, and the shortest possible $k$-invariant is $\mathcal{P}^1: K(\Z/p,2) \to K(\Z/p,2p)$.  Hence $\tau_{\leq 2p-2} X$ splits as
  a product of Eilenberg--MacLane spaces, and this splitting respects
  the $H$-space structure.  Hence the result follows by induction from
  the final Lemma \ref{indecomposables of product}.\end{proof}
  
\begin{lemma}\label{indecomposables of product}
If $X$ and $Y$ are connected $H$ spaces of finite type, then the natural
  map 
  \[
  H_*(X; \Z_p)_{\Ind} \oplus H_*(Y; \Z_p)_{\Ind} \to H_*(X\times Y; \Z_p)_{\Ind}
  \]
  is an isomorphism.
\end{lemma}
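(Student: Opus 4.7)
The plan is to prove injectivity and surjectivity separately. Injectivity and well-definedness will follow formally from naturality, while surjectivity will require the K\"unneth theorem together with an explicit chain-level computation identifying Tor classes as Bocksteins of Pontryagin products.

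First I would observe that the inclusions $i_X\colon X\hookrightarrow X\times Y$, $x\mapsto (x,e_Y)$ and $i_Y$, as well as the projections $p_X,p_Y$, are $H$-space maps. Consequently the induced maps on $H_*(-;\Z_p)$ and on $H_*(-;\mathbf{Z}/p^k)$ are homomorphisms of Pontryagin rings, and by naturality of the Bockstein $\beta_k$ they preserve the submodules $I^2$ and $\beta_k(I_k^2)$. Both the natural map of the lemma and the opposite map induced by $(p_X,p_Y)_*$ therefore descend to integral indecomposables; since $p_X\circ i_X=\mathrm{id}$ while $p_Y\circ i_X$ factors through a point, and symmetrically after swapping $X$ and $Y$, the latter is a left inverse to the former on positive degrees, giving injectivity.

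For surjectivity, the same retraction identifies $\widetilde H_*(X\times Y;\Z_p)$ with $\widetilde H_*(X;\Z_p)\oplus\widetilde H_*(Y;\Z_p)\oplus K$, where $K=\ker((p_X,p_Y)_*)$, so it suffices to show that $K$ is contained in the integral decomposables of $H_*(X\times Y;\Z_p)$. By the K\"unneth theorem, after a choice of splitting $K$ is spanned by two kinds of classes: external cross products $a\times b$ with $a\in\widetilde H_i(X;\Z_p)$ and $b\in\widetilde H_j(Y;\Z_p)$ both of positive degree, and classes lifting elements of $\mathrm{Tor}_1^{\Z_p}\bigl(\widetilde H_i(X;\Z_p),\widetilde H_j(Y;\Z_p)\bigr)$ to $\widetilde H_{i+j+1}(X\times Y;\Z_p)$. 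Because the $H$-structure on $X\times Y$ is componentwise, the external cross product $a\times b$ coincides with the Pontryagin product $(i_X)_*(a)\cdot (i_Y)_*(b)$, which lies in $I^2$.

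The main obstacle is the Tor contribution, which is precisely what forces the Bockstein term into the definition of integral decomposables. Given torsion classes $a$ of order $p^k$ and $b$ of order $p^l$ with $k\le l$ (after possibly interchanging $X$ and $Y$), I would represent them by cycles $\alpha\in C_i(X;\Z_p)$, $\beta\in C_j(Y;\Z_p)$ and choose integer chains $\gamma,\delta$ with $\partial\gamma=p^k\alpha$ and $\partial\delta=p^l\beta$. The standard Tor representative is the cycle $z=p^{l-k}\gamma\times\beta-(-1)^i\alpha\times\delta$ in $C_{i+j+1}(X\times Y;\Z_p)$. The mod $p^k$ reductions of $\gamma$ and $\delta$ give cycles $\tilde a'\in\widetilde H_{i+1}(X;\mathbf{Z}/p^k)$ and $\bar b'\in\widetilde H_{j+1}(Y;\mathbf{Z}/p^k)$, both of positive degree, and a direct boundary computation yields
\[
\partial(\gamma\times\delta)=p^k\bigl[\alpha\times\delta+(-1)^{i+1}p^{l-k}\gamma\times\beta\bigr],
\]
so that $\beta_k(\tilde a'\times\bar b')=\pm z$. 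Since $\tilde a'\times\bar b'$ equals the Pontryagin product $(i_X)_*(\tilde a')\cdot(i_Y)_*(\bar b')$ of positive-degree classes in $H_*(X\times Y;\mathbf{Z}/p^k)$, it belongs to $I_k^2$; hence $z\in\beta_k(I_k^2)$ is an integral decomposable, completing the argument.
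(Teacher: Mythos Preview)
Your proof is correct and follows essentially the same approach as the paper's own proof: injectivity by the formal retraction argument using that inclusions and projections are $H$-maps, and surjectivity via K\"unneth, identifying cross products with Pontryagin products and showing that each Tor contribution is $\beta_d$ of a product of positive-degree mod $p^d$ classes. Your version spells out the chain-level verification $\partial(\gamma\times\delta)=p^k\bigl(\alpha\times\delta+(-1)^{i+1}p^{l-k}\gamma\times\beta\bigr)$ in more detail than the paper, which simply asserts that a generator of the relevant $\Z/p^d$ summand in Tor may be chosen as $\beta_d(xy)$ with $\beta_d(x)$ and $\beta_d(y)$ generating the $p^d$-torsion; these are the same classes as your $\tilde a'$ and $\bar b'$.
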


\begin{proof}
  There are ``natural maps'' induced by $X \to X \times Y$,
  $Y \to X \times Y$, and the two projections, all of which are $H$-space maps.  A formal argument shows that one composition gives the
  identity map of $H_*(X; \Z_p)_{\Ind} \oplus H_*(Y; \Z_p)_{\Ind}$ and hence that
  $H_*(X; \Z_p)_{\Ind} \oplus H_*(Y; \Z_p)_{\Ind} \to H_*(X \times Y; \Z_p)_{\Ind}$ is injective.  (This much would
  also be true if we only take quotient by $I^2$ and not all the
  $\beta_k(I_k^2)$.)

  The algebra $H_*(X \times Y)$ may be calculated additively by the
  Kunneth formula, and the main issue in this lemma is to deal with
  non-vanishing Tor terms.  By the finite type assumption, the
  homology of $X$ and $Y$ will be direct sums of groups of the form
  $\Z_p$ and $\Z/p^k\Z$.  Pick such a direct sum decomposition.  Then
  each $\Z/p^k\Z$ summand in $H_*(X)$ and $\Z/p^l\Z$ summand in $H_*(Y)$
  pair to give a $\Z/p^d\Z$ summand in the Tor term, where
  $d = \min(k,l)$.  It is not hard to see that this summand must be in
  the $\beta_d(I_d^2)$.  Indeed, a generator may be chosen as
  $\beta_d(xy)$, where $\beta_d(x)$ and $\beta_d(y)$ are generators
  for the $p^d$ torsion in the $\Z/p^k\Z$ and $\Z/p^l\Z$ summands of
  $H_*(X)$ and $H_*(Y)$ respectively.

  We have shown that the multiplication map
  $H_*(X) \otimes H_*(Y) \to H_*(X \times Y)$ becomes surjective after
  taking quotient by $\sum \beta_k(I_k^2)$, but then it must remain
  surjective after passing to augmentation ideals and taking further
  quotients.
\end{proof} 
 
\appendix  

\newcommand{\Map}{\mathrm{Map}}

 \newcommand{\Var}{\mathrm{Var}}

\section{Construction of the Galois action on symplectic $K$-theory}
\label{sec:stable-homot-theory2}

The goal of this Appendix is to supply details for an argument sketched in the main text, viz.\ the construction of the Galois action on symplectic $K$-theory in the proof of Proposition \ref{prop:5.2}.
The contents are as follows:
 
 In Subsection \S \ref{sec:gamma-spaces} we review notation and basic definition concerning $\Gamma$-spaces, and in Subsection~\ref{sec:homot-types-compl} we review two ways to to extract a space from a simplicial scheme quasiprojective over $\Spec(\C)$. One might be called ``Betti realization'' and the other ``\'etale realization''.  Then in \S \ref{sec:etale-homotopy-type} we explain how to relate Betti realization with \'etale realization after completing at a prime $p$.  As usual, the point is that the \'etale realization of objects base changed from $\Spec(\Q)$ inherits an action of the group $\Aut(\C)$ of all field automorphisms of the complex numbers.  

The main construction happens in \S \ref{sec:gamma-objects-simpl}, where a certain $\Gamma$-object $Z$ in simplicial schemes quasi-projective over $\Spec(\Q)$ is constructed.  We prove that the $\Gamma$-space resulting from base changing $Z$ from $\Q$ to $\C$ and taking Betti realization gives a model for $\KSp(\Z)$, the symplectic $K$-theory spectrum studied in this paper.  This eventually boils down to the Betti realization of $\mathcal{A}_g(\C)^\mathrm{an}$ being a model for $B\Sp_{2g}(\Z)$ ``in the orbifold sense'',  which, in turn, is deduced from uniformization of principally polarized abelian varieties over $\C$ and the contractibility of Siegel upper half-space $\mathbb{H}_g$, as we discuss in\S \ref{sec:betti-real-mathc}.  The result is a model for the $p$-completion of the spectrum $\KSp(\Z)$ on which $\Aut(\C)$ acts by spectrum maps, as we conclude in \S \ref{sec:galo-acti-sympl}.

\subsection{Gamma spaces and deloopings of algebraic $K$-theory spaces}
\label{sec:gamma-spaces}

\newcommand{\Sets}{\mathrm{Sets}}

We summarize a convenient formalism for constructing infinite loop structures on certain spaces, and to promote certain maps to infinite loop maps, introduced by G.\ Segal (\cite{MR353298}) and further developed by Bousfield--Friedlander (\cite{BousfieldFriedlander}) and others.
\begin{definition}
Let $\Gamma^\mathrm{op}$ denote a skeleton of the category whose objects are finite pointed sets and whose morphisms are pointed maps.    Let $s\Sets_\ast$ denote the category of pointed simplicial sets.
  A \emph{$\Gamma$-space} is a functor $X: \Gamma^\mathrm{op} \to s\Sets_\ast$ sending the terminal object $\{\ast\}$ to a terminal simplicial set (one-point set in each simplicial degree).  A morphism of $\Gamma$-spaces is a natural transformation of such functors.
\end{definition}
There is then a functor
\begin{equation} \label{Binftydef}
  B^\infty: \text{$\Gamma$-spaces} \to \text{connective spectra}.
\end{equation}
Under extra assumptions on the $\Gamma$-space $X$, there is also a way to recognize  $\Omega^\infty B^\infty X$ in terms of $X(S^0)$, the value of the functor $X$ on the pointed set $S^0 := \{0, \infty\}$ with basepoint $\infty$.

The ``infinite delooping'' functor $B^\infty$ is easy to define.  Following \cite{BousfieldFriedlander}, we first extend $X: \Gamma^\mathrm{op} \to s\Sets_*$ to a functor
$
  X: s\Sets_* \to s\Sets_*
$ which preserves filtered colimits and geometric realization.  Such an extension is unique up to unique isomorphism, and automatically preserves pointed weak equivalences.  There are canonical maps
$X(S^n) \to \Omega X(S^{n+1})$
and hence
\begin{equation}
  \label{eq:38}
  |X(S^n)| \to \Omega |X(S^{n+1})|,
\end{equation}
where $S^1$ denotes the simplicial circle, and $S^n = (S^1)^{\wedge n}$ the simplicial $n$-sphere.  See e.g.\ \cite[Section 4]{BousfieldFriedlander} for more details.  These maps let us functorially associate a spectrum to each $\Gamma$-space $X$, and the spectra arising this way are automatically connective.

\begin{definition}
  The coproduct of two pointed sets $S$ and $T$ is denoted $S \vee T$ and traditionally called the wedge sum.  $\vee$ gives a symmetric monoidal structure on $\Gamma^\mathrm{op}$, and any object is isomorphic to a finite wedge sum $S^0 \vee \dots \vee S^0$.

  The $\Gamma$-space $X$ is \emph{special} if for any two objects $S,T$ the canonical map
  \begin{equation*}
    X(S \vee T) \to X(S) \times X(T),
  \end{equation*}
  is a weak equivalence.
\end{definition}
When $X$ is a special $\Gamma$-space, the pointed simplicial set $X(S^0)$ may be thought of as the underlying space of $X$.  The fold map $S^0 \vee S^0 \to S^0$ induces a diagram
\begin{equation}\label{eq:40}
  X(S^0) \times X(S^0) \xleftarrow{\simeq} X(S^0 \vee S^0) \to X(S^0),
\end{equation}
which makes $|X(S^0)|$ into an $H$-space, which is unital, associative, and commutative up to homotopy.  In particular the pointed set $\pi_0(|X(S^0)|)$ inherits the structure of a commutative monoid.  As shown by Segal, the maps~(\ref{eq:38}) are weak equivalences for $n \geq 1$ when $X$ is special, so in that case $B^\infty X$ is equivalent to an $\Omega$-spectrum with 0th space $\Omega |X(S^1)|$ and $n$th space $|X(S^n)|$ for $n \geq 1$.  We then have a map of $H$-spaces
\begin{equation}\label{eq:76}
  |X(S^0)| \to \Omega |X(S^1)| \xrightarrow{\simeq} \Omega^\infty B^\infty X
\end{equation}
which is a ``group completion'', in the sense that it induces an isomorphism
\begin{equation*}
  H_*(X(S^0))[\pi_0(X(S^0))^{-1}] \xrightarrow{\sim} H_*(\Omega |X(S^1)|),
\end{equation*}
whose domain is $H_*(X(S^0))$, made into graded-commutative ring using~(\ref{eq:40}), and localized at the multiplicative subset $\pi_0(X(S^0))$.  A similar localization holds with (local) coefficients in any $\Z[\pi_2(X(S^1))]$-module.

Many spectra may be constructed this way.  We list some examples relevant for this paper.
\begin{example}
  For any pointed simplicial set $M$, consider the $\Gamma$-space
  \begin{equation*}
    S \mapsto S \wedge (M_+),
  \end{equation*}
  where $M_+$ denotes $M$ with a disjoint basepoint added.  The corresponding spectrum is then the (unbased) suspension spectrum $\Sigma^\infty_+ M$ mentioned earlier.

There is a natural map of spectra
  \begin{equation}\label{eq:39}
    \Sigma^\infty_+ |X(S^0)| \to B^\infty X,
  \end{equation}
  natural in the $\Gamma$-space $X$, constructed as follows. For any finite pointed set $S$ and any $s \in S$ we have a map $S^0 \to \{s,\ast\}$ sending the non-basepoint to $s$.  If $X$ is a $\Gamma$-space we may apply $X$ to the composition $S^0 \to \{s,\ast\} \subset S$ to get a map $\{s\} \times X(S^0) \to X(S)$ for each $s \in S$.  These assemble to a canonical map from $S \times X(S^0)$ which factors as
  \begin{equation*}
    S \wedge X(S^0)_+ \to X(S).
  \end{equation*}
  This map is natural in $S \in \Gamma^\mathrm{op}$, i.e.,\ defines a map of $\Gamma$-spaces and hence gives rise to a map of spectra.  On homotopy groups it induces a map from the stable homotopy groups of $|X(S^0)|$ to the homotopy groups of $B^\infty X$.
\end{example}

\newcommand{\projj}[2]{\mathcal{P}_{#2}(#1)}  

\begin{example}[Constructing the algebraic $K$-theory spectrum]\label{ex:deloop-K-of-R}
  Following Segal, let us explain how to use $\Gamma$-space machinery to construct algebraic $K$-theory spectra $K(R)$ for a ring $R$.  The idea is to construct a special $\Gamma$-space whose value on $S^0$ is equivalent to $|\proj{R}|$, the classifying space of the groupoid of finitely generated projective $R$-modules.  Its value on $\{\ast, 1,\dots, n\}$ should be a classifying space for a groupoid of finitely generated projective modules equipped with a splitting into $n$ many direct summands.  

Let $S \in \Gamma^\mathrm{op}$ and let $R^S$ denote the ring of all functions $f: S \to R$ under pointwise ring operations.  The diagonal $R \to R^S$ makes any $R^S$-module into an $R$-module.  Let us for $s \in S$ write $e_s \in R^S$ for the idempotent with $e_s(s) = 1$ and $e_s(S \setminus\{s\}) = \{0\}$.  Then for projective $R^S$-module $M$ has submodules $e_s M \subset M$ and the canonical map $\oplus_{s \in S} M_S \to M$ is an isomorphism.  Hence each $M_s$ is a projective $R$-module (for the diagonal $R$-structure).  Let us write $e = 1 - e_{\ast} = \sum_{s \in S\setminus\{\ast\}} e_s \in R^S$ so that $eM = \sum_{s \in S \setminus \{\ast\}} e_s M$, and let
$\projj{R}{S}$ be the category whose objects are pairs $(n,\phi)$ with $n \in \N$ and $\phi: R^S \to M_n(R)$ an $R$-algebra homomorphism, and whose morphisms $(n,\phi) \to (n',\phi')$ are $R^S$-linear isomorphisms $\phi(e)R^n \to \phi'(e) R^{n'}$.    The forgetful functor
  \begin{align*}
    \projj{R}{S^0} & \to \proj{R}\\
    (n,\phi) & \mapsto \phi(e) R^n
  \end{align*}
  is then an equivalence of categories, since any finitely generated projective module is isomorphic to a retract of $R^n$ for some $n$.  Moreover the association
  \begin{equation*}
    S \mapsto \projj{R}{S}
  \end{equation*}
  extends to a functor from $\Gamma^\mathrm{op}$ to groupoids:  a morphism $f: S \to T$ is sent to the functor $\projj{R}{S} \to \projj{R}{T}$ which on objects sends $(n,\phi) \to (n,\phi \circ (f^*))$, where $f^*: R^T \to R^S$ is precomposing with $f$.  We emphasize that composition of morphisms in $\Gamma^\mathrm{op}$ is carried to composition of functors \emph{on the nose} (not just up to preferred isomorphism). That is, $S \mapsto \projj{R}{S}$ is a functor to the 1-category  of small groupoids.

  For $S = \{\ast,1,\dots, n\}$ the restriction functors induce an equivalence of groupoids
  \begin{equation*}
    \projj{R}{S} \to \prod_{i = 1}^n \projj{R}{\{\ast,i\}} \simeq \big(\proj{R}\big)^n.
  \end{equation*}
  It follows that $S \mapsto N(\projj{R}{S})$ is a special $\Gamma$-space and the corresponding spectrum is a model for $K(R)$.  The map~\eqref{eq:76} is a model for the canonical group-completion map
$
    |\proj{R}| \to \Omega^\infty K(R)$ 
  mentioned in Subsection~\ref{sec:algebraic-k-theory-1}.
\end{example}

\begin{example}[Constructing the symplectic $K$-theory spectrum]\label{ex:KSp-as-Gamma-space}
  Finally, let us discuss the spectrum $\KSp(\Z)$, where we are looking for a $\Gamma$-space with $X(S^0) \simeq N(\SP(\Z))$.  The idea is similar to $S \mapsto \projj{\Z}{S}$.  Recall that the objects of $\projj{\Z}{S}$ are $\Z^S$-modules $M$ whose underlying $\Z$-module is equal to $\Z^n$ for some $n \in \N$.  Let objects of $\SP_S(\Z)$ be pairs of an object $M \in \projj{\Z}{S}$ and a symplectic form $b: M \times M \to \Z$ for 
  which the action of $\Z^S$ is by symmetric endomorphisms, i.e. $b(rm_1, m_2) = b(m_1, r m_2)$ for $r \in \Z^S$.
 
  This defines a functor from $\Gamma^\mathrm{op}$ to the 1-category of small groupoids, as before.  We obtain a $\Gamma$-space $S \mapsto N(\SP_S(\Z))$, whose associated spectrum is $\KSp(\Z)$ and infinite loop space is a model for $\Z \times B\Sp_\infty(\Z)^+$.
\end{example}

\subsection{Homotopy types of complex varieties}
\label{sec:homot-types-compl}

Let 
us review various ``realization functors'' assigning a complex scheme $X \to \Spec (\C)$.  We shall mostly assume that $X$ is a \emph{variety}, which we define as follows.
\begin{definition}
  Let $\Var_\C$ denote the category of schemes over $\Spec(\C)$ which are coproducts of quasi-projective schemes.
\end{definition}

The realization functors we need may be summarized in a diagram of simplicial sets
\begin{equation}\label{eq:77}
  \begin{tikzcd}
    X(\C) = \Sing^\mathrm{an}_0(X) \rar & \Sing^\mathrm{an}(X) \ar[r,dashed] & \Et_p(X),
  \end{tikzcd}
\end{equation}
where the dashed arrow indicates a zig-zag of the form
\begin{equation*}
  \Sing^\mathrm{an}(X) \xleftarrow{\simeq} \dots \to \Et_p(X).
\end{equation*}
As we shall explain in more detail below, the ``Betti realization'' has $n$-simplices $\Sing^\mathrm{an}_n(X)$ the set of maps $\Delta^n \to X(\C)$ which are continuous in the analytic topology on $X(\C)$.  Therefore the homotopy type of $\Sing^\mathrm{an}(X)$ encodes the weak homotopy type of the space $X(\C)$ equipped with its \emph{analytic topology}.  Less interestingly, $X(\C) = \Sing^\mathrm{an}_0(X)$ is the set of complex points regarded as a constant simplicial set, encoding the homotopy type of $X(\C)$ in the discrete topology.  Finally, the ``$p$-completed \'etale realization'' $\Et_p(X)$ is a model for the \emph{\'etale homotopy type} of $X$, introduced by Artin and Mazur \cite{ArtinMazur}, or rather its $p$-completion.

We obtain similar realization functors when $X \in s\Var_\C$ is a \emph{simplicial} complex variety, i.e.\ a functor $\Delta^\mathrm{op} \to \Var_\C$.  We will make use of the following properties of these realization functors.
\begin{enumerate}[(i)]
\item\label{item:9} $\Sing^\mathrm{an}_0(X)$ and $\Et_p(X)$ are functorial with respect to commutative diagrams
  \begin{equation}\label{eq:78}
    \begin{tikzcd}
      X \dar\rar["f"] & X' \dar\\
      \Spec(\C) \rar["\sigma"] & \Spec(\C),
    \end{tikzcd}
  \end{equation}
  in which $\sigma$ is any automorphism of $\C$, and the composition~\eqref{eq:77} is a natural transformation of such functors.
\item\label{item:10} $\Sing^\mathrm{an}(X)$ is functorial with respect to diagrams of the form~\eqref{eq:78} where $\sigma \in \Aut(\C)$ is a \emph{continuous} field automorphism (that is, $\sigma$ is either the identity or complex conjugation), and all arrows in~\eqref{eq:77} are natural transformation of such functors.
\item\label{item:11} The map $\Sing^\mathrm{an}(X) \to \Et_p(X)$ induces an isomorphism in mod $p$ homology, at least when $H^1(\Sing^\mathrm{an}(X);\F_p) = 0$.
\item\label{item:12} If $X_{g,\C} \to \Spec(\C)$ is the simplicial variety arising from an atlas $U \to \mathcal{A}_{g,\C}$, then $\Sing^\mathrm{an}(X_g) \simeq B\Sp_{2g}(\Z)$.  Moreover, under this equivalence the maps $\mathcal{A}_g \times \mathcal{A}_{g'} \to \mathcal{A}_{g + g'}$ defined by taking product of principally polarized abelian varieties correspond to the symmetric monoidal structure on $\SP(\Z)$ given by orthogonal direct sum.
\end{enumerate}

It is essentially well known that realization functors with these properties exist.  In particular, the isomorphism between mod $p$ cohomology of $\Sing^\mathrm{an}(X)$ and $\Et_p(X)$ is a combination of Artin's comparison theorem relating \'etale cohomology with finite constant coefficients to Cech cohomology with finite constant coefficients, and the isomorphism between Cech cohomology and singular cohomology.  We shall use two aspects which are perhaps slightly less standard, so we outline the constructions in subsection~\ref{sec:homot-types-compl} below.  Firstly, the \'etale homotopy type usually outputs a pro-object, but it is convenient for us to have a genuine simplicial set.  Secondly, as stated in~(\ref{item:9}), we shall make usage of the fact that $X(\C) = \Sing^\mathrm{an}_0(X)$ is more functorial than the entire $\Sing^\mathrm{an}(X)$.  This last property is used only for the verification of commutativity of~\eqref{eq:12}.

The reader willing to accept on faith (or knowledge) that realization functors with these properties exist may skip ahead to \ref{sec:gamma-objects-simpl} to see how to complete the proof of Proposition~\ref{prop:5.2}.

\subsubsection{Betti realization}
\label{sec:betti-realization}

A complex scheme $X \to \Spec(\C)$ is \emph{quasi-projective} if it is isomorphic (as a scheme over $\Spec(\C)$) to an intersection of a Zariski open and a Zariski closed subset of $\PS^N_\C$ for some $N$.  The resulting embedding $X \to \PS^N_\C$ induces an injection of complex points
\begin{equation*}
  X(\C) \hookrightarrow \PS^N_\C(\C) = \C P^N,
\end{equation*}
and the set of complex points $X(\C)$ inherits the \emph{analytic topology} as a subspace of $\C P^N$, itself the quotient topology from the Euclidean topology on $\C^{N+1} \setminus \{0\}$.  We shall write $X(\C)^\mathrm{an}$ for this topological space, which is Hausdorff and locally compact, and also locally contractible (as follows from Hironaka's theorem that it is triangulable \cite{Hir75}).

For any compact Hausdorff space $\Delta$ we have have a $\C$-algebra $\C^\Delta$ of functions $\Delta \to \C$ that are continuous in the Euclidean topology.  
There is a canonical   function \begin{equation*}
 e_{\Delta}: \Delta \hookrightarrow \Spec(\C^\Delta)(\C),
\end{equation*}
sending a point of $\Delta$ to the point corresponding to the evaluation homomorphism $\C^\Delta \to \C$.  

\begin{lemma}
For any scheme $X$ over $\C$, and any compact Hausdorff space $\Delta$,  the map
\[
\begin{tikzcd}
  \text{maps $\Spec(\C^\Delta) \to X$ of schemes over $\Spec(\C)$} \ar[d]\\
  \text{maps $\Delta \to X(\C)$ continuous in the analytic topology} 
  \end{tikzcd}
  \]
induced by precomposition with $e_{\Delta}$ is a bijection.
  \end{lemma}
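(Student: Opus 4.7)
The plan is to prove the lemma first when $X$ is affine, where it amounts to unwinding definitions, and then to globalize by a Zariski gluing argument using compactness of $\Delta$.

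\emph{Affine case.} Suppose $X = \Spec(A)$. A morphism $\Spec(\C^\Delta) \to X$ over $\Spec(\C)$ is precisely a $\C$-algebra homomorphism $\phi \colon A \to \C^\Delta$, i.e.\ an assignment $a \mapsto \phi(a) \in C(\Delta,\C)$ compatible with the $\C$-algebra operations. Evaluating at $\delta \in \Delta$ yields a homomorphism $\phi_\delta \colon A \to \C$, i.e.\ a point $\hat\phi(\delta) \in X(\C)$, and the composition $a \circ \hat\phi = \phi(a)$ is continuous. Since the analytic topology on $X(\C)$ is by construction the coarsest making every $a \in A$ continuous when regarded as a function $X(\C) \to \C$, the map $\hat\phi$ is continuous. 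Conversely, any continuous $\psi \colon \Delta \to X(\C)^\mathrm{an}$ defines a homomorphism $a \mapsto a \circ \psi \in \C^\Delta$, and the two constructions are mutually inverse.

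\emph{General case.} For arbitrary $X$, I would reduce to the affine case by covering. Given continuous $\psi \colon \Delta \to X(\C)^\mathrm{an}$, use compactness of $\psi(\Delta)$ to choose finitely many affine opens $U_1,\ldots,U_n \subset X$ with $\psi(\Delta) \subset \bigcup_i U_i(\C)^\mathrm{an}$, and pick a continuous partition of unity $\{u_i\}$ on $\Delta$ subordinate to the open cover $\{V_i\}$ with $V_i := \psi^{-1}(U_i(\C)^\mathrm{an})$. The basic opens $\Spec(\C^\Delta[u_i^{-1}]) \subset \Spec(\C^\Delta)$ then cover $\Spec(\C^\Delta)$, and on each, the affine case, applied to $\psi|_{D(u_i)} \colon D(u_i) \to U_i(\C)^\mathrm{an}$, produces a morphism $\Spec(\C^\Delta[u_i^{-1}]) \to U_i \hookrightarrow X$ compatible with $\psi$ on $\C$-points. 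These morphisms agree on overlaps $D(u_i u_j)$ by a second application of the affine case, hence glue to a morphism $\Spec(\C^\Delta) \to X$ inducing $\psi$. Injectivity of the lemma's map follows from uniqueness in the affine case, applied over such a cover of $\Spec(\C^\Delta)$.

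\emph{Main obstacle.} The delicate point in the gluing step is to verify that $\psi|_{D(u_i)}$ really does induce a $\C$-algebra homomorphism into the localization $\C^\Delta[u_i^{-1}]$, which can be a strict subring of $C(D(u_i),\C)$. Concretely, for each regular function $a$ on $U_i$ one must show that the pullback $a \circ \psi$ on $D(u_i)$ has the form $f/u_i^N$ for some $f \in \C^\Delta$ and $N \geq 0$. This can be arranged by refining the cover: choose affine opens $U'_i \subset U_i$ with $\psi(\Delta) \subset \bigcup U'_i(\C)^\mathrm{an}$ and such that $\overline{U'_i(\C)^\mathrm{an}} \cap \psi(\Delta)$ is contained in a compact subset of $U_i(\C)^\mathrm{an}$, and arrange the partition of unity so that $u_i$ is supported in $V'_i := \psi^{-1}(U'_i(\C)^\mathrm{an})$. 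Then $a \circ \psi$ is bounded on $V'_i$, so $u_i^N \cdot (a \circ \psi)$ extends continuously by zero across $\Delta \setminus V'_i$ for sufficiently large $N$, yielding the desired element of $\C^\Delta$. Once this technical point is handled, the remainder of the argument is formal gluing.
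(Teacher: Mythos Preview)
Your approach is essentially the paper's: partition of unity subordinate to the pullback of an affine cover, then glue morphisms defined on the basic opens $\Spec(\C^\Delta[u_i^{-1}])$.  The paper does not separate out the affine case; it simply describes the inverse directly.

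You have, however, overcomplicated your ``main obstacle''.  No refinement of the cover, no boundedness, and no large $N$ is needed; $N=1$ already works, and this is exactly the paper's trick.  For $h = a \circ \psi$ continuous on $V_i$ and $u_i \in \C^\Delta$ with $\mathrm{supp}(u_i) \subset V_i$, the product $h\,u_i$, extended by zero off $V_i$, is already continuous on all of $\Delta$: the sets $V_i$ and $\Delta \setminus \mathrm{supp}(u_i)$ form an open cover of $\Delta$, the extension equals $h \cdot (u_i|_{V_i})$ on the first and is identically zero on the second, and these agree on the overlap.  Thus
\[
h \longmapsto (h\,u_i)\cdot u_i^{-1}
\]
defines the required ring map $\mathcal{O}(U_i) \to C(V_i,\C) \to \C^\Delta[u_i^{-1}]$ without further ado, and your refinement step can be deleted.
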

  
  \begin{proof}
  We describe the inverse.
Take $f: \Delta \rightarrow X(\C)$ and choose an affine cover $U_i$
  of $X$, and take $V_i = f^{-1}(U_i)$; choose a partition of unity $1=\sum g_i$
  on $\Delta$ where $\mathrm{supp}(g_i) \subset V_i$. 
  The $g_i$ generate the unit ideal of $\C^{\Delta}$, i.e.\ the spectrum of $\C^{\Delta}$ is the union of the open
  affines corresponding to rings $\C^{\Delta}[g_i^{-1}]$. 
We obtain
  $$ \mbox{regular functions on $U_i$} \rightarrow \mbox{continuous functions on $V_i$} \rightarrow \C^{\Delta}[g_i^{-1}].$$
  where the last map sends a continuous function $h$ on $V_i$ to $(h g_i) \cdot g_i^{-1}$,
  where we extend by zero off $V_i$ to make $hg_i$ a function on $\Delta$. Dually we obtain
  $$ \Spec \C^{\Delta}[g_i^{-1}] \longrightarrow U_i,$$
These morphisms glue to the desired map $\Spec \C^{\Delta} \rightarrow X.$ \end{proof}
 
In particular, the simplicial set $\Sing(X(\C)^\mathrm{an})$ may be written in terms of the functor $X: \text{$\C$-algebras} \to \Sets$ as
\begin{equation*}
  \Sing_n(X(\C)) = X(\C^{\Delta^n}) = \mathrm{Maps}_\text{$\C$-schemes} (\Spec(\C^{\Delta^n}),X).
\end{equation*}
where $\Delta^n$ is as usual the (topological) $n$-simplex. 
Motivated by this observation, we make the following more general definition.
\begin{definition}
  Let $X$ be a simplicial complex variety, or more generally any functor from $\C$-algebras to simplicial sets.  The \emph{analytic homotopy type} (or ``Betti realization'') of $X$ is the simplicial set $\Sing^\mathrm{an}(X)$ defined by
  \begin{equation*}
    \Sing^\mathrm{an}_n(X) = X(\C^{\Delta^n})_n. 
  \end{equation*}
  In other words, $\Sing^\mathrm{an}(X)$ is the diagonal of the simplicial set $([n],[m]) \mapsto \Sing_n(X_m(\C)^\mathrm{an})$. 
\end{definition}

\subsubsection{Etale homotopy type and $p$-adic comparison}
\label{sec:etale-homotopy-type}
 
The theory of \'etale homotopy type assigns a   pro-simplicial set\footnote{The original approach of Artin and Mazur \cite{ArtinMazur}  assigns to $X$ a pro-object in the homotopy category of simplicial sets, which was rigidified in later approaches \cite{Fried82} to output a pro-object in simplicial sets.} $\Et(X)$ functorially to any (locally Noetherian) scheme $X$, 
where 
\begin{equation} \label{cohid} H^*(\Et(X), A) \simeq H^*_{\et}(X, A)\end{equation}
for finite abelian groups $A$.  We will
outline how to modify this construction so as to assign an actual simplicial set $\Et_p(X)$ to such a scheme,
maintaining  the validity of \eqref{cohid} for $p$-torsion $A$.  We shall also make the zig-zag of 
 \eqref{eq:77}.

Let $s\Sets^{(p)}$ be the category of $p$-finite simplicial sets: those simplicial sets $X$ where $\pi_0(X)$ is a finite set, and $\pi_i(X,x)$ is a finite $p$-group for all $x \in X_0$ and all $i > 0$, which is trivial for sufficiently large $i$.  
 We  define $\Et_p$ as the composition of three functors: the  {\'e}tale homotopy type, $p$-completion,
and homotopy limit, each of which we review in turn:
\begin{equation}\label{eq:79}
  s\Var_\C \xrightarrow{\Et} \text{pro-}s\Sets \xrightarrow{\text{pro-$p$ completion}} \text{pro-}s\Sets^{(p)} \xrightarrow{\mathrm{holim}} s\Sets,
\end{equation}

As the first functor
\begin{equation} \label{Friedlander}
    s\Var_\C \xrightarrow{\Et} \text{pro-}s\Sets
\end{equation}
we shall take Friedlander's rigid \'etale homotopy type \cite{Fried82}. To each hypercover $U_\bullet \to X$ of a locally Noetherian schemes $X$ one gets a simplicial set $[n] \mapsto \pi_0(U_n)$, where $\pi_0(U_n)$ denotes the set of connected components of $U_n$ (denoted just $\pi(U_n)$ in \cite{Fried82}).  If $X_\bullet$ is a simplicial object in locally Noetherian scheme, we may similarly consider bisimplicial object $U_{\bullet,\bullet}$ forming a hypercover $U_{s,\bullet} \to X_s$ for each $s$: to this situation we associate the diagonal simplicial set $[n] \mapsto \pi_0(U_{n,n})$.  Friedlander then defines the \'etale homotopy type of $X_\bullet$ as a functor
\begin{align*}
  \mathrm{HRR}(X_\bullet)^\mathrm{op} & \to s\Sets\\
  (U_{\bullet,\bullet} \to X) & \mapsto ([n] \mapsto \pi_0(U_{n,n})),
\end{align*}
where $\mathrm{HRR}(X_\bullet)$ is a suitable category of \emph{rigid hypercovers}.  These are actually hypercovers equipped with extra data, making $\mathrm{HRR}(X_\bullet)$ into a filtered category. The details of how $\mathrm{HRR}(X_\bullet)$ is defined shall not matter for our applications. Let us emphasize however, that this construction outputs an inverse system of simplicial sets functorially in $X$, which is slightly stronger than outputting a pro-object\footnote{The point of this sentence is that the last step in~\eqref{eq:79}, taking homotopy limit, is not strictly functorial in pro-objects, only ``functorial up to weak equivalence''.  This objection is dismissed by upgrading to preferred inverse systems.}.

The $p$-profinite completion was introduced in \cite{MR1414543}, see also \cite{MR2136405}.  It associates to an inverse system $Y: j \mapsto Y_j$ in the category of simplicial sets another inverse system $Y^\wedge_p$ in the category of $p$-finite simplicial sets, and a map
\begin{equation*}
  Y \to Y^\wedge_p
\end{equation*}
inducing an isomorphism in ``continuous'' mod $p$ cohomology, defined as $H^*(Y;\F_p) = \colim_j H^*(Y_j;\F_p)$.  There is again an explicit construction which outputs an inverse system of $p$-finite simplicial sets, for instance one can for each $j$ consider all quotients $Y_j \to Z$ which are finite sets in each simplicial degree, then take a Postnikov truncation of a stage in the totalization of the Bousfield--Kan cosimplicial resolution of $Y$.  The pro-object $Y^\wedge_p$ is obtained by letting these stages vary over the natural numbers, $Z$ vary over finite quotients of $Y_j$, and $j$ vary over the indexing category of $Y$.

Combining these two constructions assigns to any $X \in s\Var_\C$ an $(\Et(X))^\wedge_p$ which is an inverse system of $p$-finite simplicial sets, together with a canonical isomorphism
\begin{equation*}
  H^*_\mathrm{et}(X;\F_p) \cong \colim H^*(\Et(X)^\wedge_p;\F_p),
\end{equation*}
where the left hand side is \'etale cohomology of $X$ with coefficients in the constant sheaf $X$, and the right hand side is the colimit of cohomology of the levels in the inverse system $(\Et(X))^\wedge_p$.

The last step is to replace the inverse system by its homotopy limit, which is a more subtle thing to do.  For any inverse system $j \mapsto Y_j$ of simplicial sets, there is a canonical map
\begin{equation*}
  \colim_j H^*(Y_j;\F_p) \to H^*(\holim_j Y;\F_p),
\end{equation*}
but there is no formal reason for this map to be an isomorphism, and in general it may well not be.  
It is known to be an isomorphism however, when the domain is finite-dimensional in each cohomological degree and vanishes in degree 1.  A recent reference for this statement in the form used here is \cite[Proposition 3.3.8 and Theorem 3.4.2]{DAGXIII}, but the insight that such pro-objects may sometimes be replaced with their homotopy limits without too much loss of information goes back to Sullivan's ``MIT notes'', see for instance \cite[Theorem 3.9]{MR2162361} for a profinite version.

We therefore define
\begin{equation*}
  \Et_p(X) := \holim (\Et(X) ^\wedge_p),
\end{equation*}
and have a canonical map
\begin{equation*}
  H^*_\mathrm{et}(X;\F_p) \to H^*(\Et_p(X);\F_p)
\end{equation*}
which is an isomorphism when the domain is finite-dimensional in each degree and vanishes in degree 1.

\begin{remark}
  Presumably the explicit construction given here could be replaced with any of the recent constructions leading to a pro-space in the $\infty$-categorical sense, e.g.\ \cite{barnea2016projective}, \cite{hoyois2018higher}, \cite{carchedietale}, or \cite[Section 12]{barwick2018exodromy}.  
  In  particular, some readers may prefer an approach based on the notion of the ``shape of an $\infty$-topos'', assigning a pro-space to any $\infty$-topos and hence to any site in the usual sense.  When $X$ is a scheme, Friedlander's explicit construction would then be replaced by the shape of the \'etale site of $X$, and the comparison maps constructed below should come from morphisms of sites
 $
    X(\C)^\mathrm{disc} \to X(\C)^\mathrm{an} \to X_{\mathrm{et}},
$
  where $X(\C)^\mathrm{disc}$ denotes the site corresponding to the set $X(\C)$ in the discrete topology.
\end{remark}

\subsubsection{Comparison map}
\label{sec:comparison-map}

When $X \in s\Var_\C$, the Artin comparison gives a canonical isomorphism between $H^*_\mathrm{et}(X;\F_p)$ and the Cech cohomology of $X(\C)^\mathrm{an}$, the complex points in the analytic topology.  Since complex varieties are paracompact and locally contractible in the analytic topology (since they are triangulable), Cech cohomology with constant coefficients is also isomorphic to singular cohomology.  In total we obtain an isomorphism
\begin{equation*}
  H^*(\Sing^\mathrm{an}(X);\F_p) \cong H^*_{\mathrm{et}}(X;\F_p).
\end{equation*}
Above we explained how \'etale cohomology is calculated by the space $\Et_p(X)$ in good cases, we now finally explain how to define a comparison map $\Sing^\mathrm{an} \to \Et_p(X)$, or at least a zig-zag.

Let $U_{\bullet, \bullet}$ be a levelwise hypercover as after \eqref{Friedlander}. 
The scheme $\Spec(\C^{\Delta^n})$ is connected, so that all maps to $U_{s,t}$ land in the same connected component.  Therefore we obtain well defined maps $\Sing^\mathrm{an}_n(U_{s,t}) \to \pi_0(U_{s,t})$ which are invariant under simplicial operations in the $n$-direction, and hence induce continuous maps
\begin{equation*}
  |\Sing^\mathrm{an}(U_{s,t})| \to \pi_0(U_{s,t})
\end{equation*}
for all $s,t$.  Moreover $U_{s,\bullet}^\mathrm{an} \to X_s^\mathrm{an}$ is a topological hypercover, which implies that $|U_{s,\bullet}^\mathrm{an}| \to X_s^\mathrm{an}$ is a weak equivalence.  Therefore the natural map
\begin{equation*}
  \Sing^\mathrm{an}(U_{s,\bullet}) \to \Sing^\mathrm{an}(X_s)
\end{equation*}
is a weak equivalence of simplicial sets for all $s$, where in the domain we implicitly pass to diagonal simplicial set.  Combining all this, and taking geometric realization in the $s$-direction, we obtain a zig-zag of maps of simplicial sets
\begin{equation*}
  \Sing^\mathrm{an}(X_\bullet) \xleftarrow{\simeq} \Sing^\mathrm{an}(U_{\bullet,\bullet}) \to \big([n] \mapsto \pi_0(U_{n,n})\big),
\end{equation*}
natural in the hypercover $U_{\bullet,\bullet} \to X_\bullet$.  Composing with the canonical map to the $p$-completion and taking homotopy limit over hypercovers of $X$, we obtain the desired zig-zag as
\begin{equation*}
  \Sing^\mathrm{an}(X) \xleftarrow{\simeq} \bigg(\holim_{U \in \mathrm{HRR}(X)} \Sing^\mathrm{an}(U)\bigg) \longrightarrow \Et_p(X).
\end{equation*}
Together with the canonical map $\Sing^\mathrm{an}_0(X) \to \Sing^\mathrm{an}(X)$, this finishes the construction of the diagram~\eqref{eq:77} of realization functors.

\subsection{Betti realization of $\mathcal{A}_{g,\C}$}
\label{sec:betti-real-mathc}

Let us finally establish the last desideratum, item~(\ref{item:12}), asserting that the Betti realization of the simplicial variety arising from an atlas $U \to \mathcal{A}_{g,\C}$ is a model for $B\Sp_{2g}(\Z)$.

\begin{example}\label{ex:analytic-fiber-products}
  Let $U,V \in \Var_\C$ and let $f: U \to V$ be a smooth surjection.  Then $U(\C)^\mathrm{an}$ and $V(\C)^\mathrm{an}$ are smooth manifolds and $f^\mathrm{an}: U(\C)^\mathrm{an} \to V(\C)^\mathrm{an}$ is a surjective submersion in the differential geometric sense.  Then we can form an object $U_\bullet \in s\Var_\C$ by letting $U_n$ be the $n$-fold fiber product of $U$ over $V$.  Taking analytic space commutes with fiber products, so $(U_\bullet(\C))^\mathrm{an} \to V(\C)^\mathrm{an}$ is also the simplicial object arising from iterated fiber products of the surjective submersion $U(\C)^\mathrm{an} \to V(\C)^\mathrm{an}$.  It follows that
  \begin{equation*}
    |U_\bullet(\C)^\mathrm{an}| \to V(\C)^\mathrm{an}
  \end{equation*}
  has contractible point fibers, and standard arguments show that it is a Serre fibration.  Hence
  \begin{equation*}
    \Sing^\mathrm{an}(U_\bullet) \to \Sing^\mathrm{an}(V)
  \end{equation*}
  is a weak equivalence, too.  
 
  \end{example}

\begin{example}\label{ex:an-actual-map}  
  Let $X_g$ be the simplicial variety arising from an atlas $U \to \mathcal{A}_g$, or even just a smooth surjective map, i.e.\ $X_g([n])$ is the $(n+1)$-fold iterated fiber product of $U$ over $\mathcal{A}_g$.  If $U' \to \mathcal{A}_g$ is another smooth surjection, then they may be compared using the bisimplicial variety $([n],[m]) \mapsto X_g([n]) \times_{\mathcal{A}_g} X'_g([m])$.  By Example~\ref{ex:analytic-fiber-products}, the projection
  \begin{equation*}
    \Sing^\mathrm{an}(X_g \times_{\mathcal{A}_g} X'_g([m])) \to \Sing^\mathrm{an}(X'_g([m]))
  \end{equation*}
  is a weak equivalence, and hence the same holds after taking geometric realization in the $m$-direction.  We deduce
  \begin{equation*}
    \Sing^\mathrm{an}(X_g) \xleftarrow{\simeq} \Sing^\mathrm{an}(X''_g) \xrightarrow{\simeq} \Sing^\mathrm{an}(X'_g),
  \end{equation*}
  where $X''_g$ is the simplicial variety obtained by iterated fiber products of $U_g \times_{\mathcal{A}_g} U'_g \to \mathcal{A}_g$.  

  Then $\mathrm{Sing}^\mathrm{an}(X_g)$ is a model for $B\Sp_{2g}(\Z)$. Indeed, we may use the quasiprojective variety $\mathcal{A}_g(N)$ (the $\Gamma_g(N) := \ker(\Sp_{2g}(\Z) \rightarrow \Sp_{2g}(\Z/N))$-cover of $\Cal{A}_g$, which parametrizes a trivialization of the $N$-torsion) as atlas for $N \geq 4$.  The simplicial variety arising from the atlas $\mathcal{A}_g(N) \to \mathcal{A}_g$ is isomorphic to the Borel construction of $\Sp_{2g}(\Z/N)$ acting on $\mathcal{A}_g(N)$.  The action of $\Sp_{2g}(\Z/N)$ on the space $(\mathcal{A}_g(N))^\mathrm{an} \cong \mathbb{H}_g/\Gamma_g(N)$ is the canonical one arising from the extension of the action of $\Gamma_g(N) < \Sp_{2g}(\Z)$, so we get 
  \begin{align*}
    \Sing^\mathrm{an}(X_g) & = \Sing^\mathrm{an}(\mathcal{A}_g(N) \moddd \Sp_{2g}(\Z/N)) = \Sing^\mathrm{an}(\mathcal{A}_g(N)) \moddd \Sp_{2g}(\Z/N) \\
    & = (\Sing(\mathbb{H}_g) / \Gamma_g(N)) \moddd \Sp_{2g}(\Z/N).
  \end{align*}
  Here ``$\moddd$'' denotes the Borel construction (homotopy orbits): explicitly, when group $G$ acts on a $X$, we write $X \moddd G$ for the usual simplicial object with $n$-simplices $G^n \times X$.
  At the last step we used the fact that, since the quotient $\mathbb{H}_g \to \mathcal{A}_g(N)$ is a covering map, there is an isomorphism of simplicial sets
 $    \Sing^\mathrm{an}(\mathcal{A}_g(N)) \cong (\Sing(\mathbb{H}_g))/\Gamma_g(N)$.

 We may then finally
   use that $\mathbb{H}_g$ is contractible, replace it by a point and the quotient by $\Gamma_g(N)$ by the homotopy quotient: 
   \begin{equation*}
    (\Sing(\mathbb{H}_g) / \Gamma_g(N)) \moddd \Sp_{2g}(\Z/n) \xleftarrow{\simeq}
    (E\Sp_{2g}(\Z) \times \Sing(\mathbb{H}_g)) \moddd \Sp_{2g}(\Z) \xrightarrow{\simeq} B\Sp_{2g}(\Z).
  \end{equation*}
\end{example}

\begin{example}\label{ex:Siegel-discrete}
  For later use, restricting the above discussion to zero-simplices yields an equivalence of groupoids  
  \begin{equation}\label{eq:43}
    \Sing^\mathrm{an}_0(X_g) \xrightarrow{\simeq} N(\mathcal{A}_g(\C)),
  \end{equation}
where  the domain is the simplicial set obtained by taking $\C$ points levelwise in the simplicial variety  $X_g$, and the codomain denotes the nerve of the groupoid whose objects are rank $g$ principally polarized abelian varieties $(A,\mathcal{L})$ over $\Spec(\C)$ and whose morphisms are isomorphisms of such.  By uniformization, we also have an equivalence
  \begin{equation*}
    \mathbb{H}_g^\delta \moddd \Sp_{2g}(\Z) \xrightarrow{\simeq} N(\mathcal{A}_g(\C)),
  \end{equation*}
  where $\mathbb{H}_g^\delta$ denotes the Siegel upper half space in the discrete topology.  The equivalence is induced by the usual construction, sending a symmetric matrix $\Omega$ with positive imaginary part to the abelian variety $\C^g/(\Z^g + \Omega \Z^g)$ in the usual principal polarization.

  To summarize, the diagram~\eqref{eq:77} for $X = X_g$ becomes a model for the evident maps
  \begin{equation*}
    \mathbb{H}_g^\delta \moddd \Sp_{2g}(\Z) \to \mathbb{H}_g \moddd \Sp_{2g}(\Z) \to \big( \mathbb{H}_g \moddd \Sp_{2g}(\Z) \big)^\wedge_p,
  \end{equation*}
  where the first map is induced by the identity map of Siegel upper half space, from the discrete to the Euclidean topology.  The composition is our $\Aut(\C)$-equivariant model for
  \begin{equation*}
    |N(\mathcal{A}_g(\C))| \to (B\Sp_{2g}(\Z))^\wedge_p.
  \end{equation*}  
\end{example} As explained above, we may use $\mathcal{A}_g$ to exhibit $B\Sp_{2g}(\Z)$ as the Betti realization of a simplicial variety defined over $\Q$, and hence construct an $\Aut(\C)$-action on its $p$-completion (at least for $g \geq 3$ where $\Sp_{2g}(\Z)$ is perfect).  It remains to see that this structure is compatible with the structure which constructs the spectrum $\KSp(\Z)$ out of the $B\Sp_{2g}(\Z)$, i.e.\ the $\Gamma$-space structure.

\subsection{A Gamma-object in simplicial varieties}
\label{sec:gamma-objects-simpl}

In this section we use the moduli stacks $\mathcal{A}_g$ to define a functor from $\Gamma^\mathrm{op}$ to simplicial complex varieties, such that the composition  
\begin{equation*}
  Z\colon \Gamma^\mathrm{op} \to s\mathrm{Var}_\C \xrightarrow{\Sing^\mathrm{an}} s\Sets
\end{equation*}
is naturally homotopy equivalent to $T \mapsto |\SP_T(\Z)|$.  We first discuss how to construct a functor $T \mapsto \mathcal{A}(T) \simeq (\coprod_{g \geq 0} \mathcal{A}_g)^{T \setminus \{\ast\}}$ from $\Gamma^\mathrm{op}$ to groupoids, modeled on how we defined $T \mapsto \SP_T(\Z)$.

To avoid excessive notation, let us agree that for a scheme $S$ we denote objects of $\mathcal{A}_g(S)$ like $(A,\mathcal{L})$, where $A$ is an abelian scheme over $S$ and $\mathcal{L}$ is a principal polarization.  On the set level, $A$ is an abbreviation for a scheme $A$ and maps of schemes $\pi: A \to S$ and $e: S \to A$, with the property that they make $A$ into a rank $g$ abelian scheme over $S$ with identity section $e$.  Similarly, $\mathcal{L}$ is an abbreviation for a line bundle $\mathcal{L}$ on $A \times_S A$, rigidified by non-zero section $i$ of $\mathcal{L}$ over $A \times_S \{e\} \hookrightarrow A \times_S A$ and $i'$ over $\{e\} \times_S A \hookrightarrow A \times_S A$ agreeing with $i$ over $(e,e): S \to A \times_S A$, with the property that $(\mathcal{L},i,i')$ is symmetric under swapping the two factors of $A$, the restriction $\Delta^*\mathcal{L}$ along the diagonal $\Delta : A \to A \times_S A$ is ample, and the morphism $A \to A^\vee$ induced by $\mathcal{L}$ is an isomorphism.  We shall say ``$(A,\mathcal{L})$ is a principally polarized abelian variety over $S$'' to mean that we are given all this data for some $g \geq 0$.

For each finite pointed set $T$ we let $\mathcal{A}(T)$ denote the category whose objects  are $(A,\mathcal{L},\phi)$ where $(A,\mathcal{L})$ is a principally polarized abelian variety over $S$, which is a scheme over $\Spec(\Q)$, and $\phi: \Z^T \to \mathrm{End}(A)$ is a ring homomorphism, 
 with the property that the image of $\phi$ is symmetric with respect to the Rosati involution defined by $\phi$.
 In particular, $\mathcal{L}$ restricts to a principal polarization on the abelian subvarieties $A_t \subset A$, defined as $A_t = \Ker(1-\phi(e_t)) \subset A$ for all $t \in T$.  For $e = \sum_{t \in T \setminus \{\ast\}} e_t$ we similarly have $\Ker(1 - \phi(e)) \subset A$, which we shall denote $eA$.  Addition in the group structure on $A$ defines an isomorphism of abelian varieties $\oplus_{t \neq \ast} A_t \to eA$.  We now define morphisms in $\mathcal{A}(T)$ to be isomorphisms of abelian schemes $eA \to eA'$ restricting to isomorphisms between the $A_t$ for all $t \in T$ and preserving polarizations.  Forgetting everything but $S$ makes this category $\mathcal{A}(T)$ fibered in groupoids over the category of schemes over $\Spec(\Q)$, and the forgetful map
\begin{equation}\label{eq:41}
  \begin{aligned}
    \mathcal{A}(T) & \to \prod_{t \in T \setminus \{\ast\}} \bigg(\coprod_{g = 0}^\infty \mathcal{A}_g \bigg)\\
    (A, \mathcal{L},\phi) & \mapsto \big((A_t,\mathcal{L}_{\vert A_t \times A_t})\big)_{t \in T \setminus \{\ast\}}
  \end{aligned}
\end{equation}
defines an equivalence of stacks over $\Spec(\Q)$.  Moreover, the association $T \mapsto \mathcal{A}(T)$ defines a functor from $\Gamma^\mathrm{op}$ to (the 1-category of) such fibered categories: functoriality is again by precomposing the map $\Z^T \to \mathrm{End}(A)$.

To turn $\mathcal{A}(T)$ into a simplicial scheme we rigidify the objects.  To be specific, let us take $U(T)$ to be a scheme classifying the functor which sends $(S \to \Spec(\Q))$ to the set of tuples $(A,\mathcal{L},\phi,j)$, where $(A,\mathcal{A},\phi) \in \mathcal{A}(T)$ as above, and $j: A \hookrightarrow \mathbb{P}^{N-1}_S$ is an embedding such that $\mathcal{O}(1)$ restricts to $3\Delta^*(\mathcal{L})$ on $A$.  This functor is represented by a locally closed subscheme of a finite product of Hilbert schemes, and hence is quasi-projective over $\Spec(\Q)$, as in  \cite[Chapter 6]{GIT}.  Finally, we extract a simplicial scheme $Z(T)$ from the map $U(T) \to \mathcal{A}(T)$ by taking iterated fiber products.  Then $n$th space classifies $(n+1)$-tuples $(A_0, \dots, A_n)$ of abelian schemes over $S$, each equipped principal polarizations $\mathcal{L}_i$ and with embeddings $j_i: A_i \subset \mathbf{P}^{N_i-1}_S$ as above and ring homomorphisms $\phi_i: \Z^T \to \mathrm{End}(A_i)$, defining principally polarized abelian subvarieties $eA_i = \Ker(1 - \phi_i(e)) \subset A_i$, as well as isomorphisms of abelian varieties
\begin{equation*}
  e A_0 \xrightarrow{\cong} e A_1 \xrightarrow{\cong} \dots \xrightarrow{\cong} eA_n
\end{equation*}
preserving polarizations (but no compatibility imposed on projective embeddings).

\begin{proposition}
  There is a zig-zag of weak equivalences of simplicial sets
  \begin{equation*}
    \Sing^\mathrm{an}(\Spec(\C) \times_{\Spec(\Q)} Z(T)) \xleftarrow{\simeq} \dots \xrightarrow{\simeq} N_\bullet(\SP_T(\Z))
  \end{equation*}
  natural in $T \in \Gamma^\mathrm{op}$.

  In particular, $\Sing^\mathrm{an}(\Spec(\C) \times_{\Spec(\Q)} Z(S^0)) \simeq \coprod_g N \Sp_{2g}(\Z)$.
\end{proposition}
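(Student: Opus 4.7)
The plan is to rerun the argument of Examples \ref{ex:an-actual-map} and \ref{ex:Siegel-discrete}, now parametrized by $T \in \Gamma^\mathrm{op}$. First, I would define a strictly $T$-natural comparison functor of groupoids
\[
\Psi_T \colon \mathcal{A}(T)(\C) \longrightarrow \SP_T(\Z), \qquad (A,\mathcal{L},\phi) \longmapsto \bigl(H_1(eA(\C)^\mathrm{an}; \Z),\, c_1(\mathcal{L}),\, \phi_*\bigr),
\]
where the $\Z^T$-module structure on $H_1(eA(\C)^\mathrm{an}; \Z)$ is induced by $\phi$ and the symplectic form by the principal polarization; that each $e_t H_1$ is symplectic is exactly the hypothesis that $\mathcal{L}$ restricts to a principal polarization on each $A_t$. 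Nerves of this functor will provide the desired zig-zag target.

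Next, since $U(T)_\C \to \mathcal{A}(T)_\C$ is smooth and surjective, $U(T)(\C)^\mathrm{an} \to \mathcal{A}(T)(\C)^\mathrm{an}$ is a surjective submersion of complex orbifolds. Hence, exactly as in Example \ref{ex:an-actual-map}, iterated fiber products compute the analytic orbifold homotopy type: a bisimplicial comparison with the pullback to any finer atlas yields a weak equivalence
\[
\Sing^\mathrm{an}(\Spec(\C) \times_{\Spec(\Q)} Z(T)) \;\simeq\; \mbox{orbifold realization of } \mathcal{A}(T)(\C)^\mathrm{an}.
\]
I would then uniformize: via equivalence \eqref{eq:41}, a complex point of $\mathcal{A}(T)$ amounts to an object $L \in \SP_T(\Z)$ of some rank vector $(g_t)_{t \in T \setminus \{*\}}$ together with a compatible positive complex structure on $L \otimes \R$, and the space of such structures is the product of Siegel upper half-spaces $\prod_t \mathbb{H}_{g_t}$, which is contractible. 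Passing through a level-$N$ cover for $N \geq 4$ to rigidify, one obtains, analogously to Example \ref{ex:an-actual-map}, a natural zig-zag of weak equivalences
\[
\Sing^\mathrm{an}(Z(T)_\C) \;\xleftarrow{\simeq}\; \coprod_{(g_t)} \biggl(\prod_t \mathbb{H}_{g_t}\biggr) \moddd \mathrm{Aut}_{\SP_T(\Z)}(L_{(g_t)}) \;\xrightarrow{\simeq}\; N_\bullet(\SP_T(\Z)),
\]
the second arrow being induced by collapsing the contractible Siegel factor to a point and identifying $\mathrm{Aut}_{\SP_T(\Z)}(L_{(g_t)}) \cong \prod_t \Sp_{2g_t}(\Z)$.

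The main obstacle will be arranging naturality in $T \in \Gamma^\mathrm{op}$ of the entire zig-zag, not just pointwise: a morphism $T \to T'$ must intertwine the comparison, the level-$N$ atlas, and the contractions of Siegel spaces. The cleanest fix is to pick, once and for all, a single quasi-projective atlas $\widetilde{U} \to \coprod_{g \geq 0} \mathcal{A}_g$ (say the level-$N$ cover of \cite[Chapter 6]{GIT}), and then define the atlas for $\mathcal{A}(T)$ by transport along the $T$-natural equivalence \eqref{eq:41}. With this setup, the middle term in the zig-zag above becomes a functor of $T \in \Gamma^\mathrm{op}$ on the nose, and the remaining weak equivalences are natural because contractibility of $\prod_t \mathbb{H}_{g_t}$ is preserved by any map of $T$'s (which simply projects away, duplicates, or re-indexes Siegel factors).
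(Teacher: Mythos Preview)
Your proposal is correct and follows essentially the same route as the paper's proof sketch: reduce via the equivalence~\eqref{eq:41} to a product of copies of $\coprod_g \mathcal{A}_g$, then rerun the argument of Example~\ref{ex:an-actual-map} (uniformization by Siegel upper half-space) componentwise. The paper's sketch is terser and simply asserts that ``all constructions are strictly functorial in $T \in \Gamma^\mathrm{op}$'', whereas you correctly flag that the intermediate terms of the zig-zag (not just $Z(T)$ itself) need to be made $T$-natural, and you propose a concrete fix; this is a useful elaboration of a point the paper leaves implicit.
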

\begin{proof}[Proof sketch]
  We have explained a smooth surjection $U(T) \to \mathcal{A}(T) \xrightarrow{\simeq} (\coprod_g \mathcal{A}_g)^{T \setminus \{\ast\}}$, which up to equivalence may be rewritten as a coproduct of smooth surjections into stacks of the form $\mathcal{A}_{g_1} \times \dots \times \mathcal{A}_{g_m}$.  After base changing to $\Spec(\C)$ all simplicial varieties arising are quasi-projective over $\Spec(\C)$.  The weak equivalence now follows
 by an argument similar to Example~\ref{ex:an-actual-map}, which can also be used to produce an explicit zig-zag.  Since all constructions are strictly functorial in $T \in \Gamma^\mathrm{op}$, so is the resulting $T \mapsto Z(T)$.
\end{proof}

Taking complex points (in the discrete topology) of $\mathcal{A}_g$ gives the groupoid $\mathcal{A}_g(\C)$ whose objects are $(A,\mathcal{L})$, principally polarized abelian varieties over $\Spec(\C)$, and whose morphisms are isomorphisms of such.  In this groupoid all automorphism groups are finite, but it has continuum many isomorphism classes of objects for $g > 0$.  Hence $|\mathcal{A}_g(\C)|$ is a disjoint union of continuum many $K(\pi,1)$'s for finite groups.  The equivalence~(\ref{eq:41}) for $T = S^0$ implies a weak equivalence of simplicial sets
\begin{equation}\label{eq:42}
  \Map(\Spec(\C),Z(S^0)) \xrightarrow{\simeq} \coprod_{g \geq 0} N(\mathcal{A}_g(\C))
\end{equation}
as in Example~\ref{ex:Siegel-discrete}.  This, and the $\Aut(\C)$-equivariant map to $\Et_p(Z(S^0))$, will eventually lead to commutativity of the diagram~\eqref{eq:12}.

\subsection{Galois action on symplectic $K$-theory}
\label{sec:galo-acti-sympl}

We finally construct the promised action of the group $\Aut(\C)$ on the spectrum $\KSp(\Z;\Z_p)$.  For simplicity we first assume $p > 3$, which has the convenient effect that $H^1_\mathrm{et}(\mathcal{A}_g;\F_p) = H^1(B\Sp_{2g}(\Z);\F_p) = 0$ for all $g$.  (For $p=3$ this fails for $g=1$ and for $p=2$ it fails for $g = 1$ and $g=2$.  A mild variation of the argument applies also in those two cases; see below.)

First, recall that we described a composite functor
\begin{equation*}
  \Gamma^\mathrm{op} \xrightarrow{Z} s\Var_\Q \xrightarrow{-\otimes_\Q \C} s\Var_\C,
\end{equation*}
whose composition with $\Sing^\mathrm{an}$ is equivalent to the $\Gamma$-space delooping $\KSp(\Z)$.  We get a zig-zag of maps between $\Gamma$-spaces
\begin{equation*}
  \Sing^\mathrm{an}(Z) \xleftarrow{\simeq} \dots \xrightarrow{\text{comparison}} \Et_p(Z)
\end{equation*}
which has the property that evaluated on any object $S \in \Gamma$ it induces an isomorphism in mod $p$ cohomology.  In addition to vanishing $H^1(-;\F_p)$, this requires that $H^*(B\Sp_{2g}(\Z);\F_p)$ is finite-dimensional in each degree, which is well known.

Now choose a simplicial set modeling a Moore space $M(\Z/p^k,2)$ for all $k$, and choose maps $M(\Z/p^{k+1},2) \to M(\Z/p^k,2)$ corresponding to reduction modulo $p^k$.  Then we get an induced functor  
\begin{equation*}
  \Gamma^{\mathrm{op}} \xrightarrow{M(\Z/p^k,2) \wedge Z} s\Var_\Q \xrightarrow{- \otimes_\Q \C} s\Var_\C,
\end{equation*}
giving rise to two $\Gamma$-spaces by applying $\Sing^\mathrm{an}$ or $\Et_p$, and a zig-zag
\begin{equation*}
  B^\infty(M(\Z/p^k,2) \wedge \Sing^\mathrm{an} (Z \otimes_\Q \C)) \xleftarrow{\simeq} \dots \xrightarrow{\text{comparison}} B^\infty(M(\Z/p^k,2) \wedge \Et_p (Z \otimes_\Q \C)).
\end{equation*}
where $B^{\infty}$ is as in \eqref{Binftydef}. Since any mod $p$ homology isomorphism becomes a weak equivalence after smashing with $M(\Z/p^k,2)$, and since the comparison map comes from a map of $\Gamma$-spaces which is a mod $p$ homology equivalence when evaluated on any $S \in \Gamma^\mathrm{op}$ as long as $p > 3$, we have produced a weak equivalence of spectra
\begin{equation*}
  M(\Z/p^k,2) \wedge \KSp(\Z) \simeq B^\infty(M(\Z/p^k,2) \wedge \Sing^\mathrm{an}(Z\otimes_\Q \C))) \xleftarrow{\simeq} \dots \xrightarrow{\simeq} B^\infty(M(\Z/p^k,2) \wedge \Et_p(Z\otimes_\Q \C))).
\end{equation*}
Desuspending twice and taking homotopy inverse limit over $k$, we get
\begin{equation*}
  \KSp(\Z;\Z_p) \simeq \holim_k (\mathbb{S}/p^k) \wedge \KSp(\Z) \simeq \holim_k M(\Z/p^k) \wedge B^\infty(\Et_p(Z\otimes_\Q \C))).
\end{equation*}
But by functoriality of the delooping machine, the group $\Aut(\C)$ manifestly acts by spectrum maps on $B^\infty(\Et_p(Z \otimes_\Q \C))$.  Hence this equivalence can be viewed as a homotopy action on the $p$-completed symplectic $K$-theory spectrum, and in particular it constructs an action on homotopy groups
\begin{align*}
  \KSp_n(\Z;\Z/p^k) & \cong \pi_n((\mathbb{S}/p^k) \wedge B^\infty(\Et_p(Z\otimes_\Q \C))))\\
  \KSp_n(\Z;\Z_p) & \cong \varprojlim_{k} \pi_n((\mathbb{S}/p^k) \wedge B^\infty(\Et_p(Z\otimes_\Q \C)))).
\end{align*}

\begin{remark}
  For $p \leq 3$ a mild variant of the argument works.  The only problem with those small primes was that $\mathcal{A}_g$ has non-trivial mod $p$ cohomology for small $g$, which prevents us from controlling the mod $p$ cohomology of the inverse limit involved in forming $\Et_p$.  But smashing with $M(\Z/p^k,2)$ makes any simplicial set be simply connected, so if we do that operation before taking $\Et_p$ there is nothing special about the small primes.  As a side-effect, this version of the argument will not make use of the calculation $H_1(B\Sp_{2g}(\Z))$.
\end{remark}

We finally discuss the compatibility of actions mentioned in Proposition~\ref{prop:5.2}.  Namely, as an instance of the spectrum map~(\ref{eq:39}) we have 
\begin{equation*}
  \Sigma^\infty (M(\Z/p^k,2) \wedge \mathrm{Et}_p(Z(S^0)\otimes_\Q \C))) \to B^\infty(M(\Z/p^k,2) \wedge \mathrm{Et}_p(Z\otimes_\Q \C))),
\end{equation*}
extracted functorially from the $\Gamma$-space $T \mapsto M(\Z/p^k,2) \wedge \mathrm{Et}_p(Z(T)\otimes_\Q \C))$, and hence equivariant for the $\Aut(\C)$-action.
By the argument of Example~\ref{ex:Siegel-discrete} above, the map of simplicial sets
\begin{equation*}
  \Sing^\mathrm{an}_0(Z(S^0)\otimes_\Q \C)) \to \mathrm{Et}_p(Z(S^0)\otimes_\Q \C))
\end{equation*}
is also equivariant.  Hence we get an equivariant map of spectra
\begin{equation*}
  \Sigma^\infty (M(\Z/p^k,2) \wedge \Sing^\mathrm{an}_0(Z(S^0)\otimes_\Q \C))) \to B^\infty (M(\Z/p^k,2) \wedge \mathrm{Et}_p(Z\otimes_\Q \C))).
\end{equation*}
Shifting degrees by 2 and taking homotopy groups we get a homomorphism
\begin{equation*}
  \pi_n^s(\Sing^\mathrm{an}_0(Z(S^0)\otimes_\Q \C));\Z/p^k) \to \KSp_n(\Z/\Z/p^k),
\end{equation*}
which is equivariant for the action constructed above.  Now finally, the equivalence~(\ref{eq:42}) is also $\Aut(\C)$-equivariant for the evident action on $\mathcal{A}_g(\C)$, i.e.\ the one changing reference maps $\pi: A \to \Spec(\C)$ of abelian schemes over $\Spec(\C)$.  Restricting attention to the path component corresponding to abelian varieties of rank $g$, we have shown that the homomorphism
\begin{equation*}
  \pi_n^s(|\mathcal{A}_g(\C)|;\Z/p^k) \to \KSp_n(\Z/\Z/p^k),
\end{equation*}
induced from mapping $N(\mathcal{A}_g(\C)) \simeq \mathbb{H}_g^\delta \moddd \Sp_{2g}(\Z) \to B\Sp_{2g}(\Z)$, is equivariant for $\Aut(\C)$.  This is the commutativity of the diagram~\eqref{eq:12}.

\bibliographystyle{amsalpha}
\bibliography{Bibliography}

\end{document}